\documentclass{amsart}

 \usepackage[fontsize=11pt]{fontsize}  

    \usepackage[a4paper, left=3cm, right=3cm, top=2cm,bottom=2cm]{geometry}  

\numberwithin{equation}{section}
\usepackage{amsmath}
\usepackage{amsthm}
\usepackage{tikz}
\usepackage[all]{xy}
\usepackage{amssymb}
\usepackage{mathtools}
\usepackage{algorithm}
\usepackage{algpseudocode}
\usepackage{stmaryrd}
\usepackage[toc]{appendix}
\usepackage{url}
\usepackage{enumitem}

  \usepackage[normalem]{ulem} 

\usepackage{color}
\usepackage[linktocpage=true, hidelinks, colorlinks, linkcolor=blue, citecolor=magenta, bookmarksopen=true, urlcolor=brown]{hyperref}
\usepackage[nameinlink]
{cleveref}

\DeclareMathAlphabet{\mathbbold}{U}{bbold}{m}{n}
\usepackage[numbers,sort&compress]{natbib}
\usepackage{tikz-cd}
\usepackage[all]{xy}
\newtheorem{theorem}{Theorem}[section]
\newtheorem{lemma}[theorem]{Lemma}

\newtheorem{corollary}[theorem]{Corollary}
\newtheorem{proposition}[theorem]{Proposition}

\theoremstyle{definition}
\newtheorem{definition}[theorem]{Definition}
\newtheorem{question}[theorem]{Question}
\newtheorem{example}[theorem]{Example}

\newtheorem{remark}[theorem]{Remark}
\newtheorem{notation}[theorem]{Notation}
  \newtheorem{construction}[theorem]{Construction}

\usepackage{graphicx} 
\newcommand{\mc}{\mathcal}
\newcommand{\mt}{\mathrm}

\newcommand{\wh}{\widehat}
\newcommand{\wt}{\widetilde}

\newcommand{\ZZ}{\mathbf{Z}}
\newcommand{\QQ}{\mathbf{Q}}

\newcommand{\NN}{\mathbf{N}}
\newcommand{\Fil}{\mt{Fil}}
\newcommand{\gr}{\mt{gr}}
\newcommand{\ad}{\mt{ad}}
\newcommand{\Hom}{\mt{Hom}}

\newcommand{\Lie}{\mt{Lie}}
\newcommand{\coker}{\mt{coker}}
\newcommand{\GL}{\mt{GL}}
\newcommand{\Sym}{\mt{Sym}}
\newcommand{\End}{\mt{End}}
\newcommand{\Gal}{\mt{Gal}}

\newcommand{\Mat}{\mt{Mat}}

\newcommand{\Id}{\mt{Id}}

\newcommand{\dR}{\mt{dR}}
\newcommand{\val}{\mt{val}}

\newcommand{\an}{\mt{an}}
\newcommand{\cyc}{\mt{cyc}}
\newcommand{\R}{\mt{R}}
\newcommand{\la}{\mt{la}}

\newcommand{\C}{\mathbf{C}}

\newcommand{\B}{\mathbf{B}}
\newcommand{\han}{\text{$\mbox{-}\mathrm{an}$}}
\newcommand{\hfin}{\text{$\mbox{-}\mathrm{fin}$}}

\newcommand{\hla}{\text{$\mbox{-}\mathrm{la}$}}

\newcommand{\pa}{\mt{pa}}

\newcommand{\Sen}{\mt{Sen}}
\newcommand{\D}{{\bf{D}}}
\newcommand{\BdRplus}{\B_{\dR}^{+}}
\newcommand{\DdRplus}{\D_{\dR}^{+}}
\newcommand{\LieGamma}{\Lie\Gamma}
\newcommand{\BdRpluspa}{\B_{\dR}^{+}(K_\infty)^{\pa}}
\newcommand{\Kinftyla}{\wh{K}_{\infty}^\la}
\newcommand{\Linftyla}{\wh{L}_{\infty}^\la}
\newcommand{\Ktaginftyla}{\wh{K}_{\infty}'^\la}
\newcommand{\kft}{{K_{\mathrm{fT}}}}
\newcommand{\kftla}{{\wh{K}^\la_{\mathrm{fT}}}}

\usepackage{xcolor}

\newcommand \onto {\twoheadrightarrow}

\newcommand{\rep}{{\mathrm{Rep}}}

\newcommand{\gal}{{\mathrm{Gal}}}

\newcommand{\kinfty}{{K_{\infty}}}

\newcommand{\hatkinfty}{{\widehat{K}_{\infty}}}

\newcommand{\gammak}{{\Gamma(\kinfty)}}

\newcommand{\gk}{{G_K}}

\newcommand{\dla}{\text{$\mbox{-}\mathrm{la}$}}

\newcommand{\dpa}{\text{$\mbox{-}\mathrm{pa}$}}

\newcommand{\bdrplus}{{\mathbf{B}^+_{\mathrm{dR}}}}

\newcommand{\Qp}{{\mathbf{Q}_p}}

\newcommand{\qp}{{\mathbf{Q}_p}}

\newcommand{\barK}{{\overline{K}}}

\newcommand{\hatkinftyla}{{\widehat{K}_{\infty}^\la}}

\newcommand{\kcyc}{{K_\cyc}}

\newcommand{\hk}{{H(K_\infty)}}

\newcommand{\rg}{{\R\Gamma}}

\newcommand{\bdrplusm}{{\mathbf{B}^{+}_{\dR, m}}}
\newcommand{\bdrplusmkinftyla}{{\mathbf{B}^{+}_{\dR, m}(\kinfty)^\la}}

\newcommand{\bdrpluskinftypa}{{\mathbf{B}^{+}_{\dR}(\kinfty)^\pa}}

 \usepackage{relsize} 
\usepackage{lmodern} 
\usepackage{setspace} 

\title[]{de Rham theory and locally analytic vectors}

\author[]{Hui Gao}
\address{Department of Mathematics and Shenzhen International Center for Mathematics, Southern University of Science and Technology, Shenzhen 518055, China}   \email{gaoh@sustech.edu.cn}

\author[]{Gal Porat}
\address{Department of Mathematics, Weizmann Institute of Science, Rehovot, Israel}   \email{galporat1@gmail.com}

\author[]{Léo Poyeton}
\address{Université de Bordeaux, Institut de Mathématiques de Bordeaux, France}   \email{leo.poyeton@math.u-bordeaux.fr}

\begin{document}

\begin{abstract}
 \normalsize{  
 Let \(K_\infty/K\)  be a \(p\)-adic Lie extension of a $p$-adic field $K$. We study the subring of pro-analytic vectors in the  de Rham period ring \(\B_{\mathrm{dR}}^+(K_\infty)\). 
We show that the pro-analytic subring admits a Galois-equivariant isomorphism with a  formal power series ring $\Kinftyla \llbracket t_{K_\infty} \rrbracket$ if and only if $K_\infty$ satisfies a certain orientability condition, which   says that the  \(\widehat{K}_\infty\)-level Sen operator  admits  a Galois-equivariant $\B_{\mathrm{dR}}^+$-lift. 
A key input  is the vanishing of   higher locally analytic vectors of    \(\widehat{K}_\infty\)-representations.  
As an application, we show that the lifted Sen operator induces regular connections on pro-analytic vectors of $\B_{\mathrm{dR}}^+$-representations, and can be used to compute Galois cohomology.
 }
\end{abstract}

\subjclass[2020]{Primary 11F80; 14F30.}
 \keywords{}
 
 \date{\today} 
 \maketitle
\setcounter{tocdepth}{2}
\begingroup
\small
\tableofcontents 
\endgroup
\section{Introduction}

This article is concerned with the locally analytic structure of de Rham
period rings attached to $p$-adic Lie extensions.
Let $K$ be a mixed characteristic complete discrete valuation field with perfect residue
field of characteristic $p$ and let $\gk=\gal(\barK/K)$ be the absolute Galois group. Let $K_\infty/K$ be an infinitely ramified
Galois extension such that
$\Gamma(K_\infty):=\operatorname{Gal}(K_\infty/K)$
is a $p$-adic Lie group. Write
$H(K_\infty)=\operatorname{Gal}(\overline K/K_\infty)$. Our aim is to understand the pro-analytic vectors of the action of $\Gamma(K_\infty)$ on $\B_{\mathrm{dR}}^+(K_\infty):=(\B_{\mathrm{dR}}^+)^{H(K_\infty)}$. A key observation is that this problem is controlled by the de Rham theory of the Lie algebra $\operatorname{Lie}\Gamma(K_\infty)$ with the adjoint action of
$\Gamma(K_\infty)$, viewed as a $G_K$-representation by precomposition with
$G_K\twoheadrightarrow\Gamma(K_\infty)$. Before stating the main result, we
explain how this problem arises, why the familiar cyclotomic answer does not
extend formally to an arbitrary tower, and how the adjoint representation
$\operatorname{Lie}\Gamma(K_\infty)$ enters the picture.

\subsection{$p$-adic Lie towers and analytic period rings}

The choice of an infinitely ramified extension of $K$ is a recurring feature of
$p$-adic Hodge theory, Iwasawa theory, and the $p$-adic Langlands program.
Different choices of $K_\infty$ lead to rather different theories.  We
return to the principal examples after stating the main theorem.

The completion $\widehat K_\infty$ is the most accessible period ring
attached to the tower, and in some sense the computation 
$\widehat K_\infty^{\mathrm{la}}=K_\infty$ in the cyclotomic setting via Tate's trace maps (\cite{tate1967p}) can be considered as a starting point of $p$-adic Hodge theory. One of the main results of Berger and Colmez's article \cite{berger2016theorie} is the computation of $\widehat K_\infty^{\mathrm{la}}$ for general $K_\infty$. 

However, one would like to understand locally
analytic vectors in much larger period rings.  Perhaps the most interesting such period ring is the large
Robba ring
$(\widetilde{\mathbf B}_{\mathrm{rig}}^\dagger)^{H(K_\infty)}$.
In the cyclotomic case, Berger (\cite{berger2016multivariable}) proves that its pro-analytic vectors recover, up to iterates of
Frobenius, the usual Robba ring.  Further calculations for other towers and other rings
of periods were carried out by L. P.  (\cite{poyeton_locally_analytic_vectors_and_rings_of_periods},\cite{poyeton_locally_analytic_vectors_and_zp_extensions}).
These Robba rings are important because they form a bridge from Galois
representations to $p$-adic analysis.  In the cyclotomic theory they lead to connections with
$p$-adic differential equations (\cite{berger2002representations},\cite{cherbonnier1998representations},\cite{fontaine2007representations},\cite{kedlaya2005slope}), to explicit reciprocity laws
and Perrin--Riou's big exponential in Iwasawa theory (\cite{berger2003bloch}), and to
the description of locally analytic representations in the $p$-adic local
Langlands correspondence (\cite{colmez2016representations},\cite{colmez2014p}).  Thus, from the point of view of
applications, the locally analytic vectors in Robba-type period rings are
the objects one would ultimately like to compute.

For general towers $K_\infty/K$, such a computation appears to be much more
difficult. Results of L.~P.~  (\cite{poyeton_locally_analytic_vectors_and_rings_of_periods}) and Steingart (\cite{steingart2024higher}) suggest $(\widetilde{\mathbf B}_{\mathrm{rig}}^\dagger)^{H(K_\infty)}$ has nonzero higher locally analytic vectors whenever $K_\infty$ does not contain the cyclotomic extension. This indicates that the correct locally analytic Robba ring for applications for general $K_\infty/K$ is likely to be the derived version $(\widetilde{\mathbf B}_{\mathrm{rig}}^\dagger)^{H(K_\infty),\mt{R}\Gamma(K_\infty)\hla}$. Our understanding is that strong theorems in this direction have been proven by Anschütz, Le Bras, Rodríguez Camargo and  Scholze in their upcoming work on analytic prismatization, and they will be applied to investigate Iwasawa theory for general $K_\infty$ in upcoming work of Ponsinet, Rodrigues Jacinto and Steingart.

The ring $\mathbf B_{\mathrm{dR}}^+(K_\infty)$ has an intermediate
level of complexity between $\wh{K}_\infty$ and $(\widetilde{\mathbf B}_{\mathrm{rig}}^\dagger)^{H(K_\infty)}$.  It is larger than $\widehat K_\infty$ and remembers the entire de
Rham filtration, but its reduction modulo its maximal ideal  $I_\theta$ is
$\widehat K_\infty$, and its successive graded pieces are rank-one
semilinear $\widehat K_\infty$-representations.  It is therefore natural to
ask whether the calculation of Berger--Colmez can be lifted through the
$I_\theta$-adic filtration.  This is the problem studied in the present
article.

\subsection{From Sen theory to de Rham theory} \label{subsec:intro_sen_dR} 
 
 One can study $\wh{K}_\infty$-semilinear representations of $\Gamma(K_\infty)$ using Sen's theory. The original
theory uses vectors which lie in finite-dimensional $K$-subspaces stable
under the Galois action.  This works especially well when $\Gamma(K_\infty)$ is one-dimensional, but finite vectors become too small in higher
dimension.  Berger and Colmez (\cite{berger2016theorie}) observed that locally analytic vectors are
the correct replacement. 
 The most difficult part of their article is the computation
of $\widehat K_\infty^{\mathrm{la}}$ itself.  They show that $\widehat K_\infty^{\mathrm{la}}$ is a field of convergent power series in $\dim\Gamma(K_\infty)-1$ variables over $K_\infty$.

It is natural to ask for a de Rham version of this theorem.  Reduction
modulo the maximal ideal $I_\theta$ identifies $ \mathbf
B_{\mathrm{dR}}^+(K_\infty)/I_\theta
       \cong\widehat K_\infty$.
The quotients
$\mathbf B_{\mathrm{dR}}^+(K_\infty)/I_\theta^n$ are Banach
representations of $\Gamma(K_\infty)$, and
$\mathbf B_{\mathrm{dR}}^+(K_\infty)$ is their inverse limit.  The natural object to consider in this setting is then the pro-analytic vectors $\mathbf B_{\mathrm{dR}}^+(K_\infty)^{\pa} = \varprojlim_n [\mathbf B_{\mathrm{dR}}^+(K_\infty)/I_\theta^n]^\la$.

In the cyclotomic case (i.e. $\kinfty=\kcyc$), G.~P.~ (\cite[Proposition 2.6]{porat2022lubin}) computed $\mathbf B_{\mathrm{dR}}^+(\kcyc)^{\pa}=\kcyc\llbracket t\rrbracket,$ where $t$ is Fontaine's element, where a key point is that there is a natural inclusion $\kcyc \subset \barK \subset \mathbf B_{\mathrm{dR}}^+$.
In general, however, $\Kinftyla/K$ may not be an algebraic extension, so the structure of $\mathbf B_{\mathrm{dR}}^+(\kinfty)^{\pa}$ becomes more complicated. An argument using Cohen's structure theorem shows that $\BdRpluspa$ is non-canonically isomorphic to a power series ring $\Kinftyla\llbracket t_{K_\infty}\rrbracket $, where $t_{K_\infty}$ is a variable, see Theorem \ref{thm:dvr}.  This can be regarded as an analogue of the non-canonical ring isomorphism $\bdrplus \cong \C\llbracket t \rrbracket$, and thus prompts the following natural canonicity question, which will be the core of our investigation. 

\begin{question}
\label{question:bdr_plus_iso}
When is there a $\Gamma(K_\infty)$-equivariant\footnote{For simplicity, we are being imprecise in the introduction: one needs to explain how to equip $\Kinftyla \llbracket t_{K_\infty} \rrbracket$ with a natural $\Gamma(K_\infty)$-action. It is in fact slightly more natural to consider instead the completed symmetric algebra  $\wh{\Sym}_{\Kinftyla}(\gr^1\BdRplus(K_\infty))^\la)$, see~ Theorem \ref{thm:brdplusps_iso}; the $\Gamma(K_\infty)$-action on $\Kinftyla\llbracket t_{K_\infty}\rrbracket$ then depends on a trivialization $\gr^1\BdRplus(K_\infty))^\la \cong \Kinftyla \cdot t_{K_\infty}$.} isomorphism
of the form 
\begin{equation*} \label{eqintro:bdr_plus_iso}
\BdRpluspa \cong \Kinftyla\llbracket t_{K_\infty}\rrbracket ? 
\end{equation*} 
In particular, when is there   a $\Gamma(K_\infty)$-equivariant section 
\[ \Kinftyla \to \BdRpluspa ?\]
\end{question}


It turns out this question has a deformation-theoretic flavour, and its answer lies in the de Rham theory of the adjoint representation. Indeed, consider
$\operatorname{Lie}\Gamma(K_\infty)$ with the adjoint action of
$\Gamma(K_\infty)$, viewed as a $G_K$-representation by precomposition with
$G_K\twoheadrightarrow\Gamma(K_\infty)$.  A theorem of Sen (\cite[Theorem 12]{sen1980continuous}) gives a
canonical nonzero element, the Sen operator
\[
 \Theta(K_\infty)\in
 \bigl(\C\otimes_{\mathbf Q_p}
       \operatorname{Lie}\Gamma(K_\infty)\bigr)^{G_K}=\D_{\C}(\Lie\Gamma(K_\infty)).
\] Berger and Colmez prove that,
when this element acts as a differential operator on
$\widehat K_\infty^{\mathrm{la}}$, it acts by zero (\cite[\S6]{berger2016theorie}). Our idea is to consider it as a $\C$-period of $\Lie\Gamma(K_\infty)$ and try to lift it to $\mathbf B_{\mathrm{dR}}^+$. Thus we are led to the following key definition.

\begin{definition}[Definition \ref{def:orientation}] \label{def:intro_orient}
An \emph{orientation} of $K_\infty/K$ is a lift of
$\Theta(K_\infty)$ to an element
\[
 \nabla(K_\infty)\in
 \bigl(\mathbf B_{\mathrm{dR}}^+\otimes_{\mathbf Q_p}
       \operatorname{Lie}\Gamma(K_\infty)\bigr)^{G_K}=\D_{\dR}^+(\Lie\Gamma(K_\infty)).
\]
We say that $K_\infty/K$ is \emph{orientable} if an orientation exists. We say $K_\infty/K$ is \emph{canonically orientable} if a unique orientation exists.
\end{definition}

It is not difficult to show that orientability is a necessary condition for the existence of a $\Gamma(K_\infty)$-equivariant isomorphism $\BdRpluspa \cong \Kinftyla\llbracket t_{K_\infty}\rrbracket$ in Question \ref{question:bdr_plus_iso} to hold. Indeed, such an isomorphism leads to a $\Gamma(K_\infty)$-equivariant section   map $ \Kinftyla \to \BdRpluspa$, along which one can Galois-equivariantly lift $\Theta(\kinfty) \in  \Kinftyla \otimes_{\QQ_p} \Lie \gammak$ to $\nabla(K_\infty)$. In fact, we shall quickly see that  orientability is also a sufficient condition in Theorem \ref{thm:brdplusps_iso_intro} below.

\begin{example} \label{ex:intro_ori}

\begin{enumerate}
\item  \label{item:ori1}
(Theorem \ref{thm:characterization_of_canonical_orientability}). 
$K_\infty$ is canonically orientable if and only if the representation
 $\Lie\Gamma(K_\infty)$ has no generalized Hodge-Tate weights in $\ZZ_{\leq-1}$.  This includes familiar examples:
 \begin{enumerate}
    \item (Examples \ref{ex:ab_ext} and \ref{ex:nilp_ext}).  Abelian extensions (and nilpotent extensions in general) $K_\infty/K$ are canonically orientable. In particular, cyclotomic, Lubin-Tate and 1-dimensional towers $K_\infty/K$ are canonically orientable. This observation matches with previous study of    $\bdrplus$-representations over these towers (see \cite{Fon04},  \cite{berger2016multivariable}, \cite{porat2022lubin} etc.)

\item  \label{item:ori1b} (Example \ref{ex:Kummer_ext}).  The false Tate (curve)   extension $K(\zeta_{p^\infty},\pi^{1/p^\infty})/K$ is canonically orientable (where $\zeta_{p^\infty}$ resp.~ $\pi^{1/p^\infty}$ signifies taking a compatible system of $p^n$-th roots of 1 and a fixed uniformizer $\pi$). 
This is the tower used in the study of $(\varphi, \tau)$-modules (\cite{caruso2013representations}, \cite{GL20},  \cite{gao_poyeton_overconvergence} etc.), see also the $\bdrplus$-theory developed in \cite{GMWdR}.
 \end{enumerate}

\item \label{item_dR}
(Proposition \ref{prop:de_rham_extensions_are_orientable} and Theorem \ref{thm:characterization_of_orientability}).  If $\Lie \gammak$ is a de Rham representation, then $K_\infty/K$ is orientable (although not necessarily canonically so). More generally, the tower $\kinfty$ is orientable if and only if   $\dim_K\D_{\dR}(\Lie\Gamma(K_\infty)) = \dim_K\D_{\mt{HT}}(\Lie\Gamma(K_\infty)).$ 

\item \label{item_nondR}
(Example \ref{ex_non_dR_ext}).  The well-known non-split extension $0\rightarrow\QQ_{p}\rightarrow V\rightarrow\QQ_{p}(1)\rightarrow0$ induces a $p$-adic Lie extension which is non-orientable (in particular, the representation $\Lie \gammak$ is Hodge--Tate but not de Rham). 

\item (Example \ref{ex:SL2}).  Suppose
$\Gamma(K_\infty)\cong\mathrm{SL}_2(\mathbf Z_p)$.  Berger and Colmez
studied these extensions in \cite[\S5]{berger2016theorie} and described $\Kinftyla$.  The standard two-dimensional
representation $V_{\mt{std}}$ may be considered as a $G_K$-representation by precomposing it with $G_K \twoheadrightarrow \Gamma(K_\infty)$. It has generalized
Hodge--Tate weights $\pm s$ for some
$s\in\overline{K}^\times$. 
Depending  on the value of $s$ and properties of $V_{\mt{std}}$, it turns out $K_\infty/K$ could be either  canonically  orientable, non-canonically orientable or non-orientable.
\end{enumerate}
\end{example}

Our main theorem shows that for every orientation we get a $\Gamma(K_\infty)$-equivariant isomorphism of $\BdRpluspa$ with a 1-variable power series ring over $\Kinftyla$. 

\begin{theorem}[Theorem  \ref{thm:brdplusps_iso}]
\label{thm:brdplusps_iso_intro}
Let $K_\infty/K$ be orientable with a chosen orientation $\nabla=\nabla(K_\infty)$. Then the reduction map $\bdrplus(\kinfty) \to \hatkinfty$ induces a $\Gamma(K_\infty)$-equivariant isomorphism
\[ \BdRplus(K_\infty)^{\pa,\nabla =0}
\cong \Kinftyla.\]
Furthermore, there   exists an  element $t_{K_\infty}\in (\mathrm{Fil}^1 \bdrplus(\kinfty))^{\mathrm{pa}} $ such that $\nabla(t_\kinfty)=t_\kinfty$, and for any such element, the natural inclusion map 
\[ \BdRplus(K_\infty)^{\pa,\nabla =0}\llbracket t_\kinfty\rrbracket \to \BdRpluspa\]
is an isomorphism. 
\end{theorem}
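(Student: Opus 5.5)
The plan is to work inductively along the $I_\theta$-adic filtration, using the vanishing of higher locally analytic vectors for $\wh{K}_\infty$-representations stated in the abstract. That is, we use $\R^i\Gamma\hla$-vanishing for $i\geq 1$ for Banach $\wh{K}_\infty$-semilinear representations, applied to the graded pieces $\gr^n\BdRplus(K_\infty)\cong \wh{K}_\infty(n)$. This vanishing gives exactness of
\[0\to (\gr^n)^{\la}\to (\BdRplus(K_\infty)/I_\theta^{n+1})^{\la}\to (\BdRplus(K_\infty)/I_\theta^{n})^{\la}\to 0.\]
Passing to the inverse limit (Mittag-Leffler holds since the transition maps are surjective), we get that $\BdRpluspa$ is $I_\theta$-adically separated and complete. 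Its graded pieces are $(\wh{K}_\infty(n))^\la$, and these are free of rank one over $\Kinftyla$. For the rank-one statement I would invoke the Berger--Colmez/Sen theory: for a rank one $\wh K_\infty$-representation, the locally analytic vectors form a rank one $\Kinftyla$-module, and $\Theta(K_\infty)$ acts on it by the Hodge--Tate weight $n$.

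Next I would study the operator $\nabla$ acting on $\BdRpluspa$. Since $\nabla\in (\BdRplus\otimes\Lie\Gamma(K_\infty))^{G_K}$ is $H(K_\infty)$-invariant, it is a $\Gamma(K_\infty)$-equivariant combination of the Lie algebra derivations with coefficients in $\BdRplus(K_\infty)$. A small point to check is that these coefficients are themselves pro-analytic, so that $\nabla$ preserves $\BdRpluspa$; this follows from $G_K$-invariance of the tensor, which makes them finite-dimensional-orbit vectors. Then $\nabla$ is a derivation preserving the filtration. Since it lifts $\Theta(K_\infty)$, it acts on $\gr^n$ by $\Theta$. On $\gr^0=\Kinftyla$ this is $0$ by Berger--Colmez \S6, and on $(\wh K_\infty(n))^\la$ it is multiplication by $n$, because $\Theta$ acts on $t^n$ by $n$ (the weight of $\QQ_p(n)$).

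Then the key computations are as follows.

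(i) Existence of $t_{K_\infty}$. Pick any generator $x_1$ of $\gr^1$ lifted to $(\Fil^1)^{\pa}$. Then $(\nabla-1)x_1\in \Fil^2$, and on $\gr^n$ with $n\ge2$ the operator $\nabla-1$ acts by $n-1$, which is invertible. So we correct $x_1$ successively by elements of $\Fil^n$ and converge by completeness to get $\nabla t=t$.

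(ii) $\BdRplus(K_\infty)^{\pa,\nabla=0}\to\Kinftyla$ is bijective. For surjectivity, lift $x\in\Kinftyla$ (surjectivity of the reduction comes from the exact sequences above). Then kill the error in $\Fil^n$ successively, since $\nabla$ acts by $n\neq0$ on $\gr^n$, $n\geq 1$. For injectivity, an element of $\Fil^n\setminus\Fil^{n+1}$ with $n\geq1$ cannot be killed by $\nabla$, since $\nabla$ is injective on $\gr^n$. Equivariance is clear because $\nabla$ is $\Gamma(K_\infty)$-invariant. Since $\nabla$ is a derivation, its kernel $R$ is a subring.

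(iii) The map $R\llbracket t\rrbracket\to\BdRpluspa$ is an isomorphism. On graded pieces it sends $R\cdot t^n\cong \Kinftyla\, t^n$ to $(\gr^n)^\la$. This is an isomorphism provided $t^n$ generates $(\gr^n)^\la$ over $\Kinftyla$, which holds because $t$ generates $\gr^1{}^\la$ and the rank one modules multiply compatibly: $(\gr^1)^{\la\otimes n}\to(\gr^n)^\la$ is an isomorphism, since $\wh K_\infty(1)^{\otimes n}=\wh K_\infty(n)$ and the la functor on rank one objects is compatible with tensor products. Completeness on both sides then gives the isomorphism.

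The main obstacle is the exactness / rank-one structure of $(\gr^n)^\la$, i.e. the vanishing of higher locally analytic vectors together with the fact that $\wh K_\infty(n)^\la$ is free of rank one over $\Kinftyla$ with $\Theta$ acting by $n$. This is where I would invoke the paper's earlier vanishing theorem and Berger--Colmez decompletion. After that, the successive-approximation steps are formal.
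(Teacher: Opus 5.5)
Your proposal is correct and has the same skeleton as the paper. The vanishing theorem makes $\BdRpluspa$ complete for the filtration $(I_\theta^n)^{\pa}$, with graded pieces $(\gr^n\BdRplus(K_\infty))^{\la}$ free of rank one over $\Kinftyla$. The operator $\nabla$ preserves this filtration and acts on the graded pieces through $\Theta(K_\infty)$. Once that action is known to be multiplication by $n$, your steps (i)--(iii) are the successive-approximation content of Lemma \ref{lem:operator_on_filtered_space}.

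The real difference is how you get the eigenvalue $n$. The paper exhibits an eigenvector.
\begin{itemize}
\item When $K_\cyc\subset K_\infty$, it uses $\nabla(K_\infty)(t)=t$, obtained by compatibility with the canonical orientation of $K_\cyc$ (Lemma \ref{lem:nabla_on_t}).
\item In general, it passes to $L_\infty=K_\infty K_\cyc$, reduces to $\Delta\cong\ZZ_p$ with an element $z$, transports the orientation to $L_\infty$, and uses the cyclotomic case there. It then builds $t_{K_\infty}=ut$ with $u=\exp(-\log\chi_\cyc(\gamma)(z-z_0))$ (Construction \ref{cons:t_kinfty}).
\end{itemize}
You instead read the eigenvalue off from Sen theory and produce $t_{K_\infty}$ abstractly by approximation.

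Your stated reason (``$\Theta$ acts on $t^n$ by $n$'') only makes literal sense when $K_\cyc\subset K_\infty$, since otherwise $t\notin\BdRplus(K_\infty)$. Identifying the Berger--Colmez operator on $W^{\la}$ with the classical Sen operator therefore needs a precise reference or the following short argument, which is the heart of the matter.
\begin{itemize}
\item Since $K_\cyc\subset L_\infty$, one has $(\gr^1\BdRplus(K_\infty))^{\la}\subset(\gr^1\BdRplus(L_\infty))^{\la}=\Linftyla\cdot\bar t$.
\item On $\Gamma(K_\infty)$-locally analytic vectors fixed by $\Gal(L_\infty/K_\infty)$, $\Theta(L_\infty)$ acts as $\Theta(K_\infty)$. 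On $\bar t$ it acts as $\Theta(K_\cyc)$, i.e.\ by $1$. Both follow from Lemma \ref{lemma:sen_operator_compatibility} and the argument of Lemma \ref{lem:nabla_action_compatibility}.
\item $\Theta(L_\infty)$ kills $\Linftyla$ (Proposition \ref{prop:sen_operator_kills_la_vecs}).
\end{itemize}
Hence $\Theta(K_\infty)$ is the identity on $(\gr^1)^{\la}$, and by Leibniz it is multiplication by $n$ on $(\gr^n)^{\la}$.

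With this filled in, your route is lighter than the paper's. It works only at the graded level, with Sen operators rather than orientations of $L_\infty$. It needs neither the element $z$, nor the reduction to $\Delta\cong\ZZ_p$, nor the finite-extension descent of Lemma \ref{lem:reduction_along_finite_extensions}. What the paper's route buys is an explicit formula for $t_{K_\infty}$.

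Finally, the clause ``for any such element'' requires $t_{K_\infty}\neq 0$, and you must check that such an element generates $\gr^1$. Your step (iii) only treats the element you constructed. This follows from your injectivity argument in (ii) applied to $\nabla-1$, which is injective on $\gr^m$ for all $m\ge 2$.
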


The following theorem then completely answers Question \ref{question:bdr_plus_iso}: an orientation is exactly the data which determines a $\Gamma(K_\infty)$-equivariant power series description of $\BdRpluspa$.  

\begin{theorem}[Theorem \ref{thm:parameterization_of_bdrplus_isos}]
\label{thm:intro_para}
The orientations of $K_\infty$ are naturally in bijection with the set of $\Gamma(K_\infty)$-equivariant isomorphisms between $\BdRpluspa$ and $1$-variable power series rings over $\Kinftyla$ inducing the identity on graded pieces.
\end{theorem}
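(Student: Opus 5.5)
Both directions of the bijection will be built out of Theorem \ref{thm:brdplusps_iso_intro}, and then I will check that they are mutually inverse. By an ``isomorphism with a $1$-variable power series ring inducing the identity on graded pieces'' I mean the following: a $\Gamma(K_\infty)$-equivariant ring isomorphism
\[ \iota\colon \wh{\Sym}_{\Kinftyla}\bigl(\gr^1\BdRplus(K_\infty)^\la\bigr) \xrightarrow{\sim} \BdRpluspa \]
whose associated graded map is the identity. By $I_\theta$-adic completeness, this amounts to two pieces of data: an equivariant ring section $s\colon \Kinftyla \to \BdRpluspa$ of reduction, and an equivariant $s$-semilinear lift $\gr^1(\cdot)^\la \to (\Fil^1)^{\pa}$ of the projection.

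\emph{From orientations to isomorphisms.} Fix an orientation $\nabla$. Theorem \ref{thm:brdplusps_iso_intro} gives the section $s_\nabla$, namely the inverse of $\BdRplus(K_\infty)^{\pa,\nabla=0}\cong\Kinftyla$. For the degree-one part I use the eigenspace decomposition of $\nabla$: since $\nabla$ is $G_K$-invariant, its eigenspaces are $\Gamma(K_\infty)$-stable. I claim that
\[ \{x\in(\Fil^1)^{\pa} : \nabla x = x\} \]
maps isomorphically onto $\gr^1(\cdot)^\la$. Injectivity: elements of $(\Fil^2)^{\pa}$ are of the form $t_{K_\infty}^2 f$, and $\nabla$ acts on such an element by a $\nabla$-eigenvalue of at least $2$ on the leading term, so no nonzero one can have eigenvalue $1$. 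Surjectivity follows from Theorem \ref{thm:brdplusps_iso_intro}, since $t_{K_\infty}\cdot s_\nabla(\Kinftyla)$ lies in this eigenspace. I therefore expect the $\nabla$-eigenvalue-$1$ part of $\Fil^1$ to be $s_\nabla(\Kinftyla)\,t_{K_\infty}$, which gives an equivariant lift, and hence $\iota_\nabla$.

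\emph{From isomorphisms to orientations.} Given $\iota$, restricting to degree $0$ gives the section $s$. The Sen operator $\Theta(K_\infty)$ is a $G_K$-invariant element of $\Kinftyla\otimes_{\QQ_p}\Lie\Gamma(K_\infty)$ by Berger--Colmez, so I set $\nabla_\iota:=(s\otimes\mathrm{id})(\Theta)$. This is $G_K$-invariant because $s$ is equivariant, and it lies in $\DdRplus(\Lie\Gamma(K_\infty))$ and lifts $\Theta$, so it is an orientation.

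\emph{The two composites.} First, $\nabla\mapsto\iota_\nabla\mapsto\nabla$: the section $s_\nabla$ has image in $\ker\nabla$, and $\nabla$ itself has coefficients in $\ker\nabla$; one checks this by applying $\nabla$ to its own coefficients, using that the action is by derivations and that $\nabla$ commutes with itself. Because reduction is injective on $\ker\nabla\otimes\Lie\Gamma(K_\infty)$, the element $\nabla$ is the unique lift of $\Theta$ with coefficients in $s_\nabla(\Kinftyla)$, so $(s_\nabla\otimes\mathrm{id})(\Theta)=\nabla$.

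Second, $\iota\mapsto\nabla_\iota\mapsto\iota$: I must show $s(\Kinftyla)\subset\ker\nabla_\iota$ and that the degree-one image is the eigenvalue-$1$ eigenspace. The operator $\nabla_\iota$ acts on the image of $\iota$ as the derivation induced by $\Theta$ through the equivariant isomorphism. On $s(\Kinftyla)$ this is $s(\Theta\cdot-)$, which vanishes by Berger--Colmez's result that $\Theta$ kills $\Kinftyla$. On the degree-one piece $\gr^1$ it acts through the Sen operator of $\gr^1\cong\wh K_\infty(1)$, i.e.\ by $1$. Uniqueness of a section with image in $\ker\nabla$ and of the degree-one lift, both from Theorem \ref{thm:brdplusps_iso_intro}, then forces $\iota=\iota_{\nabla_\iota}$.

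I expect the main obstacle to be justifying that the action of the $\Lie\Gamma(K_\infty)$-valued element $\nabla_\iota$ on pro-analytic vectors is transported through $\iota$ into $\Theta$ acting on the power series ring. This requires compatibility of the derived Lie algebra action with equivariant maps and with the coefficient substitution $s\otimes\mathrm{id}$, which I would verify carefully on each quotient $\BdRplus/I_\theta^n$.
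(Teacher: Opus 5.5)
Your two maps and the checks of the composites are essentially the paper's. There, an orientation $\nabla_0$ goes to the section $\Kinftyla\cong\BdRplus(K_\infty)^{\pa,\nabla_0=0}\subset\BdRpluspa$, and a section $s_0$ goes to $s_{0,*}(\Theta(K_\infty))$. The composites are checked using two facts: $\Theta(K_\infty)$ kills $\Kinftyla$, and an orientation has its coefficients in its own kernel. The paper routes everything through equivariant sections and asserts that sections correspond to isomorphisms with $\wh{\Sym}_{\Kinftyla}(L(K_\infty)^\la)$ ``by the proof of Theorem \ref{thm:brdplusps_iso}''. You instead pin down the degree-one lift as $\ker(\nabla-1)\cap(\Fil^1\BdRplus(K_\infty))^{\pa}$, and show via $\nabla_\iota(\iota(y))=\iota(\Theta(K_\infty)y)=\iota(y)$ that a given $\iota$ is forced to use it. This makes explicit something the paper leaves implicit: such an isomorphism is determined by its degree-zero part. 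The commutation of $\partial_i$ with equivariant maps, which you single out as the main obstacle, is also used by the paper without comment.

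The step that does not hold up as written is the claim that the coefficients $b_i$ of $\nabla=\sum_i b_i\otimes\partial_i$ satisfy $\nabla(b_i)=0$. The fact that `$\nabla$ commutes with itself' carries no information, since $[X,X]=0$ for every $X\in\BdRpluspa\otimes_{\QQ_p}\Lie\Gamma(K_\infty)$. Yet the conclusion fails for general $X$: take $K_\infty=K_\cyc$ and $\partial\in\Lie\Gamma(K_\cyc)$ normalized by $\partial(t)=t$; then $X=t\otimes\partial$ has $X(t)=t^2\neq0$. Nor can you argue purely with operators on $\BdRpluspa$ and then read off coefficients. An element of $R\otimes_{\QQ_p}\Lie\Gamma(K_\infty)$ can act by zero on $R$ without vanishing, e.g.\ $\Theta(K_\infty)$ on $\Kinftyla$.

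The missing input is the $\Gamma(K_\infty)$-invariance of $\nabla$ as a tensor. Differentiate $\sum_i g(b_i)\otimes\ad(g)(\partial_i)=\sum_i b_i\otimes\partial_i$ along $\partial_j$ to get $\sum_i\partial_j(b_i)\otimes\partial_i+\sum_i b_i\otimes[\partial_j,\partial_i]=0$ in $\BdRpluspa\otimes_{\QQ_p}\Lie\Gamma(K_\infty)$. Multiply by $b_j$ and sum over $j$. The bracket terms cancel by antisymmetry, which is the meaningful version of `$[\nabla,\nabla]=0$'. This leaves $\sum_i\nabla(b_i)\otimes\partial_i=0$, so each $\nabla(b_i)=0$. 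This is Lemma \ref{lem:nabla_kills_its_coefficients}; with it supplied, your argument goes through.
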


\begin{remark}\label{rem:intro_can_ori}
In particular, if a tower $K_\infty/K$ is canonically orientable, then there is a \emph{canonical} isomorphism $$\BdRpluspa \cong \Kinftyla\llbracket t_{K_\infty}\rrbracket .$$
\end{remark}


\subsection{A cohomological application}  \label{sec_intro:coho_app}

The key fact $\nabla(t_\kinfty)=t_\kinfty$ in Theorem \ref{thm:brdplusps_iso_intro} makes it possible to connect $\bdrplus$-representations with \emph{regular connections} by using any orientable tower. Let $U$ be a finite free $\bdrplus$-module equipped with a semilinear $\gk$-action; using a standard d\'evissage argument which makes use of our vanishing theorem for higher locally analytic vectors (Theorem \ref{thm:higher_la_vanishing_k_infty}), one can show that
\[ D:= (U^{\hk})^{\gammak\dpa}\]
is a finite free  $\BdRplus(K_\infty)^{\pa}$-module with full rank, which then admits a $\nabla=\nabla(\kinfty)$-action. Using the identification $\BdRplus(K_\infty)^{\pa} \cong \Kinftyla\llbracket t_{K_\infty}\rrbracket$, and using the fact $\nabla(t_\kinfty)=t_\kinfty$, it is clear that $\nabla$ induces a \emph{regular connection} on $D$ in the sense that it satisfies the Leibniz rule with respect to the (regular) derivation $t_\kinfty \cdot \frac{d}{dt_\kinfty}$, that is:
\[ \nabla(ax)=a\nabla(x) + t_\kinfty \cdot \frac{d}{dt_\kinfty}(a)x\]
for all $a \in \Kinftyla\llbracket t_{K_\infty}\rrbracket$ and $x\in D$.
We have the following cohomological consequence.

\begin{theorem}[Theorem \ref{thm:dR_cat_coho}]
Suppose $\kinfty$ is orientable with a chosen orientation $\nabla$. 
Then there is a  $\gammak$-equivariant quasi-isomorphism
\[ [D \xrightarrow{\nabla} D] \simeq \rg(\gk, U)\otimes_K \Kinftyla.\]
In addition, taking $\gammak$-invariants, we have
\[ [D \xrightarrow{\nabla} D]^{\gammak=1} \simeq \rg(\gk, U). \]
\end{theorem}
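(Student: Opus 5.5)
The plan is to compute $\rg(\gk,U)$ in two stages, first over $\hk$ and then over $\gammak$, and finally to pass to analytic vectors and Lie algebra cohomology. First, Hochschild--Serre gives
\[ \rg(\gk,U)\simeq \rg(\gammak,\rg(\hk,U)). \]
By dévissage along the $t$-adic filtration of $U$ (graded pieces are $\C$-representations) together with almost purity / Tate--Sen vanishing, $H^i(\hk,U/t^nU)=0$ for $i>0$. Passing to the limit (Mittag-Leffler holds since the transition maps on $U^{\hk}$ are surjective), this gives $\rg(\hk,U)\simeq U^{\hk}$. Hence $\rg(\gk,U)\simeq\rg(\gammak,U^{\hk})$, where $U^{\hk}$ is a finite free $\BdRplus(K_\infty)$-module.

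Second, I will replace $U^{\hk}$ by its pro-analytic vectors. The vanishing of higher locally analytic vectors (Theorem \ref{thm:higher_la_vanishing_k_infty}), applied to each $U^{\hk}/t^n$ via dévissage to $\wh K_\infty$-representations, shows that $U^{\hk}/t^n\to \R\Gamma^{\la}$ is concentrated in degree $0$. The comparison of continuous cohomology with locally analytic cohomology (Lazard / Tamme, Rodríguez Camargo) then gives
\[ \rg(\gammak,U^{\hk})\simeq \rg(\gammak,D), \]
and further, as a smooth-type statement, $\rg(\gammak,D)\simeq \rg(\Lie\gammak,D)^{\gammak}$. So the heart of the matter is to compute $\rg(\Lie\gammak,D)$, the Lie algebra cohomology, $\gammak$-equivariantly.

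Third, I use Sen/Berger--Colmez theory in the form that the $\Lie\gammak$-action on $D$ factors through $\nabla$. The Lie algebra $\mathfrak g$ acts on $D$ through $\Kinftyla\otimes\mathfrak g$, and the annihilator directions (those killing $\Kinftyla$, which reduce to the Sen operator on $\gr$) are precisely spanned by $\nabla$, while the complementary directions act through the "geometric" directions of $\Kinftyla$. I would show that $\rg(\mathfrak g,D)\otimes$-decomposes, via a Koszul/Hochschild--Serre spectral sequence for $\Kinftyla\otimes\mathfrak g$, as $[D\xrightarrow{\nabla}D]$ computed relatively over the cohomology of the geometric part. Analogously to Berger--Colmez / Rodríguez Camargo's computation $\rg(\mathfrak g,\Kinftyla)\simeq$ (de Rham complex), the geometric part contributes $\Kinftyla\otimes_K(-)$. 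Using the equivariant isomorphism $\BdRpluspa\cong\Kinftyla\llbracket t_\kinfty\rrbracket$ of Theorem \ref{thm:brdplusps_iso}, I would write $D=\Kinftyla\llbracket t\rrbracket\otimes_{D^{\nabla\text{-fin}}}$ and reduce, by dévissage in $t$ and the regularity $\nabla(t)=t$, to the Sen-level statement on $\gr^i$ pieces. There $[\gr^iD\xrightarrow{\nabla}\gr^iD]$ computes $\rg(\gk,\gr^iU)\otimes_K\Kinftyla$ by Sen theory. The five lemma then upgrades this to $U$, and taking $\gammak$-invariants (exact on $\Kinftyla$ with $(\Kinftyla)^{\gammak}=K$) gives the second statement.

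The main obstacle will be constructing a natural, $\gammak$-equivariant comparison map rather than matching abstract cohomology groups. A natural candidate is the map $\rg(\gk,U)\otimes_K\Kinftyla\to[D\to D]$ induced by inclusion of $\gk$-invariants and $\Kinftyla$-linearity. I expect the genuinely delicate step to be verifying that the geometric directions of $\mathfrak g$ contribute exactly a factor $\Kinftyla$ (no extra cohomology). This relies on the Berger--Colmez description of $\Kinftyla$ as a convergent power series field, and on $\nabla$ commuting appropriately with those derivations.
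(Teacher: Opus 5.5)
\textbf{There is a genuine gap: the third step, which carries the whole content of the statement, is not established.} Your first two steps are fine; they amount to Theorem \ref{thm:dR_cat}, giving $\rg(\gk,U)\simeq\rg(\gammak,U^{\hk})\simeq\rg(\gammak,D)$ and $\rg(\gammak,D)\otimes_K\kinfty\simeq\rg(\Lie\gammak,D)$. Neither route you sketch for the third step gets there.

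\emph{The Lie-algebra route runs in the wrong direction.} Suppose you grant a Hochschild--Serre type spectral sequence along $\nabla$ (which commutes with the $\Lie\gammak$-action, being $\gammak$-invariant). It expresses $\rg(\Lie\gammak,D)$ through the cohomology of $\mt{H}^i([D\xrightarrow{\nabla}D])$ in the remaining ``geometric'' directions. You want the converse: to recover $\mt{H}^i([D\xrightarrow{\nabla}D])\cong\mt{H}^i(\gk,U)\otimes_K\Kinftyla$ from the known $\mt{H}^i(\gk,U)\otimes_K\kinfty$. For that you would need to know beforehand that $\mt{H}^i([D\xrightarrow{\nabla}D])$ is finite free over $\Kinftyla$ with trivial induced connection, together with a Poincar\'e lemma for $\Kinftyla$. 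That is essentially the theorem itself, and none of it is supplied.

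\emph{The d\'evissage route fails for the reason you half-identify.} The five lemma needs a single morphism, defined in all degrees and compatible with the $t_{\kinfty}$-adic filtration, that induces quasi-isomorphisms on graded pieces.
\begin{enumerate}
\item Your candidate (inclusion of $\gk$-invariants, extended $\Kinftyla$-linearly) only defines the degree-$0$ component. It gives no map $\mt{H}^1(\gk,U)\to D/\nabla D$.
\item A chain map can be built: compose $\rg(\gammak,D)\to\rg(\Lie\gammak,D)$ with $\phi\mapsto\sum_i b_i\phi(\partial_i)$ on $1$-cochains, where $\nabla=\sum_i b_i\otimes\partial_i$.
\item However, the Sen-level quasi-isomorphisms you want to invoke on $\gr^iD$ (Theorem \ref{thm:SenBC16}(4)) are obtained by base change and descent, not by this map. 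You would still have to prove that this particular map is a quasi-isomorphism on graded pieces.
\end{enumerate}

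\textbf{The missing idea} is to avoid building any comparison map intrinsically over $\kinfty$, by passing to $L_\infty=\kinfty\kcyc$; this is what the paper does.
\begin{enumerate}
\item By Lemmas \ref{lem:orientations_preserved_under_central_extensions} and \ref{lem:nabla_action_compatibility}, $\nabla(\kinfty)$ induces a unique compatible orientation of $L_\infty$, which restricts to the canonical cyclotomic one.
\item Hence the isomorphisms of Theorem \ref{thm:brdplusps_iso} fit into compatible inclusions $\Kinftyla\llbracket t_{\kinfty}\rrbracket\subset\wh{L}_\infty^\la\llbracket t\rrbracket\supset\kcyc\llbracket t\rrbracket$.
\item By Lemma \ref{lem:basis_lav}, $D(L_\infty)$ is the base change, as a regular connection, of both $D(\kinfty)$ and $D(\kcyc)$.
\item Base change for cohomology of regular connections gives $\rg(\nabla(\kinfty),D(\kinfty))\otimes_{\Kinftyla}\wh{L}_\infty^\la\simeq\rg(\nabla(\kcyc),D(\kcyc))\otimes_{\kcyc}\wh{L}_\infty^\la$.
\item The classical cyclotomic theorem identifies the right side with $\rg(\gk,U)\otimes_K\wh{L}_\infty^\la$.
\item Taking $\Gal(L_\infty/\kinfty)$-invariants concludes.
\end{enumerate}
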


\subsection{Structure of the article}

In \S2 we prove the vanishing of the higher locally analytic vectors of a
finite free semilinear $\widehat K_\infty$-representation.  This gives the
exactness needed to lift locally analytic vectors through the quotients of
$\mathbf B_{\mathrm{dR}}^+(K_\infty)$.  In \S3 we recall the Sen operator
of a $p$-adic Lie tower and introduce the adjoint representation
$\operatorname{Lie}\Gamma(K_\infty)$.  In \S4 we define orientations,
give criteria for their existence and uniqueness, and study their behavior
under morphisms of towers.  In \S5 we compute the pro-analytic vectors in
$\mathbf B_{\mathrm{dR}}^+(K_\infty)$. In \S6 we deduce consequences of the main theorem for orientations. In \S7 we give cohomological applications in the theory of Sen modules and $\BdRplus$-representations.

\subsection{Notation and conventions}
\label{subsection:notation_and_conventions}
\subsubsection{{$p$}-adic Hodge theory}
\label{subsubsection:p_adic_hodge_theory_notations}

Let $K$ be a mixed characteristic complete discrete valuation field with perfect residue
field of characteristic $p$.  
Let $\kcyc/K$ be the cyclotomic extension by adjoining all $p$-power roots of unity, and let $\chi_{\cyc}$ be the cyclotomic character inducing   $\gal(\kcyc/K) \to \mathbb{Z}_p^\times$.  
Our Hodge-Tate weights are normalized so that $\chi_{\cyc}$ is of weight 1. Let $\C$ be the completion of the algebraic closure   $\overline{K}$. Let $\BdRplus$ be Fontaine's usual DVR with uniformizer $t$. Fontaine's $G_K$-equivariant map $\theta:\BdRplus\to \C$ gives an identification $\BdRplus/t\BdRplus \cong \C$.

Whenever we write $K_{\infty}/K$, we always mean  an infinitely ramified extension such that $\Gamma(K_\infty/K) := \Gal(K_\infty/K)$ is a $p$-adic Lie group. The completion $\wh{K}_\infty$ is a perfectoid field (see \cite[Theorem 2.13]{Coates_Greenberg} and \cite[Remark 3.3]{Sch12}). 
Most of the properties discussed in this article only depend on $\Gamma(K_\infty/K)$ up to an open subgroup, and so only depend on $K$ up to a finite extension; for this reason we sometimes omit $K$ from the notation and simply write $\Gamma(K_\infty)$ for $\Gamma(K_\infty/K)$. We let $\BdRplus(K_\infty):=\B_{\dR}^{+,\Gal(\overline{K}/K_\infty)}$. Then $\BdRplus(K_\infty)$ is a DVR (see~ \cite[Corollary 6.4]{Sch13}) whose maximal ideal we denote by $I_\theta$, and by pro-étale descent one shows that $\theta$ induces an isomorphism
$\BdRplus(K_\infty)/I_\theta\cong\wh{K}_\infty$. Each quotient $\BdRplus(K_\infty)/I_\theta^n$ is a $\QQ_p$-Banach space with a continuous action of $\Gamma(K_\infty)$. The expression $\BdRplus(K_\infty) = \varprojlim_n(\BdRplus(K_\infty)/I_\theta^n)$ exhibits $\BdRplus(K_\infty)$ as a $\QQ_p$-Fréchet space
with a continuous action of $\Gamma(K_\infty)$. We let $L(K_\infty) = \gr^1\BdRplus(K_\infty)$. It is a 1-dimensional $\wh{K}_\infty$-vector space endowed with a $\wh{K}_\infty$-semilinear action of $\Gamma(K_\infty).$

\subsubsection{Locally analytic vectors}
\label{subsubsection:la_vectors_notations}
Given a compact $p$-adic Lie group $G$ acting continuously on a $\QQ_p$-Banach space $M$,
we let $M^{\la}$ denote the $G$-locally analytic vectors in $M$ (the group $G$ is omitted from the notation, since it will always be clear from the context). If $M$ is a $\QQ_p$-Fréchet space with a $G$-action, so that $M = \varprojlim_i M_i$ with the $M_i$ forming a compatible system of $\QQ_p$-Banach representations of $G$, we define the pro-analytic vectors in $M$ as $M^{\pa}:=\varprojlim_i M_i^{\la}$.

The following lemma summarizes two well-known results which are repeatedly used in this paper.   

\begin{lemma}\label{lem:basis_lav}
Let $R$ be a $\qp$-Fr\'echet algebra with $G$-action.
\begin{enumerate} 
\item $(R^\pa)^\times=R^\pa \cap R^\times$. In particular, if $R$ is a field, then so is $R^\pa$. 
\item   Let $M$ be a finite free $R$-module equipped with an $R$-semilinear $G$-action. Suppose $e_1, \cdots, e_d$ is a basis of $M$ such that each $e_i$ lies in $M^{\pa}$.  Then $M^\pa$ is a finite free $R^\pa$-module with $e_1, \cdots, e_d$ as a basis.
\end{enumerate}
\end{lemma}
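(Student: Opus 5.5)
The plan is to reduce both parts to the Banach case and then pass to the inverse limit. Write $R=\varprojlim_i R_i$ with each $R_i$ a $\QQ_p$-Banach algebra carrying a compatible continuous $G$-action. Since $R^{\pa}=\varprojlim_i R_i^{\la}$, and compatibility of inverses and of coordinates is automatic, it suffices to prove each assertion for every $R_i$ separately.

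For (1), take a Banach algebra $R$ and $x\in R^{\la}\cap R^\times$. The inclusion $(R^{\la})^\times\subset R^{\la}\cap R^\times$ is trivial, since $R^{\la}$ is a subring: multiplication $R\times R\to R$ is continuous bilinear, so products of locally analytic vectors are locally analytic. For the converse, choose an open subgroup $H\subset G$ on which the orbit map $g\mapsto g(x)$ is given by a convergent power series in local coordinates. Shrinking $H$, we may assume $\|x^{-1}\|\cdot\|g(x)-x\|<1$ for all $g\in H$. Then
\[ g(x^{-1})=g(x)^{-1}=\Bigl(\sum_{n\ge 0}\bigl(-x^{-1}(g(x)-x)\bigr)^n\Bigr)x^{-1}. \]
The series converges uniformly, and each term is an analytic function of $g$. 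After shrinking $H$ further so that the radii of convergence allow the substitution, the composite is again given by a convergent power series, so $x^{-1}\in R^{\la}$.

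In the Fréchet case, let $x\in R^{\pa}\cap R^\times$. Each image $x_i\in R_i$ is invertible and locally analytic, so $x_i^{-1}\in R_i^{\la}$ by the Banach case. The $x_i^{-1}$ are compatible, hence $x^{-1}\in R^{\pa}$. If $R$ is a field, every nonzero element of $R^{\pa}$ lies in $R^\times$, hence in $(R^{\pa})^\times$, so $R^{\pa}$ is a field.

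For (2), again first let $R$ be Banach and give $M\cong R^d$ the product norm via the basis $e_1,\dots,e_d$. If $m=\sum_j a_je_j$ with all $a_j\in R^{\la}$, then $m\in M^{\la}$, since the action map $R\times M\to M$ is continuous bilinear and each $e_j$ is locally analytic. For the converse, define a matrix-valued function $A(g)=(A_{kj}(g))$ by
\[ g(e_j)=\sum_k A_{kj}(g)\,e_k. \]
Since the coordinate maps $e_k^*:M\to R$ are continuous and $\QQ_p$-linear, each $A_{kj}:G\to R$ is analytic on a small open subgroup $H$; moreover $A(1)=\Id$. Let $m=\sum_j a_je_j\in M^{\la}$. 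The coordinate vector $c(g)$ of $g(m)$ is analytic on $H$ for the same reason, and it satisfies $c(g)=A(g)\cdot\bigl(g(a_j)\bigr)_j$. After shrinking $H$ so that $\|A(g)-\Id\|<1$, the inverse $A(g)^{-1}$ is given by a Neumann series, which is analytic in $g$ exactly as in (1). Hence $g\mapsto g(a_j)$, the $j$-th entry of $A(g)^{-1}c(g)$, is analytic, so $a_j\in R^{\la}$. Linear independence of the $e_i$ over $R^{\la}$ is inherited from $R$, so $e_1,\dots,e_d$ is a basis of $M^{\la}$ over $R^{\la}$.

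In the Fréchet case, apply the Banach case to $M_i=M\otimes_R R_i$, which is free over $R_i$ on the images of the $e_i$; these images are locally analytic because $e_i\in M^{\pa}$. Passing to the limit gives $M^{\pa}=\bigoplus_j R^{\pa}e_j$.

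The main obstacle is the bookkeeping in showing that composing the analytic orbit maps with the Neumann series for inversion (of a scalar or of $A(g)$) stays analytic on a possibly smaller subgroup. This is the standard fact that $R^{\la}$ is the union, over $H$ and $r$, of spaces of $(H,r)$-analytic vectors, and that $(H,r)$-analyticity is preserved under products and under convergent series after passing to a smaller level $(H',r')$. I would invoke this standard Berger–Colmez/Emerton fact rather than reprove it.
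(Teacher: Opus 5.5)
Your proof is correct and follows essentially the same route as the sources the paper cites for this lemma: Berger--Colmez's Neumann-series argument for inverses, and Berger's inversion of the matrix $A(g)$ of the action for the basis statement, each applied level by level to pass from Banach to pro-analytic vectors. The one point to state precisely is that the Neumann series must converge in the Banach algebra $\mathcal{C}^{\mathrm{an}}(H',R)$ and not just pointwise; this holds because $g\mapsto x^{-1}(g(x)-x)$ (resp.\ $g\mapsto A(g)-\mathrm{Id}$) is analytic and vanishes at $g=1$, so its analytic norm on a sufficiently small open subgroup $H'$ is $<1$, whereas the pointwise bound $\|x^{-1}\|\,\|g(x)-x\|<1$ alone would not give this.
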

\begin{proof}
    For item (1), see \cite[Lemma 2.5]{berger2016theorie} and \cite[Lemma 3.1.2]{gao_poyeton_overconvergence}. For item (2), see \cite[Proposition 2.4]{berger2016multivariable}.
\end{proof}
\color{black}

Given a compact $p$-adic Lie group $G$, we denote by $\mt{R}^i_{G \hla}$ for $i \ge 0$ the right derived functors of the functor of $G$-locally analytic vectors; concretely, given a $G$-representation on a $\QQ_p$-Banach space $M$, we have $$\mt{R}^i_{G \hla}(M) = \varinjlim_{G' \le G}\mt{H}^i(G',\mathcal{C}^{\la}(G',M)),$$ where the direct limit ranges over open normal uniform subgroups of $G$. Alternatively, one may fix an open normal uniform $G_0 \le G$ and set $G_n = G_0^{p^n}$, and then it is also the case that 
$$ \mt{R}^i_{G \hla}(M) = \varinjlim_n\mt{H}^i(G_n,\mathcal{C}^{\an}(G_n,M)).$$ Here $\mc{C}^{\an}(G_n,M)$ is endowed with the action $g(f)(x) = g(f(g^{-1}x))$. We refer the reader to \cite{RJRC22}  for more details on the definitions of these derived functors. 
In the particular case $i = 0$, we have the subspaces of analytic vectors $M^{G_n\han}=\mt{H}^0(G_n,\mc{C}^{\an}(G_n,M))$. These can also be defined in an alternative way, which we now explain. Since $G_0$ is uniform, we can define a homeomorphism $c:G_0\cong \ZZ_p^d$, so that there are induced homeomorphisms $c|_{G_n}:G_n\cong (p^n\ZZ_p)^d$. Then $M^{G_n\han}$ can also be defined as the space of $m \in M$ such that there exists an expansion $g(m) = \sum_{\bf{k} \in \NN^d}c(g)^{\bf{k}}m_{\bf{k}}$ with $p^{n|\bf{k}|}m_{\bf{k}}\to 0$. Using this perspective one sees that $M^{G_n\han}$ is a Banach space with valuation given by $\val_{G_n}(m) = \inf_{\bf{k}}(n|{\bf{k}}|+\val_M(m_{{\bf{k}}})).$

\subsubsection{Derivations}
\label{subsubsection:derivations}

We often consider a linear extension of an action by derivations. These are defined in the following way. Let $R$ be a commutative $\QQ_p$-algebra, and let $M$ be an $R$-module. Suppose $M$ has an action of $\Lie\Gamma$ by $\QQ_p$-linear derivations. Given $\Theta \in R\otimes_{\QQ_p}\Lie\Gamma$, we can act with $\Theta=\sum b_i \otimes \partial_i$ on $M$ by setting $\Theta(x):=\sum_i b_i \partial_i(x)$. This action is well defined (independent of the specific writing $\Theta = \sum b_i \otimes \partial_i$).

\subsection{Acknowledgments}
We thank Pierre Colmez, Rustam Steingart and Yupeng Wang for useful discussions and correspondences. 
 Part of this work was first carried out while Gal Porat was visiting the University of Bordeaux (IMB). He would like to thank the institute for the hospitality.
We were assisted by ChatGPT 5.5-5.6 Pro for simplifying some arguments and for polishing the article.
Hui Gao is partially supported by the National Natural Science Foundation of China under agreement NSFC-12471011. 
 Gal Porat was supported
by the European Research Council (ERC 101161909). 
Léo Poyeton is partially supported by the French Agence Nationale de la Recherche under agreement ANR-25-CE40-4664.

\section{Vanishing of higher locally analytic vectors}
\label{section:vanishing_of_higher_la}

\subsection{The vanishing theorem}

 Recall our notation and conventions $\mathsection$\ref{subsubsection:p_adic_hodge_theory_notations} and $\mathsection$\ref{subsubsection:la_vectors_notations}. 
In this section we prove the following theorem. It is well known to the experts (see \cite[Remark 5.7]{porat2024locally} which follows the idea of \cite[Theorem 3.6.1]{pan2022locally}), but we include a proof here due to a lack of a suitable reference.

\begin{theorem}
\label{thm:higher_la_vanishing_k_infty}
Let $W$ be a finite free $\wh{K}_{\infty}$-semilinear representation of $\Gamma(K_\infty)$.
Then $\R^i_{\Gamma(K_\infty)\hla}(W) = 0$ for $i \ge 1$.
\end{theorem}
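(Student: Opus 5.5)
We follow the strategy of \cite[Theorem 3.6.1]{pan2022locally}: reduce to $W=\wh{K}_\infty$, then compute $\R^i_{\Gamma(K_\infty)\hla}$ through a comparison between Galois cohomology of $\mc{C}^{\an}$-valued functions and $\C$-coefficients, using that $\wh{K}_\infty$ is perfectoid.

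\emph{Step 1: reduction to the trivial representation.} Shrink $\Gamma(K_\infty)$ to an open normal uniform subgroup $G_0$; this does not affect $\R^i_{\hla}$, since the definition is a colimit over such subgroups. By Sen--Berger--Colmez theory, after passing to a sufficiently small $G_n$, $W$ admits a basis of locally analytic vectors $e_1,\dots,e_d$; these come from $\D_{\Sen}(W)$, or from the decompletion in \cite{berger2016theorie}. Then $\mc{C}^{\an}(G_n,W)\cong \mc{C}^{\an}(G_n,\wh{K}_\infty)^{\oplus d}$, twisted by an analytic cocycle. I will check that this twist can be untwisted: the map $f\mapsto (g\mapsto g^{-1}\cdot f(g))$ has the standard form, and the matrix of $g$ on the $e_i$ is an analytic function of $g$ with values in $\GL_d(\wh{K}_\infty^{\la})$. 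This should identify $\mc{C}^{\an}(G_n,W)$ with a direct sum of copies of $\mc{C}^{\an}(G_n,\wh{K}_\infty)$, compatibly with the $G_n$-actions up to an isomorphism. If this step is delicate, I will instead d\'evissage along a filtration, using the long exact sequence of $\R^i_{\hla}$.

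\emph{Step 2: the case $W=\wh{K}_\infty$.} Write $\wh{K}_\infty=\C^{H(K_\infty)}$. Almost purity and the perfectoidness of $\wh{K}_\infty$ give $\mt{H}^i(H(K_\infty),\C)=0$ for $i\geq 1$, and the same holds with $\mc{C}^{\an}(G_n,\QQ_p)\wh{\otimes}$-coefficients, by a Banach-coefficient version of Tate--Sen. By Hochschild--Serre, this yields
\[ \mt{H}^i(G_n,\mc{C}^{\an}(G_n,\wh{K}_\infty))\cong \mt{H}^i(G_{K,n},\mc{C}^{\an}(G_n,\QQ_p)\wh{\otimes}_{\QQ_p}\C), \]
where $G_{K,n}$ is the preimage of $G_n$ in $G_K$. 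On the right-hand side, $\mc{C}^{\an}(G_n,\QQ_p)$ is a colimit-friendly Banach $G_K$-representation that is locally analytic for the quotient $\Gamma$. We then compute via Sen theory: $\varinjlim_n$ of these cohomologies equals the cohomology of the Sen operator $\Theta(K_\infty)$ acting on $\varinjlim_n \mc{C}^{\an}(G_n,\QQ_p)\otimes \C$, that is, on germs of analytic functions. The key point is Pan's argument: on locally analytic functions, the action of $\Theta$ is a first-order differential operator given by a \emph{nonzero} vector field (Sen's theorem says $\Theta\neq0$). Its cokernel on germs of analytic functions therefore vanishes, by solving $\partial f=h$ in local coordinates after a change of variables straightening the vector field. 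It remains to treat degree $1$ versus degrees $\geq 2$. Since $\C$-cohomology of $G_K$ is computed by the two-term complex $[\,\cdot\,\xrightarrow{\Theta}\,\cdot\,]$ (Sen), it vanishes in degrees $\geq 2$ automatically. Degree $1$ is exactly the cokernel of $\Theta$, which vanishes.

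\emph{Main obstacle.} The hardest step is making Step 2 rigorous: justifying Sen's two-term description of $\mt{H}^\bullet(G_{K,n},-\wh{\otimes}\C)$ for the infinite-dimensional Banach coefficients $\mc{C}^{\an}(G_n,\QQ_p)$, uniformly enough in $n$ to pass to the colimit. It also requires the surjectivity of the nonvanishing vector field $\Theta$ on germs of $\C$-valued analytic functions with the correct radius shrinkage. For this I plan to first decomplete to $\C$-coefficients over $K_{\cyc}$-level, using the Tate--Sen normalized traces with explicit bounds. I then solve $\Theta f=h$ by power series in coordinates where $\Theta=\partial/\partial x_1$, accepting the loss $G_n\to G_{n+1}$, which the colimit absorbs.
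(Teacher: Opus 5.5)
Your route differs from the paper's and is viable in outline. You untwist to $W=\wh{K}_\infty$ and use almost purity plus Hochschild--Serre to move the computation to $G_{K_n}$ with coefficients $\mc{C}^{\an}(G_n,\QQ_p)\widehat{\otimes}\C$. You then want cyclotomic Sen theory with these infinite-dimensional coefficients, so that vanishing becomes surjectivity of the nonvanishing invariant vector field $\Theta(K_\infty)$ on germs.

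The paper never leaves $\wh{K}_\infty$. For $L_\infty=K_\infty K_\cyc$ it quotes the Tate--Sen-axiom result \cite[Theorem C]{porat2024locally}. It then descends along $\Delta=\Gal(L_\infty/K_\infty)\cong\ZZ_p$ by a condensed Hochschild--Serre identity for derived locally analytic vectors. This reduces everything to $\mt{H}^1(\Delta,\wh{L}_\infty^{\la})=0$, proved by the explicit primitive $y=\sum_i(-1)^i\nabla^i(x)z^{i+1}/(i+1)!$ built from $z$ with $\gamma(z)=z+1$ (Lemma \ref{lem:delta_no_higher_coho}).

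Your approach gives uniformity: no case distinction, no auxiliary tower, and a conceptual reason for the vanishing, namely $\Theta(K_\infty)\neq0$. The paper's approach is economical: its only new computation is a one-variable integration. Be aware, though, that your whole argument rests on the step you flag as the obstacle, and that step is a theorem in its own right. You must:
\begin{itemize}
\item[(i)] redo Sen's decompletion for $\mc{C}^{\an}(G_n,\QQ_p)\widehat{\otimes}\C$, where $\Gal(\overline{K}/K_\cyc)$ acts nontrivially on the first factor. This means trivializing operator-valued cocycles with constants uniform in $n$.
\item[(ii)] identify the resulting Sen operator with $\Theta(K_\infty)$ acting through the derived left-regular action. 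This does not follow from the finite-dimensional Theorem \ref{thm:sen_op_in_lie_alg}. One route is density in $\mc{C}^{\an}(G_n,\QQ_p)$ of polynomial functions in the matrix entries of a faithful representation, which are translation-finite, together with continuity of the Sen operator.
\item[(iii)] descend surjectivity of $\Theta$ from $\C$-valued germs to the decompleted module over $K_m$. For instance, use a continuous $K_m$-linear projection $\C\to K_m$, which commutes with $\Theta$.
\end{itemize}
Also, the two-term complex lives on the decompleted module and computes Galois cohomology only after base change and invariants. It is not $\Theta$ on $\varinjlim_n\mc{C}^{\an}(G_n,\QQ_p)\otimes\C$ itself, though this is harmless for vanishing in degrees $\ge1$.

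One correction in Step 1. The literal map $f\mapsto(g\mapsto g^{-1}\cdot f(g))$ does not preserve analyticity, because orbit maps in $W$ are only continuous; take $W=\wh{K}_\infty$ and $f$ constant with a non-locally-analytic value. The untwisting must act only on the basis. Write $g(e_j)=\sum_iA_{ij}(g)e_i$, so that $A(gh)=A(g)\,g(A(h))$. Then $\vec f\mapsto\bigl(g\mapsto A(g)^{-1}\vec f(g)\bigr)$ intertwines the action on $\mc{C}^{\an}(G_n,W)$ with the componentwise action on $\mc{C}^{\an}(G_n,\wh{K}_\infty)^{\oplus d}$, provided $A$ and $A^{-1}$ are analytic on $G_n$ ($n\gg0$). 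The $e_i$ come from the Berger--Colmez decompletion \cite[Th\'eor\`eme 3.4]{berger2016theorie}, not from $\D_{\Sen}$. Elements of $\D_{\Sen}$ lie in $(\C\otimes_{\wh{K}_\infty}W)^{\Gal(\overline{K}/K_\cyc)}$, hence not in $W$ unless $K_\cyc\subset K_\infty$.
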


The following corollary will be used often in \S \ref{section:la_vecs_in_bdr}. It follows from the Theorem by d\'evissage.

\begin{corollary}
\label{cor:higher_la_vanishing_bdr}
Let $W$ be a finite length $\BdRplus(K_\infty)$-semilinear representation of $\Gamma(K_\infty)$. Then $\R^i_{\Gamma(K_\infty)\hla}(W) = 0$ for $i \ge 1$. 
\end{corollary}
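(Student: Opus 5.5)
The plan is a dévissage along the $I_\theta$-adic filtration, reducing to Theorem \ref{thm:higher_la_vanishing_k_infty}. Since $W$ has finite length over the DVR $\BdRplus(K_\infty)$, there is some $n$ with $I_\theta^n W = 0$. The subspaces $I_\theta^k W$ are $\Gamma(K_\infty)$-stable, because $I_\theta$ is $\Gamma(K_\infty)$-stable. So $W$ has a finite filtration
\[
W \supset I_\theta W \supset \cdots \supset I_\theta^n W = 0
\]
by closed $\Gamma(K_\infty)$-stable subspaces. Each graded piece $I_\theta^k W / I_\theta^{k+1} W$ is a finite-dimensional $\wh{K}_\infty = \BdRplus(K_\infty)/I_\theta$-vector space with a semilinear continuous $\Gamma(K_\infty)$-action. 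It is therefore a finite free $\wh{K}_\infty$-semilinear representation, to which Theorem \ref{thm:higher_la_vanishing_k_infty} applies.

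We argue by induction on the length of this filtration, equivalently on $n$. For the short exact sequence of Banach representations
\[
0 \to I_\theta W \to W \to W/I_\theta W \to 0,
\]
the derived functors $\R^i_{\Gamma(K_\infty)\hla}$ give a long exact sequence (see \cite{RJRC22}):
\[
\cdots \to \R^i_{\Gamma(K_\infty)\hla}(I_\theta W) \to \R^i_{\Gamma(K_\infty)\hla}(W) \to \R^i_{\Gamma(K_\infty)\hla}(W/I_\theta W) \to \cdots.
\]
For $i\ge1$, the right-hand term vanishes by Theorem \ref{thm:higher_la_vanishing_k_infty}. The left-hand term vanishes by induction, since $I_\theta W$ is killed by $I_\theta^{n-1}$. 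Hence the middle term vanishes, and this closes the induction. The base case $n=1$ is exactly Theorem \ref{thm:higher_la_vanishing_k_infty}.

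The only point needing care is that the long exact sequence is available. This requires the sequence to be a strict short exact sequence of $\QQ_p$-Banach spaces with continuous actions, so that applying $\mathcal{C}^{\an}(G_n,-)$ keeps it exact. Once that holds, the continuous cohomology long exact sequence of each $G_n$ and the exact colimit over $n$ give the result. Strictness holds because everything is finite-dimensional over $\wh{K}_\infty$ or finite length over $\BdRplus(K_\infty)$. In that setting $I_\theta W$ is a closed subspace (the image of multiplication by a uniformizer), and $W/I_\theta W$ carries the quotient topology. By the open mapping theorem, these topologies agree with the natural ones, so the sequence splits topologically as Banach spaces. I expect this topological bookkeeping to be the only, mild, obstacle.
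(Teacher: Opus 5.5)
Your proposal is correct and is essentially the paper's argument. The paper states only that the corollary follows from Theorem \ref{thm:higher_la_vanishing_k_infty} by d\'evissage, which is exactly your induction along the $\Gamma(K_\infty)$-stable filtration $I_\theta^k W$: the graded pieces are finite-dimensional semilinear $\wh{K}_\infty$-representations, and the long exact sequence for $\R^i_{\Gamma(K_\infty)\hla}$ (with the mild strictness check you flag) closes the induction.
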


\subsection{Reductions}

We first show that we are allowed to replace $K$ by a finite extension.
 
\begin{lemma}
\label{lem:reduction_finite_extension_vanishing_higher_la}
Let $K'$ be a finite extension of $K$ and let $K'_{\infty} = K'K_\infty$. Then Theorem \ref{thm:higher_la_vanishing_k_infty} is true for $K'_{\infty}/K'$ and $W' := \wh{K}_\infty'\otimes_{\wh{K}_\infty}W$ if and only if it is true for $K_\infty/K$ and $W$.
\end{lemma}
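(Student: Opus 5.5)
The plan is to reduce both directions to two facts about higher locally analytic vectors: they can be computed on any open subgroup, and they commute with finite direct sums.

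\textbf{Setup.} Set $\Gamma=\Gamma(K_\infty/K)$ and $\Gamma'=\Gamma(K'_\infty/K')=\Gal(K'_\infty/K')$. Restriction identifies $\Gamma'$ with the open subgroup $\Gal(K_\infty/K_\infty\cap K')$ of $\Gamma$. The extension $\wh{K}'_\infty/\wh{K}_\infty$ is finite, of degree $d\le[K':K]$.

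\textbf{Step 1: passing to an open subgroup.} For an open subgroup $H\le \Gamma$ and a Banach representation $M$ of $\Gamma$, we have $\R^i_{\Gamma\hla}(M)=\R^i_{H\hla}(M|_H)$. This is immediate from the formula $\R^i_{G\hla}(M)=\varinjlim_n \mt{H}^i(G_n,\mc{C}^{\an}(G_n,M))$ recalled in \S\ref{subsubsection:la_vectors_notations}: the colimit only involves arbitrarily small uniform open normal subgroups, which may be chosen inside $H$ and normal in $\Gamma$. Consequently both statements may be read as statements about the single group $\Gamma'$.

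\textbf{Step 2 (``if'' direction).} Suppose the theorem holds for $K_\infty/K$ and every finite free $W$. As a $\Gamma'$-representation, $W'=\wh{K}'_\infty\otimes_{\wh{K}_\infty}W$ is a finite free $\wh{K}_\infty$-semilinear representation of rank $d\cdot\mathrm{rank}(W)$.
\begin{itemize}
\item To get a $\Gamma$-representation, induce: form $\mathrm{Ind}_{\Gamma'}^{\Gamma}W'$, which is finite free over $\wh{K}_\infty$ with $\Gamma$-action.
\item By hypothesis and Step 1, $\R^i_{\Gamma'\hla}$ of its restriction to $\Gamma'$ vanishes for $i\ge1$.
\item This restriction decomposes as a finite direct sum over cosets and contains $W'$ as the summand for the trivial coset. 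Since $\R^i$ commutes with finite direct sums, $\R^i_{\Gamma'\hla}(W')=0$.
\end{itemize}
A shortcut avoids induction: $W'$ is already finite free over $\wh{K}_\infty$ with semilinear action of $\Gamma'$, which is the Galois group of $K_\infty$ over the finite extension $K_\infty\cap K'$ of $K$. So it suffices to know the theorem for $K_\infty/(K_\infty\cap K')$, and by Step 1 this follows from the theorem for $K_\infty/K$ applied to the induced module.

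\textbf{Step 3 (``only if'' direction).} Suppose the theorem holds for $K'_\infty/K'$ and all $W'$. Since $\wh{K}'_\infty$ is a finite étale extension of $\wh{K}_\infty$ and $p$ is invertible, we use the normalized trace.
\begin{itemize}
\item The map $\frac1d\mathrm{Tr}_{\wh{K}'_\infty/\wh{K}_\infty}$ is $\Gamma'$-equivariant. Hence $W\to W'$ is a $\Gamma'$-equivariant split injection of Banach representations.
\item Therefore $\R^i_{\Gamma'\hla}(W)$ is a direct summand of $\R^i_{\Gamma'\hla}(W')=0$, so it vanishes.
\item By Step 1, $\R^i_{\Gamma\hla}(W)=\R^i_{\Gamma'\hla}(W)=0$.
\end{itemize}

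\textbf{Main obstacle.} The delicate point is Step 1: that $\R^i_{\Gamma\hla}$ only depends on the restriction to an open subgroup, including compatibility of the colimit over shrinking subgroups. I would check this against the definitions in \cite{RJRC22}, using that for $G_n\subset H$ the terms of the colimit agree on the nose. The remaining steps are formal additivity arguments.
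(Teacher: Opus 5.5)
\textbf{Step 2 has a gap.} The lemma is an equivalence for the fixed pair $(W,W')$. In Step 2, however, you assume vanishing for \emph{every} finite free representation over $K_\infty/K$ and apply it to $\mathrm{Ind}_{\Gamma'}^{\Gamma}W'$, which is a different representation from $W$. From $\R^i_{\Gamma\hla}(W)=0$ alone, your argument gives no information about $W'$, so the stated ``if'' implication is not proved.

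A minor side remark: $\Gamma'$ need not be normal in $\Gamma$, because $K'\cap K_\infty$ need not be Galois over $K$. So the restriction of the induced module splits over double cosets rather than cosets. The functions supported on $\Gamma'$ still form a $\Gamma'$-stable summand, so that part is harmless.

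\textbf{The missing idea} is the $\Gamma'$-module structure of $\wh K'_\infty$. Put $F=K'\cap K_\infty$. Since $K_\infty/F$ is Galois, $K'$ and $K_\infty$ are linearly disjoint over $F$. Hence $\wh K'_\infty=K'\otimes_F\wh K_\infty$, with $\Gamma'\cong\Gal(K_\infty/F)$ acting trivially on $K'$. Choosing an $F$-basis of $K'$ gives $W'\cong W^{\oplus[K':F]}$ as $\Gamma'$-representations. Your Step 1 then yields
\[\R^i_{\Gamma'\hla}(W')\cong\R^i_{\Gal(K_\infty/F)\hla}(W)^{\oplus[K':F]}=\R^i_{\Gamma\hla}(W)^{\oplus[K':F]}.\]
This gives both directions at once for the given $W$, and makes both the induction and the trace unnecessary.

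This is essentially the paper's mechanism. The paper reaches linear disjointness via Serre's lemma and an auxiliary field $K'_0$, then writes $\mc{C}^{\an}(\Gamma_n,W')\cong\mc{C}^{\an}(\Gamma_n,W)^{\oplus[K':K]}$. Taking $F=K'\cap K_\infty$ avoids the auxiliary field altogether.

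\textbf{Step 3 is correct.} It proves, for the fixed $W$, the direction the paper actually uses later (replacing $K$ by a finite extension). The trace $\mathrm{Tr}_{\wh K'_\infty/\wh K_\infty}$ commutes with every automorphism of $\wh K'_\infty$ that stabilizes $\wh K_\infty$, whether or not the extension is Galois. So $W$ is a $\Gamma'$-stable direct summand of $W'$, and additivity of $\R^i$ finishes the argument. This is a genuinely different route from the paper, and it needs no field-theoretic reduction at all.
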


\begin{proof}

Assume first that $K'_\infty=K_\infty$, so that $W' = W$. Since $\Gamma(K_\infty/K')$ is an open subgroup of $\Gamma(K_\infty/K)$, we have $\R^i_{\Gamma(K_\infty/K')\hla}(W)=\R^i_{\Gamma(K_\infty/K)\hla}(W)$ and in this case the vanishing for $K'_\infty/K'$ or $K_\infty/K$ is clearly equivalent.

In the general case, by \cite[p. 97, Lemma 6]{serre1964corps}, there exists a finite extension $K_0'/K$ such that $K_0'$ is linearly disjoint from $K_\infty/K$ and $K_0'K_\infty = K_\infty'$. Let $E = K_0'K'$. Then by the case already proved, the theorem is equivalent for either of the towers $K_\infty'/K'$,  $K_\infty'/E$ or $K_\infty'/K_0'$. Hence we are free to replace $K'$ with $K_0'$.

Thus, we reduce to the case where $K'$ is linearly disjoint from $K_\infty$. We have then a natural isomorphism $\Gamma(K_\infty'/K')\cong \Gamma(K_\infty/K)$, and so open subgroups in these two groups   are identified. We may identify a system of open normal uniform subgroups $\Gamma_n(K_\infty/K)$ in $\Gamma(K_\infty/K)$ with a system of open normal uniform subgroups $\Gamma_n(K_\infty'/K')$ in $\Gamma(K_\infty'/K')$.
 Furthermore, we have
 $$W'= \wh{K}_\infty'\otimes_{\wh{K}_\infty}W=K' \otimes_K W,$$
 So there is a $\Gamma_n(K_\infty'/K') \cong \Gamma_n(K_\infty/K)$-equivariant isomorphism
 \begin{align*}\mc{C}^{\an}(\Gamma_n(K_\infty'/K'),W')&=\mc{C}^{\an}(\Gamma_n(K_\infty'/K'),W\otimes_K K')\\&=\mc{C}^{\an}(\Gamma_n(K_\infty/K),W)\otimes_K K'\\&\cong \mc{C}^{\an}(\Gamma_n(K_\infty/K),W)^{\oplus [K':K],}
 \end{align*}
 Hence, for every $i \ge 0$ we have
 $$
 \R^i_{\Gamma(K_\infty'/K')\hla}(W') \cong \R^i_{\Gamma(K_\infty/K)\hla}(W)^{\oplus[K':K]}$$
 and the result follows.\end{proof}

In what follows, we let $L_\infty = K_\infty K_\cyc$. The tower $L_\infty/K$ still fits into our framework, in the sense that $\Gamma(L_\infty/K)$ is an infinitely ramified $p$-adic Lie group.

\begin{proposition}
\label{prop:reduction_to_infinite_delta}
If $L_\infty/K_\infty$ is a finite extension then Theorem \ref{thm:higher_la_vanishing_k_infty} holds for $K_\infty/K$.
\end{proposition}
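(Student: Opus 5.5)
If $L_\infty/K_\infty$ is finite, then $K_\infty$ contains $K(\zeta_{p^n})K_\infty$-type information up to a finite extension. More precisely, there is a finite extension $K'$ of $K$ with $K'K_\infty \supseteq K_\cyc$: take $K'$ to be a finite subextension of $L_\infty$ generating $L_\infty$ over $K_\infty$. Then $K'_\infty := K'K_\infty = L_\infty$. By Lemma \ref{lem:reduction_finite_extension_vanishing_higher_la}, it suffices to prove the theorem for $K'_\infty/K'$ and $W' = \wh{K}'_\infty\otimes_{\wh{K}_\infty}W$. We may therefore assume $K_\cyc\subseteq K_\infty$.

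In that case I would use the Sen-theoretic decompletion available over towers containing the cyclotomic extension. Let $H=\Gal(K_\infty/K_\cyc)$, a closed normal subgroup of $\Gamma=\Gamma(K_\infty)$ with quotient $\Gamma_K=\Gal(K_\cyc/K)$.

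\textbf{Step 1.} By Sen/almost étale descent (Tate--Sen), $W^{H}$ is a finite free $\wh{K}_\cyc$-module with $\wh{K}_\infty\otimes W^H\cong W$, after possibly replacing $K$ by a finite extension again via Lemma \ref{lem:reduction_finite_extension_vanishing_higher_la}. Moreover the continuous cohomology $\mt{H}^i(H,W)=0$ for $i\ge1$ by the vanishing of higher cohomology of $H$ on $\wh{K}_\infty$ (Hilbert 90 / almost purity type statement).

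\textbf{Step 2.} Apply a Hochschild--Serre type spectral sequence for derived locally analytic vectors, in the form of \cite{RJRC22}. Derived $\Gamma$-locally analytic vectors of $W$ are computed as derived $\Gamma_K$-locally analytic vectors of the derived $H$-locally analytic vectors. I would rather use the Pan-style argument: write $\mc{C}^{\an}(\Gamma_n,W)$ and compute $\mt{H}^i(\Gamma_n,-)$ by first taking $H_n=H\cap\Gamma_n$-cohomology.

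\textbf{Step 3.} Reduce to the cyclotomic case, where $\Gamma_K$ is one-dimensional. There Tate's normalized traces give a decomposition $W^H=D\oplus X$, where $D$ is the finite $K_n$-module of $K_\cyc$-finite vectors and $\gamma-1$ is invertible on $X$ with bounded inverse. The $\Gamma_K$-locally analytic cohomology of $X$ then vanishes, as it does for $\mc{C}^{\an}$ coefficients. For $D$, it is the cohomology of an analytic finite-dimensional module, which vanishes in positive degrees after the colimit over $n$.

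The main obstacle is Step 2: controlling the $H$-part. Here I would use Pan's trick of showing that the higher cohomology of $\Gamma_n$ on $\mc{C}^{\an}(\Gamma_n,W)$ is killed in the colimit. The approach is to exhibit, on $\mc{C}^{\an}(\Gamma_n,\wh{K}_\infty)$, elements of $K_\infty^{\la}$ (e.g.\ coordinates of the Berger--Colmez description of $\Kinftyla$) that trivialize the Lie algebra action up to increasing $n$. The cyclotomic assumption is exactly what makes the Sen operator invertible direction available for this contraction.
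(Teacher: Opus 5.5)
Your reduction step is correct and matches the paper. Since $L_\infty=K_\infty(\zeta_{p^n})$ for some $n$, Lemma \ref{lem:reduction_finite_extension_vanishing_higher_la} with $K'=K(\zeta_{p^n})$ lets you assume $K_\cyc\subseteq K_\infty$. The gap is in everything after that. For such a tower the paper does not argue from scratch. It verifies the Tate--Sen axioms (TS1)--(TS3) \cite{berger2008familles,brinon2009cmi} and (TS4) \cite[Example 5.5]{porat2024locally} for $L_\infty/K$, and then invokes \cite[Theorem C]{porat2024locally}. Your Steps 2--3 are meant to replace that theorem, and as written they do not.

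The identity in Step 2 is ill-typed: $W^{\R H\hla}$ still carries an action of $\Gamma(K_\infty)$, not of $\Gamma_K$. The Hochschild--Serre statement that does hold is Proposition \ref{prop: HS_spe_seq} with $N=H$. Combined with your Steps 1 and 3, it only yields that $\mt{RHom}_{\QQ_p[H]}(\QQ_p,W^{\R\Gamma(K_\infty)\hla})\simeq (W^H)^{\R\Gamma_K\hla}$ is concentrated in degree $0$. This does not imply $\R^i_{\Gamma(K_\infty)\hla}(W)=0$, because $H$-cohomology is not conservative. For example, a nontrivial character of $H\cong\ZZ_p$ has no cohomology in any degree, and already in degree $0$, passing to $H$-invariants shrinks $\Kinftyla$ to $K_\cyc$. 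That formula is only useful in the opposite direction, when the larger group is already under control. This is how the paper uses it, with $N=\Delta$, in the proof of Theorem \ref{thm:higher_la_vanishing_k_infty}.

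Suppose instead you run Hochschild--Serre for $H_n=H\cap\Gamma_n$ on $\mc{C}^{\an}(\Gamma_n,W)$, as you suggest. The terms with $b\ge 1$ involve $\mt{H}^b(H_n,\mc{C}^{\an}(\Gamma_n,W))$, which are essentially higher $H$-locally analytic vectors of $W$. The $b=0$ row is built from $W^{H_n\han}$ rather than $W^H$, so the Tate-trace decomposition of Step 3 is applied to the wrong object.

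What is missing is precisely the contraction in the $\Lie H$-directions. You would need elements of $\Kinftyla$ on which $\Lie H$ acts invertibly, with bounds strong enough that primitives converge in $\wh{K}_\infty^{\Gamma_N\han}$ for some $N\ge n$. This is a multi-dimensional, possibly non-abelian analogue of Lemma \ref{lem:delta_no_higher_coho}. You would then also need the argument that this kills $\varinjlim_n\mt{H}^i(\Gamma_n,\mc{C}^{\an}(\Gamma_n,W))$ for $i\ge 1$. You only name this step.

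Your last sentence is also off. By Proposition \ref{prop:sen_operator_kills_la_vecs}, the Sen operator kills $\Kinftyla$, so there is no $x\in\Kinftyla$ with $\Theta(K_\infty)(x)=1$. The Sen direction is therefore exactly the one that elements of $\Kinftyla$ cannot contract. It has to be handled by the Tate--Sen decompletion, and that is where $K_\cyc\subseteq K_\infty$ enters.

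To close the gap, either cite \cite[Theorem C]{porat2024locally} after checking (TS1)--(TS4) for $L_\infty/K$, or carry out that contraction argument in full.
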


\begin{proof} By the previous lemma, it suffices to prove it for the tower $L_\infty/K$, which contains $K_\cyc$. The tower $L_\infty/K$ satisfies the Tate-Sen axioms (TS1)-(TS3) by \cite[Proposition 4.1.1]{berger2008familles}, \cite[{$\mathsection{14.1}$}]{brinon2009cmi}, and (TS4)  by \cite[Example 5.5]{porat2024locally}. Hence we conclude by \cite[Theorem C]{porat2024locally}.
\end{proof}

\begin{lemma} \label{lem:assume_zp_ext}
Suppose $L_\infty/K_\infty$ is infinite. Then for the purpose of proving Theorem \ref{thm:higher_la_vanishing_k_infty}, we may assume $\Gal(L_\infty/K_\infty)\cong \ZZ_p$. Writing $\gamma$ for a topological generator of $\Gal(L_\infty/K_\infty)$, we may also assume there exists an element $z\in \wh{L}_\infty^{\Gamma(L_\infty/K)\hla}$ with $\gamma(z) = z+1$.
\end{lemma}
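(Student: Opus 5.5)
The plan has two parts: first shrink the tower so that $\Gal(L_\infty/K_\infty)\cong \ZZ_p$, then produce $z$ from the Sen-theoretic structure of $\wh{L}_\infty$ over $\wh{K}_\infty$.

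\emph{Reducing to $\ZZ_p$.} We have $\Gal(L_\infty/K_\infty)\cong \Gal(K_\cyc/K_\cyc\cap K_\infty)$, which is an open subgroup of $\ZZ_p^\times$. If $L_\infty/K_\infty$ is infinite, this group is infinite. By Lemma \ref{lem:reduction_finite_extension_vanishing_higher_la} we may replace $K$ by a finite extension $K'$ and $K_\infty$ by $K'K_\infty$. Choose $K'$ so that it contains $\zeta_p$ (or $\zeta_4$ when $p=2$), and so that $K'$ contains the finite part of $K_\cyc\cap K_\infty$ over $K$ where needed. After this change, $\Gal(K'_\cyc/K')$ is isomorphic to $\ZZ_p$ via $\chi_\cyc$. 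Every closed infinite subgroup of $\ZZ_p$ is open, hence itself isomorphic to $\ZZ_p$. Since $\Gal(L_\infty/K_\infty)$ embeds as a closed subgroup of $\Gal(K'_\cyc/K')$, it is $\cong \ZZ_p$. Let $\gamma$ be a topological generator.

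\emph{Constructing $z$.} The natural candidate comes from the Sen theory of the $\ZZ_p$-extension $\wh{L}_\infty/\wh{K}_\infty$, or equivalently from the locally analytic computations of Berger--Colmez \cite{berger2016theorie}. Set $G=\Gamma(L_\infty/K)$ and $H=\Gal(L_\infty/K_\infty)\cong\ZZ_p$, a closed normal subgroup. The key input is the Ax--Sen--Tate type statement: the derivation $\nabla_\gamma=\log\gamma$ acting on $\wh{L}_\infty^{G\hla}$ is surjective onto $\wh{L}_\infty^{G\hla}$, or at least its image contains $1$. I would establish this in two steps.

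\begin{enumerate}
\item The $H$-invariants of $\wh{L}_\infty^{G\hla}$ are $\wh{K}_\infty^{\la}$.
\item Since $H$ is normal in $G$, the element $\nabla_\gamma$ spans a line in $\Lie G$ stable under the adjoint action up to scalars. By Berger--Colmez's description of $\wh{L}_\infty^{\la}$ as power series in variables killed by the Sen operator, $\wh{L}_\infty^\la$ is not killed by $\nabla_\gamma$.
\end{enumerate}

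Then pick $y$ with $\nabla_\gamma(y)=c\neq 0$ and $c\in \wh{K}_\infty^\la$, i.e.\ $c$ is $H$-invariant. Such a $y$ exists by a Tate-trace/normalized-trace argument for the cyclotomic part: the equation $\nabla_\gamma y=c$ can be solved in $\wh{L}_\infty^{\la}$ modulo $\wh{K}_\infty^\la$. Concretely, I would use that $\log\chi_\cyc$ restricted to $H$, composed with a Sen-type period, gives such a $y$. Replacing $y$ by $y/c$, and using that $\gamma$ acts on $y$ by $y\mapsto y+\log_p(\chi(\gamma))$ once $\nabla_\gamma$-derivatives of order two vanish, yields $z$ after rescaling. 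Here one uses $\gamma=\exp(\nabla_\gamma)$ on analytic vectors for $\gamma$ deep enough, so a finite-index shrinking of $H$ may be needed; this is again allowed by the reduction lemma.

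The main obstacle is step (2), the existence of a $G$-locally analytic $y$ with $\nabla_\gamma y$ a nonzero $H$-invariant. My first attempt would be to take a locally analytic period such as $\log$ of the Kummer/cyclotomic coordinate, as in Berger--Colmez's $\mathrm{SL}_2$ or Kummer examples. Failing a uniform construction, I would use Sen's theorem that $\wh{L}_\infty/\wh{K}_\infty$ has nonzero Sen operator along $H$ (it is infinitely ramified), together with the vanishing of $\mathrm{H}^1$ of $H$ on the trace-zero part, to produce $y$ as a solution of $(\gamma-1)y=1$ in the decompleted field.
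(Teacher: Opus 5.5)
Your reduction to $\Gal(L_\infty/K_\infty)\cong\ZZ_p$ is correct and matches the paper.

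The gap is the construction of $z$, which is the real content of the lemma. Step (2) only shows that $\nabla_\gamma$ acts nontrivially on $\Linftyla$. That does not yield a $y$ with $\nabla_\gamma(y)$ nonzero and killed by $\nabla_\gamma$, since a nonzero derivation of a field need not have such elements. Producing that $y$ is essentially the statement itself, and you leave it to unspecified ``Tate-trace'' or ``Sen-type period'' arguments.

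Your proposed key input, surjectivity of $\nabla_\gamma$ on $\Linftyla$, is not available either. In the paper it is proved in Lemma \ref{lem:delta_no_higher_coho} \emph{using} $z$.

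The suggested mechanisms also point in the wrong direction. Since $\wh{K}_\infty$ is perfectoid, $\wh{L}_\infty/\wh{K}_\infty$ is pro-\'etale, not infinitely ramified in Sen's sense. So there is no Sen operator ``along $\Delta$'' and no normalized trace. Indeed, a $\gamma$-stable decomposition $\wh{L}_\infty=\wh{K}_\infty\oplus X$ (or its analogue on locally analytic vectors) would force $(\gamma-1)y\in X$ for every $y$. Then $(\gamma-1)y=1$ would have no solution at all, so a ``trace-zero part'' argument would refute the lemma rather than prove it.

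The missing idea has two parts.

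\emph{Existence in $\wh{L}_\infty$.} Almost purity gives $\mt{H}^1_{\mt{cont}}(\Gal(\overline{K}/K_\infty),\C)=0$. Hence the homomorphism $\log\chi_\cyc$ is a coboundary: there is $z\in\wh{L}_\infty$ with $g(z)=z+\log\chi_\cyc(g)$ for $g\in\Delta$.

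\emph{Local analyticity.} One then needs a decompletion step to make $z$ locally analytic for all of $\Gamma(L_\infty/K)$. The paper simply cites \cite[Lemme 3.6]{berger2016theorie}, which gives, after a finite extension of $K$, an element $z\in\Linftyla$ with $g(z)=z+\log\chi_\cyc(g)$ on $\Delta$. It then divides by $\log\chi_\cyc(\gamma)$. No $\exp(\nabla_\gamma)$ argument or further shrinking of $\Delta$ is needed.

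If you want to derive this yourself, apply \cite[Th\'eor\`eme 3.4]{berger2016theorie} as follows.
\begin{enumerate}
\item Let $V=\QQ_pe_1\oplus\QQ_pe_2$ with $g\mapsto\begin{pmatrix}1&\log\chi_\cyc(g)\\0&1\end{pmatrix}$, and set $W=(\C\otimes_{\QQ_p}V)^{\Gal(\overline{K}/K_\infty)}$.
\item By almost purity, $W$ is $2$-dimensional over $\wh{K}_\infty$ and spans $\C\otimes_{\QQ_p}V$. It lies in $\wh{L}_\infty\otimes_{\QQ_p}V$ because $\Gal(\overline{K}/L_\infty)$ acts trivially on $V$.
\item By Lemma \ref{lem:basis_lav}, $W^{\la}\subset\Linftyla\otimes_{\QQ_p}V$, and $W^{\la}$ spans $W$. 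So some $xe_1+ye_2\in W^{\la}$ has $y\neq0$.
\item Invariance gives $g(y)=y$ and $g(x)=x-y\log\chi_\cyc(g)$ for $g\in\Gal(\overline{K}/K_\infty)$. Hence $z=-x/y\in\Linftyla$ satisfies $g(z)=z+\log\chi_\cyc(g)$, as required.
\end{enumerate}
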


\begin{proof} The cyclotomic character gives an isomorphism of $\Delta=\Gal(L
_\infty/K_\infty)$ with an open subgroup of $\ZZ_p^\times$. Replacing $K$ by a finite extension, as we may, forces $\Gal(K_\cyc/K)\cong\ZZ_p$, and so we may assume $\Delta \cong \ZZ_p$. By \cite[Lemme 3.6]{berger2016theorie}, after replacing $K$ by a finite extension (as we may) there exists an element $z\in \wh{L}_\infty^{\Gamma(L_\infty/K)\hla}$ with $g(z) = z+\log\chi_\cyc(g)$ for $g \in \Delta$. Let $\gamma$ be a generator of $\Delta$. Replacing $z$ with $z/\log\chi_\cyc(\gamma)$, we may assume $\gamma(z)=z+1$.
\end{proof}

\subsection{Proof of the theorem}

We now prove Theorem \ref{thm:higher_la_vanishing_k_infty}. In this subsection, we use some condensed mathematics to simplify cohomology computations; these techniques will not be used elsewhere in the article. All the objects in what follows will be considered in the sense explained in \cite{RJRC22}, \cite{jacinto2023solid}, using the fully faithful inclusion of Banach representations of compact $p$-adic Lie groups into the appropriate categories of solid representations in $\QQ_p$-modules, as explained in detail in loc. cit. 

We need the following lemma.
\begin{lemma}
Let $1 \to N \to G \to G/N \to 1$ be an exact sequence of compact $p$-adic Lie groups. 
Then the inflation functor $\mt{Inf}_{G,N}$ from complexes of $\QQ_p[G/N]$-modules to complexes of $\QQ_p[G]$-modules  (respectively the locally analytic inflation functor $\mt{Inf}^{\la}_{G,N}$ from complexes of locally analytic $G/N$-representations to complexes of locally analytic $G$-representations) is left adjoint to the functor of $N$-fixed points $C \mapsto \mt{RHom}_{\QQ_p[N]}(\QQ_p,C)$.
\end{lemma}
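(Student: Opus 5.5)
The adjunction will be obtained by writing both functors as tensor–hom constructions along a ring map and then invoking the standard derived tensor–hom adjunction. We work in the derived category of solid $\QQ_p[G]$-modules, as in \cite{RJRC22}, \cite{jacinto2023solid}. Here $\QQ_p[G]$ denotes the solid (Iwasawa) group algebra $\QQ_{p,\blacksquare}[G]$, and similarly for $N$ and $G/N$.

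For the first assertion, the plan has three steps.
\begin{enumerate}
\item Identify inflation with restriction of scalars along the surjection of solid group algebras $\QQ_p[G]\to \QQ_p[G/N]$. There is a natural isomorphism
\[
\QQ_p[G/N]\simeq \QQ_p[G]\otimes^{L}_{\QQ_p[N]}\QQ_p .
\]
It holds because $\QQ_p[G]$ is (topologically) free as a module over $\QQ_p[N]$, after choosing a continuous section of $G\to G/N$, which exists for profinite groups. Hence the tensor product is underived in effect.
\item Apply the derived tensor–hom adjunction for the bimodule $\QQ_p[G/N]$. For a complex $A$ of $\QQ_p[G/N]$-modules and a complex $C$ of $\QQ_p[G]$-modules this gives
\[
\mt{RHom}_{\QQ_p[G]}(\mt{Inf}_{G,N}A, C)\simeq \mt{RHom}_{\QQ_p[G/N]}\bigl(A,\mt{RHom}_{\QQ_p[G]}(\QQ_p[G/N],C)\bigr).
\]
\item Rewrite the inner term using the isomorphism of the first step:
\[
\mt{RHom}_{\QQ_p[G]}(\QQ_p[G/N],C)\simeq \mt{RHom}_{\QQ_p[G]}\bigl(\QQ_p[G]\otimes^L_{\QQ_p[N]}\QQ_p, C\bigr)\simeq \mt{RHom}_{\QQ_p[N]}(\QQ_p,C).
\]
This object carries its residual $G/N$-action. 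This proves the claim for $\mt{Inf}_{G,N}$.
\end{enumerate}

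For the locally analytic assertion, the same argument runs with group algebras replaced by the algebras of locally analytic distributions $\mc{D}^{\la}(G,\QQ_p)$. In the formalism of \cite{RJRC22}, locally analytic representations of $G$ are the $\mc{D}^{\la}(G)$-modules $V$ whose natural map $V^{R G\hla}\to V$ is an equivalence. This category sits fully faithfully inside solid $\QQ_p[G]$-modules, and $\mc{D}^\la(G)\otimes_{\QQ_p[G]}^L(-)$ is idempotent, as proved there. The locally analytic inflation is restriction along $\mc{D}^\la(G)\to\mc{D}^\la(G/N)$. The key input replacing the first step is
\[
\mc{D}^\la(G/N)\simeq \mc{D}^\la(G)\otimes^L_{\mc{D}^\la(N)}\QQ_p ,
\]
which again follows from a locally analytic section $G/N\to G$; such a section exists by the $p$-adic Lie structure, and it gives $\mc{D}^\la(G)\cong \mc{D}^\la(N)\wh\otimes\mc{D}^\la(G/N)$ as left $\mc{D}^\la(N)$-modules. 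Tensor–hom adjunction then gives the right adjoint $C\mapsto \mt{RHom}_{\mc{D}^\la(N)}(\QQ_p,C)$. Finally, for $C$ locally analytic, \cite{RJRC22} gives a comparison isomorphism $\mt{RHom}_{\mc{D}^\la(N)}(\QQ_p,C)\simeq\mt{RHom}_{\QQ_p[N]}(\QQ_p,C)$: both compute locally analytic $N$-cohomology, which agrees with continuous cohomology for locally analytic representations. This identifies the right adjoint as stated.

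The main obstacle is this locally analytic step. One must ensure that the solid tensor product decomposition of $\mc{D}^\la(G)$ is genuinely flat, so that the derived tensor is underived, and that the comparison between $\mc{D}^\la(N)$- and $\QQ_p[N]$-cohomology holds in the solid setting. I would try first to quote these facts directly from \cite{RJRC22}, where they are established via the Lazard–Serre resolution.
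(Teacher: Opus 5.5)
Your continuous half is correct and is the paper's argument: tensor–hom adjunction along $\QQ_p[G]\to\QQ_p[G/N]$, then Shapiro. You prove the Shapiro step via a continuous section and flatness, where the paper simply cites \cite[Proposition 5.15]{RJRC22}.

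\textbf{The locally analytic half has a genuine gap.} Because $\iota_G,\iota_{G/N}$ are fully faithful and $\iota_G\circ\mt{Inf}^{\la}_{G,N}=\mt{Inf}_{G,N}\circ\iota_{G/N}$, the continuous case already gives $\Hom(\mt{Inf}^{\la}_{G,N}A,C)\simeq\Hom_{D(\QQ_p[G/N])}(A,\mt{RHom}_{\QQ_p[N]}(\QQ_p,C))$ for locally analytic $A$ and $C$. So the locally analytic assertion is \emph{equivalent} to one claim: $\mt{RHom}_{\QQ_p[N]}(\QQ_p,C)$ is again a locally analytic $G/N$-representation whenever $C$ is a locally analytic $G$-representation. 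Your $\mc{D}^{\la}$ detour never proves this.

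Your argument computes the right adjoint of restriction between the categories of \emph{all} $\mc{D}^{\la}$-modules. By your own description, that category strictly contains the locally analytic representations. For instance, $\mc{D}^{\la}(G)$ itself is not locally analytic: the orbit map of $\delta_1$ is analytic only on balls that shrink as one passes to finer Banach quotients $\mc{D}^{(h)}$, so $\delta_1$ is merely pro-analytic. Restricting an adjunction to full subcategories only shows that the right adjoint of $\mt{Inf}^{\la}_{G,N}$ is $C\mapsto(\mt{RHom}_{\QQ_p[N]}(\QQ_p,C))^{\mt{R}(G/N)\hla}$. Your sentence ``this identifies the right adjoint as stated'' silently drops that outer functor. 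This is not harmless. The next proposition, the identity $\mt{RHom}_{\QQ_p[N]}(\QQ_p,C^{\mt{R}G\hla})\simeq(\mt{RHom}_{\QQ_p[N]}(\QQ_p,C))^{\mt{R}(G/N)\hla}$, and through it the second quasi-isomorphism in the proof of Theorem \ref{thm:higher_la_vanishing_k_infty}, need exactly the version without the outer derived locally analytic vectors.

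\textbf{How the paper avoids this.} The paper's sketch uses the $h^+$-analytic distribution algebras $\mc{D}^{(h^+)}$ instead of $\mc{D}^{\la}$. The point is that modules over $\mc{D}^{(h^+)}$ are exactly the $h^+$-analytic representations; this is the identification the paper implicitly uses. Consequently, the right adjoint of restriction along $\mc{D}^{(h^+)}(G)\to\mc{D}^{(h^+)}(G/N)$ lands in $h^+$-analytic $G/N$-representations by construction. The $h^+$-analytic Shapiro lemma $\mc{D}^{(h^+)}(G)\otimes^{L}_{\mc{D}^{(h^+)}(N)}\QQ_p\simeq\mc{D}^{(h^+)}(G/N)$ (your section argument, run at level $h$) then identifies that adjoint with $\mt{RHom}_{\QQ_p[N]}(\QQ_p,-)$.

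To reach an arbitrary locally analytic $C$, write $C=\varinjlim_h C^{\mt{R}h^+\text{-an}}$. Since $\QQ_p$ is a compact $\QQ_p[N]$-module (Lazard's finite free resolution), $N$-invariants commute with this colimit. A colimit of $h^+$-analytic $G/N$-representations is locally analytic, which gives the missing landing statement. With this replacement your outline goes through.
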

\begin{proof}
Consider first the continuous setting. The inflation functor $\mt{Inf}_{G,N}$ from $\QQ_p[G/N]$-complexes to $\QQ_p[G]$-complexes is given by restricting from $\QQ_p[G/N]$-complexes to $\QQ_p[G]$-complexes. We have the series of identities
\begin{align*}
\mt{RHom}_{\QQ_p[G]}(\mt{Inf}_{N}^G(C_1),C_2) 
&= \mt{RHom}_{\QQ_p[G/N]}(C_1,\mt{RHom}_{\QQ_{p}[G]}(\QQ_p[G/N],C_2))\\
&= \mt{RHom}_{\QQ_p[G/N]}(C_1,\mt{RHom}_{\QQ_p[N]}(\QQ_p,C_2)),
\end{align*}
where the last equality follows from Shapiro's lemma (\cite[Proposition 5.15]{RJRC22}). This proves the lemma in the continuous setting. In the locally analytic setting, one can argue similarly by restricting from $D^{h^+}(G/N,\QQ_p)$-complexes to $D^{h^+}(G,\QQ_p)$-complexes and using the $h^+$-analytic version of Shapiro's lemma of loc. cit.
\end{proof}

\begin{proposition} \label{prop: HS_spe_seq}
    
Let $1 \to N \to G \to G/N \to 1$ be an exact sequence of compact $p$-adic Lie groups. Let $C$ be a complex of solid $\QQ_p[G]$-modules. Then there is a natural quasi-isomorphism of complexes $$\mt{RHom}_{\QQ_p[N]}(\QQ_p,C^{\mt{R}G\hla}) = (\mt{RHom}_{\QQ_p[N]}(\QQ_p,C))^{\mt{R}(G/N)\hla}.$$ Here $\mt{R}G\hla$ and
${\mt{R}(G/N)\hla}$ denote derived locally analytic vectors as in \cite{RJRC22}.
\end{proposition}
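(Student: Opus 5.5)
We prove the quasi-isomorphism by a Yoneda argument: both sides should represent the same functor on complexes of locally analytic $G/N$-representations, and the preceding lemma supplies all the adjunctions needed. We use the framework of \cite{RJRC22}. There, derived locally analytic vectors $C\mapsto C^{\mt{R}G\hla}$ are right adjoint to the forgetful functor from (complexes of) locally analytic $G$-representations to solid $\QQ_p[G]$-complexes. Concretely, $C^{\mt{R}G\hla}=\mt{RHom}_{\QQ_p[G]}(\QQ_p,\mc{C}^{\la}(G,\QQ_p)\otimes C)$, and the forgetful functor is fully faithful on the locally analytic side.

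Fix a test object $V$, a complex of locally analytic $G/N$-representations. We compute $\mt{RHom}_{G/N\hla}(V,-)$ of both sides, where $\mt{RHom}_{G/N\hla}$ denotes $\mt{RHom}$ in the derived category of locally analytic $G/N$-representations.

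\emph{Left side.} The preceding lemma (locally analytic version) says $\mt{Inf}^{\la}_{G,N}$ is left adjoint to $N$-invariants. Hence
\[
\mt{RHom}_{G/N\hla}\bigl(V,\mt{RHom}_{\QQ_p[N]}(\QQ_p,C^{\mt{R}G\hla})\bigr)=\mt{RHom}_{G\hla}\bigl(\mt{Inf}^{\la}_{G,N}V,\,C^{\mt{R}G\hla}\bigr).
\]
By the adjunction defining $\mt{R}G\hla$, this equals $\mt{RHom}_{\QQ_p[G]}(\mt{Inf}^{\la}_{G,N}V,\,C)$.

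\emph{Right side.} Using first the adjunction for $\mt{R}(G/N)\hla$ and then the continuous version of the lemma,
\[
\mt{RHom}_{G/N\hla}\bigl(V,(\mt{RHom}_{\QQ_p[N]}(\QQ_p,C))^{\mt{R}(G/N)\hla}\bigr)=\mt{RHom}_{\QQ_p[G/N]}\bigl(V,\mt{RHom}_{\QQ_p[N]}(\QQ_p,C)\bigr)=\mt{RHom}_{\QQ_p[G]}(\mt{Inf}_{G,N}V,\,C).
\]

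\emph{Comparison.} The two computations agree provided $\mt{Inf}^{\la}_{G,N}V$ and $\mt{Inf}_{G,N}V$ have the same underlying solid $\QQ_p[G]$-complex. This holds because a locally analytic $G/N$-representation, pulled back along the analytic surjection $G\to G/N$, is locally analytic for $G$: the orbit maps compose with an analytic map. The identifications are natural in $V$, and both sides are objects of the same derived category, so Yoneda gives the quasi-isomorphism. The natural map itself can be written down directly: apply $\mt{R}(G/N)\hla$ to $\mt{RHom}_{\QQ_p[N]}(\QQ_p,C^{\mt{R}G\hla})\to\mt{RHom}_{\QQ_p[N]}(\QQ_p,C)$, noting that the source is already $G/N$-locally analytic. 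The Yoneda computation then shows this map is an equivalence.

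The main obstacle is one technical point: checking that $\mt{RHom}_{\QQ_p[N]}(\QQ_p,-)$ sends locally analytic $G$-complexes to locally analytic $G/N$-complexes, compatibly with the $h^+$-analytic formalism. Without this, the left-hand side is not a priori an object of the category in which the Yoneda argument takes place. To handle it, we would use the description via distribution algebras: $N$-invariants of a $D^{h^+}(G,\QQ_p)$-module are computed as $\mt{RHom}_{D^{h^+}(N)}(\QQ_p,-)$, which is naturally a module over $D^{h^+}(G/N,\QQ_p)$. The lemma's proof already relies on the analytic Shapiro lemma of \cite{RJRC22}, which is exactly the required compatibility; compatibility of the colimit over $h$ with $\mt{RHom}$ follows from compactness of $\QQ_p$ as a $D^{h^+}(N)$-module.
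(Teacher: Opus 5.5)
Your proposal is correct and is essentially the paper's proof. The paper notes $\iota_G\circ\mt{Inf}^{\la}_{G,N}=\mt{Inf}_{G,N}\circ\iota_{G/N}$ and concludes that the two composite right adjoints, built from the preceding lemma and the adjunction $\iota_\bullet\dashv\mt{R}(\bullet)\hla$, coincide; this is exactly what your Yoneda computation against test objects $V$ unpacks, and the point you flag (that $N$-invariants send locally analytic $G$-complexes to locally analytic $G/N$-complexes) is taken as implicit in the paper's locally analytic version of the lemma.
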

\begin{proof}
For $\bullet \in \{G,G/N\}$, let  $D(\QQ_p[\bullet])$ (respectively $\mt{Rep}_{\QQ_p}^\la(\bullet)$) be the derived category of solid $\QQ_p[\bullet]$-modules (respectively the  subcategory of locally analytic $\bullet$-representations). Let $$\iota_\bullet:\mt{Rep}_{\QQ_p}^{\la}(\bullet)\to D(\QQ_p[\bullet])$$ be the fully faithful inclusion functor (see \cite[Remark 1.2.1(2)]{jacinto2023solid}).
We have the following commutative diagram:

\[\begin{tikzcd} [row sep=large, column sep=large]
   \mt{Rep}^{\la}(G/N) \arrow[r,"\mt{Inf}^\la_{G,N}"] \arrow[d,"\iota_{G/N}"]
    & \mt{Rep}^{\la}(G)\arrow[d,"\iota_{G}"] \\
      D(\QQ_p[G/N]) \arrow[r,"\mt{Inf}_{G,N}"] 
    & D(\QQ_p[G]).
\end{tikzcd}
\]

By the previous lemma, the functors $\mt{Inf}_{G,N}, \mt{Inf}^{\la}_{G,N}$ have as right adjoint the $N$-fixed points functor $C\mapsto \mt{RHom}_{\QQ_p[N]}(\QQ_p,C)$. By \cite[Theorem A, Remark 1.2.1(2)]{jacinto2023solid}, for $\bullet \in \{G,G/N\}$ the  functor $\iota_\bullet$ has as right adjoint the functor $\mt{R}(\bullet)\hla$. It follows that $\iota_G\circ \mt{Inf}_{G,N}^\la$ has right adjoint the functor
$$C\mapsto\mt{RHom}_{\QQ_p[N]}(\QQ_p,C^{\mt{R}G\hla})$$
while $\mt{Inf}_{G,N}\circ \iota_{G/N}$ has as right adjoint the functor
$$C \mapsto (\mt{RHom}_{\QQ_p[N]}(\QQ_p,C))^{\mt{R}(G/N)\hla}.$$
Since $\iota_G\circ \mt{Inf}_{G,N}^\la=\mt{Inf}_{G,N}\circ \iota_{G/N},$ these two right adjoints are naturally identified, which concludes the proof.
\end{proof}

Using the reductions of the previous subsection (Proposition \ref{prop:reduction_to_infinite_delta} and Lemma \ref{lem:assume_zp_ext}), we may assume $L_\infty/K_\infty$ is an infinite extension, with $\Delta=\Gal(L_\infty/K_\infty)=\gamma^{\ZZ_p}$ and that there exists $z \in \wh{L}_\infty^\la$ with $\gamma(z) = z+ 1$. Let $\nabla=\nabla_\gamma$ be the generator of $\Lie \Delta $ corresponding to $\gamma$, we have $\nabla(z)=1$. The map $\Gamma(L_\infty/K)\to\Gamma(K_\infty/K)$ has image $\Gal(K_\infty/K)$. We may and do assume $K_\cyc\cap K_\infty =K$ again by performing a finite extension of $K$. For the rest of the proof, write $\Gamma(L_\infty):= \Gamma(L_\infty/K)$ and $\Gamma(K_\infty):= \Gamma(K_\infty/K)$ to lighten the notation.
We let $W(K_\infty):= W$ and $W(L_\infty) := \wh{L}_\infty \otimes_{\wh{K}_\infty}W(K_\infty)$, which is a $\Gamma(L_\infty)$-semilinear representation. We can write the following chain of quasi-isomorphisms:
\begin{align*}
W(K_\infty)^{\mt{R}\Gamma(K_\infty)\hla} &= \mt{RHom}_{\QQ_p[\Delta]}(\QQ_p,W(L_\infty))^{\mt{R}\Gamma(K_\infty)\hla}\\
&= \mt{RHom}_{\QQ_p[\Delta]}(\QQ_p,W(L_\infty)^{\mt{R}\Gamma(L_\infty)\hla}) \\
&=\mt{RHom}_{\QQ_p[\Delta]}(\QQ_p,W(L_\infty)^{\Gamma(L_\infty)\hla}).
\end{align*}
We explain the equalities above.
Since the extension $\wh{L}_\infty/\wh{K}_\infty$ is pro-étale ($\wh{K}_\infty$ is perfectoid), we know by the almost purity theorem that $\mt{H}_{\mt{cont}}^i(\Delta,W(L_\infty))=0$ for $i > 0$ and is equal to $W(K_\infty)$ for $i=0$, which gives the first equality. The second equality follows from Proposition \ref{prop: HS_spe_seq}. For the third equality, use that as in the proof of Proposition \ref{prop:reduction_to_infinite_delta}, we know that $\R^i_{\Gamma(L_\infty)\hla}(W(L_\infty))=0$ for $i>0$.
Hence, the proof of the theorem reduces to the following key computation.

\begin{lemma} \label{lem:delta_no_higher_coho}
 
We have $\mt{H}^i_{\mt{cont}}(\Delta,W(L_\infty)^{\Gamma(L_\infty)\hla}) = 0$ for $i > 0$.
\end{lemma}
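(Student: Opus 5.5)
Since $\Delta=\gamma^{\ZZ_p}$ is procyclic, continuous cohomology of $\Delta$ on a (solid) $\QQ_p[\Delta]$-module $M$ is computed by the complex $[M \xrightarrow{\gamma-1} M]$. So $\mt{H}^i=0$ automatically for $i\ge 2$, and it suffices to show that $\gamma-1$ is surjective on $M:=W(L_\infty)^{\Gamma(L_\infty)\hla}$. Every element of $M$ is locally analytic for $\Gamma(L_\infty)$, hence for $\Delta$. I will therefore work with the infinitesimal generator $\nabla=\nabla_\gamma$. On $M$ we have $\gamma=\exp(\nabla)$ on $\gamma^{p^n}$-analytic vectors after passing to $\gamma^{p^n}$. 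The plan is: first show $\nabla\colon M\to M$ is surjective, then deduce surjectivity of $\gamma-1$ (after possibly replacing $\gamma$ by $\gamma^{p^n}$ and using a restriction–corestriction / finite index argument).

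\textbf{Step 1: surjectivity of $\nabla$, using $z$.} The key tool is the element $z\in \wh{L}_\infty^{\la}$ with $\nabla(z)=1$. Recall the Berger–Colmez structure: for a locally analytic $x$, the series
\[ x=\sum_{k\ge0}\frac{(z-z_n)^k}{k!}\,R_k(x)\]
where the $R_k(x)$ are killed by $\nabla$, and $z_n\in L_\infty$ approximates $z$ with $\gamma(z_n)$ controlled. This exhibits $M$ as $M^{\nabla=0}\{\{z-z_n\}\}$, a ring of convergent power series in $z-z_n$ over $M^{\nabla=0}$, following Berger–Colmez \cite[\S4]{berger2016theorie}. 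Such an expansion holds for $\Gamma_n$-analytic vectors, with $n$ large depending on the radius. Given $y=\sum_k a_k (z-z_n)^k$ with $\nabla(a_k)=0$, I define the primitive
\[ x=\sum_k a_k \frac{(z-z_n)^{k+1}}{k+1}.\]
Then $\nabla x=y$. The factor $1/(k+1)$ costs only a logarithmic loss in radius, so $x$ is analytic for $\Gamma_{n+1}$, say. It remains to check that the coefficients $a_k$ lie in $W(L_\infty)$, which holds because $W(L_\infty)$ is obtained from $W(K_\infty)$ by base change, where $\Delta$ acts trivially.

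I would actually phrase Step 1 more cleanly as follows: since $W(L_\infty)=\wh{L}_\infty\otimes W(K_\infty)$ and $\nabla$ is a derivation that kills $W(K_\infty)$-vectors, the expansion operators $R_k=\sum_j \frac{(-1)^j}{j!}(z-z_n)^j\nabla^{j+k}/k!$ make sense on $M$.

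\textbf{Step 2: from $\nabla$ to $\gamma-1$.} This is the main obstacle. On $\gamma^{p^m}$-analytic vectors, $\gamma^{p^m}-1=\nabla_m\cdot u$, where $\nabla_m=p^m\nabla$ and $u=\sum_{j\ge1}\nabla_m^{j-1}/j!$. For $m$ large relative to the analyticity radius, $u$ is invertible, since $\nabla_m$ has small operator norm on the $\Gamma_{m'}$-analytic Banach space. Hence $\gamma^{p^m}-1$ is surjective on the $\Gamma_{m'}$-analytic part, and taking the colimit gives surjectivity on all of $M$. Finally $\mt{H}^1(\Delta,M)$ injects into $\mt{H}^1(\Delta^{p^m},M)$ via restriction, because corestriction composed with restriction is multiplication by $p^m$, which is invertible over $\QQ_p$. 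Thus $\mt{H}^1(\Delta,M)=0$.

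The delicate points are the uniformity of norms: one must check that the primitive in Step 1 and the inverse of $u$ land in an analytic Banach space for a slightly smaller group, compatibly in the colimit. I expect the estimate on $(z-z_n)^k/(k+1)$ and on $\nabla_m$ to be the technical heart of the argument.
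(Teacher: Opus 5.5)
Your Step 1 is, in substance, the paper's argument. If you expand $x=\sum_k \frac{w^k}{k!}R_0(\nabla^k x)$ with $R_0=\sum_j \frac{(-w)^j}{j!}\nabla^j$ and integrate termwise, you get exactly the primitive the paper writes down directly:
\[ y=\sum_{i\ge 0}(-1)^i\,\nabla^i(x)\,\frac{w^{i+1}}{(i+1)!},\qquad w=z-z_0,\ z_0\in L_\infty. \]
Here $\nabla(y)=x$ follows by telescoping, using only that $\nabla$ is a derivation with $\nabla(w)=1$. So you do not need a structure theorem $M=M^{\nabla=0}\{\{z-z_n\}\}$, and Berger--Colmez \S4 does not supply one in this generality anyway. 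The paper proves convergence of $y$ in $\wh{L}_\infty^{\Gamma_N\han}$ from Berger--Colmez's bound $\val_{\Gamma_n}(\nabla x)\ge C_n+\val_{\Gamma_n}(x)$. It takes $\val(z-z_0)>1/(p-1)+\max(0,1-C_n)$, and then $N$ so large that $\val_{\Gamma_N}(w)=\val(w)$. The level $N$ is therefore dictated by $w$ and $C_n$, not by the factor $1/(k+1)$, and is not $n+1$ in general.

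Step 2 is where you genuinely differ from the paper. The paper uses that $\QQ_p$ is a compact solid $\QQ_p[\Delta]$-module to write $\mt{H}^1_{\mt{cont}}(\Delta,\Linftyla)=\varinjlim_n\mt{H}^1_{\mt{cont}}(\Delta,\wh{L}_\infty^{\Gamma_n\han})$. It then applies Lazard's comparison to identify each term with $\mt{H}^1_{\Lie\Delta}(\wh{L}_\infty^{\Gamma_n\han})^{\Delta=1}$, so only surjectivity of $\nabla$ in the colimit is needed. You replace Lazard by the explicit identity $\gamma^{p^m}-1=\nabla_m u$ on analytic vectors, together with restriction--corestriction. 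That route is more elementary and self-contained. The factorization, and the invertibility of $u$ on $M_N:=\wh{L}_\infty^{\Gamma_N\han}$ for $m$ large relative to $N$, are fine.

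However, the quantifiers in Step 2 fail as written.
\begin{itemize}
\item Invertibility of $u$ on $M_N$ needs $m$ large relative to the bound on $\nabla$ there, and that bound is not uniform in $N$.
\item $\nabla$ is not surjective on a fixed $M_{m'}$: when $u$ is invertible, the image of $\gamma^{p^m}-1$ on $M_{m'}$ is just $\nabla(M_{m'})$.
\item Consequently, for $x\in M_n$ a solution of $(\gamma^{p^m}-1)v=x$ lives in $M_N$ with $N=N(n)$, and it requires $m\ge m(n)$.
\end{itemize}
What you actually prove is that each $x\in M_n$ lies in $(\gamma^{p^m}-1)M$ for $m\ge m(n)$. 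Taking the colimit does not yield a single $m$ with $\gamma^{p^m}-1$ surjective on $M$. So you have not shown $\mt{H}^1(\Delta^{p^m},M)=0$ for any fixed $m$, and the restriction-injectivity step has nothing to land on.

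The fix is to run restriction--corestriction elementwise. For $x\in M_n$, the restriction of its class is the class of $N_m x=\sum_{i=0}^{p^m-1}\gamma^i x$. This still lies in $M_n$, because $\Gamma_n$ is normal in $\Gamma(L_\infty)$. Choosing $m\ge m(n)$ gives $N_m x=(\gamma^{p^m}-1)v=(\gamma-1)N_m v$. Hence $p^m x\equiv N_m x\equiv 0$ modulo $(\gamma-1)M$, so $x\in(\gamma-1)M$. With this repair, and the direct series in place of the structure theorem, your argument is a valid alternative to the paper's.
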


\begin{proof}
In the proof of \cite[Théorème 3.4]{berger2016theorie} it is shown that $$W(L_\infty)^{\Gamma(L_\infty)\hla}=\wh{L}_{\infty}^\la\otimes_{\Kinftyla}W(K_\infty)^{\Gamma(K_\infty)\hla}.$$ As $\Delta$ acts trivially on $W(K_\infty)^{\Gamma(K_\infty)\hla}$, we reduce to the case $W(L_\infty) = \wh{L}_{\infty}$. 
Since $\Delta\cong \ZZ_p$, the only nontrivial case is $i=1$. Let $\Gamma_n$ be the usual sequence of open normal uniform subgroups for $\Gamma(L_\infty)$. Then we have $\Linftyla = \varinjlim \wh{L}_\infty^{\Gamma_n\han}$. The complex computing continuous cohomology of $\Delta$ is $\mt{RHom}_{\QQ_p[\Delta]}(\QQ_p,\Linftyla),$ and since $\QQ_p$ is a compact object in the category of solid $\QQ_p[\Delta]$-modules (it is the cokernel of a map $\QQ_p[\Delta]\to \QQ_p[\Delta]$), we learn that 
$\mt{H}^1_{\mt{cont}}(\Delta,\Linftyla)=\varinjlim_n\mt{H}^1_{\mt{cont}}(\Delta,\wh{L}_\infty^{\Gamma_n\han}).$ Hence, we can use Lazard's theorem (\cite[Theorem 1.7]{RJRC22}) to get 
$\mt{H}^1_{\mt{cont}}(\Delta,\wh{L}_\infty^{\Gamma_n\han}) = \mt{H}^1_{\Lie \Delta}(\wh{L}_\infty^{\Gamma_n\han})^{\Delta=1}.$

We therefore reduce to showing that the direct limit $\varinjlim_n \mt{H}^1_{\Lie \Delta}(\wh{L}_\infty^{\Gamma_n\han})$ is zero. 
We are going to do this by showing that for every $n$ there exists $N \ge n$ such that the map $\mt{H}^1_{\Lie \Delta}(\wh{L}_\infty^{\Gamma_n\han}) \to \mt{H}^1_{\Lie \Delta}(\wh{L}_\infty^{\Gamma_N\han})$ is zero. Equivalently, we show that for every $x \in \wh{L}_\infty^{\Gamma_n\han}$ there exists $y \in \wh{L}_\infty^{\Gamma_N\han}$ such that $\nabla(y)=x$.

We are going to define $y$ as a power series. For $m \ge 1$, let $\val_{\Gamma_m}$ denote the valuation of the Banach space $\wh{L}_\infty^{\Gamma_m\han}$. 
By \cite[Lemme 2.6]{berger2016theorie},
we have $\nabla( \wh{L}_\infty^{\Gamma_n\han}) \subset  \wh{L}_\infty^{\Gamma_n\han}$ and there exists some constant $C_n$ (possibly negative) such that for $x \in \wh{L}_\infty^{\Gamma_n\han}$
we have $$\val_{\Gamma_{n}}(\nabla(x)) \ge C_n + \val_{\Gamma_{n}}(x).$$




Let $z_0 \in L_\infty$ such that the $p$-adic valuation $a=\val_{\wh{L}_\infty}(z-z_0)$ satisfies$$a>1/(p-1)+\max(0,-C_n + 1).$$ Since $z_0$ is smooth, we still have $\nabla(z-z_0)=1$, so we may replace $z-z_0$ with $z$. We may choose $N\ge n$ sufficiently large such that $z \in \wh{L}_\infty^{\Gamma_{N\han}}$ and $\val_{\Gamma_{N\han}}(z) = \val_{\wh{L}_\infty}(z) = a$.

For $N$, we thus have 
\begin{align*}
\val_{\Gamma_{N\han}}(\nabla^i(x)\cdot \frac{z^{i+1}}{(i+1)!}) &\ge \val_{\Gamma_{N\han}}(\nabla^i(x)) + \val_{\Gamma_{N\han}}(z^{i+1}/(i+1)!)\\
&\ge\val_{\Gamma_{n\han}}(\nabla^i(x)) + \val_{\wh{L}_\infty}(z^{i+1}/(i+1)!)\\
&\ge \val_{\Gamma_{n\han}}(x) + iC_n + (i+1)a -(i+1)/(p-1)\\
&\ge \val_{\Gamma_{n\han}}(x) + i,
\end{align*}
where the last step is because of the condition on $a$.

Hence, we have
$\val_{\Gamma_{N\han}}(\nabla^i(x)\cdot {z^{i+1}}/{(i+1)!}) \to \infty$
as $i \to \infty$. Thus, the power series
$$y = \sum_{i=0}(-1)^i\nabla^i(x)\frac{z^{i+1}}{(i+1)!}$$
converges in $\wh{L}_\infty^{\Gamma_N\han}$. A straightforward check shows that $\nabla(y) = x$, which concludes the proof.\end{proof}

\section{Sen operators}
\label{section:sen_operators}

Recall our notation from $\mathsection$\ref{subsection:notation_and_conventions}. In \S \ref{subsec:sen_op_def} and \S \ref{subsec:adj_rep_sen},  we  recall the Sen operator for the extension $K_\infty/K$, following \cite{sen1980continuous} and \cite{berger2016theorie}.
In \S \ref{subsec:rationality}, we study the field of rationality of the Sen operator.

We start by recording an interesting fact (well known to the experts) which shows that $p$-adic Lie extensions $\kinfty/K$ are abundant. This fact serves as a justification of our setup, and will not be used elsewhere in the article.

\begin{lemma} \label{lem:pro_p_as_Gal_gp}
Let $G$ be a topologically finitely generated pro-$p$ group.
Then there exist a finite extension $K/\Qp$ and a Galois extension $L/K$
such that $\Gal(L/K)\cong G$ as topological groups.
\end{lemma}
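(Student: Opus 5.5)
The strategy is to realize $G$ as a quotient of the Galois group of the maximal pro-$p$ extension $K(p)$ of a suitable $p$-adic field $K$. We use the classical structure theory of $G_K(p):=\Gal(K(p)/K)$ due to Demuškin, Serre and Labute. Since $G$ is topologically finitely generated, say by $d$ elements, there is a continuous surjection from the free pro-$p$ group $F_d$ onto $G$. So it suffices to find $K$ and a continuous surjection $G_K(p)\twoheadrightarrow F_d$. Composing the two surjections then gives a surjection $G_K(p)\twoheadrightarrow G$, whose kernel is a closed normal subgroup $N$. Taking $L:=K(p)^{N}$ yields a Galois extension with $\Gal(L/K)\cong G_K(p)/N\cong G$ as topological groups. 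This last identification holds because a continuous bijective homomorphism between compact Hausdorff groups is a homeomorphism.

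To produce the surjection onto $F_d$, I would choose $K/\Qp$ finite with $\mu_p\not\subset K$, for instance $K=\Qp$ when $p$ is odd, or $K=\QQ_2(\sqrt{3})$-type fields when $p=2$, where one checks that $\zeta_4\notin K$. In that case Shafarevich's theorem says $G_K(p)$ is a free pro-$p$ group of rank $[K:\Qp]+1$. Its rank is $\dim_{\mathbf F_p}H^1(G_K,\mathbf F_p)=\dim_{\mathbf F_p}K^\times/K^{\times p}$, which equals $[K:\Qp]+1$ when $\mu_p\not\subset K$ by the local Euler characteristic formula together with $H^2=0$.

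It remains to make the rank at least $d$. I would take a finite extension $K$ of degree $\ge d-1$ that still contains no primitive $p$-th root of unity; for $p$ odd, take $K$ unramified of degree $\geq d-1$ over $\Qp$. For $p=2$ we need $i\notin K$, so we can take $K$ unramified over $\QQ_2$ of odd degree $\ge d-1$: then $\QQ_2(i)$, being ramified, is not contained in $K$. A free pro-$p$ group of rank $r\ge d$ surjects onto $F_d$ by sending $d$ of its free generators to the generators of $F_d$ and the remaining generators to $1$.

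The main point requiring care is the input that $G_K(p)$ is free when $\mu_p\not\subset K$. This is standard: $H^2(G_K(p),\mathbf F_p)\hookrightarrow H^2(G_K,\mathbf F_p)\cong H^0(G_K,\mu_p)^\vee=0$ by local Tate duality, and a pro-$p$ group with vanishing $H^2$ is free. The remaining steps are formal.
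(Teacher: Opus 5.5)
\textbf{The case $p=2$ fails.} For odd $p$ your argument is correct and is the paper's proof spelled out: obtain a free pro-$p$ quotient of $G_K(p)$ of large rank from Shafarevich's theorem, then compose with $F_d\twoheadrightarrow G$. But the lemma does not exclude $p=2$, and there your argument breaks.

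Shafarevich's theorem needs $\mu_p\not\subset K$. For $p=2$ this never holds, because $\mu_2=\{\pm1\}$ lies in every $K$. You have silently replaced $\mu_2$ by $\mu_4$, and the condition $i\notin K$ has nothing to do with freeness. Your own verification shows the failure: $H^2(G_K,\mathbf{F}_2)\cong H^0(G_K,\mu_2)^\vee\cong\mathbf{F}_2\neq0$, and $\dim_{\mathbf{F}_2}K^\times/K^{\times2}=[K:\QQ_2]+2$, not $[K:\QQ_2]+1$. In fact $G_K(2)$ is never free. It is a Demu\v{s}kin group of rank $n=[K:\QQ_2]+2$ with exactly one defining relation; for instance $G_{\QQ_2}(2)$ has generators $x_1,x_2,x_3$ and the single relation $x_1^2x_2^4[x_2,x_3]=1$. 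So for $p=2$, no choice of $K$ (unramified of odd degree or otherwise) gives a free $G_K(2)$, and your surjection $G_K(2)\twoheadrightarrow F_d$ is not justified.

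\textbf{How to repair it.} You need the Demu\v{s}kin case of the structure theory, together with an explicit free quotient. Let $q$ be the largest power of $2$ with $\mu_q\subset K$. By Demu\v{s}kin, Serre and Labute, the relation $r$ on generators $x_1,\dots,x_n$ of $G_K(2)$ can be taken to be one of the following, for some $f\in\{2,3,\dots,\infty\}$ with the convention $2^\infty=0$:
\begin{enumerate}
\item if $q\neq2$: $x_1^{q}[x_1,x_2][x_3,x_4]\cdots[x_{n-1},x_n]$;
\item if $q=2$ and $n$ is odd: $x_1^2x_2^{2^f}[x_2,x_3][x_4,x_5]\cdots[x_{n-1},x_n]$;
\item if $q=2$ and $n$ is even: $x_1^{2+2^f}[x_1,x_2][x_3,x_4]\cdots[x_{n-1},x_n]$ or $x_1^2[x_1,x_2]x_3^{2^f}[x_3,x_4]\cdots[x_{n-1},x_n]$.
\end{enumerate}
In the odd-$n$ form, sending $x_1,x_2,x_4,\dots,x_{n-1}$ to $1$ kills $r$. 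In every other form, sending $x_1,x_3,\dots,x_{n-1}$ to $1$ kills $r$. Hence $G_K(2)$ surjects onto a free pro-$2$ group of rank at least $(n-1)/2$, which is $\ge d$ once $[K:\QQ_2]\ge 2d-1$.

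With this input, the rest of your argument goes through unchanged: compose with $F_d\twoheadrightarrow G$, pass to the fixed field of the kernel, and use the compact-Hausdorff argument for the topological isomorphism.
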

\begin{proof} 
By Theorems 7.5.11-7.5.12 of \cite{NSW}, one can choose $K$ such that $G_K$ surjects onto a free pro-p group with arbitrarily large rank, which then surjects onto $G$.
\end{proof}

\subsection{The Sen operator of the tower $K_\infty/K$} \label{subsec:sen_op_def}

 In this subsection we recall the construction of the Sen operator for $V$ a finite-dimensional $\QQ_{p}$-linear
representation of $G_{K}$. 

Let $V_{\C}:=\C\otimes_{\QQ_p} V$, considered as a \emph{semilinear}
representation of $G_{K}$. We have the $K_{\cyc}$-vector space
\[
\D_{\Sen}(V):=(V_{C})^{\Gal(\overline{K}/K_\cyc),\Gamma(K_\cyc)\hfin},
\]
that is, the $K_{\cyc}$-subspace of elements in $V_{C}$ which are
fixed under $H_{\cyc}$, and which lie in a $\Gamma(K_\cyc)$-stable
finite-dimensional $K$-vector space. It is $\Gamma(K_\cyc)$-stable,
and hence $G_{K}$-stable. In \cite{sen1980continuous}, Sen proves that the natural
map
\[
\D_{\Sen}(V)_{\C}:=\C\otimes_{K_{\cyc}}\D_{\Sen}(V)\rightarrow V_{\C}
\]
is an isomorphism of semilinear $G_{K}$-representations.

With this given, one defines the Sen operator $\Theta(V)\in\End_{K_{\cyc}}(\D_{\Sen}(V))$
(a linear operator!) as follows. Choose $\gamma\in\Gamma(K_\cyc)$
which is sufficiently close to 1. Then 
\[
\Theta(V):=\log\gamma/\log\chi_{\cyc}(\gamma).
\]
This is independent of any choice. There is an inclusion
\[
\End_{K_{\cyc}}(\D_{\Sen}(V))\hookrightarrow\End_{\C}(\D_{\Sen}(V)_{\C})\cong\End_{\C}(V_{\C})=\C\otimes_{\QQ_p}\End_{\QQ_{p}}(V)
\]
which allows us to consider $\Theta(V)$ as an element of $\End_{\C}(V_{\C})$.
In fact, we have the following theorem of Sen (\cite[Theorem 11]{sen1980continuous}),
which shows $\Theta(V)$ belongs to a smaller subspace.
\begin{theorem}
\label{thm:sen_op_in_lie_alg}
Let $\rho:G_{K}\rightarrow\GL_{\QQ_{p}}(V)$ denote the linear representation
defining $V$. Then we have 
\[
\Theta(V)\in \C \otimes_{\QQ_p} \Lie(\rho(G_{K}))\subset \C\otimes_{\QQ_p}\End_{\QQ_{p}}(V).
\]
\end{theorem}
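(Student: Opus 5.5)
The plan is to reduce Sen's theorem to a statement about the Lie algebra of the Zariski-closure-free image $H:=\rho(G_K)$, a compact $p$-adic Lie subgroup of $\GL_{\QQ_p}(V)$. Write $\mathfrak h=\Lie(H)\subset\End_{\QQ_p}(V)$. Since $H$ is an analytic subgroup, there is an open uniform subgroup $H_0\le H$ on which $\log$ converges and maps $H_0$ bijectively onto a lattice in $\mathfrak h$; after replacing $K$ by a finite extension we may assume $\rho(G_K)=H_0$. This does not change $\Theta(V)$, since $\D_{\Sen}$ and $\log\gamma/\log\chi_\cyc(\gamma)$ for $\gamma$ near $1$ are insensitive to finite extensions. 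The key idea is to use the tautological ``universal'' situation: the Sen operator is functorial in $V$ and compatible with tensor operations. We would therefore characterize $\C\otimes\mathfrak h$ inside $\C\otimes\End(V)$ as the set of elements that act as derivations killing all $H$-invariant tensors in all tensor constructions $T^{a,b}(V)=V^{\otimes a}\otimes (V^\vee)^{\otimes b}$.

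\emph{Step 1 (functoriality).} Using Sen's isomorphism $\D_{\Sen}(V)_\C\cong V_\C$ and its compatibility with $\otimes$, duals and $G_K$-equivariant maps (which follows from the definition via $\Gamma(K_\cyc)$-finite vectors), we show that $\Theta$ satisfies three properties. First, $\Theta(V\otimes W)=\Theta(V)\otimes1+1\otimes\Theta(W)$. Second, $\Theta(V^\vee)=-\Theta(V)^t$. Third, any $G_K$-invariant vector $v\in T^{a,b}(V)$ is killed by $\Theta(T^{a,b}V)$: indeed $v\in\D_{\Sen}$ is fixed by $\gamma$, so $\log\gamma(v)=0$.

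\emph{Step 2 (algebraic hull).} By Chevalley's theorem, the Zariski closure $\mathbf H$ of $H$ in $\GL(V)$ is the stabilizer of a line in some $T^{a,b}(V)$, and the algebraic Lie algebra $\Lie\mathbf H$ is the subspace of $\End V$ acting as derivations preserving all $H$-stable lines. A stable line $\ell$ with character $\psi$ can be twisted away by tensoring with $\ell^\vee$, and one also uses $\Theta(\psi)=\mathrm{d}\psi(\Theta)$ for characters. Step 1 then gives $\Theta(V)\in\C\otimes\Lie\mathbf H$.

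\emph{Step 3 (closing the gap).} Step 2 only yields the algebraic envelope, and $\mathfrak h$ can be strictly smaller than $\Lie\mathbf H$. This is the step I expect to be the main obstacle. Here I would follow Sen's original route. We pass to the tower $K_\infty$ cut out by $\rho$, so that $\Gamma(K_\infty)=H_0$. By Tate–Sen descent for $K_\infty K_\cyc$, as in Proposition \ref{prop:reduction_to_infinite_delta}, $V_\C$ descends to a module over $\wh{K}_\infty^{\la}$, on which $\Lie H_0$ acts. We then realize $\Theta(V)$ as the image of the Sen operator $\Theta(K_\infty)$ of the tower under $\C\otimes\mathfrak h\to\C\otimes\End V$. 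Concretely, write $V\otimes\C$ with basis given by the matrix coefficients $\rho$, viewed as locally analytic functions on $H_0$. The operator $\log\gamma$ for $\gamma\in\Gal(K_\cyc K_\infty/K_\infty)$ near $1$ acts on $\rho$ through derivatives along $\mathfrak h$. Hence $\Theta(V)=\sum_i c_i\,\mathrm{d}\rho(X_i)$, where $X_i$ is a basis of $\mathfrak h$ and $c_i\in\C$ are the coefficients expressing the infinitesimal action of $\Lie\Gal(K_\cyc K_\infty/K_\infty)$ via the locally analytic element $z$ of Lemma \ref{lem:assume_zp_ext}. Making this rigorous needs the $\la$-vector description $\D_{\Sen}=(V_\C)^{\la}$ and the fact that $H_0$ acts on $\rho$ analytically, with $\mathrm{d}\rho$ landing in $\mathfrak h$. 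If this direct route stalls, the fallback is Sen's argument: reduce to $H$ abelian modulo commutators by induction on $\dim\mathfrak h$, and use Step 2 for the central torus part.
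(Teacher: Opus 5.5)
The paper does not prove this statement; it quotes it as \cite[Theorem 11]{sen1980continuous}. Your proposal therefore has to stand on its own, and it has a genuine gap.

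Steps 1 and 2 are correct, but they only give the weaker inclusion $\Theta(V)\in\C\otimes_{\QQ_p}\Lie\mathbf H$, where $\mathbf H$ is the Zariski closure of $\rho(G_K)$. The whole content of the theorem is the passage from $\Lie\mathbf H$ down to $\mathfrak h$, which is your Step 3, and that step is not carried out. No argument built from tensor functoriality and stable lines can see this difference, because $H$ and $\mathbf H$ have the same invariants and stable lines in every $T^{a,b}(V)$. The difference already occurs for split tori. Take $\rho=\mathrm{diag}(\psi,\psi^{\lambda})$ with $\psi\colon G_K\to 1+p\ZZ_p$ of open image and $\lambda\in\ZZ_p\setminus\QQ$. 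Then $\mathfrak h=\QQ_p\cdot\mathrm{diag}(1,\lambda)$, while $\Lie\mathbf H$ is the full diagonal subalgebra. So your fallback (``use Step 2 for the central torus part'') fails in exactly the abelian case it is meant to handle.

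Your main route in Step 3 is circular as written, and its central claim is wrong.
\begin{itemize}
\item In this paper $\Theta(K_\infty)\in\C\otimes_{\QQ_p}\Lie\Gamma(K_\infty)$ is defined only by invoking the theorem you are proving (Definition \ref{def:definition_of_sen_operator}), and Proposition \ref{prop:sen_operator_kills_la_vecs} is phrased in terms of it. Neither can be used here.
\item Put $L_\infty=K_\infty K_{\cyc}$. An element $\gamma\in\Gal(L_\infty/K_\infty)$ acts trivially on $V$, since $\rho$ factors through $\Gamma(K_\infty)$. So $\log\gamma$ does not act on $\rho$ ``through derivatives along $\mathfrak h$''.
\end{itemize}

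What is true, after the usual reductions, is the following. Write a basis of $\D_{\Sen}(V)$ as $e=vA$, with $v$ a basis of $V$ and $A\in\GL_n(\wh L_\infty^{\la})$. In the basis $v$ one has $\Theta(V)=\nabla_\gamma(A)A^{-1}$, where $\nabla_\gamma=\log\gamma/\log\chi_{\cyc}(\gamma)$. Invariance of $e$ under $\Gal(L_\infty/K_{\cyc})$ gives $X(A)A^{-1}=-\mathrm{d}\rho(X)$ for $X\in\Lie\Gal(L_\infty/K_{\cyc})\cong\mathfrak h$. The theorem therefore reduces to finding $c_i$ with $(\nabla_\gamma+\sum_i c_iX_i)(A)=0$. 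That is an annihilation statement of Berger--Colmez type, and it would have to be proved independently of Sen's theorem. The element $z$ only imposes one linear condition on such $c_i$; it says nothing about whether such an operator exists.

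One way to supply the missing idea is to build $\mathfrak h$ into the decompletion itself. After shrinking $K$, $\rho(G_K)$ lies in the $G_K$-stable group $\mathcal G(\mathcal O_\C)=\exp(p^k\mathcal O_\C\otimes_{\ZZ_p}\mathfrak h_{\ZZ_p})$, for a Lie lattice $\mathfrak h_{\ZZ_p}$ and $k\gg0$. Tate's and Sen's successive approximations can be run inside this group, using $\exp$ and Baker--Campbell--Hausdorff instead of $1+X$. This yields $A\in\mathcal G(\mathcal O_\C)$ and $\Theta(V)=\mathrm{Ad}(A)\Phi_0$ with $\Phi_0\in K_m\otimes_{\QQ_p}\mathfrak h$. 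Hence $\Theta(V)\in\C\otimes_{\QQ_p}\mathfrak h$, and Steps 1 and 2 become unnecessary.
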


Given a faithful representation $V$ of $\Gamma(K_\infty)$ (which exists by Ado's theorem \cite[{$\mathsection{7.3}$}]{bourbaki1975lie}), we set (implicitly
using Theorem \ref{thm:sen_op_in_lie_alg})
\[
\Theta(K_{\infty}/K,V):=\Theta(V)\in \C\otimes_{\QQ_p}\Lie\Gamma(K_\infty).
\]

\begin{proposition}
\label{prop:sen_operator_independant_of_faithful_rep_and_nonzero}
The element $\Theta(K_{\infty}/K,V)$ is independent of $V$ and depends on $K$ only up to a finite extension. It is nonzero.
\end{proposition}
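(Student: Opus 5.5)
Three things need to be shown: $\Theta(K_\infty/K,V)$ does not depend on $V$, it is insensitive to finite extensions of $K$, and it is nonzero. All three come from the functoriality of Sen's construction.

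\emph{Independence of $V$.} Let $V$ and $V'$ be two faithful representations of $\Gamma(K_\infty)$. I would compare both with $V\oplus V'$, which is again faithful. Sen's construction commutes with direct sums: $\D_{\Sen}(V\oplus V')=\D_{\Sen}(V)\oplus\D_{\Sen}(V')$, and $\log\gamma$ acts diagonally. Hence $\Theta(V\oplus V')=\Theta(V)\oplus\Theta(V')$ inside $\C\otimes\End(V\oplus V')$.

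The Lie algebra $\mathfrak g=\Lie\Gamma(K_\infty)$ embeds in $\End(V)$, $\End(V')$ and $\End(V\oplus V')$, the last diagonally. The projections $\End(V\oplus V')\to\End(V)$ and $\End(V\oplus V')\to\End(V')$ restrict to the identity on $\mathfrak g$ under these identifications. So the element of $\C\otimes\mathfrak g$ determined by $V\oplus V'$ maps both to the element determined by $V$ and to the one determined by $V'$. Therefore all three coincide.

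The same argument shows more generally that $\Theta$ is functorial for $G_K$-equivariant maps $V\to W$: Sen's operator commutes with morphisms because $\D_{\Sen}$ is a functor and $\log\gamma$ is natural.

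\emph{Finite extensions.} Replace $K$ by a finite $K'$. Then $K'_\cyc/K_\cyc$ is finite, and for $V$ restricted to $G_{K'}$ we have $\D_{\Sen,K'}(V)=K'_\cyc\otimes_{K_\cyc}\D_{\Sen,K}(V)$. This holds because $K'_\cyc\otimes\D_{\Sen,K}(V)$ consists of $\Gamma(K'_\cyc)$-finite vectors and has full rank, so by uniqueness in Sen's theorem it is the whole space. Choose $\gamma'\in\Gamma(K'_\cyc)\subset\Gamma(K_\cyc)$ close to $1$; it defines the same operator $\log\gamma'/\log\chi_\cyc(\gamma')$. The Lie algebra of $\Gamma(K'_\infty/K')$, an open subgroup, equals $\Lie\Gamma(K_\infty)$, so $\Theta$ is unchanged.

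\emph{Nonvanishing.} This is the main step. Suppose $\Theta(V)=0$ for a faithful $V$. By Sen's theorem (\cite[Theorem 11]{sen1980continuous}, in its strong form) the Lie algebra of the inertia image $\rho(I_K)$ is the smallest $\Qp$-subspace $\mathfrak h\subset\mathfrak g$ with $\Theta\in\C\otimes\mathfrak h$. If $\Theta=0$, then $\mathfrak h=0$, so $\rho(I_K)$ is finite, and $K_\infty/K$ is then finitely ramified.

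If only the weaker containment of Theorem~\ref{thm:sen_op_in_lie_alg} is available, I would argue via Sen's theorem that $\Theta(V)=0$ if and only if $V$ is $\C$-admissible, i.e.\ potentially unramified. That again says inertia acts through a finite quotient. Either way this contradicts the standing assumption that $K_\infty/K$ is infinitely ramified. Hence $\Theta(K_\infty/K)\neq0$.
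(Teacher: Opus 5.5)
Your proposal is correct and follows essentially the same route as the paper, whose proof simply cites \cite[Theorem 12 (a)--(b)]{sen1980continuous} for independence of $V$ and of finite extensions of $K$, and the Corollary to \cite[Theorem 11]{sen1980continuous} (the Sen operator vanishes if and only if inertia acts through a finite quotient) for nonvanishing. Your comparison of $V$ and $V'$ through $V\oplus V'$ unpacks the first citation, and it is the same device the paper uses in the proof of Lemma~\ref{lemma:sen_operator_compatibility}.
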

\begin{proof}
See \cite[Theorem 12, (a)-(b)]{sen1980continuous}. To show it is nonzero, one can use for example the Corollary to \cite[Theorem 11]{sen1980continuous}. 
\end{proof}
This allows us to give the following definition.

\begin{definition}[The Sen operator of $K_\infty$]
\label{def:definition_of_sen_operator}
The Sen operator of $K_\infty$ is the unique element $$\Theta(K_\infty)\in \C \otimes_{\QQ_p}\Lie\Gamma(K_\infty)$$ satisfying $\Theta(K_\infty)=\Theta(K_\infty/K,V)$ for any faithful representation $V$ of $\Gamma(K_\infty)$.    
\end{definition}

The following compatibility will be useful.

\begin{lemma}
\label{lemma:sen_operator_compatibility}
  Let $K_\infty \subset L_\infty$  be two $p$-adic Lie extensions  of $K$. 
Then the natural morphism 
$$\C \otimes_{\QQ_p} \Lie\Gamma(L_\infty) \to \C \otimes_{\QQ_p} \Lie\Gamma(K_\infty), $$ which is induced by the quotient map  $\Gamma(L_\infty)  \to \Gamma(K_\infty)$, 
maps $\Theta(L_\infty)$ to $\Theta(K_\infty).$ 
\end{lemma}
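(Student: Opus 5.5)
The plan is to reduce everything to the functoriality of Sen's operator $\Theta(V)$ in the representation $V$, together with the independence statement in Proposition \ref{prop:sen_operator_independant_of_faithful_rep_and_nonzero}.

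First choose a faithful representation $\rho_L: \Gamma(L_\infty)\to \GL(V_L)$ and a faithful representation $\rho_K:\Gamma(K_\infty)\to\GL(V_K)$; the latter exists by Ado's theorem. Let $\pi:\Gamma(L_\infty)\to\Gamma(K_\infty)$ be the quotient map. Set $V:=V_L\oplus V_K$, where $\Gamma(L_\infty)$ acts on $V_K$ through $\pi$. This $V$ is still a faithful representation of $\Gamma(L_\infty)$. By Proposition \ref{prop:sen_operator_independant_of_faithful_rep_and_nonzero} we therefore have $\Theta(L_\infty)=\Theta(V)$, viewed inside $\C\otimes\Lie\Gamma(L_\infty)\subset \C\otimes \End(V)$ via Theorem \ref{thm:sen_op_in_lie_alg}.

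The key step is that Sen's construction is compatible with direct sums and with $G_K$-equivariant maps. Since $\D_{\Sen}(V)=\D_{\Sen}(V_L)\oplus\D_{\Sen}(V_K)$, and $\Theta$ is defined as $\log\gamma/\log\chi_\cyc(\gamma)$ for $\gamma$ close to $1$, the operator $\Theta(V)$ is block diagonal with blocks $\Theta(V_L)$ and $\Theta(V_K)$. In particular, the projection $\C\otimes\End(V)\to\C\otimes\End(V_K)$ (restriction to the second block) sends $\Theta(V)$ to $\Theta(V_K)=\Theta(K_\infty)$. The identification uses $V_K$ as a faithful $\Gamma(K_\infty)$-representation together with the fact that $\Theta(V_K)$ depends only on $V_K$ as a $G_K$-representation.

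It then remains to check that this block projection, restricted to $\C\otimes\Lie\rho(\Gamma(L_\infty))\cong\C\otimes\Lie\Gamma(L_\infty)$, coincides with $\C\otimes d\pi$. This holds because the $V_K$-component of $\rho=\rho_L\oplus(\rho_K\circ\pi)$ is $\rho_K\circ\pi$, whose differential is $d\rho_K\circ d\pi$, and $d\rho_K$ identifies $\Lie\Gamma(K_\infty)$ with $\Lie\rho_K(\Gamma(K_\infty))$. Combining the two steps gives $(\C\otimes d\pi)(\Theta(L_\infty))=\Theta(K_\infty)$.

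The main technical point is the compatibility of $\Theta$ with direct sums and with the differential. This needs care with the identification $\End_{K_\cyc}(\D_{\Sen})\hookrightarrow\C\otimes\End(V)$, but it follows formally from the naturality of $\D_{\Sen}$ and of $\log\gamma$. Finally, a possible finite extension of $K$ is harmless, since $\Theta$ depends on $K$ only up to finite extension.
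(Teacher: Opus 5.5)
Your proposal is correct and follows the same route as the paper: take $V_L\oplus V_K$ as a faithful representation of $\Gamma(L_\infty)$, use that Sen's operator of a direct sum is the direct sum of the Sen operators, and check that the block projection $\End(V_L\oplus V_K)\to\End(V_K)$ restricts to the differential of $\Gamma(L_\infty)\to\Gamma(K_\infty)$ on Lie algebras. The paper packages the last step as a commutative square of groups, then differentiates it and tensors with $\C$; it cites Sen for the direct-sum compatibility, which you instead justify from the naturality of $\D_{\Sen}$ and of $\log\gamma$.
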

\begin{proof}Let \(V(K_\infty)\) be a faithful representation of \(\Gamma(K_\infty)\), and let
\(V(L_\infty)\) be a faithful representation of \(\Gamma(L_\infty)\).
 By assumption, there is a morphism $
\Gamma(L_\infty)\to \Gamma(K_\infty)$.
Precomposing this morphism, we may consider $V(K_\infty)$ as a representation of $\Gamma(L_\infty)$. The representation $V(L_\infty) \oplus V(K_\infty)$ is then a faithful representation of $\Gamma(L_\infty)$.
We have a commutative diagram
\[
\begin{tikzcd}
\Gamma(L_\infty)
  \arrow[r]
  \arrow[d]
&
\Gamma(K_\infty)
  \arrow[d]
\\
\operatorname{GL}_{\mathbf Q_p}
\bigl(V(L_\infty))\times\GL_{\QQ_p}( V(K_\infty)\bigr)
  \arrow[r]
&
\operatorname{GL}_{\mathbf Q_p}\bigl(V(K_\infty)\bigr),
\end{tikzcd}
\]
where the bottom horizontal map is induced by the projection
$
V(L_\infty)\oplus V(K_\infty)\to V(K_\infty).
$
Differentiating and tensoring with $\C$, we obtain a commutative diagram
\[
\begin{tikzcd}
\C\otimes_{\mathbf Q_p}\Lie\Gamma(L_\infty)
  \arrow[r]
  \arrow[d]
&
\C\otimes_{\mathbf Q_p}\Lie\Gamma(K_\infty)
  \arrow[d]
\\
\End_{\C}\!\bigl(
V(L_\infty)_\C)\oplus\End_\C( V(K_\infty)_\C
\bigr)
  \arrow[r]
&
\End_{\C}\!\bigl(V(K_\infty)_\C\bigr).
\end{tikzcd}
\]

It therefore suffices to show that the bottom horizontal map sends the Sen
operator of \(V(L_\infty)\oplus V(K_\infty)\) to the Sen operator of
\(V(K_\infty)\). This follows from the equality $\Theta(V(L_\infty)\oplus V(K_\infty)) = \Theta(V(L_\infty)) \oplus \Theta(V(K_\infty)$ (see Remark (4) after \cite[Theorem 4]{sen1980continuous}).\end{proof}

\subsection{The adjoint $p$-adic Galois representation of $K_\infty$} \label{subsec:adj_rep_sen}

Continue with the notation of the previous subsection. We now introduce one of the key objects of this paper: the $p$-adic representation $\Lie\Gamma(K_\infty)$, also called the adjoint representation of $K_\infty$. We give it the structure of a $p$-adic representation by precomposing the adjoint representation of $\Gamma(K_\infty)$ with the projection $G_K \twoheadrightarrow \Gamma(K_\infty)$. We may consider $p$-adic Hodge theoretic invariants of $\Lie\Gamma(K_\infty)$ as in classical $p$-adic Hodge theory. Two key players for us will be the $0$-Hodge Tate weight space
$$\D_{\C}(\Lie\Gamma(K_\infty)) = (\C \otimes_{\QQ_p} \Lie\Gamma(K_\infty))^{G_K}$$
and the positive de Rham space
$$\DdRplus(\Lie\Gamma(K_\infty))=(\BdRplus\otimes_{\QQ_p}\Lie\Gamma(K_\infty))^{G_K}.$$

 Because of the definition of the $G_K$ action on $\Lie\Gamma(K_\infty)$, item (1) of the following theorem can be interpreted as saying that as a $\C$-linear operator,  $\Theta(K_{\infty})$ is $G_K$-equivariant.

\begin{theorem} \label{thm:sen_is_analytic}
\hfill
\begin{enumerate}
    \item We have $\Theta(K_{\infty})   \in \D_{\C}(\Lie\Gamma(K_\infty))$.
    \item We have $\Theta(K_\infty) \in \widehat{K}_{\infty}^\la\otimes_{\QQ_p}\Lie\Gamma(K_\infty).$
\end{enumerate}  

\end{theorem}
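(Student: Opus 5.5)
For item (1), I will use the functoriality of Sen's construction. Fix a faithful representation $V$ of $\Gamma(K_\infty)$, viewed as a $G_K$-representation, and let $\rho:G_K\to \GL_{\QQ_p}(V)$. The Sen operator $\Theta(V)\in \End_{K_{\cyc}}(\D_{\Sen}(V))$ commutes with the action of $\Gamma(K_\cyc)$ on $\D_{\Sen}(V)$. This holds because $\Theta(V)=\log\gamma/\log\chi_\cyc(\gamma)$ and $\Gamma(K_\cyc)$ is abelian (up to finite index).

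Extending scalars to $\C$, the operator $\Theta(V)\in \End_\C(V_\C)=\C\otimes_{\QQ_p}\End(V)$ commutes with the semilinear $G_K$-action on $V_\C$. Concretely, for $g\in G_K$ the element $g(\Theta(V))$ is $(g\otimes \ad\rho(g))(\Theta(V))$ in $\C\otimes\End(V)$, and commutation with $g$ says exactly that this equals $\Theta(V)$. So $\Theta(V)\in(\C\otimes_{\QQ_p}\End V)^{G_K}$, where $G_K$ acts on $\End V$ by conjugation through $\rho$.

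By Theorem \ref{thm:sen_op_in_lie_alg}, $\Theta(V)$ also lies in $\C\otimes\Lie\rho(G_K)$. The inclusion $\Lie\Gamma(K_\infty)\hookrightarrow\End V$ is equivariant for the adjoint action, and $\C\otimes(-)$ preserves injectivity. It follows that $\Theta(K_\infty)\in \D_\C(\Lie\Gamma(K_\infty))$.

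For item (2), I will first descend along $H(K_\infty)$. The group $H(K_\infty)$ acts trivially on $\Lie\Gamma(K_\infty)$, and $\C^{H(K_\infty)}=\hatkinfty$ by Ax–Sen–Tate. Item (1) therefore gives
\[
\Theta(K_\infty)\in (\C\otimes_{\QQ_p}\Lie\Gamma(K_\infty))^{H(K_\infty)}=\hatkinfty\otimes_{\QQ_p}\Lie\Gamma(K_\infty).
\]
Now choose a $\QQ_p$-basis $X_1,\dots,X_d$ of $\Lie\Gamma(K_\infty)$ and write $\Theta(K_\infty)=\sum_i a_i\otimes X_i$ with $a_i\in\hatkinfty$. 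It remains to show each $a_i$ is $\Gamma(K_\infty)$-locally analytic.

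The $\Gamma(K_\infty)$-invariance from (1) gives $g(a_\bullet)=\ad(g)^{-1}$-type transforms of $a_\bullet$. Explicitly, the vector $(a_1,\dots,a_d)$ satisfies $(g(a_i))_i=A(g)^{-1}(a_i)_i$, where $A(g)\in\GL_d(\QQ_p)$ is the matrix of $\ad(g)$ in the basis $X_1,\dots,X_d$. The map $g\mapsto A(g)$ is an analytic homomorphism, since the adjoint representation of a $p$-adic Lie group is analytic. Hence the orbit map $g\mapsto g(a_i)=\sum_j (A(g)^{-1})_{ij}a_j$ is a finite $\hatkinfty$-combination of analytic functions, so it is locally analytic and $a_i\in\Kinftyla$.

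The main obstacle is the equivariance step in (1): one must check carefully that the $G_K$-action on $\End_\C(V_\C)$ induced by the semilinear action matches the tensor action on $\C\otimes\End V$, and that $\Theta(V)$, defined on $\D_{\Sen}(V)$ via $\Gamma(K_\cyc)$ only, commutes with all of $G_K$. For the latter I will use that $\D_{\Sen}(V)$ is $G_K$-stable with $G_K$ acting through $\Gamma(K_\cyc)$, together with density of $\D_{\Sen}(V)_\C$ in $V_\C$. Alternatively, I will cite Sen's result that $\Theta$ is $G_K$-equivariant as a $\C$-linear endomorphism. After this, (2) is formal.
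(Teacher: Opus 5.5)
Your argument is correct. For item (2) it is the paper's proof made explicit. The paper notes that $\Theta(K_\infty)$ is a $\Gamma(K_\infty)$-fixed vector of $\hatkinfty\otimes_{\QQ_p}\Lie\Gamma(K_\infty)$, hence locally analytic. It then invokes Lemma~\ref{lem:basis_lav}(2) for the basis $1\otimes X_i$, which consists of locally analytic vectors because the adjoint representation is analytic. Your direct computation $(g(a_i))_i=A(g)^{-1}(a_i)_i$ does the same job without the lemma; it is the same identity used in the proof of Theorem~\ref{thm:field_of_def_of_sen_op_is_gamma_finite}. You also spell out the Ax--Sen--Tate descent into $\hatkinfty\otimes_{\QQ_p}\Lie\Gamma(K_\infty)$, which the paper leaves implicit.

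The real difference is item (1). The paper simply cites Sen's Theorem 12(d), while you derive it from the following facts:
\begin{enumerate}
\item $\Theta(V)$ commutes with $\Gamma(K_\cyc)$. This group is abelian outright, so ``up to finite index'' is unnecessary.
\item The $\C$-linear extension of $\Theta(V)$ to $\C\otimes_{K_\cyc}\D_{\Sen}(V)=V_\C$ is therefore $G_K$-equivariant. This is an equality by Sen's theorem, so no density argument is needed.
\item Conjugation on $\End_\C(V_\C)$ matches the diagonal action on $\C\otimes_{\QQ_p}\End_{\QQ_p}(V)$.
\item Theorem~\ref{thm:sen_op_in_lie_alg}, combined with the equivariant inclusion $\Lie\Gamma(K_\infty)\subset\End_{\QQ_p}(V)$.
\end{enumerate}
Each step checks out. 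Your version is self-contained where the paper relies on the reference, at the cost of only a few extra lines.
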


\begin{proof}
Item (1) is \cite[Theorem 12 (d)]{sen1980continuous}. For item (2), as $\Theta(K_\infty)$ is a $\gammak$-fixed point of 
$\hatkinfty \otimes_\qp \Lie \gammak$, it is in particular   a locally analytic vector; we can conclude using Lemma \ref{lem:basis_lav}, as $\Lie \gammak$ is automatically locally analytic. 
\end{proof}

\begin{remark}
Theorem \ref{thm:sen_is_analytic}(1) gives a nice way to see the functoriality of the association $K_\infty \mapsto \Theta(K_\infty)$. Let $\mt{LieExt}_K$ be the category whose objects are $p$-adic Lie extensions $K_\infty/K$, with morphisms  $K_\infty/K\to K_\infty'/K$ being the same as a morphism $K_\infty \to K_\infty'$ which is the identity on $K$. Then the associations $K_\infty/K\mapsto \Lie\Gamma(K_\infty)$ and $K_\infty/K \mapsto \D_{\C}(\Lie(\Gamma(K_\infty)))$ are contravariant functors, and $\D_{\C}(\Lie(\Gamma(K_\infty'))) \to \D_{\C}(\Lie(\Gamma(K_\infty)))$ maps $\Theta(K_\infty')$ to $\Theta(K_\infty)$, by Lemma \ref{lemma:sen_operator_compatibility}. 
\end{remark}

We shall need the following important result due to Berger and Colmez. Recall $\mathsection$\ref{subsubsection:derivations} for our definitions for the action of $\wh{K}_\infty^\la\otimes_{\QQ_p}\Lie\Gamma(K_\infty)$ on $\wh{K}_\infty^\la$. 

\begin{proposition}
\label{prop:sen_operator_kills_la_vecs}
The Sen operator $\Theta(K_\infty) \in \widehat{K}_{\infty}^\la\otimes_{\QQ_p}\Lie\Gamma(K_\infty)$ acts by zero on $\widehat{K}_{\infty}^\la$.
\end{proposition}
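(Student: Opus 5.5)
The plan is to reduce to the cyclotomic direction, where the Sen operator is visibly a derivation along $\Gal(L_\infty/K_\infty)$. We follow the idea of Berger--Colmez \cite[\S6]{berger2016theorie}.

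\textbf{Step 1: reduction.} Set $L_\infty = K_\infty K_\cyc$, and replace $K$ by a finite extension, which changes neither $\Theta(K_\infty)$ nor the locally analytic vectors in question. By Lemma \ref{lemma:sen_operator_compatibility}, $\Theta(L_\infty)$ maps to $\Theta(K_\infty)$ under $\C\otimes\Lie\Gamma(L_\infty)\to\C\otimes\Lie\Gamma(K_\infty)$. Moreover, the action of $\Lie\Gamma(L_\infty)$ on $\wh{K}_\infty^\la\subset \wh{L}_\infty^\la$ factors through $\Lie\Gamma(K_\infty)$. It therefore suffices to show that $\Theta(L_\infty)$ kills $\wh{L}_\infty^\la$, or at least kills $\wh{K}_\infty^\la$. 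So we may assume $K_\cyc\subset K_\infty$.

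\textbf{Step 2: $K_\infty\supset K_\cyc$.} Let $H=\Gal(K_\infty/K_\cyc)$, a closed normal subgroup, and let $\nabla_\gamma\in\Lie\Gamma(K_\infty)$ lift the generator of $\Lie\Gamma(K_\cyc)$ given by $\log\gamma/\log\chi_\cyc(\gamma)$.

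\begin{itemize}
\item First identify $\Theta(K_\infty)$ concretely. Take a faithful representation $V$ of $\Gamma(K_\infty)$. Then $\D_{\Sen}(V)$ sits inside $(\C\otimes V)^{\Gal(\overline{K}/K_\cyc)}=\wh{K}_\infty\otimes V$ up to taking $H$-invariants. The Sen operator acts there by $\nabla_\gamma$, the derivative of the $\Gamma(K_\cyc)$-action on $H$-invariant, $\Gamma(K_\infty)$-finite vectors.
\item Next, use Sen's identity $\Theta(V)\in\C\otimes\Lie\Gamma(K_\infty)$ from Theorem \ref{thm:sen_op_in_lie_alg}, together with the locally analytic structure. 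On $(\wh{K}_\infty\otimes V)^\la$, the operator $1\otimes\Theta$ (acting on $V$ via the Lie algebra) and the differential action of $\Lie\Gamma(K_\infty)$ on both factors should combine as follows: for $x\in\wh{K}_\infty^\la$ one expects $\Theta(x)=\nabla_{\Sen}(x)-\nabla_{\Sen}(x)=0$.
\end{itemize}

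Concretely, I would use the Berger--Colmez description $\wh{K}_\infty^\la\otimes V=(\wh{K}_\infty\otimes V)^\la$ and consider the $\wh{K}_\infty$-linear operator $\Theta_V$ on $\wh{K}_\infty\otimes V$. Since it commutes with $G_K$ (Theorem \ref{thm:sen_is_analytic}(1)), it preserves locally analytic vectors. Compare it with the Lie algebra action $\Lie\Gamma\ni D\mapsto D\otimes1+1\otimes D$. The key identity to prove is that on $(\wh{K}_\infty\otimes V)^{H\text{-}\mathrm{fin}}$ one has
\[
\Theta_V=\nabla_\gamma\ (\text{total action}).
\]
Equivalently, the "horizontal" part $\Theta\otimes 1$ acting on scalars vanishes. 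Taking $V$ with a vector whose orbit spans and varying coordinates, this gives $\Theta(x)=0$ for a topologically generating set of $\wh{K}_\infty^\la$. These generators are the matrix coefficients of a basis of $\D_{\Sen}$-type vectors, i.e. the entries of a period matrix in $\wh{K}_\infty^\la$, by the Berger--Colmez computation. Since $\Theta$ acts as a continuous derivation, it then kills all of $\wh{K}_\infty^\la$.

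\textbf{Main obstacle.} The hard part is the final generation step: knowing that $\wh{K}_\infty^\la$ is (topologically, over $K_\infty$) generated by the entries of such period matrices and their inverses. This is the deep part of \cite{berger2016theorie}. I would simply invoke their structure theorem, that $\wh{K}_\infty^\la$ is built from $K_\infty$ and such coordinates. Failing that, I would cite \cite[\S6]{berger2016theorie} directly, where this vanishing is established.
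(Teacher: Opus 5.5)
Your Step~1 reduction is fine, though not needed. Step~2, however, does not prove the statement.

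The identity you single out, $\Theta_V=\nabla_\gamma$ for the total action on $H$-finite vectors, is wrong as stated. $\nabla_\gamma$ is only a lift of the cyclotomic generator, so it does not act canonically on vectors that are not killed by $\Lie H$. In Example~\ref{ex:Kummer_ext}, $\Theta(K_\infty)=\nabla_\gamma+\alpha\nabla_\tau$. On the $H$-finite vectors $1\otimes v$ with $v\in V$, the two sides differ by $\alpha\,\nabla_\tau(v)$, which is nonzero in general.

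The correct version lives on $\D_{\Sen}(V)$. Let $\Theta^{\mathrm{tot}}$ denote $\Theta(K_\infty)$ acting by derivations on $\Kinftyla\otimes_{\QQ_p}V$. Then $\Theta^{\mathrm{tot}}(x\otimes v)=\Theta(K_\infty)(x)\otimes v+\Theta_V(x\otimes v)$. An element $e\in\D_{\Sen}(V)$ is $H$-invariant, hence killed by $\Lie H$. So $\Theta^{\mathrm{tot}}(e)$ only sees the image of $\Theta(K_\infty)$ in $\C\otimes_{\QQ_p}\Lie\Gamma(K_\cyc)$, which is $1\otimes\nabla_\gamma$ by Lemma~\ref{lemma:sen_operator_compatibility}. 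Hence $\Theta^{\mathrm{tot}}(e)=\nabla_\gamma(e)=\Theta_V(e)$. Comparing the two formulas, $\Theta(K_\infty)$ kills the entries of the matrix expressing a basis of $\D_{\Sen}(V)$ in a basis of $V$. That is all this argument gives.

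The real gap is the passage from these period entries to all of $\Kinftyla$. You need $K_\infty$ and the period entries to generate a dense subring of some $\wh{K}_\infty^{\Gamma_n\han}$, and you do not prove this. It is an upper bound on the size of $\Kinftyla$, which is the hard content; Sen theory of a finite-dimensional $V$ only produces elements of $\Kinftyla$. Moreover, the description of $\Kinftyla$ in \cite{berger2016theorie} is obtained in \S6 by means of the isomorphism recalled below, which gives the vanishing directly. So invoking that description is no easier than invoking the vanishing. You would also still have to match its variables with your period entries.

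Your fallback of citing \cite[\S6]{berger2016theorie} is what the paper does. The point is that the argument there needs no generation statement. By the proof of \cite[Th\'eor\`eme 6.1]{berger2016theorie}, the orbit map $x\mapsto\phi_x$, with $\phi_x(g)=g(x)$, induces an isomorphism $\wh{K}_\infty(\mu_{p^m})\otimes_{K_n}\wh{K}_\infty^{\Gamma_n\han}\cong\mc{C}^{\an}(\Gamma_n,\wh{K}_\infty)^{D=0}$. Here $D\phi(g)=\lim_{t\to 0}\bigl(\phi(e^{t\mathfrak{a}}g)-\phi(g)\bigr)/t$. Next, \cite[Proposition 6.3]{berger2016theorie} identifies $\mathfrak{a}$ with the Sen operator of the faithful representation $V_1$, so $\mathfrak{a}=\Theta(K_\infty)$ by Proposition~\ref{prop:sen_operator_independant_of_faithful_rep_and_nonzero}. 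Finally, evaluating $D(\phi_x)=0$ at $g=1$ gives $\Theta(K_\infty)(x)=0$ for every $x$ at once.
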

\begin{proof}
To show this, consider the proof of \cite[Théorème 6.1]{berger2016theorie}. It is shown there that there is an isomorphism
$$\wh{K}_\infty(\mu_{p^m})\otimes_{K_n} \wh{K}_\infty^{\Gamma_n\han} \xrightarrow[]{\cong}\mc{C}^{\an}(\Gamma_n,\wh{K}_\infty)^{D=0},$$
where for $\phi \in \mc{C}^{\an}(\Gamma_n,\wh{K}_\infty)$, we have $D(\phi)(g) = \lim_{t \to 0}\frac{\phi(e^{t\mathfrak{a}}g)-\phi(g)}{t}$, for some specific ${\mathfrak{a}} \in\wh{K}_\infty(\mu_{p^m})\otimes_{\QQ_p}\Lie \Gamma(K_\infty)$. In \cite[Proposition 6.3]{berger2016theorie} it is proved that ${\mathfrak{a}}$ is the Sen operator of the representation $V_1$ of $\Gamma(K_\infty)$ which is given as follows: $V_1$ is the space of functions $\Gamma(K_\infty) \to \QQ_p$ which are restrictions of linear functions from $\Mat_{d}(\QQ_p)$ to $\Gamma(K_\infty)$, and where $\Gamma(K_\infty)$ acts on $V_1$ by the formula $(g\phi)(x)=\phi(g^{-1}(x))$. The representation $V_1$ of $\Gamma(K_\infty)$ is faithful, and so ${\mathfrak{a}}=\Theta(K_\infty)$. It thus suffices to explain why $\mathfrak{a}$ kills $\wh{K}_{\infty}^{\Gamma_n \han}.$
In the isomorphism mentioned above, an element $x \in \wh{K}_\infty^{\Gamma_n\han}$ is mapped to the function $\phi$ which is given by $\phi(g) = g(x)$. In particular $\phi(1)=x$. Hence as $D(\phi)(1)=0$, we get
$$0=D(\phi)(1) = \lim_{t \to 0}\frac{\phi(e^{t\mathfrak{a}})-\phi(1)}{t}=\lim_{t \to 0}\frac{e^{t\mathfrak{a}}(x)-x}{t}=\mathfrak{a}(x),$$
as required.
\end{proof}

\subsection{The field of rationality of $\Theta(\kinfty)$} \label{subsec:rationality}

In this subsection, we discuss the smallest field of $\C$ over which $\Theta(K_\infty)$ is defined. 

\begin{definition}[The field of rationality $E(K_\infty)$]
\label{def:rationality}
 We let $E(K_\infty)$ be the smallest subfield of $\C$ containing $K$ such that $\Theta(K_\infty) \in E(K_\infty)\otimes_{\QQ_p}\Lie\Gamma(K_\infty).$ Equivalently, let $\{\partial_i\}_i$ be a $\QQ_p$-basis 
      of $\Lie\Gamma(K_\infty)$. Writing $\Theta(K_\infty) = \sum a_i \otimes \partial_i \in \C \otimes_{\QQ_p} \Lie\Gamma(K_\infty)$, one sees that the field $E(K_\infty)$ generated over $K$ by the $a_i$ is independent of the choice of basis $\{\partial_i\}_i$. 
\end{definition}

In the following theorem, write $\ad(K_\infty):\Gamma(K_\infty) \to \GL(\Lie(\Gamma(K_\infty))$ for the adjoint representation.  Then $\widehat{K}_{\infty}^{\ker(\ad(K_\infty))} \subset \widehat{K}_\infty$ is a closed subfield.

\begin{theorem}
\label{thm:field_of_def_of_sen_op_is_gamma_finite}
The field $E(K_\infty)$ is contained in $\wh{K}_\infty^{\ker(\ad(K_\infty)),\Gamma(K_\infty)\hfin},$ the subring of elements in $\wh{K}_\infty^{\ker(\ad(K_\infty))}$ which have $\Gamma(K_\infty)$-orbit of finite $\QQ_p$-dimension.

\end{theorem}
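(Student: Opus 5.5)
Choose a $\QQ_p$-basis $\partial_1,\dots,\partial_d$ of $\Lie\Gamma(K_\infty)$ and write $\Theta(K_\infty)=\sum_i a_i\otimes\partial_i$ with $a_i\in\C$. By Theorem \ref{thm:sen_is_analytic}(2) and Lemma \ref{lem:basis_lav}(2), $\Theta(K_\infty)$ lies in $\wh{K}^\la_\infty\otimes_{\QQ_p}\Lie\Gamma(K_\infty)$. Since the $\partial_i$ form a basis, this forces each $a_i\in\wh{K}_\infty^\la\subset\wh{K}_\infty$. The plan is to show two things: each $a_i$ is fixed by $\ker(\ad(K_\infty))$, and the $\Gamma(K_\infty)$-orbit of each $a_i$ spans a finite-dimensional $\QQ_p$-space. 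The subring $\wh{K}_\infty^{\ker(\ad(K_\infty)),\Gamma(K_\infty)\hfin}$ is a subring (sums and products of finite vectors are finite), and it contains $K$. Moreover it is a field: if $a\neq0$ is finite, then $a$ is algebraic over the invariants $K$ of a finite-dimensional $K$-algebra $K[\Gamma a]$ on which $\Gamma$ acts, so $K[\Gamma a]$ is a domain finite over $K$ and hence a field. Granting the two claims, the field generated over $K$ by the $a_i$ therefore lies in this field, and so $E(K_\infty)$ is contained in it.

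The key input is Theorem \ref{thm:sen_is_analytic}(1): $\Theta(K_\infty)$ is fixed by $G_K$ under the diagonal action, and $G_K$ acts on $\wh{K}_\infty\otimes\Lie\Gamma(K_\infty)$ through $\Gamma(K_\infty)$. For $g\in\Gamma(K_\infty)$, write $\ad(g)\partial_j=\sum_i m_{ij}(g)\partial_i$ with $m(g)\in\GL_d(\QQ_p)$. The invariance $g\cdot\Theta=\Theta$ reads $\sum_j g(a_j)\otimes\ad(g)\partial_j=\sum_i a_i\otimes\partial_i$. Comparing coefficients in the basis $\partial_i$ gives $a_i=\sum_j m_{ij}(g)\,g(a_j)$, that is, $g(a)=m(g)^{-1}a$ for the column vector $a=(a_1,\dots,a_d)^{T}$.

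Both claims follow at once from this formula. If $g\in\ker(\ad(K_\infty))$, then $m(g)=1$, so $g(a_i)=a_i$. For any $g$, $g(a_i)$ is a $\QQ_p$-linear combination of $a_1,\dots,a_d$, so the $\QQ_p$-span of $\{a_1,\dots,a_d\}$ is $\Gamma(K_\infty)$-stable and finite-dimensional. Each $a_i$ is therefore $\Gamma(K_\infty)$-finite, and indeed finite over $\QQ_p$, which is stronger than over $K$.

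The main point needing care is bookkeeping rather than depth. First, the $G_K$-action on $\C\otimes\Lie\Gamma(K_\infty)$ must be genuinely diagonal, Galois on $\C$ and adjoint on $\Lie$; this holds by the convention set up in \S\ref{subsec:adj_rep_sen}. Second, one must confirm that the $a_i$ lie in $\wh{K}_\infty$, which is needed to speak of $\wh{K}_\infty^{\ker(\ad(K_\infty))}$. This can also be seen directly: $H(K_\infty)$ acts trivially on $\Lie\Gamma(K_\infty)$, so $G_K$-invariance gives $a_i\in\C^{H(K_\infty)}=\wh{K}_\infty$ by Ax–Sen–Tate. Everything else is the coefficient comparison above.
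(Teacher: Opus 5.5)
Your coefficient comparison is exactly the paper's argument: $G_K$-invariance of $\Theta(K_\infty)=\sum_i a_i\otimes\partial_i$ gives $g(a)=m(g)^{-1}a$. Hence each $a_i$ is fixed by $\ker(\ad(K_\infty))$, and its $\Gamma(K_\infty)$-orbit lies in the $\QQ_p$-span of $a_1,\dots,a_d$.

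The gap is the step that passes from the generators $a_i$ to the whole field $E(K_\infty)$, by asserting that $\wh{K}_\infty^{\ker(\ad(K_\infty)),\Gamma(K_\infty)\hfin}$ is a field. Only the $K$-span of $\Gamma(K_\infty)\cdot a$ is finite-dimensional. The $K$-algebra $K[\Gamma(K_\infty)\cdot a]$ it generates need not be, and a $\Gamma(K_\infty)$-finite element need not be algebraic over $K$.

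The false Tate tower of Example \ref{ex:Kummer_ext} shows this concretely. There $\Theta(\kft)=\nabla_\gamma+\alpha\nabla_\tau$, so $E(\kft)=K(\alpha)$, and $g(\alpha)\chi_\cyc(g)=\alpha+c(g)$. Thus $\alpha$ is $\Gamma(\kft)$-finite, since its orbit lies in $\QQ_p+\QQ_p\alpha$. But for $1\neq g\in\Gal(\kft/\kcyc)$ one has $g(\alpha)=\alpha+c(g)\neq\alpha$, so $\alpha\notin\kft$. Since an element of $\wh{K}_{\mathrm{fT}}$ algebraic over $K$ lies in $\overline{K}^{H(\kft)}=\kft$, $\alpha$ is transcendental over $K$. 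This is consistent with $\mathrm{trdeg}_K E(\kft)=1$ from Proposition \ref{prop:trans_deg_sen}.

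So $K[\Gamma(\kft)\cdot\alpha]=K[\alpha]$ is a polynomial ring. Moreover $1/\alpha\in E(\kft)$ has orbit containing $1/(\alpha+c(g))$ for $g\in\Gal(\kft/\kcyc)$. These are infinitely many elements $1/(\alpha+c)$ with distinct $c\in\ZZ_p$, and they are $\QQ_p$-linearly independent by partial fractions. Hence $1/\alpha$ is not $\Gamma(\kft)$-finite. The subring is therefore not a field, and the inclusion of the whole \emph{field} $E(K_\infty)$ in it actually fails in this example, so no argument can supply this step.

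What your computation does prove is the corrected statement. First, $E(K_\infty)\subset\wh{K}_\infty^{\ker(\ad(K_\infty))}$; this part is fine, since a fixed field is a field. Second, $K[a_1,\dots,a_d]\subset\wh{K}_\infty^{\ker(\ad(K_\infty)),\Gamma(K_\infty)\hfin}$, so $E(K_\infty)$ is the fraction field of a ring of $\Gamma(K_\infty)$-finite vectors. This is also all that the paper's own proof establishes: it stops once the $a_i$ are shown to be $\Gamma(K_\infty)$-finite. You should drop the field claim and state the result in this form. The later use in Corollary \ref{cor:sen_op_rationality} only needs the fixed-field part.
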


\begin{proof}
Let $\pi:G_K\to \Gamma(K_\infty)$ be the projection. The subgroup $\ker(\ad(K_\infty)\circ\pi)$ fixes $\Lie\Gamma(K_\infty)$ and $\Theta(K_\infty)$ (the latter being fixed by all of $G_K$). Hence all of the $a_i$ in Definition  \ref{def:rationality} are fixed by this subgroup also. It follows that the field $E(K_\infty)$ is contained in the corresponding fixed field $\wh{K}_\infty^{\ker\ad(K_\infty)}$. Now express $\Theta(K_{\infty})=\sum a_{i} \otimes \partial_i$ with
$a_{i}\in \wh{K}_\infty^{\ker\ad(K_\infty)}$ such that the $\partial_{i}$ form a basis of $\Lie\Gamma(K_\infty)$. Then for every $g \in \Gamma(K_\infty),$
\begin{align}
\label{eq:ad_identity}
\sum a_i \otimes \partial_i=\sum g(a_{i})\otimes \mathrm{ad}(g)(\partial_{i}).
\end{align}
Write $h_{ij}(g)$ for the coefficients of the matrix of $\ad(g^{-1})^t$ in the basis $\{{\partial_i}\}_{i=1,...,d}.$ They lie in $\QQ_p$. The equation (\ref{eq:ad_identity}) implies that we have $g(a_i) = \sum_j h_{ij}(g)a_j$, 
which shows that the $\Gamma(K_\infty)$-orbit of the $a_i$ is finite-dimensional over $\QQ_p.$
\end{proof}

\begin{corollary}
\label{cor:sen_op_rationality}  If $\Gamma(K_\infty)$ is abelian, then $\Theta(K_\infty) \in K \otimes_{\QQ_p} \Lie\Gamma(K_\infty)$.
\end{corollary}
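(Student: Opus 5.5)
If $\Gamma(K_\infty)$ is abelian, the plan is to apply Theorem \ref{thm:field_of_def_of_sen_op_is_gamma_finite} and show that the field it produces is just $K$.

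First, the adjoint action. Since $\Gamma(K_\infty)$ is abelian, its adjoint action on $\Lie\Gamma(K_\infty)$ is trivial. Hence $\ker(\ad(K_\infty)) = \Gamma(K_\infty)$. The theorem then gives
\[
E(K_\infty) \subset \wh{K}_\infty^{\Gamma(K_\infty)}.
\]
One can also see this directly from equation (\ref{eq:ad_identity}). With $\ad(g)=\Id$ and $\{\partial_i\}$ a basis, comparing coefficients gives $g(a_i)=a_i$ for every $g\in\Gamma(K_\infty)$.

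Second, we compute the invariants $\wh{K}_\infty^{\Gamma(K_\infty)}$, which is where the content lies. By Ax--Sen--Tate, $\C^{G_K}=K$. The field $\wh{K}_\infty$ is the closure of $K_\infty$ in $\C$, and it is identified with $\C^{H(K_\infty)}$ (again by Ax--Sen--Tate, applied to the closed subgroup $H(K_\infty)$). Therefore
\[
\wh{K}_\infty^{\Gamma(K_\infty)} = (\C^{H(K_\infty)})^{G_K/H(K_\infty)} = \C^{G_K} = K.
\]
So every $a_i$ lies in $K$, and hence $\Theta(K_\infty)\in K\otimes_{\QQ_p}\Lie\Gamma(K_\infty)$.

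One subtlety concerns the convention that $\Theta(K_\infty)$ depends on $K$ only up to finite extension (Proposition \ref{prop:sen_operator_independant_of_faithful_rep_and_nonzero}). Here $K$ is the fixed base field and $\Gamma(K_\infty)=\Gal(K_\infty/K)$. The invariants computation above holds for that $K$, so no enlargement is needed.

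I do not expect a real obstacle. The only point needing care is justifying $\wh{K}_\infty=\C^{H(K_\infty)}$, which is a standard consequence of Ax--Sen--Tate.
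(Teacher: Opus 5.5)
Your proposal is correct and follows the route the paper intends: the corollary is stated without proof, immediately after Theorem \ref{thm:field_of_def_of_sen_op_is_gamma_finite}. Abelianness gives $\ker(\ad(K_\infty))=\Gamma(K_\infty)$, so the coefficients of $\Theta(K_\infty)$ lie in $\wh{K}_\infty^{\Gamma(K_\infty)}=\C^{G_K}=K$ by Ax--Sen--Tate. Equivalently, they are $G_K$-invariant elements of $\C\otimes_{\QQ_p}\Lie\Gamma(K_\infty)$ with trivial action on the Lie algebra.
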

  

When $\Lie\Gamma(K_\infty)$ is Hodge-Tate we can get finer results for $E(K_\infty)$. We record these for the interest of the reader although they will not be used in the rest of the article.

\begin{proposition} \label{prop:trans_deg_sen}
Suppose  $\Lie\Gamma(K_\infty)$ is a Hodge-Tate representation. Then the following three numbers are equal:
\begin{enumerate}
\item
$\mt{trdeg}_{K}E(K_\infty)$.

\item $\dim\Gamma(K_\infty)-\dim\mt{Z}_{\Lie\Gamma(K_\infty)}(\Theta(K_\infty)),$ where $\mt{Z}_{\Lie\Gamma(K_\infty)}(\Theta(K_\infty))$ is the centralizer of $\Theta(K_\infty)$ in $\C\otimes_{\QQ_p} \Lie\Gamma(K_\infty)$. 

\item $\dim\Gamma(K_\infty)-\dim_{K}\D_{\C}(\Lie\Gamma(K_\infty)).$
\end{enumerate}
\end{proposition}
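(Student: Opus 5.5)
The plan is to prove the equality of (2) and (3) first, which is linear algebra once we know the Sen operator of the adjoint representation. Then we identify (1) with the dimension of the Zariski closure of the adjoint orbit of $\Theta(K_\infty)$, and compute that dimension via its tangent space. Write $\mathfrak g=\Lie\Gamma(K_\infty)$, $d=\dim\Gamma(K_\infty)$, and fix a $\QQ_p$-basis $\{\partial_i\}$ with $\Theta(K_\infty)=\sum a_i\otimes\partial_i$, as in Definition \ref{def:rationality}.

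\emph{Step 1: (2) $=$ (3).} By functoriality of Sen operators along $\ad:\Gamma(K_\infty)\to\GL(\mathfrak g)$ (compare the argument of Lemma \ref{lemma:sen_operator_compatibility} and Sen's Theorem \ref{thm:sen_op_in_lie_alg}), the Sen operator of the $G_K$-representation $\mathfrak g$ is $\ad(\Theta(K_\infty))$ acting on $\C\otimes_{\QQ_p}\mathfrak g$. Since $\mathfrak g$ is Hodge--Tate, this operator is semisimple with integer eigenvalues. The eigenvalue-$0$ part has $\C$-dimension $\dim_K\D_{\C}(\mathfrak g)$, which is the multiplicity of the Hodge--Tate weight $0$. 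Its kernel is exactly $\mt{Z}_{\mathfrak g}(\Theta(K_\infty))$. This gives (2) $=$ (3).

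\emph{Step 2: trdeg as the dimension of a Galois orbit closure.} Put $a=(a_i)\in\C^d$, and let $V\subset\mathbf A^d_K$ be the Zariski closure of $a$ over $K$, so that $\mt{trdeg}_K E(K_\infty)=\dim V$. I claim $V_{\C}$ is the $\C$-Zariski closure of the orbit $G_K\cdot a$. One inclusion is clear. For the other, the ideal $I\subset\C[x_1,\dots,x_d]$ of polynomials vanishing on $G_K\cdot a$ is $G_K$-stable and closed in each degree. Galois descent for finite-dimensional semilinear $\C$-representations that are trivial, using Ax--Sen--Tate $\C^{G_K}=K$ and the vanishing of $H^1(G_K,\GL_n(\C))$ on trivial objects, shows that $I$ is generated by its $G_K$-invariants. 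These invariants are $K$-polynomials vanishing at $a$. By the proof of Theorem \ref{thm:field_of_def_of_sen_op_is_gamma_finite}, $g(a)=h(g)a$ with $h(g)$ the matrix of $\ad(g^{-1})^t$. So $G_K\cdot a$ is the orbit of $\Theta(K_\infty)$ under $\ad(\Gamma(K_\infty))$, and $\dim V$ is the dimension of the Zariski closure of $\ad(\Gamma(K_\infty))\cdot\Theta(K_\infty)$. If $\mathbf H$ denotes the Zariski closure of $\ad(\Gamma(K_\infty))$ in $\GL(\mathfrak g)$, this closure is the orbit closure $\overline{\mathbf H\cdot\Theta}$. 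Its dimension is $\dim\Lie\mathbf H-\dim\mathrm{Stab}_{\Lie\mathbf H}(\Theta)=\dim[\Lie\mathbf H,\Theta]$.

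\emph{Step 3 (main obstacle): $[\Lie\mathbf H,\Theta]=[\ad\mathfrak g,\Theta]$.} The inclusion $\supseteq$ gives trdeg $\ge$ (2). The difficulty is that the algebraic hull $\Lie\mathbf H$ may be strictly larger than $\ad\mathfrak g$, for instance when $\mathfrak g$ has a non-algebraic torus part. I would first try to use the Hodge--Tate hypothesis through Sen's description of the Galois Lie algebra. The image $\ad\mathfrak g$ is the Lie algebra of the Galois image of the representation $\mathfrak g$. Here Sen's operator $\ad\Theta$ is semisimple with rational eigenvalues, so its algebraic hull is a split one-dimensional torus $T_\Theta$. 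The elements of $\Lie\mathbf H$ are built from $\ad\mathfrak g$ by taking replicas, i.e.\ semisimple and nilpotent parts and their rational-eigenvalue modifications. The plan is to show that each replica $X'$ of an element $X\in\ad\mathfrak g$ satisfies $[X',\Theta]\in[\ad\mathfrak g,\Theta]+\C\cdot\Theta$. The projection onto $\C\cdot\Theta$ is then excluded, because the orbit is conical only up to the torus $T_\Theta$, which fixes $\Theta$. If this replica analysis is too delicate, the fallback is to bound trdeg from above directly. One would use $E(K_\infty)\subset\Kinftyla$ (Theorem \ref{thm:sen_is_analytic}) together with Proposition \ref{prop:sen_operator_kills_la_vecs}: the derivations $\partial\in\mathfrak g$ act on $K(a)$ through $a\mapsto -[\partial,\Theta]$, and $\Theta$ kills everything. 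This should bound the rank of the module of $K$-derivations of $E(K_\infty)$ coming from $\mathfrak g$ by $\dim[\mathfrak g,\Theta]$. One then needs to argue that these derivations span $\Omega_{E(K_\infty)/K}^\vee$, which uses the fact that $E(K_\infty)$ consists of $\Gamma$-finite vectors (Theorem \ref{thm:field_of_def_of_sen_op_is_gamma_finite}).
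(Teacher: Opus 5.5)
Your Steps 1 and 2 are sound. Step 1 is exactly the paper's argument for (2)$=$(3). Step 2 correctly gives $\mathrm{trdeg}_K E(K_\infty)=\dim_{\C}\bigl((\Lie\mathbf H)_{\C}\cdot\Theta\bigr)$. The descent there is better justified by the fact that a $G_K$-stable $\C$-subspace of $\C\otimes_K K[x]_{\le n}$ is defined over $K$: normalise a Pl\"ucker coordinate to $1$ and use $\C^{G_K}=K$. Note that $\mathrm{H}^1(G_K,\GL_n(\C))$ itself is not trivial.

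\textbf{The gap is Step 3.} The inclusion $(\Lie\mathbf H)_{\C}\cdot\Theta\subseteq[\mathfrak g_{\C},\Theta]$, i.e.\ $\mathrm{trdeg}_K E(K_\infty)\le$ (2), is the whole content of (1)$=$(2), and neither of your routes proves it. As written, the proposal establishes only (1)$\ge$(2).
\begin{itemize}
\item The replica claim is only asserted. Excluding a $\C\cdot\Theta$-component ``because $T_\Theta$ fixes $\Theta$'' is not an argument. In fact $\C\cdot\Theta$ is not the relevant direction: what must be controlled is the component along $\ker(\ad\Theta)$ in $\mathfrak g_{\C}=\mathrm{im}(\ad\Theta)\oplus\ker(\ad\Theta)$.
\item The fallback has the same hole. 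That the derivations coming from $\mathfrak g$ span $\mathrm{Der}_K(E(K_\infty))$ does not follow from $\Theta$ killing $E(K_\infty)$ plus $\Gamma$-finiteness. A Lie algebra of derivations can have only algebraic constants and still span a proper subspace, e.g.\ $x\partial_x+\lambda y\partial_y$ on $K(x,y)$ with $\lambda\notin\QQ$. This is the non-algebraic-hull problem again.
\end{itemize}
The paper does not argue this step by hand: it quotes Serre (\emph{Groupes alg\'ebriques associ\'es aux modules de Hodge--Tate}, \S1.5, Th\'eor\`eme 3) for (1)$=$(2).

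\textbf{How your framework can be completed.}
\begin{itemize}
\item Put $\theta=\ad\Theta$, which is semisimple by the Hodge--Tate hypothesis. Grade $\mathfrak a_{\C}:=\ad(\mathfrak g)\otimes\C$ by $\ad\theta$-weights; it is $\ad\theta$-stable because $\theta\in\mathfrak a_{\C}$.
\item Elements of weight $m\neq0$ are nilpotent. They map $\Theta$ into the $m$-eigenspace of $\theta$, hence into $\mathrm{im}(\theta)=[\mathfrak g_{\C},\Theta]$.
\item A weight-$0$ element $\ad W$ satisfies $\ad[W,\Theta]=0$. So $[W,\Theta]\in\mathfrak z(\mathfrak g_{\C})\cap\mathrm{im}(\theta)\subseteq\ker\theta\cap\mathrm{im}\,\theta=0$.
\item Hence the weight-$0$ part lies in the Lie algebra of the algebraic group $\{h:\ h\Theta=\Theta,\ h\theta h^{-1}=\theta\}$. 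Therefore so does its algebraic hull $\mathfrak h_0$.
\item Use Chevalley's theorems: $[\mathfrak a^{\mathrm{alg}},\mathfrak a^{\mathrm{alg}}]=[\mathfrak a,\mathfrak a]$, and a Lie algebra generated by algebraic subalgebras is algebraic. The first shows $\mathfrak h_0+\mathfrak a_{\C}$ is a subalgebra. It is generated by $\mathfrak h_0$ and lines spanned by nilpotent elements, so it is algebraic. It therefore equals the algebraic hull $(\Lie\mathbf H)_{\C}$ of $\mathfrak a_{\C}$.
\item The weight-$0$ part of $(\Lie\mathbf H)_{\C}$ is $\mathfrak h_0$, which kills $\Theta$. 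Thus $(\Lie\mathbf H)_{\C}\cdot\Theta=[\mathfrak g_{\C},\Theta]$, and your Step 2 then yields (1)$=$(2) without appealing to Serre.
\end{itemize}
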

\begin{proof}
The equality between (1) and (2) is \cite[{$\mathsection{1.5}$}, Theorem 3]{serre1979groupes}. The equality between (2) and (3) follows from the fact that $\D_{\C}(\Lie\Gamma(K_\infty))$ is the space where $\Theta(\Lie\Gamma(K_\infty))$ acts by zero, and the observation that as an element of $\End_{\C}(\Lie(\Gamma(K_\infty)),$ the Sen operator $\Theta(\Lie\Gamma(K_\infty)) $ is none other than $ \ad(\Theta(K_\infty)).$ \end{proof}

\begin{corollary}
Suppose $\Lie\Gamma(K_\infty)$ is a Hodge-Tate representation. Then the Sen operator is defined over an algebraic extension of $K$ if and only if $\Lie\Gamma(K_\infty)$ is abelian. 
\end{corollary}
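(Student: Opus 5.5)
We derive the corollary from Proposition \ref{prop:trans_deg_sen}. The claim is that $E(K_\infty)$ is algebraic over $K$ if and only if $\Lie\Gamma(K_\infty)$ is abelian, and $E(K_\infty)$ is algebraic exactly when $\mt{trdeg}_K E(K_\infty)=0$. By the equality of (1) and (2) in Proposition \ref{prop:trans_deg_sen}, this holds if and only if $\dim \mt{Z}_{\Lie\Gamma(K_\infty)}(\Theta(K_\infty)) = \dim\Gamma(K_\infty)$. In other words, the centralizer of $\Theta(K_\infty)$ in $\C\otimes_{\QQ_p}\Lie\Gamma(K_\infty)$ is the whole Lie algebra, so $\Theta(K_\infty)$ is central in $\C\otimes_{\QQ_p}\Lie\Gamma(K_\infty)$. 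We interpret the centralizer dimension as a $\C$-dimension, following Serre's theorem; nothing else in the argument depends on this convention.

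The direction ``abelian $\Rightarrow$ algebraic'' is immediate: Corollary \ref{cor:sen_op_rationality} gives $\Theta(K_\infty)\in K\otimes_{\QQ_p}\Lie\Gamma(K_\infty)$, so $E(K_\infty)=K$. It also follows from the criterion above, since in an abelian Lie algebra every element is central.

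For the converse, suppose $\Theta:=\Theta(K_\infty)$ is central in $\mathfrak g_\C:=\C\otimes_{\QQ_p}\mathfrak g$, where $\mathfrak g=\Lie\Gamma(K_\infty)$, and show that $\mathfrak g$ is abelian. We use the adjoint action of $G_K$ on $\mathfrak g$, extended semilinearly to $\mathfrak g_\C$. This action is by Lie algebra automorphisms, and $\Theta$ is $G_K$-fixed by Theorem \ref{thm:sen_is_analytic}(1). Let $\mathfrak h\subset\mathfrak g$ be the smallest $\QQ_p$-subspace with $\Theta\in\C\otimes_{\QQ_p}\mathfrak h$; this is the $\QQ_p$-span of the ``coefficients'' of $\Theta$. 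Since $\Theta$ is central, $[\Theta, x]=0$ for all $x\in\mathfrak g$. Expanding $\Theta=\sum a_i\otimes h_i$ with the $a_i$ $\QQ_p$-linearly independent gives $[h_i,x]=0$ for all $i$. Hence $\mathfrak h\subset\mathfrak z(\mathfrak g)$, where $\mathfrak z(\mathfrak g)$ denotes the center.

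Next we show $\mathfrak h=\mathfrak g$. Since $\Theta$ is $G_K$-fixed, $\mathfrak h$ is stable under $\ad(\Gamma(K_\infty))$, so $\mathfrak h$ is an ideal. Sen's theorem states that the Lie algebra of the image of inertia is the smallest $\QQ_p$-subspace whose $\C$-span contains $\Theta$; this is Theorem \ref{thm:sen_op_in_lie_alg} together with its refinement in \cite[Theorem 11]{sen1980continuous}. Consequently $\mathfrak h$ is the Lie algebra of inertia. Since $K_\infty/K$ has finite unramified part up to a finite extension, this Lie algebra is all of $\mathfrak g$. Therefore $\mathfrak g=\mathfrak h\subset\mathfrak z(\mathfrak g)$, and $\mathfrak g$ is abelian.

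The main obstacle is precisely this step $\mathfrak h=\mathfrak g$: the converse relies on Sen's result that $\Theta$ generates the Lie algebra of inertia. If that refinement were unavailable, a fallback would be to argue directly from the Hodge--Tate hypothesis. Centrality of $\Theta$ means the Sen operator of $\mathfrak g$, which is $\ad(\Theta)$, vanishes, so $\mathfrak g$ is $\C$-admissible and hence potentially unramified. By the corollary to \cite[Theorem 11]{sen1980continuous}, $\ad(\Gamma(K_\infty))$ is then finite. It follows that $\mathfrak g$ is central up to finite index, hence abelian.
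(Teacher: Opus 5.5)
Your reduction to centrality of $\Theta(K_\infty)$ via Proposition~\ref{prop:trans_deg_sen}, the direction ``abelian $\Rightarrow$ algebraic'', and the argument that the coefficient space $\mathfrak h$ lies in $\mathfrak z(\mathfrak g)$ are all fine. The step that fails is $\mathfrak h=\mathfrak g$. A $p$-adic Lie extension can have an infinite unramified part, and no finite extension of $K$ removes it. For example, take $K_\infty=K_\cyc K_{\mathrm{ur}}$ with $K_{\mathrm{ur}}$ the unramified $\ZZ_p$-extension. Then $\Gamma(K_\infty)$ is open in $\ZZ_p^\times\times\ZZ_p$, and the Lie algebra of inertia, which by Sen's Theorem 11 is your $\mathfrak h$, is a line in a plane. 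So ``this Lie algebra is all of $\mathfrak g$'' is false as stated.

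The repair is short. Let $I\subset\Gamma(K_\infty)$ be inertia. Then $\Gamma(K_\infty)/I$ is a quotient of $\Gal(K^{\mathrm{ur}}/K)\cong\widehat{\ZZ}$, hence procyclic, so $\mathfrak h=\Lie I$ has codimension at most $1$ in $\mathfrak g$. Writing $\mathfrak g=\mathfrak h+\QQ_p x$ with $\mathfrak h$ central, every bracket vanishes, so $\mathfrak g$ is abelian. Your fallback has the same defect: $\ad(\Theta)=0$ only says that $\ad$ restricted to inertia has finite image (potential unramifiedness), not that $\ad(\Gamma(K_\infty))$ is finite. That gives $\Lie I\subset\mathfrak z(\mathfrak g)$, and you again need the codimension-one argument.

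With this fix your proof is correct, and it takes a different route from the paper's. The paper argues by contradiction. It passes to the fixed field $K'_\infty$ of the center of $\Gamma(K_\infty)$ and observes that centrality forces $\Theta(K_\infty)\in\C\otimes_{\QQ_p}\mathfrak z(\mathfrak g)$. Lemma~\ref{lemma:sen_operator_compatibility} then gives $\Theta(K'_\infty)=0$, contradicting nonvanishing of the Sen operator. The paper therefore needs only the weak corollary of Sen's Theorem 11 ($\Theta\neq0$ when inertia is infinite) plus functoriality. You instead use the full minimality statement of Sen's Theorem 11 to identify $\mathfrak h$ with $\Lie I$ directly, which avoids quotient towers altogether. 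The paper's route silently needs the same fact you were missing. Nonvanishing of $\Theta(K'_\infty)$ requires $K'_\infty/K$ to be infinitely ramified, and that holds only because $\dim\mathfrak g/\mathfrak z(\mathfrak g)\ge 2$ when $\mathfrak g$ is non-abelian.
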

\begin{proof}
The previous proposition shows that $E(K_\infty)\subset \bar{K}$ if and only if $\Theta(K_\infty)$ is central in $\C\otimes_{\QQ_p} \Lie\Gamma(K_\infty)$. It therefore suffices to show that this condition implies that $\Lie\Gamma(K_\infty)$ is abelian. Let $\mt{Z}(K_\infty)$ be the center of $\Gamma(K_\infty)$. Possibly replacing $\Gamma(K_\infty)$ with an open subgroup, as we may do, $\mt{Z}(K_\infty)$ becomes a $p$-adic Lie group with Lie algebra $\mt{Z}(\Lie\Gamma(K_\infty))$. Let $\mt{P}\Gamma(K_\infty) = \Gamma(K_\infty)/\mt{Z}(K_\infty)$. Assume by contradiction that $\Lie\Gamma(K_\infty)$ is not abelian. Then we have $\mt{P}\Gamma(K_\infty) = \Gamma(K_\infty')$ for some $p$-adic Lie extension $K_\infty'$ of $K$. We have an exact sequence
$$0 \to \mt{Z}(\Lie\Gamma(K_\infty)) \to \Lie\Gamma(K_\infty) \to \Lie\mt{P}\Gamma(K_\infty) \to 0.$$
By the second paragraph of \cite[Chapter I, {$\mathsection{3}$}, no. 8]{bourbaki1975lie}, we have $$\mt{Z}(\C\otimes_{\QQ_p}\mt\Lie\Gamma(K_\infty)) =  \C\otimes_{\QQ_p}\mt{Z}(\Lie\Gamma(K_\infty)).$$ Hence by assumption we have $$\Theta(K_\infty) \in  \C\otimes_{\QQ_p}\mt{Z}(\Lie\Gamma(K_\infty)),$$ so the map $$\C\otimes_{\QQ_p}\Lie\Gamma(K_\infty) \to \C\otimes_{\QQ_p}\Lie\mt{P}\Gamma(K_\infty)$$ sends $\Theta(K_\infty)$ to $0$. But by Lemma \ref{lemma:sen_operator_compatibility}, this is also equal to $\Theta(K_\infty')$, which is nonzero by Proposition \ref{prop:sen_operator_independant_of_faithful_rep_and_nonzero}.
\end{proof}

\section{Orientations} \label{sec:ori}

In this section we introduce the concept of an orientation, which is a certain Galois-equivariant  lift of a Sen operator. As explained in the introduction, this lift will lead to canonical  descriptions of $\bdrpluskinftypa$ in   \S \ref{section:la_vecs_in_bdr}.


\subsection{Definition of orientations}


\begin{definition} \label{def:orientation}
    Recall by Theorem \ref{thm:sen_is_analytic}, we have
\[ \Theta(K_{\infty})   \in \D_{\C}(\Lie\Gamma(K_\infty)) = (\C \otimes_{\QQ_p} \Lie\Gamma(K_\infty))^{G_K} \]
By an orientation of $K_\infty$, we mean a lift of $\Theta(K_\infty)$ to an element \[ \nabla(K_\infty) \in \DdRplus(\Lie\Gamma(K_\infty)) =(\BdRplus\otimes_{\QQ_p}\Lie\Gamma(K_\infty))^{G_K}\]
We say $K_\infty$ is orientable if there exists an orientation of $K_\infty$. We say $K_\infty$ is canonically orientable if a unique orientation exists.
\end{definition}

\begin{lemma}
    If an orientation $\nabla(K_\infty)$ exists, then necessarily
    \[ \nabla(K_\infty) \in  (\BdRpluspa\otimes_{\QQ_p}\Lie\Gamma(K_\infty))^{\gammak}\]
\end{lemma}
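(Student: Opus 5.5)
The plan is to follow the same pattern as the proof of Theorem \ref{thm:sen_is_analytic}(2): first descend $\nabla(K_\infty)$ to a $\Gamma(K_\infty)$-fixed vector in a Fréchet representation, then use that fixed vectors are automatically pro-analytic, and finally apply Lemma \ref{lem:basis_lav}(2) to move the pro-analyticity onto the coefficients.

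\textbf{Descent.} Fix a $\QQ_p$-basis $\partial_1,\dots,\partial_d$ of $\Lie\Gamma(K_\infty)$ and write $\nabla(K_\infty)=\sum_i b_i\otimes\partial_i$ with $b_i\in\BdRplus$. The group $H(K_\infty)=\Gal(\overline K/K_\infty)$ acts trivially on $\Lie\Gamma(K_\infty)$, since the $G_K$-action factors through $\Gamma(K_\infty)$. So $G_K$-invariance of $\nabla(K_\infty)$ forces each $b_i$ to be $H(K_\infty)$-fixed, i.e.\ $b_i\in\BdRplus(K_\infty)$. Thus
\[\nabla(K_\infty)\in\bigl(\BdRplus(K_\infty)\otimes_{\QQ_p}\Lie\Gamma(K_\infty)\bigr)^{\Gamma(K_\infty)}.\]

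\textbf{Fixed vectors are pro-analytic.} Set $M=\BdRplus(K_\infty)\otimes_{\QQ_p}\Lie\Gamma(K_\infty)$. This is a finite free $\BdRplus(K_\infty)$-module with the diagonal semilinear $\Gamma(K_\infty)$-action, and a Fréchet representation as the inverse limit of the Banach representations $M/I_\theta^n M$. The image of $\nabla(K_\infty)$ in each $M/I_\theta^nM$ is $\Gamma(K_\infty)$-fixed, and a fixed vector has constant orbit map, so it is analytic. Passing to the limit gives $\nabla(K_\infty)\in M^{\pa}$.

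\textbf{Coefficients.} The basis vectors $1\otimes\partial_i$ lie in $M^{\pa}$, because $\Gamma(K_\infty)$ acts on them through the finite-dimensional, analytic adjoint representation. Lemma \ref{lem:basis_lav}(2), applied with $R=\BdRplus(K_\infty)$, then says that $M^\pa$ is free over $\BdRpluspa$ with basis $1\otimes\partial_i$, so $M^\pa=\BdRpluspa\otimes_{\QQ_p}\Lie\Gamma(K_\infty)$. Hence every $b_i\in\BdRpluspa$, and $\Gamma(K_\infty)$-invariance is inherited, which is the assertion.

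The only step requiring any care is the last one: one must check that Lemma \ref{lem:basis_lav}(2) applies to the Fréchet ring $\BdRplus(K_\infty)$ with its $I_\theta$-adic presentation, in particular that the $\partial_i$ are pro-analytic in each quotient $M/I_\theta^nM$. This holds because the adjoint action is a $\QQ_p$-linear analytic representation of $\Gamma(K_\infty)$.
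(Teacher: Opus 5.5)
Your proposal is correct and follows the paper's proof. The paper likewise observes that a $\Gamma(K_\infty)$-fixed vector is automatically pro-analytic and then concludes via Lemma \ref{lem:basis_lav}, using that $\Lie\Gamma(K_\infty)$ is locally analytic, just as in Theorem \ref{thm:sen_is_analytic}(2); you have simply written out the descent to $\BdRplus(K_\infty)$ and the coefficient step explicitly.
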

\begin{proof}
    This is similar to  Theorem \ref{thm:sen_is_analytic}: a $\gammak$-fixed point is necessarily pro-analytic.
\end{proof}

\begin{notation} \label{nota:DC_DDR}
    We set up some notation which will be convenient in this section. Let $V_{\Sen}(K_\infty)$ be the $K$-span of $\Theta(K_\infty)$ in $\D_{\C}(\Lie\Gamma(K_\infty))$. Let $V_{\dR}^+(K_\infty)$ be the inverse image of $V_{\Sen}(K_\infty)$ under the natural map $\DdRplus(\Lie\Gamma(K_\infty)) \to \D_{\C}(\Lie\Gamma(K_\infty)).$ Thus $\kinfty$ is orientable if and only if the map $V_{\dR}^+(K_\infty) \to V_{\Sen}(K_\infty)$ is non-zero, and canonically orientable  if and only if the map $V_{\dR}^+(K_\infty) \to V_{\Sen}(K_\infty)$ is an isomorphism.
\end{notation}

\subsection{General criteria}
 
We start with an easy sufficient condition for orientability.

\begin{proposition}[de Rham extensions are orientable]  
\label{prop:de_rham_extensions_are_orientable}
Suppose $\Lie\Gamma(K_\infty)$ is de Rham as a representation. Then $K_\infty$ is orientable. It is canonically orientable if and only if  $\DdRplus(\Lie\Gamma(K_\infty)(1))=0$. 
\end{proposition}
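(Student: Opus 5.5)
The plan is to reduce both claims to the surjectivity and the kernel of the reduction map $\theta:\DdRplus(V)\to\D_{\C}(V)$ for the de Rham representation $V:=\Lie\Gamma(K_\infty)$. By definition, $K_\infty$ is orientable if and only if $\Theta(K_\infty)$ lies in the image of this map (Notation \ref{nota:DC_DDR}). It is canonically orientable if and only if, in addition, the lift is unique, that is, the map $V_{\dR}^+(K_\infty)\to V_{\Sen}(K_\infty)$ is injective. This injectivity amounts to the kernel of $\DdRplus(V)\to\D_{\C}(V)$ being zero. Indeed, two lifts differ by an element of that kernel, and conversely, given one lift $\nabla$ and a nonzero kernel element $\kappa$, the element $\nabla+\kappa$ is another lift.

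For the first step I would take the $G_K$-cohomology of the short exact sequence
\[0\to \Fil^1\BdRplus\otimes_{\QQ_p}V\to \BdRplus\otimes_{\QQ_p}V\to \C\otimes_{\QQ_p}V\to 0,\]
and identify $\Fil^1\BdRplus=t\BdRplus\cong\BdRplus(1)$. This gives the exact sequence
\[0\to \DdRplus(V(1))\to\DdRplus(V)\to \D_{\C}(V)\to H^1(G_K,\BdRplus\otimes V(1)).\]
The kernel statement is then immediate, which proves the "canonically orientable iff $\DdRplus(V(1))=0$" part, provided orientability is known.

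The main step is surjectivity of $\DdRplus(V)\to\D_{\C}(V)$ when $V$ is de Rham. I would use the standard fact that for de Rham $V$ one has $\BdR\otimes_{\QQ_p}V\cong \BdR\otimes_K\D_{\dR}(V)$ compatibly with filtrations. Taking $\Fil^0$ gives
\[\BdRplus\otimes_{\QQ_p}V\cong\sum_i \Fil^{-i}\D_{\dR}(V)\otimes_K t^{i}\BdRplus,\]
and passing to $\gr^0$ gives
\[\C\otimes V\cong\bigoplus_i \gr^{-i}\D_{\dR}(V)\otimes_K\C(i).\]
Taking $G_K$-invariants with Tate's theorem ($\C(i)^{G_K}=0$ for $i\ne0$, $\C^{G_K}=K$) yields $\D_{\C}(V)=\gr^0\D_{\dR}(V)$. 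Meanwhile $\DdRplus(V)=\Fil^0\D_{\dR}(V)$, and the map between them is the natural projection $\Fil^0\to\gr^0$, which is surjective. So $\Theta(K_\infty)$ lifts, and $K_\infty$ is orientable.

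The point needing most care is checking that the reduction map $\DdRplus(V)\to\D_{\C}(V)$ agrees with $\Fil^0\D_{\dR}\to\gr^0\D_{\dR}$ under these identifications. This is a routine compatibility of the filtered comparison isomorphism with $\theta$, but it is where the de Rham hypothesis is used. An alternative is to show that the connecting map to $H^1(G_K,\BdRplus\otimes V(1))$ vanishes by a dimension count using $\dim_K\D_{\dR}=\dim_K\D_{\mathrm{HT}}$.
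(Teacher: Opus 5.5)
Your proposal is correct and follows the paper's proof. The paper likewise identifies $\DdRplus(V)\to\D_{\C}(V)$ with the surjection $\Fil^0\D_{\dR}(V)\twoheadrightarrow\gr^0\D_{\dR}(V)$ and reads off its kernel as $\DdRplus(V(1))$ from the sequence induced by $0\to t\BdRplus\to\BdRplus\to\C\to 0$; you simply supply the filtered-comparison details that the paper leaves implicit.
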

\begin{proof}
The map $\DdRplus(\Lie\Gamma(K_\infty))\to \D_{\C}(\Lie\Gamma(K_\infty))$ identifies with the surjection   $$\Fil^0\D_{\dR}(\Lie\Gamma(K_\infty)) \onto \gr^0\D_{\dR}(\Lie\Gamma(K_\infty))$$ and has kernel $\DdRplus(\Lie\Gamma(K_\infty)(1)).$
\end{proof}

\begin{remark} \label{rem:future_criterion}
Write $V = \Lie\Gamma(K_\infty)$. We have an exact sequence
\begin{equation} \label{eq:seq_for_ori}
0\rightarrow\D_{\dR}^{+}(V(1))\rightarrow\D_{\dR}^{+}(V)\rightarrow\D_{\C}(V)\xlongrightarrow{\delta} \mt{H}^{1}(G_{K},\B_{\dR}^{+}\otimes V(1)).
\end{equation} 
The tower $K_\infty$ is orientable if and only if $\delta(\Theta(K_\infty))=0$. 
 Thus the image $\delta(\Theta(K_\infty))$  is the precise obstruction for orientability.
 However, this obstruction is in general difficult to compute directly; we shall see in Theorem \ref{thm:characterization_of_orientability} below an (equivalent) characterization of orientability (the proof will use results in \S \ref{section:la_vecs_in_bdr}).
\end{remark}

To give a useful characterization for $K_\infty$ being canonically orientable, we  recall a classical lemma.

\begin{lemma} \label{lem:no_dR_period}
Let $V$ be a $p$-adic Galois representation of $G_K$. The following are equivalent.

\begin{enumerate}
\item The representation $V$ has no generalized Hodge-Tate weights in $\ZZ_{\le 0}$.

\item We have  $\mt{H}^i(G_K,\BdRplus\otimes_{\QQ_p} V) = 0$ for $i = 0, 1$.

\item We have  $\D_{\dR}^+(V)= 0$.

\end{enumerate}

\end{lemma}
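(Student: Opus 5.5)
We use the $t$-adic filtration of $\BdRplus$ together with Tate's classical computation of $\mt{H}^i(G_K,\C(j))$ and Sen's theory. The graded pieces are $t^j\BdRplus/t^{j+1}\BdRplus \cong \C(j)$ for $j\ge 0$, so $\BdRplus\otimes_{\QQ_p}V$ has graded pieces $\C\otimes_{\QQ_p} V(j)$. The basic input is: for a $p$-adic representation $W$, we have $\mt{H}^0(G_K,\C\otimes W)=0$ if and only if $\mt{H}^1(G_K,\C\otimes W)=0$, if and only if $0$ is not a generalized Hodge--Tate weight of $W$. This follows from Sen's theory: $\mt{H}^i(G_K,\C\otimes W)\otimes_K \C$ is computed by $\ker$ and $\coker$ of the Sen operator $\Theta(W)$ acting on $\D_{\Sen}(W)$. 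Equivalently, both groups have $K$-dimension equal to the dimension of the generalized $0$-eigenspace kernel of $\Theta$, and they vanish exactly when $\Theta(W)$ is invertible. The generalized Hodge--Tate weights of $V(j)$ are those of $V$ shifted by $j$. Hence $V(j)$ has weight $0$ if and only if $V$ has weight $-j$.

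For (1)$\Rightarrow$(2), assume $V$ has no weights in $\ZZ_{\le0}$. Then for every $j\ge 0$ the representation $V(j)$ has no weight $0$, so $\mt{H}^i(G_K,\C\otimes V(j))=0$ for $i=0,1$. The long exact sequences for $0\to t^{n}\BdRplus/t^{n+1}\to \BdRplus/t^{n+1}\to\BdRplus/t^n\to0$, tensored with $V$, give by induction on $n$ that $\mt{H}^i(G_K,(\BdRplus/t^n)\otimes V)=0$ for $i=0,1$. To pass to the limit $\BdRplus=\varprojlim \BdRplus/t^n$ we use the Milnor-type sequence $0\to \varprojlim^1 \mt{H}^{i-1}\to \mt{H}^i(\varprojlim)\to\varprojlim \mt{H}^i\to 0$ for continuous cochains. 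This is valid because the transition maps are surjective on cochain level, since $\BdRplus$ carries the inverse limit topology. The $\varprojlim^1$ term vanishes because all the $\mt{H}^0$ vanish, and we obtain (2). The implication (2)$\Rightarrow$(3) is trivial, since $\D_{\dR}^+(V)=\mt{H}^0$.

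For (3)$\Rightarrow$(1), argue by contrapositive. Suppose $-j$ is a generalized Hodge--Tate weight of $V$ with $j\ge0$, and choose $j$ minimal. Then $V(j)$ has weight $0$, so $\mt{H}^0(G_K,\C\otimes V(j))\neq0$. We need to lift a nonzero element to a $G_K$-invariant in $\BdRplus\otimes V$. The obstacle is that the connecting maps into $\mt{H}^1$ may not vanish. The plan is to use the isomorphism $t^j\BdRplus\otimes V \cong \BdRplus\otimes V(j)$ and work with the inclusion $\mt{H}^0(G_K,t^j\BdRplus\otimes V)\subset \D_{\dR}^+(V)$. It then suffices to find a nonzero invariant in $\BdRplus\otimes W$ with $W=V(j)$, where $W$ has weight $0$ but no weights in $\ZZ_{\le -1}$ by minimality.

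The expected main difficulty is precisely this lifting, since the invariant in $\C\otimes W$ may obstruct. To handle it, we consider $\mt{H}^0(G_K,(\BdRplus/t^{n})\otimes W)$, which is nonzero for $n=1$. We need to show these form an inverse system whose limit is nonzero. The standard approach is via Fontaine's theorem that $\BdRplus\otimes W\cong \bigoplus$-type decomposition over $\overline K\llbracket t\rrbracket$: the $G_K$-invariants of $(\BdRplus/t^n)\otimes W$ are computed by $\D_{\Sen}$-type modules $\D^+_{\dR,n}$ over $K_\cyc[t]/t^n$ with nilpotent-shifted operators $\Theta+k$ on the $t^k$ graded pieces. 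Since $\Theta+k$ is invertible for $k\ge1$ (no weights $\le -1$), the invariants in $t\BdRplus\otimes W$ and its $\mt{H}^1$ vanish by the first part applied to $W(1)$. The long exact sequence then gives $\D_{\dR}^+(W)\cong \mt{H}^0(G_K,\C\otimes W)\neq0$. This is in fact clean: apply the (1)$\Rightarrow$(2) argument to $W(1)$, which has no weights in $\ZZ_{\le0}$, and conclude (1).
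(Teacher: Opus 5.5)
Your overall route is the paper's. For (1)$\Rightarrow$(2) you dévisse along $t^n\BdRplus/t^{n+1}\BdRplus\cong\C(n)$ and pass to the $t$-adic limit; your Milnor-sequence argument is just a more explicit version of the paper's ``$t$-adic continuity''. For (3)$\Rightarrow$(1) you move an extremal non-positive integer weight to $0$ by twisting, apply (1)$\Rightarrow$(2) to the further twist by $1$ to get $\D_{\dR}^+(W)\cong\D_{\C}(W)\neq 0$, and then use $\D_{\dR}^+(V(j))=\Fil^j\D_{\dR}(V)\subset\D_{\dR}^+(V)$.

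\textbf{The gap: your extremal choice is the wrong way round.}
\begin{itemize}
\item For $W=V(j)$ to have no weights in $\ZZ_{\le -1}$, you need $-j$ to be the \emph{smallest} integer weight of $V$, i.e.\ $j$ \emph{maximal}. This matches the paper, which takes $n$ to be the minimal weight.
\item With $j$ minimal the step fails, and not just formally. Take the Hodge--Tate, non-de Rham representation $V=\Lie\Gamma(K_\infty)$ of Example~\ref{ex_non_dR_ext}, with weights $-1,0$. Your choice gives $j=0$ and $W=V$, so $W(1)$ has weight $0$ and (1)$\Rightarrow$(2) does not apply to it. Indeed, for this $V$ the map $\D_{\dR}^+(V)\to\D_{\C}(V)$ is zero, so the nonzero element of $\D_{\C}(W)$ genuinely does not lift.
\item Taking $j=1$ instead, $W(1)=V(2)$ has weights $1,2$, and the argument goes through. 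It produces the nonzero subspace $\D_{\dR}^+(V(1))=\Fil^1\D_{\dR}(V)\subset\D_{\dR}^+(V)$.
\end{itemize}

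With that correction your proof is complete. The intermediate discussion of a Fontaine-type decomposition over $\overline{K}\llbracket t\rrbracket$ and modules $\D^+_{\dR,n}$ is unjustified as stated and should be dropped; your final sentence already contains the whole argument.

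One minor point: $\dim_K\mt{H}^0(G_K,\C\otimes W)$ equals the dimension of $\ker\Theta(W)$, not of the generalized $0$-eigenspace. Only the vanishing criterion is used, so this is harmless.
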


\begin{proof}
First we show that item (1) implies item (2). For $n \ge 0$, we have exact sequences
$$ 0\to t^{n+1}\BdRplus \to t^{n}\BdRplus \to \C(n) \to 0.$$
Tensoring with $V$ and taking $G_K$-cohomology, we get that the natural maps $$\mt{H}^i(G_K,t^{n+1}\BdRplus\otimes_{\QQ_p} V) \to \mt{H}^i(G_K,t^{n}\BdRplus\otimes_{\QQ_p} V)$$ for $i = 0,1$
 are isomorphisms. Hence we conclude by $t$-adic continuity.

Clearly item (2) implies item (3).
 To conclude, we show that the negation of item (1) implies the negation of item (3). Suppose then that $V$ has a generalized Hodge-Tate weight in $\ZZ_{\le 0}$, and let $n$ be the minimal integer. Observe that $V(-n+1)$ has no generalized Hodge-Tate weights in $\ZZ_{\le 0}$, so by the direction already established, we have $\mt{H}^i(G_K,\BdRplus\otimes_{\QQ_p} V(-n+1)) = 0$ for $i = 0, 1$. Hence $\D_{\dR}^+(V(-n)) \cong \D_{\C}(V(-n)),$ so $\D_{\dR}^+(V(-n)) $ is nonzero. But $\D_{\dR}^+(V(-n)) = \Fil^{-n}\D_{\dR}^+(V)$ is contained in $\D_{\dR}^+(V)$, so $\D_{\dR}^+(V)$ itself is nonzero.
\end{proof}

We deduce the following characterization of canonically orientable extensions:

\begin{theorem}[Characterization of canonically orientable $K_\infty$]
\label{thm:characterization_of_canonical_orientability}
The following are equivalent.
\begin{enumerate}
    \item 
 The extension $K_\infty$ is canonically orientable.

\item The representation
 $\Lie\Gamma(K_\infty)$ has no generalized Hodge-Tate weights in $\ZZ_{\leq-1}$. 

\end{enumerate}
\end{theorem}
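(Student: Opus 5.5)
Write $V=\Lie\Gamma(K_\infty)$. The plan is to read canonical orientability off the exact sequence \eqref{eq:seq_for_ori} of Remark \ref{rem:future_criterion},
\[
0\rightarrow\D_{\dR}^{+}(V(1))\rightarrow\D_{\dR}^{+}(V)\rightarrow\D_{\C}(V)\xrightarrow{\delta} \mt{H}^{1}(G_{K},\B_{\dR}^{+}\otimes V(1)).
\]
The set of orientations is the fibre of $\D_{\dR}^+(V)\to\D_\C(V)$ over $\Theta(K_\infty)$. Whenever this fibre is nonempty, it is a torsor under the kernel $\D_{\dR}^+(V(1))$. Hence $K_\infty$ is canonically orientable if and only if two things hold: $\delta(\Theta(K_\infty))=0$, and $\D_{\dR}^+(V(1))=0$.

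The other ingredient is Lemma \ref{lem:no_dR_period} applied to $V(1)$. The generalized Hodge--Tate weights of $V(1)$ are those of $V$ shifted by $+1$. So $V$ has no weights in $\ZZ_{\le -1}$ exactly when $V(1)$ has no weights in $\ZZ_{\le 0}$, and by the lemma this is equivalent to $\mt{H}^0(G_K,\BdRplus\otimes V(1))=\mt{H}^1(G_K,\BdRplus\otimes V(1))=0$, and also to $\D_{\dR}^+(V(1))=0$.

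For (2)$\Rightarrow$(1): the vanishing of $\mt{H}^1(G_K,\BdRplus\otimes V(1))$ forces $\delta=0$, so $\Theta(K_\infty)$ lifts and $K_\infty$ is orientable. The vanishing of $\D_{\dR}^+(V(1))$ then makes the lift unique.

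For (1)$\Rightarrow$(2): uniqueness of the orientation forces the kernel $\D_{\dR}^+(V(1))$ to vanish, since two lifts differ by an element of it and any nonzero element would produce a second lift. By the equivalence (1)$\Leftrightarrow$(3) of Lemma \ref{lem:no_dR_period} for $V(1)$, this gives that $V(1)$ has no generalized Hodge--Tate weights in $\ZZ_{\le0}$, which is (2).

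Two routine points remain to check. First, the sequence itself must be justified: take $G_K$-cohomology of $0\to t\BdRplus\otimes V\to \BdRplus\otimes V\to \C\otimes V\to 0$ and identify $t\BdRplus\otimes V\cong \BdRplus\otimes V(1)$. Second, the weight shift under Tate twist must be verified. I do not expect a genuine obstacle. The only subtle point is that the direction (1)$\Rightarrow$(2) needs the orientation to exist, which it does by assumption, so that the fibre is a nonempty torsor and uniqueness really does force $\D_{\dR}^+(V(1))=0$.
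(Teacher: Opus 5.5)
Your proposal is correct and follows essentially the same route as the paper. Both directions are read off the exact sequence $0\to\D_{\dR}^+(V(1))\to\D_{\dR}^+(V)\to\D_{\C}(V)\to \mt{H}^1(G_K,\BdRplus\otimes V(1))$ together with Lemma \ref{lem:no_dR_period} applied to $V(1)$: uniqueness of the lift forces $\D_{\dR}^+(V(1))=0$ in one direction, and in the other the vanishing of both outer terms makes $\D_{\dR}^+(V)\to\D_{\C}(V)$ an isomorphism.
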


\begin{proof}
Write $V = \Lie\Gamma(K_\infty)$.

Suppose item (1) holds. Then $\D_{\dR}^+(V(1)),$ which is the kernel of $\D_{\dR}^+(\Lie\Gamma(K_\infty)) \to \D_{\C}(\Lie\Gamma(K_\infty))$, must be zero. So the previous lemma implies item (2).

 Suppose item (2) holds. We have an exact sequence
\begin{equation} \label{eq:seq_for_ori_new}
0\rightarrow\D_{\dR}^{+}(V(1))\rightarrow\D_{\dR}^{+}(V)\rightarrow\D_{\C}(V)\rightarrow\mt{H}^{1}(G_{K},\B_{\dR}^{+}\otimes V(1)).
\end{equation} 
Since $V$ has no Hodge-Tate weights in $\ZZ_{\le -1}$, the twist $V(1)$ has no Hodge-Tate weights in $ \ZZ_{\le0}$. By the previous lemma applied to $V(1)$, $\D_{\dR}^+(V(1))$ and $\mt{H}^1(G_K,\B_{\dR}^+\otimes V(1))$ both vanish. Hence the map $\DdRplus(V)\to \D_{\C}(V)$ is an isomorphism and $K_\infty$ is canonically orientable.
\end{proof}

\subsection{Examples}
We now discuss specific examples of the concept of orientability.

\begin{example}[Abelian extensions are canonically orientable] \label{ex:ab_ext}
 Suppose $\Gamma(K_\infty)$ is abelian. The Hodge-Tate weights of $\Lie\Gamma(K_\infty)$
are $0$, so by Theorem \ref{thm:characterization_of_canonical_orientability} we get that $K_\infty$ is canonically orientable. We also know this independently of Theorem \ref{thm:characterization_of_canonical_orientability}, as we have already established in Corollary \ref{cor:sen_op_rationality} that the Sen operator lies in $K\otimes_{\QQ_p}\Lie\Gamma(K_\infty)$, and this clearly has a canonical lift to  $\DdRplus(\Lie\Gamma(K_\infty)).$

\end{example}

\begin{example}[Virtually nilpotent extensions are canonically orientable] \label{ex:nilp_ext}
Suppose more generally $\Gamma(K_\infty)$ is virtually nilpotent (i.e. a finite index subgroup of $\Gamma(K_\infty)$ is nilpotent). Then $ \Lie\Gamma(K_\infty)$ is nilpotent. The Sen operator of $\Lie\Gamma(K_\infty)$, as an element of $$\C \otimes_{\QQ_p} \Lie\Gamma(K_\infty) \subset \End_{\C}(\C \otimes_{\QQ_p} \Lie\Gamma(K_\infty)),$$ is equal to $\ad(\Theta(K_\infty))$. In particular, it is a  nilpotent operator, which means the generalized Hodge-Tate weights of $\Lie\Gamma(K_\infty)$ are all zero. It follows from Theorem \ref{thm:characterization_of_canonical_orientability} that $K_\infty$ is canonically orientable.
\end{example}

\begin{example}[The false Tate extension is canonically orientable] \label{ex:Kummer_ext} 
Let $\pi_0=\pi$ be a uniformizer of $K$, inductively for each $n \geq 1$, choose some $\pi_n \in \barK$ such that $\pi_n^p=\pi_{n-1}$. Let $\kinfty=\kft=\cup_n \kcyc(\pi_n)$ be the false Tate extension.  
 We claim that  $\Lie\Gamma(\kft)$ has Hodge-Tate weights
0 and 1, and so $K_\infty$ is canonically orientable by Theorem \ref{thm:characterization_of_canonical_orientability}. 
To see this, take the faithful representation $V$ of $\Gamma(\kft)$ given by
\[
g\mapsto\left[\begin{array}{cc}
\chi_{\cyc}(g) & c(g)\\
0 & 1
\end{array}\right],
\]
where $c(g)$ is the usual Kummer cocycle defined by the equation $g(\pi^{1/p^\infty}) = \zeta_{p^\infty}^{c(g)}\pi^{1/p^\infty}.$
Then we have
an identification
\[
\Lie\Gamma(\kft)=\left[\begin{array}{cc}
* & *\\
0 & 0
\end{array}\right]\subset\End_{\QQ_p}(V).
\]
Using the above description of $\Lie\Gamma(\kft)$, one computes it is the 2-dimensional
representation given by
\[
g\mapsto\left[\begin{array}{cc}
1 & 0\\
-c(g) & \chi_{\cyc}(g)
\end{array}\right],
\]
whose Hodge-Tate weights are $0,1$.  
Let us calculate the Sen operator and its lift in this case. We claim there is a unique element $\tilde{\alpha} \in \BdRplus(\kft)^\pa$ (respectively\footnote{This element appeared in \cite[{$\mathsection{4.5}$}]{berger2016theorie}.}  $\alpha \in \wh{K}_\mt{fT}^\la$) satisfying the identity $g(\tilde{\alpha})\chi_\cyc(g)=\tilde{\alpha}+c(g)$ (respectively $g(\alpha)\chi_\cyc(g)=\alpha+c(g)$). To show existence, take $\tilde{\alpha}=\log([\pi^\flat]/\pi)/t$, respectively $\theta(\tilde{\alpha})$. We explain uniqueness in the case of $\tilde{\alpha}$; the case of $\alpha$ is similar but easier. Suppose $\tilde{\alpha}_1, \tilde{\alpha}_2$ are two such elements, and let $z = \tilde{\alpha}_1-\tilde{\alpha}_2$. Then $g(\tilde{\alpha}_1-\tilde{\alpha}_2)\chi_{\cyc}(g)=\tilde{\alpha_1}-\tilde{\alpha}_2$. In particular, $z$ is fixed by $\Gal(\overline{K}/K_\cyc)$, hence is contained in $\BdRplus(K_\cyc)^\pa$, which is equal to $K_\cyc\llbracket t\rrbracket $ by \cite[Proposition 2.6(2)]{porat2022lubin}. Writing $z = \sum a_it^i$ with $a_i \in K_\cyc$, and observing the identity $\nabla(K_\cyc)(z) = -z$, we get $a_i = 0$ for all $i$, and so $z=0$, which shows $\tilde{\alpha}_1 = \tilde{\alpha}_2$. One then verifies that the Sen operator and its canonical lift are given by
\[
\Theta(\kft)=\left[\begin{array}{cc}
1 & \alpha\\
0 & 0
\end{array}\right] \text{and } \nabla(\kft)=\left[\begin{array}{cc}
1 & \tilde{\alpha}\\
0 & 0
\end{array}\right].
\]

In terms of more traditional notation (see \cite[Notation 6.5]{GMWHT}) this is \[\Theta(\kft) = \nabla_\gamma + \alpha \nabla_\tau \text{ and } \nabla(\kft) = \nabla_\gamma + \tilde{\alpha} \nabla_\tau.\]
\end{example}

\begin{example}[A non-orientable example] \label{ex_non_dR_ext}
 By \cite[Example 6.3.5]{brinon2009cmi}, we may choose a nonsplit extension
\[
0\rightarrow\QQ_{p}\rightarrow V\rightarrow\QQ_{p}(1)\rightarrow0.
\]
Let $K_{\infty}$ correspond to the kernel of this representation.
In matrix notation, we may write this representation as an extension
\[
g\mapsto\left[\begin{array}{cc}
1 & c(g) \\
0 & \chi_{\cyc}(g)
\end{array}\right]
\]
for some cocycle $c(g)$. As a subrepresentation of $\End(V)$, we
have 
\[
\Lie\Gamma(K_\infty)\cong\left[\begin{array}{cc}
0 & *\\
0 & *
\end{array}\right]\subset\left[\begin{array}{cc}
* & *\\
* & *
\end{array}\right]=\End(V).
\]
From this, it is straightforward to calculate $\Lie\Gamma(K_\infty)$ is the representation given by 
\[g\mapsto
\left[\begin{array}{cc}
\chi_{\cyc}(g)^{-1} & c(g)\chi_{\cyc}(g)^{-1}\\
0 & 1
\end{array}\right].
\]
In particular, it is Hodge-Tate with weights $-1,0$, but is not de
Rham (see \cite[Example 6.3.5]{brinon2009cmi}). As it is 2-dimensional, the map $\D_{\dR}^{+}(\Lie\Gamma(K_\infty))\rightarrow\D_{\C}(\Lie\Gamma(K_\infty))$ must be zero. So the element $\Theta(K_{\infty})\in\D_{\C}(\Lie\Gamma(K_\infty))$
 cannot be lifted to $\DdRplus(\Lie\Gamma(K_\infty)),$ which means $K_\infty$ is not orientable. We  remark that this non-orientability also follows from a more general criterion, see Theorem \ref{thm:characterization_of_orientability}. 
\end{example}

\begin{example}[${\mt{SL}_2}(\ZZ_p)$-examples] \label{ex:SL2}
Suppose that $\Gamma(K_\infty)\cong\mt{SL}_2(\ZZ_p)$. This case is discussed at length in \cite[{$\mathsection{5}$}]{berger2016theorie}. By considering the standard representation of $\mt{SL}_2(\ZZ_p)$, we get a representation of $G_K$ with two generalized Hodge-Tate weights $\pm s$ for some $s \neq 0$  in $\bar{\QQ}_p$. The representation $\Lie\Gamma(K_\infty)$ then has generalized Hodge-Tate weights $0, \pm 2s$.

There are two cases:
\begin{enumerate}
\item   $s\notin\frac{1}{2}(\ZZ\backslash\{0\})$. In this case,
we have $\pm2s\notin\ZZ_{\leq-1}$, so Theorem \ref{thm:characterization_of_canonical_orientability} applies, and $K_\infty$ is canonically orientable.

\item  $s\in\frac{1}{2}(\ZZ\backslash\{0\})$. In this case, either $K_\infty$ is non-orientable, or 
$K_\infty$ is orientable, but necessarily not canonically so, because $\D_{\dR}^{+}(\Lie\Gamma(K_\infty)(1))$
is nonzero. This can happen, for example, if one takes a modular
form $f$ of weight 2, which is non-CM and non-ordinary at $p$, and
twists the restrictions to $G_{\QQ_p(\mu_{2p})}$ of the Galois representation of $V_{f}$ by
$\det(V_{f})^{-1/2}$. Later in Corollary  \ref{cor:orientable_iff_HT_eq_de_rham}, we will see that for $\Lie\Gamma(K_\infty)$ Hodge-Tate, being orientable is equivalent to being de Rham.

\end{enumerate}
\end{example}

\subsection{Compatibilities}

In the next section we shall need functoriality properties of orientations. The following result shows that orientations act compatibly along towers; this can be regarded as a compatibility complementing Lemma \ref{lemma:sen_operator_compatibility}.

\begin{lemma} 
\label{lem:nabla_action_compatibility}
Suppose that $K_\infty \subset L_\infty$ are $p$-adic Lie extensions of $K$ with compatible orientations $\nabla(L_\infty)$, $\nabla(K_\infty)$, i.e. $\nabla(L_\infty)$ is mapped to $\nabla(K_\infty)$ under the map $V_{\dR}^+(L_\infty) \to V_{\dR}^+(K_\infty)$. Let $x \in \BdRpluspa$. Then $\nabla(L_\infty)(x) = \nabla(K_\infty)(x).$
\end{lemma}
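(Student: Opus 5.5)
The lemma is a statement about how the Lie algebras of the two Galois groups act on the common vector $x$, so I plan to reduce it to a formal compatibility of infinitesimal actions.

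\textbf{Setup.} Since $K_\infty \subset L_\infty$, we have $H(L_\infty) \subset H(K_\infty)$, so $\BdRplus(K_\infty) \subset \BdRplus(L_\infty)$. Let $\pi:\Gamma(L_\infty)\to\Gamma(K_\infty)$ be the quotient map, with kernel $N=\Gal(L_\infty/K_\infty)$. The $\Gamma(L_\infty)$-action on $\BdRplus(K_\infty)$ is the inflation of the $\Gamma(K_\infty)$-action along $\pi$.

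\textbf{Step 1: pro-analytic vectors and infinitesimal actions.} For $x\in \BdRpluspa$, the image of $x$ in each quotient $\BdRplus(K_\infty)/I_\theta^n$ is $\Gamma(K_\infty)$-locally analytic. Composing the orbit map with $\pi$, which is an analytic homomorphism, shows that $x$ is $\Gamma(L_\infty)$-locally analytic in $\BdRplus(L_\infty)/I_\theta^n$; note that $I_\theta$ of the smaller ring maps into $I_\theta$ of the larger one. Hence $x \in \BdRplus(L_\infty)^{\pa}$. Differentiating the identity $g(x)=\pi(g)(x)$ along one-parameter subgroups $\exp(s\partial)$ gives
\[
\partial(x)=d\pi(\partial)(x)\qquad\text{for all } \partial\in\Lie\Gamma(L_\infty).
\]
In particular, $\Lie N=\ker d\pi$ acts by zero on $x$.

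\textbf{Step 2: extension of scalars.} Write $\nabla(L_\infty)=\sum_i b_i\otimes\partial_i$ with $b_i\in\BdRplus$. By the preceding lemma we may in fact take $b_i\in \BdRplus(L_\infty)^{\pa}$. Following \S\ref{subsubsection:derivations}, the action on $x$ is
\[
\nabla(L_\infty)(x)=\sum_i b_i\,\partial_i(x)=\sum_i b_i\, d\pi(\partial_i)(x).
\]
The right-hand side is exactly the action of $(\mathrm{id}\otimes d\pi)(\nabla(L_\infty))$ on $x$. By the compatibility hypothesis, $(\mathrm{id}\otimes d\pi)(\nabla(L_\infty))=\nabla(K_\infty)$, because the map $V_{\dR}^+(L_\infty)\to V_{\dR}^+(K_\infty)$ is induced by $\BdRplus\otimes d\pi$. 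This proves $\nabla(L_\infty)(x)=\nabla(K_\infty)(x)$.

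\textbf{Main obstacle: well-definedness of the actions.} The delicate point is making sense of both actions as elements of one common ring. One option is to work inside $\BdRplus$, where $x$ lives and all $b_i$ live. The other is to check that $\nabla(K_\infty)(x)$ computed inside $\BdRplus(K_\infty)^{\pa}$, using coefficients in $\BdRplus(K_\infty)^{\pa}$, agrees with the expression above. Both reduce to the independence of the chosen expression $\sum b_i\otimes\partial_i$ noted in \S\ref{subsubsection:derivations}, together with $\BdRplus$-linearity of the pairing. I would verify this modulo each $I_\theta^n$, where all spaces are Banach and the derivative formula in Step 1 is standard, and then pass to the inverse limit.
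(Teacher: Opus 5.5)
Your proof is correct and follows essentially the same route as the paper. The key step is the identity $\partial(x)=d\pi(\partial)(x)$ for $x$ coming from $\BdRplus(K_\infty)^{\pa}$. Combined with the compatibility hypothesis, it gives $\nabla(L_\infty)(x)=\bigl((1\otimes d\pi)\nabla(L_\infty)\bigr)(x)=\nabla(K_\infty)(x)$. Your Step 1 and the well-definedness discussion spell out what the paper leaves implicit: that $x$ is $\Gamma(L_\infty)$-pro-analytic, and that the action of $\sum b_i\otimes\partial_i$ does not depend on how it is written.
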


\begin{proof} Let $\pi:\Gamma(L_\infty) \to \Gamma(K_\infty)$ be the projection with the induced projection $\Lie\pi:\LieGamma(L_\infty)\to \Lie\Gamma(K_\infty)$. The element $x$ lies in the subring $\BdRplus(K_\infty)^{\pa}$, hence if $\sum b_i \otimes \partial_i \in \BdRplus(L_\infty)^{\pa} \otimes_{\QQ_p} \LieGamma(L_\infty)$ then $(\sum b_i \otimes \partial_i)(x)=(\sum b_i \otimes (\Lie\pi)(\partial_i))(x).$ On the other hand, $(1\otimes \Lie \pi)$ induces the natural map from $V_{\dR}^+(L_\infty)\to V_{\dR}^+(K_\infty)$, so by assumption $(1\otimes \Lie\pi)(\nabla(L_\infty)) = \nabla(K_\infty)$. Thus we get
$$\nabla(L_\infty)(x) = (1\otimes \Lie\pi)(\nabla(L_\infty))(x) = \nabla(K_\infty)(x),$$as required. \end{proof}

We will conclude the section by proving a result that will help us shift orientations along central extensions.

\begin{lemma}
\label{lem:orientations_preserved_under_central_extensions}
Let $L_\infty/K$ be p-adic Lie Galois extension with $K_\infty \subset L_\infty$ and $\Delta = \Gal(L_\infty/K_\infty)$ abelian. Suppose $\Delta$ is central in $\Gamma(L_\infty)=\Gal(L_\infty/K)$ up to a finite extension of $K$; in other words, that the conjugation map $\Gamma(K_\infty) = \Gal(K_\infty/K)\to \mt{Aut}(\Delta)$ has finite image. Then the natural map $V_{\dR}^+(L_\infty)\to V_{\dR}^+(K_\infty)$ is an isomorphism and induces a bijection between orientations of $L_\infty$ and orientations of $K_\infty$.
\end{lemma}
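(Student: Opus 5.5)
We work with the short exact sequence of $G_K$-representations
\[0\to \Lie\Delta \to \Lie\Gamma(L_\infty) \to \Lie\Gamma(K_\infty)\to 0,\]
which is $G_K$-equivariant for the adjoint actions. The plan is to show that applying $\DdRplus$ and $\D_{\C}$ gives maps behaving like isomorphisms on the relevant pieces, and then to compare the spaces $V_{\dR}^+$ and $V_{\Sen}$.

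First, after replacing $K$ by a finite extension (harmless, since everything depends on $K$ only up to finite extension), we may assume $\Delta$ is central in $\Gamma(L_\infty)$. Then the adjoint action of $\Gamma(L_\infty)$ on $\Lie\Delta$ is trivial, so $\Lie\Delta$ is a trivial $G_K$-representation, i.e. $\Lie\Delta\cong \QQ_p^r$.

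Next we compute the cohomology of $\BdRplus\otimes\Lie\Delta$ and $\C\otimes\Lie\Delta$. We have $\DdRplus(\Lie\Delta)=K^r$ and $\D_{\C}(\Lie\Delta)=K^r$, and
\[\mt{H}^1(G_K,\BdRplus)\cong \mt{H}^1(G_K,\C)\cong K,\]
the first isomorphism because $\mt{H}^i(G_K,\C(n))=0$ for $n\ge1$. Taking the long exact sequences therefore gives a morphism of exact sequences
\[0\to K^r\to \DdRplus(\Lie\Gamma(L_\infty))\to \DdRplus(\Lie\Gamma(K_\infty))\to K^r\]
mapping to the analogous sequence for $\D_{\C}$, with the outer vertical maps isomorphisms. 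This alone does not yield an isomorphism $V_{\dR}^+(L_\infty)\to V_{\dR}^+(K_\infty)$, since $\DdRplus(\Lie\Gamma(L_\infty))$ contains the extra $K^r$. The kernel $K^r$ maps isomorphically into $\D_{\C}$, onto $\D_{\C}(\Lie\Delta)$, and that image need not lie in $V_{\Sen}(L_\infty)=K\Theta(L_\infty)$.

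To handle this, we use that $\Theta(L_\infty)$ maps to $\Theta(K_\infty)\neq 0$ (Lemma~\ref{lemma:sen_operator_compatibility} and Proposition~\ref{prop:sen_operator_independant_of_faithful_rep_and_nonzero}). Hence $V_{\Sen}(L_\infty)\to V_{\Sen}(K_\infty)$ is an isomorphism of one-dimensional $K$-spaces, and $V_{\Sen}(L_\infty)\cap\D_{\C}(\Lie\Delta)=0$.

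Now take $x\in V_{\dR}^+(L_\infty)$ mapping to $0$ in $V_{\dR}^+(K_\infty)$. Then $x\in K^r=\DdRplus(\Lie\Delta)$, and its image in $\D_{\C}$ lies in $V_{\Sen}(L_\infty)\cap \D_{\C}(\Lie\Delta)=0$. Since $\DdRplus(\Lie\Delta)\to\D_{\C}(\Lie\Delta)$ is injective, $x=0$. This proves injectivity.

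For surjectivity, take $y\in V_{\dR}^+(K_\infty)$ lifting $a\Theta(K_\infty)$. Its connecting image in $\mt{H}^1(G_K,\BdRplus\otimes\Lie\Delta)$ maps to the connecting image of $a\Theta(K_\infty)$ in $\mt{H}^1(G_K,\C\otimes\Lie\Delta)$. That image is zero, because $a\Theta(K_\infty)$ lifts to $a\Theta(L_\infty)\in\D_{\C}(\Lie\Gamma(L_\infty))$. The map $\mt{H}^1(G_K,\BdRplus\otimes\Lie\Delta)\to \mt{H}^1(G_K,\C\otimes\Lie\Delta)$ is injective, so $y$ lifts to some $\tilde y\in\DdRplus(\Lie\Gamma(L_\infty))$. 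The image of $\tilde y$ in $\D_{\C}$ differs from $a\Theta(L_\infty)$ by an element $d\in\D_{\C}(\Lie\Delta)$. Since $\DdRplus(\Lie\Delta)\to\D_{\C}(\Lie\Delta)$ is surjective, we can lift $d$ and subtract it from $\tilde y$. The result lies in $V_{\dR}^+(L_\infty)$ and maps to $y$.

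Finally, orientations are exactly the elements of $V_{\dR}^+$ mapping to $\Theta$. The isomorphism $V_{\dR}^+(L_\infty)\to V_{\dR}^+(K_\infty)$ is compatible with $V_{\Sen}(L_\infty)\cong V_{\Sen}(K_\infty)$, which sends $\Theta(L_\infty)$ to $\Theta(K_\infty)$, so it restricts to a bijection of orientations.

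The main obstacle is the surjectivity step: one needs the comparison $\mt{H}^1(G_K,\BdRplus)\to\mt{H}^1(G_K,\C)$ to be an isomorphism (or at least injective) for the trivial representation. We will verify this by dévissage along $t^n\BdRplus$, using $\mt{H}^i(G_K,\C(n))=0$ for $n\geq 1$ and $i=0,1$, together with $t$-adic completeness, as in the proof of Lemma~\ref{lem:no_dR_period}.
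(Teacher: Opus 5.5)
Your proof is correct and follows essentially the same route as the paper. You reduce to $\Delta$ central so that $\Lie\Delta$ is trivial, compare the long exact sequences for $\DdRplus$ and $\D_{\C}$, and chase using $V_{\Sen}(L_\infty)\cong V_{\Sen}(K_\infty)$. Your condition $V_{\Sen}(L_\infty)\cap\D_{\C}(\Lie\Delta)=0$ and the vanishing of the connecting image of $\Theta(K_\infty)$ are exactly the paper's Steps 1--2. Your direct element chase, with the explicit dévissage showing that $\mt{H}^1(G_K,\BdRplus)\to\mt{H}^1(G_K,\C)$ is injective, simply makes precise what the paper asserts as ``the two outer vertical maps are isomorphisms.''
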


\begin{proof}
To make the notation lighter, we write for this proof only $\DdRplus(K_\infty)$ for $\DdRplus(\Lie\Gamma(K_\infty))$; similarly with $\D_C$ instead of $\DdRplus$ and $L_\infty$ instead of $K_\infty$.
We are free to replace $K$ by a finite extension contained in $K_\infty$, and so we may assume $\Delta$ is central in $\Gamma(L_\infty)$. We have an exact sequence
$$1 \to \Delta \to \Gamma(L_\infty) \to \Gamma(K_\infty) \to 1$$
of $p$-adic Lie groups. We may consider $\Lie(\Gamma(L_\infty))$ as a $G_K$-representation by letting $G_K$ first project onto $\Gamma(L_\infty)$ and then act via the adjoint action. We have an exact sequence of $G_K$-representations with $\QQ_p$-coefficients:
$$0 \to \Lie\Delta \to \Lie\Gamma(L_\infty) \to \Lie\Gamma(K_\infty) \to 0,$$
and $\Lie\Delta$ is the trivial representation, since $\Delta$ is central.

We now argue in steps:

\textbf{Step 1}: Consider the exact sequence obtained by tensoring with $\C$ and taking $G_K$-fixed points:
$$ 0 \to \D_{\C}(\Lie\Delta) \to \D_{\C}(L_\infty) \to \D_{\C}(K_\infty) \to \mt{H}^1_\C(\Lie\Delta) \to ...$$

Then we claim that $V_{\Sen}(K_\infty)$ is in the kernel of the connecting map from $\D_{\C}(K_\infty)$ to $\mt{H}^1_\C(\Lie\Delta)$. Indeed, by Lemma \ref{lemma:sen_operator_compatibility}, $\Theta(K_\infty)$ is the image of $\Theta(L_\infty)$ under the map from $\D_{\C}(L_\infty)$ to $ \D_{\C}(K_\infty)$, hence it is mapped to zero in the complex.

\textbf{Step 2}: Let $\partial$ be the boundary map $$\partial:\D_{\C}(K_\infty) \to \mt{H}^1_\C(\Lie\Delta).$$ Consider the following commutative diagram:
\[\begin{tikzcd}[row sep=small, column sep=small] 
    0 \arrow[r] 
    & \D_{\C}(\Lie\Delta) \arrow[d,"g"] \arrow[r] 
        & \D_{\C}(L_\infty) \arrow[d] \arrow[r] 
        & \ker(\partial) \arrow[d] \arrow[r] 
        & 0 \\
    0 \arrow[r] 
    & \ker(f)  \arrow[r] 
        & \D_{\C}(L_\infty)/V_{\Sen}(L_\infty) \arrow[r,"f"] 
        & \D_{\C}(K_\infty)/V_{\Sen}(K_\infty)
\end{tikzcd}
\]

Both rows of the diagram are exact. We claim that the map $g$ is an isomorphism, so that $\ker(f)$ is identified with $\D_{\C}(\Lie\Delta)$. Indeed, we have by the snake lemma an exact sequence

$$ 0 \to \ker(g) \to V_{\Sen}(L_\infty) \to V_{\Sen}(K_\infty) \to \coker(g) \to 0,$$
 where we use the previous step to identify the third term with $V_{\Sen}(K_\infty)$. Since $V_{\Sen}(L_\infty) \to V_{\Sen}(K_\infty)$ is an isomorphism, we conclude the proof of the second step.

\textbf{Step 3:} Using the previous step, we get another commutative diagram with exact rows
\[
\begin{tikzcd}[row sep=small, column sep=small] 
    0 \arrow[r] 
    & \DdRplus(\Lie\Delta) \arrow[d] \arrow[r] 
        & \DdRplus(L_\infty) \arrow[d] \arrow[r] 
        & \DdRplus(K_\infty) \arrow[d] \arrow[r] & \mt{H}^{1,+}_{\dR}(\Lie\Delta) \arrow[d] \\
    0 \arrow[r] 
    & \D_{\C}(\Lie\Delta)  \arrow[r] 
        & \D_{\C}(L_\infty)/V_{\Sen}(L_\infty) \arrow[r] 
        & \D_{\C}(K_\infty)/V_{\Sen}(K_\infty) \arrow[r] 
        & \mt{H}^1_\C(\Lie\Delta) 
\end{tikzcd}\]

The two outer vertical maps are isomorphisms, because $\Lie\Delta$ is a trivial $G_K$-representation. By a diagram chase, we conclude that the natural map $V_{\dR}^+(L_\infty) \to V_{\dR}^+(K_\infty)$, which is the map between the kernels of the middle columns of the diagram, is an isomorphism. The claim follows.\end{proof}

\begin{corollary}
\label{cor:bijection_between_orientaitons_along_dim_1_extensions}
Let $L/K$ be an abelian 1-dimensional $p$-adic Lie extension and let $L_\infty=K_\infty L$. Then $L_\infty/K$ is a $p$-adic Lie extension, the natural map $V_{\dR}^+(L_\infty)\to V_{\dR}^+(K_\infty)$ is an isomorphism, and there is a bijection between orientations of $L_\infty$ and orientations of $K_\infty$.\end{corollary}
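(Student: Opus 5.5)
The plan is to deduce the corollary from Lemma \ref{lem:orientations_preserved_under_central_extensions}, applied to the tower $K_\infty\subset L_\infty$ with $\Delta=\Gal(L_\infty/K_\infty)$. There are three things to check: that $L_\infty/K$ is a $p$-adic Lie extension, that $\Delta$ is abelian, and that the conjugation action of $\Gamma(K_\infty)$ on $\Delta$ has finite image.

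First, $L_\infty=K_\infty L$ is Galois over $K$ as a compositum of Galois extensions. Restriction gives an injection
\[\Gamma(L_\infty)\hookrightarrow \Gamma(K_\infty)\times\Gal(L/K).\]
Its image is a closed subgroup of a $p$-adic Lie group, hence a $p$-adic Lie group. The extension $L_\infty/K$ is infinitely ramified because it contains $K_\infty$. Restriction to $L$ embeds $\Delta$ into $\Gal(L/L\cap K_\infty)\subset\Gal(L/K)$, so $\Delta$ is abelian.

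Second, I check the centrality condition. For $\sigma\in\Gamma(L_\infty)$ and $\delta\in\Delta$, the conjugate $\sigma\delta\sigma^{-1}$ again lies in $\Delta$. Its restriction to $K_\infty$ is trivial, and its restriction to $L$ equals $\delta|_L$, because $\Gal(L/K)$ is abelian. Since an element of $\Gamma(L_\infty)$ is determined by its restrictions to $K_\infty$ and $L$, we get $\sigma\delta\sigma^{-1}=\delta$. So $\Delta$ is genuinely central in $\Gamma(L_\infty)$, and the conjugation map $\Gamma(K_\infty)\to\mt{Aut}(\Delta)$ is trivial; in particular it has finite image. Note that the one-dimensionality of $L/K$ is not needed for this argument.

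Third, Lemma \ref{lem:orientations_preserved_under_central_extensions} now applies directly. It gives that $V_{\dR}^+(L_\infty)\to V_{\dR}^+(K_\infty)$ is an isomorphism and that this map induces a bijection on orientations.

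The only point needing care is the degenerate case where $\Delta$ is finite, for instance $\Delta$ trivial or $L\subset K_\infty$ up to a finite extension. Then $\Lie\Delta=0$, and the lemma's argument still goes through verbatim. Alternatively, after passing to a finite extension of $K$ (which the paper allows throughout), one gets $L_\infty=K_\infty$ and the statement is trivial. I do not expect any serious obstacle; the main step is the centrality verification, which is immediate from $\Gal(L/K)$ being abelian.
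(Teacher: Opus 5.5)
Your proposal is correct and follows the paper's own proof: embed $\Gamma(L_\infty)$ as a closed subgroup of $\Gamma(K_\infty)\times\Gal(L/K)$, note that $\Delta$ embeds into the abelian group $\Gal(L/K)$, so the conjugation action on $\Delta$ is trivial (in particular has finite image), and then apply Lemma \ref{lem:orientations_preserved_under_central_extensions}. Your explicit check that $\Delta$ is genuinely central, using restrictions to $K_\infty$ and to $L$, just spells out the step the paper states in one line.
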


\begin{proof}
The injection $\Gamma(L_\infty) \to \Gamma(K_\infty/K) \times \Gal (L/K)$ identifies $\Gamma(L_\infty)$ with a closed subgroup of the target, so it is a $p$-adic Lie group. Furthermore, under this injection, the subgroup $\Delta=\Gal(L_\infty/K_\infty)$ is identified with a subgroup of $\Gal(L/K)$. In particular, it follows that the relevant conjugation action on $\Delta$ has finite image. Now apply the previous lemma.\end{proof}

\section{Pro-analytic vectors in  {$\BdRplus$}}
\label{section:la_vecs_in_bdr}

In this section we study the structure of $\BdRpluspa$. The main purpose of this section is to prove Theorem \ref{thm:brdplusps_iso}, which says that an orientation gives rise to a Galois-equivariant description of $\BdRpluspa$ in terms of a 1-variable power series ring over $\Kinftyla$. In \S \ref{subsec:kcyc_case}, we first treat the case $K_\cyc \subset K_\infty$; we then treat the general case in \S \ref{subsec:gen_case}.
This canonical description makes it possible to relate $\bdrplus$-representations to \emph{regular connections} in \S \ref{sec:reg_conn}.
Before doing that, we first prove in \S \ref{subsec:alg_bdr_plus_iso} that even without an orientation, $\BdRpluspa$ is abstractly isomorphic to such a ring.

\subsection{The algebraic structure of $\bdrplus(\kinfty)^\pa$}
\label{subsec:alg_bdr_plus_iso}

Recall our notation from \S \ref{subsubsection:p_adic_hodge_theory_notations}. The goal of this subsection is to prove that $\bdrplus(\kinfty)^\pa$ and $\Kinftyla \llbracket s \rrbracket$ are abstractly isomorphic as rings (Theorem \ref{thm:dvr}).

We shall first need the following lemma.

\begin{lemma}
\label{lem:graded_analytic_quotients}
For $n\ge 0$, the natural map $(I_\theta^n)^\pa/(I_\theta^{n+1})^{\pa} \to (I_\theta^n/I_\theta^{n+1})^\la$ is an isomorphism.
\end{lemma}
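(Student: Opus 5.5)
The idea is to write $(I_\theta^n)^\pa$ as an inverse limit of locally analytic vectors in Banach quotients, and then to use the vanishing of higher locally analytic vectors (Corollary \ref{cor:higher_la_vanishing_bdr}) for exactness. For $m \ge n$, the quotients $I_\theta^n/I_\theta^m$ are finite length $\BdRplus(K_\infty)$-semilinear Banach representations of $\Gamma(K_\infty)$. By definition,
\[ (I_\theta^n)^\pa=\varprojlim_m (I_\theta^n/I_\theta^m)^\la. \]

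\textbf{Step 1: exactness at each finite level.} For every $m\ge n+1$ there is a short exact sequence
\[ 0\to I_\theta^{n+1}/I_\theta^m\to I_\theta^n/I_\theta^m\to I_\theta^n/I_\theta^{n+1}\to 0. \]
Its first term is a finite length $\BdRplus(K_\infty)$-representation, so Corollary \ref{cor:higher_la_vanishing_bdr} gives $\R^1_{\Gamma(K_\infty)\hla}(I_\theta^{n+1}/I_\theta^m)=0$. The long exact sequence of derived locally analytic vectors then shows that
\[ 0\to (I_\theta^{n+1}/I_\theta^m)^\la\to (I_\theta^n/I_\theta^m)^\la\to (I_\theta^n/I_\theta^{n+1})^\la\to 0 \]
is exact.

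\textbf{Step 2: passing to the limit.} Next take $\varprojlim_m$. Inverse limits are left exact, which gives injectivity of the first map and exactness in the middle. For surjectivity onto $(I_\theta^n/I_\theta^{n+1})^\la$, it suffices that the system $\{(I_\theta^{n+1}/I_\theta^m)^\la\}_m$ satisfies Mittag-Leffler. It does, because its transition maps are surjective. Indeed, apply Step 1 to the sequence
\[ 0\to I_\theta^{m}/I_\theta^{m+1}\to I_\theta^{n+1}/I_\theta^{m+1}\to I_\theta^{n+1}/I_\theta^m\to 0. \]
Vanishing of $\R^1$ for $I_\theta^m/I_\theta^{m+1}$, which is a rank one $\wh{K}_\infty$-representation, by Theorem \ref{thm:higher_la_vanishing_k_infty}, gives surjectivity of $(I_\theta^{n+1}/I_\theta^{m+1})^\la\to(I_\theta^{n+1}/I_\theta^m)^\la$. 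Therefore $\varprojlim^1=0$, and we obtain a short exact sequence
\[ 0\to (I_\theta^{n+1})^\pa\to (I_\theta^n)^\pa\to (I_\theta^n/I_\theta^{n+1})^\la\to 0. \]

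\textbf{Step 3: conclusion and the main point.} The map in the last sequence is the natural map of the statement, so the claim follows. The only nontrivial input is the vanishing of $\R^1$, which was already established. The one point to check carefully is that the Banach structures on $I_\theta^n/I_\theta^m$ are the subspace and quotient structures compatible with those on $\BdRplus(K_\infty)/I_\theta^m$. This is needed so that the derived locally analytic functor long exact sequence applies to strict exact sequences of Banach representations, and so that $(I_\theta^n)^\pa$ agrees with $I_\theta^n\cap\BdRpluspa$. Both hold because $I_\theta^n/I_\theta^m$ is closed in the Banach space $\BdRplus(K_\infty)/I_\theta^m$ and $\varprojlim$ commutes with kernels.
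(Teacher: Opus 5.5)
Your proof is correct and follows essentially the paper's argument. In both, the key input is the vanishing of $\R^1$ on the rank-one pieces $I_\theta^m/I_\theta^{m+1}$ (Theorem~\ref{thm:higher_la_vanishing_k_infty}). This makes the transition maps $(I_\theta^n/I_\theta^{m+1})^\la\to(I_\theta^n/I_\theta^m)^\la$ surjective, so the inverse limit surjects onto $(I_\theta^n/I_\theta^{n+1})^\la$. Your level-wise exactness via Corollary~\ref{cor:higher_la_vanishing_bdr}, followed by Mittag-Leffler on the kernel system, is just a slightly longer packaging of that same observation.
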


\begin{proof}
The map is clearly injective. For each $m \geq 0$ we have $\R^1_\la(I_\theta^m/I_\theta^{m+1})=0$ by Theorem \ref{thm:higher_la_vanishing_k_infty}. It follows that the map  $(I_\theta^n/I_\theta^{m+1})^\la \to (I_\theta^n/I_\theta^{m})^\la$ is surjective for all $m\ge n$. This implies the surjectivity of $(I_\theta^n)^\pa \to (I_\theta^n/I_\theta^{n+1})^\la$.
\end{proof}

\begin{proposition}
\label{prop:bdran_local}

The ring $\bdrplus(\kinfty)^\pa$ is a complete discrete valuation ring with maximal ideal $I_{\theta}^{\pa}$ and residue field $\Kinftyla$.
\end{proposition}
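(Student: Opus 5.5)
Write $R=\bdrplus(\kinfty)^\pa=\varprojlim_n R_n$ with $R_n:=[\bdrplus(\kinfty)/I_\theta^n]^\la$. The plan is to check, one by one, that $R$ is local, has residue field $\Kinftyla$, is complete, and has a principal maximal ideal generated by a non-zero-divisor. Everything will come from Lemma \ref{lem:graded_analytic_quotients} together with Lemma \ref{lem:basis_lav}(1).

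\textbf{Residue field.} By the case $n=0$ of Lemma \ref{lem:graded_analytic_quotients}, reduction modulo $I_\theta$ gives a surjection $R\to(\wh{K}_\infty)^\la=\Kinftyla$ with kernel $I_\theta^\pa:=I_\theta\cap R$. Moreover, by the $m$-step surjectivity in the proof of that lemma, each transition map $R_{n+1}\to R_n$ is surjective.

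\textbf{Locality.} Take $x\in R\setminus I_\theta^\pa$. Then $x$ is a unit of the DVR $\bdrplus(\kinfty)$, so Lemma \ref{lem:basis_lav}(1) gives $x^{-1}\in R$. Hence $R$ is local with maximal ideal $I_\theta^\pa$, and $R/I_\theta^\pa\cong\Kinftyla$ is a field.

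\textbf{Uniformizer.} Again by Lemma \ref{lem:graded_analytic_quotients}, with $n=1$, the map $I_\theta^\pa/(I_\theta^2)^\pa\to(I_\theta/I_\theta^2)^\la$ is surjective. The target is $L(K_\infty)^\la$. Since $L(K_\infty)$ is a one-dimensional $\wh{K}_\infty$-semilinear representation, Berger--Colmez (the same input as in Lemma \ref{lem:delta_no_higher_coho}) shows it has a locally analytic basis, so this space is nonzero. Choose $s\in I_\theta^\pa$ whose image in $I_\theta/I_\theta^2$ is nonzero; then $s$ is a uniformizer of $\bdrplus(\kinfty)$. Now let $x\in (I_\theta^n)^\pa$. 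We can write $x=s^n u$ with $u$ in $\bdrplus(\kinfty)$, and we need $u\in R$. This does not follow from Lemma \ref{lem:basis_lav}(1) directly, since $s^n$ is not a unit. Instead, pass to $\mathrm{Frac}(\bdrplus(\kinfty))=\bdrplus(\kinfty)[1/s]$, or argue as follows. Multiplication by $s^n$ gives a $\Gamma$-equivariant isomorphism $\bdrplus(\kinfty)/I_\theta^m\xrightarrow{\sim} I_\theta^n/I_\theta^{n+m}$, but $s^n$ is only pro-analytic, so equivariance fails on the nose. The better route is: multiplication by $s^n$ maps $R_m$ into $(I_\theta^n/I_\theta^{n+m})^\la$ and is injective. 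Surjectivity follows by induction on $m$ using the graded pieces: both gradeds are $\gr^k(\cdot)^\la$, identified via multiplication by $s^n$, which is a $\wh{K}_\infty$-linear isomorphism $\gr^k\to\gr^{k+n}$ mapping la vectors into la vectors. Its inverse, multiplication by the inverse in $\mathrm{gr}$ of $\bar s^{\,n}$, also preserves la vectors, because $\bar s\in L(K_\infty)^\la$ is a basis and Lemma \ref{lem:basis_lav}(1) applies in the graded ring $\bigoplus_k \gr^k$ localized at $\bar s$. A five-lemma induction then gives bijectivity, and taking limits yields $(I_\theta^n)^\pa=s^nR$.

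\textbf{Completeness and DVR.} Completeness follows from $R=\varprojlim R/(I_\theta^n)^\pa$, where $R/(I_\theta^n)^\pa=R_n$ by the surjectivity of the transition maps. Since $s$ is not a zero divisor (as $R\subset \bdrplus(\kinfty)$), $R$ is a complete Noetherian local domain with principal maximal ideal $sR$, and $\bigcap_n s^nR=0$. Therefore $R$ is a complete DVR.

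The main obstacle is the step $(I_\theta^n)^\pa=s^nR$, that is, dividing by a pro-analytic uniformizer while staying pro-analytic. The plan is to handle it via the graded comparison and induction on length described above.
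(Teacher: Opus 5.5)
Your argument is correct, and its skeleton matches the paper's. The residue field comes from Lemma~\ref{lem:graded_analytic_quotients}, locality from Lemma~\ref{lem:basis_lav}(1), a pro-analytic uniformizer $s$ via Berger--Colmez, completeness from $R/(I_\theta^n)^\pa=R_n$, and then the standard DVR criterion.

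The one place you diverge is the step you flag as the main obstacle, $(I_\theta^n)^\pa=s^nR$. You are right that Lemma~\ref{lem:basis_lav}(1) does not give it. However, part (2) of the same lemma does, and this is what the paper uses: $I_\theta^n$ is a free rank-one $\bdrplus(\kinfty)$-module with basis $s^n\in(I_\theta^n)^\pa$, hence $(I_\theta^n)^\pa=R\cdot s^n$. The mechanism behind that lemma is formal. Modulo each $I_\theta^{n+m}$, the coordinate map $as^n\mapsto a$ is a continuous (though non-equivariant) linear isomorphism. So $g\mapsto g(s^n)/s^n$ is an analytic function with values in $(\bdrplus(\kinfty)/I_\theta^m)^\times$, analytically invertible near $1$, and it can be divided out of $g(as^n)$. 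No vanishing of higher locally analytic vectors enters.

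Your induction instead shows that $s^n\cdot\colon R_m\to(I_\theta^n/I_\theta^{n+m})^\la$ is bijective via the five lemma. This uses the surjectivity of $R_{m+1}\to R_m$ (i.e.\ Theorem~\ref{thm:higher_la_vanishing_k_infty}) together with the graded case. It is valid, just longer.

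For the graded case, you appeal to Lemma~\ref{lem:basis_lav}(1) in a localized graded ring, which is not a Fr\'echet algebra and so lies outside the lemma as stated. Instead, note that Berger--Colmez gives $\dim_{\Kinftyla}(\gr^k)^\la=1$, and $\bar s^{\,k}$ is a nonzero element of it. Hence $(\gr^k)^\la=\Kinftyla\cdot\bar s^{\,k}$, and multiplication by $\bar s^{\,n}$ is visibly bijective; the paper invokes Berger--Colmez in exactly this form.

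Two small clean-ups. Delete the sentence asserting that multiplication by $s^n$ is $\Gamma$-equivariant and then that equivariance fails. Also, ``Noetherian'' is a consequence of your last step, not an input to it.
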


\begin{proof}
We know that $\hatkinftyla$ is   a field by Lemma \ref{lem:basis_lav}. By the previous lemma, we have $\BdRpluspa/I_\theta^\pa \cong \Kinftyla$, which shows that $I_\theta^\pa$ is maximal with residue field $\Kinftyla$. By Lemma \ref{lem:basis_lav} we have 
$$\B_{\dR}^{+}(K_\infty)^{\pa,\times}=\B_{\dR}^{+}(K_\infty)^{\pa}\cap\B_{\dR}^{+}(K_\infty)^{\times},$$
so any element not in $I_\theta^\pa$ is invertible. Therefore $\B_{\dR}^{+}(K_\infty)^{\pa}$ is local.

Let $n\ge 1.$ Since $I_\theta^n/I_\theta^{n+1}$ is 1-dimensional over $\hatkinfty$, its locally analytic vectors form   a one-dimensional vector space over $\hatkinftyla$ (\cite[Theorem 3.4]{berger2016theorie}). By the previous lemma, we may choose $s \in I_\theta^\pa\setminus (I_\theta^2)^\pa$. Then $s$ generates $I_\theta$ and thus $s^n \in (I_\theta^n)^\pa \setminus (I_\theta^{n+1})^\pa$.  Lemma \ref{lem:basis_lav} then implies that  
 $(I_\theta^n)^\pa$ is the ideal in $\BdRpluspa$ generated by $s^n$. In particular, $I_\theta^\pa$ is principal, so Exercise 4 of \cite[Chapter 9]{atiyah2018introduction} implies $\BdRpluspa$ is a discrete valuation ring. It remains to prove $\BdRpluspa$ is $(I_\theta^\pa)^n = (I_\theta^n)^\pa$-adically complete. This comes down to showing $\BdRpluspa/(I_\theta^n)^\pa = (\BdRplus(K_\infty)/I_\theta^n)^\la$, which follows by a d\'evissage argument from the previous lemma.
\end{proof}

Cohen's structure theorem (\cite[Tag 0C0S]{Sta}) then implies the following result.

\begin{theorem}
    \label{thm:dvr}
         There is an algebraic  isomorphism
\[\BdRpluspa \cong \widehat{K}_{\infty}^{\la}\llbracket s \rrbracket \]
which maps $s$ to a generator of $I_{\theta}^{\pa}$.
 \end{theorem}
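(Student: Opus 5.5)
The plan is to deduce Theorem~\ref{thm:dvr} directly from Proposition~\ref{prop:bdran_local} and Cohen's structure theorem for complete local rings in equal characteristic.

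First, note that $\BdRpluspa$ contains $\QQ_p$. It is a $\QQ_p$-algebra because $\QQ_p$ acts trivially, so constants are analytic vectors. Hence $\BdRpluspa$ is a complete Noetherian local ring of equal characteristic zero. By Proposition~\ref{prop:bdran_local} it is a complete discrete valuation ring with maximal ideal $I_\theta^\pa$ and residue field $\Kinftyla$.

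Next, invoke Cohen's theorem (Tag 0C0S) in characteristic zero. It provides a coefficient field, that is, a subfield $k\subset \BdRpluspa$ mapping isomorphically onto the residue field $\Kinftyla$. Concretely, the argument runs as follows. By Zorn's lemma, pick a subfield containing $\QQ$ that is maximal among subfields meeting $I_\theta^\pa$ trivially. Completeness and Hensel's lemma, together with characteristic zero (so all extensions are separable), show that this subfield surjects onto the residue field. No Galois equivariance is claimed, so this noncanonical choice is harmless.

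Then choose a uniformizer. Following the proof of Proposition~\ref{prop:bdran_local}, take $s\in I_\theta^\pa\setminus (I_\theta^2)^\pa$; it generates $I_\theta^\pa$. Define a ring map $k\llbracket X\rrbracket\to\BdRpluspa$ sending $X\mapsto s$. It is well defined because $\BdRpluspa$ is $I_\theta^\pa$-adically complete and $(I_\theta^\pa)^n=(I_\theta^n)^\pa$ is generated by $s^n$.

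The map is surjective. Given $x$, write $x\equiv a_0 \bmod s$ with $a_0\in k$, then write $(x-a_0)/s\equiv a_1$, and so on; the resulting series converges by completeness. The map is injective because a nonzero power series equals $X^n\cdot(\text{unit})$, whose image is $s^n\cdot(\text{unit})\neq 0$ in the domain $\BdRpluspa$.

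Finally, identify $k\cong\Kinftyla$ via reduction to obtain the stated isomorphism, with $s$ a generator of $I_\theta^\pa$. The only substantive input is the existence of a coefficient field. This is entirely supplied by Cohen's theorem once Proposition~\ref{prop:bdran_local} is known, so I expect no real obstacle; the genuine work, namely completeness and the DVR property, was already done there.
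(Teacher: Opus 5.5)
Your proposal is correct and follows the paper's route: the paper deduces the theorem directly from Proposition~\ref{prop:bdran_local} (that $\BdRpluspa$ is a complete DVR with residue field $\Kinftyla$) together with Cohen's structure theorem. You have simply written out what the citation supplies, namely the construction of a coefficient field in characteristic zero and the verification that $k\llbracket X\rrbracket\to\BdRpluspa$, $X\mapsto s$, is bijective.
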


\begin{corollary}
   The map $\B_{\dR}^{+}(K_\infty)^{\pa}\rightarrow\B_{\dR}^{+}(K_\infty)$
is faithfully flat. 
\end{corollary}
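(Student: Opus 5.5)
The plan is to use only that both rings are discrete valuation rings and that the inclusion is a local homomorphism; no locally analytic input beyond Proposition~\ref{prop:bdran_local} will be needed. Write $A=\BdRpluspa$ and $B=\BdRplus(K_\infty)$. By Proposition~\ref{prop:bdran_local}, $A$ is a DVR with maximal ideal $I_\theta^\pa=I_\theta\cap A$. By \S\ref{subsubsection:p_adic_hodge_theory_notations}, $B$ is a DVR with maximal ideal $I_\theta$. The map $A\to B$ is the inclusion, so it is injective.

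\textbf{Flatness.} Over a DVR, and more generally over any PID, a module is flat if and only if it is torsion-free. Since $B$ is an integral domain and $A\subset B$, every nonzero element of $A$ is a nonzerodivisor on $B$. Hence $B$ is a torsion-free $A$-module, and therefore flat. One could also say more: the proof of Proposition~\ref{prop:bdran_local} produces an element $s\in I_\theta^\pa\setminus (I_\theta^2)^\pa$ that generates $I_\theta$. So a uniformizer of $A$ maps to a uniformizer of $B$, although flatness does not need this.

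\textbf{Faithful flatness.} For a flat local homomorphism of local rings $(A,\mathfrak m_A)\to(B,\mathfrak m_B)$, faithful flatness is equivalent to $\mathfrak m_A B\neq B$, and this holds automatically because $\mathfrak m_A B\subset \mathfrak m_B$. Here $\mathfrak m_A=I_\theta^\pa\subset I_\theta=\mathfrak m_B$, so the map is local. Equivalently, the only prime ideals of $A$ are $0$ and $I_\theta^\pa$, and both lie in the image of $\operatorname{Spec}B\to\operatorname{Spec}A$: the zero ideal of $B$ pulls back to $0$, and $I_\theta$ pulls back to $I_\theta^\pa$. This surjectivity on spectra, combined with flatness, gives faithful flatness.

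I do not expect any real obstacle. The only point to check is that $I_\theta^\pa$ is precisely $I_\theta\cap A$, so that the inclusion is local, and this holds by the definition of pro-analytic vectors of the ideal $I_\theta$.
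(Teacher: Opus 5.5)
Your argument is correct and is essentially the one the paper intends. The corollary is stated without proof right after Proposition~\ref{prop:bdran_local} and Theorem~\ref{thm:dvr}, and it follows exactly as you say: $A\to B$ is an injective local homomorphism of DVRs, so $B$ is torsion-free and hence flat over $A$, and flat plus local gives faithfully flat. Only the trivial inclusion $I_\theta^\pa\subset I_\theta$ (or Lemma~\ref{lem:basis_lav}(1)) is needed for localness, so the equality $I_\theta^\pa=I_\theta\cap A$ that you flag is not actually required.
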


\subsection{Statement of the main theorem}

In the presence of an orientation we can get a much stronger $\Gamma(K_\infty)$-equivariant version of Theorem \ref{thm:dvr}.

\begin{theorem}
\label{thm:brdplusps_iso}
Let $K_\infty/K$ be an orientable $p$-adic Lie Galois extension with a chosen orientation $\nabla=\nabla(K_\infty)$. Then:

\begin{enumerate}
    \item  The composition $$\BdRplus(K_\infty)^{\pa,\nabla=0} \subset \BdRpluspa \twoheadrightarrow \Kinftyla$$ induces a 
 natural $\Gamma(K_\infty)$-equivariant isomorphism of $K$-algebras
$$\BdRplus(K_\infty)^{\pa,\nabla=0} \cong \Kinftyla.$$ In particular, $\BdRpluspa$ acquires in this way a natural $\Kinftyla$-algebra structure, and the structure morphism $\Kinftyla \to \BdRpluspa$ is $\Gamma(K_\infty)$-equivariant. 

\item  For $n \ge 1$, the $\Gamma(K_\infty)$-stable $\Kinftyla$-subspace $$\BdRplus(K_\infty)^{\pa,\nabla=n} \subset\BdRplus(K_\infty)$$ is contained in $\Fil^n\BdRpluspa$. The induced map $$\BdRplus(K_\infty)^{\pa,\nabla=n} \to L(K_\infty)^{\otimes n,\la}=\mt{gr}^n\BdRplus(K_\infty)^\pa$$ is a natural isomorphism of 1-dimensional $\Kinftyla$-semilinear representations of $\Gamma(K_\infty)$.

\item The natural map from $\prod_{n \ge 0} \BdRplus(K_\infty)^{\pa,\nabla=n}$ to $\BdRpluspa$ is an isomorphism, and so one gets a natural
$\Gamma(K_\infty)$-equivariant isomorphism of $\wh{K}_\infty^\la$-algebras
$$\BdRpluspa \cong \wh{\Sym}_{\Kinftyla}(L(K_\infty)^\la).$$ 
\end{enumerate}
\end{theorem}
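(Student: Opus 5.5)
The argument rests on how $\nabla$ acts on the graded pieces of $\BdRpluspa$, followed by successive approximation along the $I_\theta$-adic filtration.

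Write $\nabla=\sum_i b_i\otimes\partial_i$ with $b_i\in\BdRpluspa$; this is possible by the lemma following Definition \ref{def:orientation}. Since $\nabla$ is $G_K$-invariant, its action on $\BdRpluspa$ is a $\Gamma(K_\infty)$-equivariant $K$-linear derivation. It preserves each $(I_\theta^n)^\pa$, because each $\partial_i$ does and $b_i\in\BdRpluspa$.

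\textbf{Step 1: the action on graded pieces.} On $\gr^0=\Kinftyla$, the derivation $\nabla$ acts through $\theta(b_i)$, i.e. via $\Theta(K_\infty)$, which is zero by Proposition \ref{prop:sen_operator_kills_la_vecs}. For $\gr^n=L(K_\infty)^{\otimes n,\la}$ I would show that $\nabla$ acts as multiplication by $n$, which reduces to $n=1$ by the Leibniz rule. There are two routes for $n=1$.
\begin{itemize}
\item When $K_\cyc\subset K_\infty$, Lemma \ref{lem:nabla_action_compatibility} lets me compare with the cyclotomic orientation. Fontaine's $t$ lies in $\BdRplus(K_\cyc)^\pa$ and generates $L(K_\infty)$, and one checks $\nabla(K_\cyc)(t)=t$ directly, since $\Theta(K_\cyc)=\nabla_\gamma/\log\chi_\cyc(\gamma)$.
\item In general, I would use Corollary \ref{cor:bijection_between_orientaitons_along_dim_1_extensions} with $L=K_\cyc$, transport the orientation to $L_\infty=K_\infty K_\cyc$, and then restrict back via Lemma \ref{lem:nabla_action_compatibility}.
\end{itemize}
More intrinsically, $\gr^1\nabla$ is $\Theta(K_\infty)$ acting on $L(K_\infty)^\la\cong\Kinftyla\otimes\C(1)$-type data. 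Here Sen theory says $\Theta$ acts on a generator of weight one by $1$, and on $\Kinftyla$ by $0$.

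\textbf{Step 2: solving $\nabla=n$ modulo successive powers.} Fix $n\ge0$ and $\bar x\in\gr^n$. I lift $\bar x$ to $x_0\in(I_\theta^n)^\pa$ using Lemma \ref{lem:graded_analytic_quotients}, and then correct it inductively. Suppose $(\nabla-n)x_k\in(I_\theta^{m})^\pa$ with $m>n$. Its image $\bar y$ in $\gr^m$ satisfies $(\nabla-n)\bar y=(m-n)\bar y$ there. So I set $x_{k+1}=x_k-\frac{1}{m-n}y$, where $y$ is a pro-analytic lift of $\bar y$. The iteration converges in the complete ring $\BdRpluspa$ (Proposition \ref{prop:bdran_local}) to an $x$ with $\nabla x=nx$ lifting $\bar x$.

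Uniqueness follows from the same graded computation. If $(\nabla-n)x=0$ and $x\in(I_\theta^m)^\pa\setminus(I_\theta^{m+1})^\pa$, then on $\gr^m$ we get $(m-n)\bar x=0$. This forces $m=n$. The same argument shows that $\BdRplus(K_\infty)^{\pa,\nabla=n}\subset\Fil^n$, and that the map to $\gr^n$ is injective.

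These observations give (1) and (2):
\begin{itemize}
\item The reduction maps $\BdRplus(K_\infty)^{\pa,\nabla=0}\to\Kinftyla$ and $\BdRplus(K_\infty)^{\pa,\nabla=n}\to\gr^n$ are bijective.
\item The source in (1) is a subring, since the kernel of a derivation is a subring, so the map in (1) is a $K$-algebra isomorphism.
\item Everything is $\Gamma(K_\infty)$-equivariant, because $\nabla$ is.
\item The spaces $\nabla=n$ are $\Kinftyla$-modules by the Leibniz rule.
\end{itemize}

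\textbf{Step 3: the product decomposition.} The map $\prod_n\BdRplus(K_\infty)^{\pa,\nabla=n}\to\BdRpluspa$ is well defined because the $n$-th factor lies in $\Fil^n$. It is bijective by completeness, since it induces isomorphisms on every graded piece. Multiplicativity, $(\nabla=a)\cdot(\nabla=b)\subset(\nabla=a+b)$, identifies the product with $\wh{\Sym}_{\Kinftyla}(L(K_\infty)^\la)$.

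The main obstacle is Step 1, namely showing that $\nabla$ acts by exactly $1$ on $\gr^1$. This needs a careful identification of the Sen operator's action on $L(K_\infty)^\la$, or the reduction to $K_\cyc\subset K_\infty$ via the compatibility lemmas. Step 2 also silently requires that $y$ can be chosen pro-analytic, and this is exactly where Lemma \ref{lem:graded_analytic_quotients}, and hence Theorem \ref{thm:higher_la_vanishing_k_infty}, enters.
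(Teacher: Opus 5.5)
Your proposal is correct. Its architecture matches the paper's: show that $\nabla$ preserves the $I_\theta^\pa$-adic filtration and acts by $n$ on $\gr^n$ by comparing with the cyclotomic tower, then split the complete filtered ring into eigenspaces. Where you differ is the general case, and your route there is shorter.

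\textbf{The paper's route.} It settles $K_\cyc\subset K_\infty$ first. It then uses Lemmas \ref{lem:reduction_along_finite_extensions_easy_direction} and \ref{lem:reduction_along_finite_extensions} to reduce to $\Gal(L_\infty/K_\infty)\cong\ZZ_p$, where Berger--Colmez's element $z$ is available. Next it builds the explicit element $t_{K_\infty}=ut$ of Construction \ref{cons:t_kinfty}, with $u=\exp(-\log\chi_\cyc(\gamma)(z-z_0))$. This element is $\Delta$-invariant and satisfies $\nabla(K_\infty)(t_{K_\infty})=t_{K_\infty}$, and it serves as the generator of $\gr^1$.

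\textbf{Your route.} You transport the orientation to $L_\infty=K_\infty K_\cyc$ via Corollary \ref{cor:bijection_between_orientaitons_along_dim_1_extensions}, which applies whether or not $L_\infty/K_\infty$ is finite. You then restrict the statement ``$\nabla(L_\infty)$ acts by $n$ on $\gr^n$'' to $\BdRplus(K_\infty)^\pa$ using Lemma \ref{lem:nabla_action_compatibility}. For this restriction you should state explicitly that the filtrations are compatible: $\Fil^m\BdRplus(L_\infty)\cap\BdRplus(K_\infty)=\Fil^m\BdRplus(K_\infty)$, since both are induced from $t^m\BdRplus$.

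\textbf{Comparison.} Your route avoids the finite-extension reductions and the construction altogether. An element with $\nabla t_{K_\infty}=t_{K_\infty}$ then appears a posteriori as any nonzero vector of $\BdRplus(K_\infty)^{\pa,\nabla=1}$. What the paper's route buys is an explicit formula for $t_{K_\infty}$ in terms of $t$ and $z$.

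Your Step 2 is a direct successive-approximation proof of Lemma \ref{lem:operator_on_filtered_space}, and it does not need the graded pieces to be finite-dimensional. One small correction there: you can take the correction term to be $y=(\nabla-n)x_k$, which is already pro-analytic. So Lemma \ref{lem:graded_analytic_quotients} is not needed at that point. It is genuinely needed to lift $\bar x$ to $x_0$ and to identify $\gr^n\BdRpluspa$ with $L(K_\infty)^{\otimes n,\la}$.
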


\begin{remark}
In particular, choosing a generator $t_{K_\infty}$ of $L(K_\infty)^\la$, we get a $\Gamma(K_\infty)$-trivialization $L(K_\infty)^\la\cong \Kinftyla\cdot t_{K_\infty}.$
By considering $t_{K_\infty}$ as an element in $\mathrm{Fil}^1 \bdrpluskinftypa$, we get that the natural inclusion map
\[ \BdRplus(K_\infty)^{\pa,\nabla=0}\llbracket t_\kinfty \rrbracket \to  \bdrpluskinftypa\]
is an isomorphism. This induces the $\Gamma(K_\infty)$-equivariant isomorphism
\[  \wh{K}_\infty^\la\llbracket t_{K_\infty}\rrbracket \cong \BdRpluspa\]
in Question \ref{question:bdr_plus_iso}.
\end{remark}

\begin{example}[The False Tate extension]

Let $\kft, \tilde{\alpha}$ and $\alpha$ be as in Example \ref{ex:Kummer_ext}. 
  We describe explicitly the \emph{unique} Galois-equivariant section map
  \[ s: \kftla \to \bdrplus(\kft)^\pa.\]
  We shall see it is completely determined by one rule $s(\alpha)=\wt\alpha$. By \cite[Proposition 4.12]{berger2016theorie}, we have
      \[ \kftla =\cup_{n \geq 1} K({\zeta_{p^{r(n)}}, \pi_{r(n)}})\{\{ \alpha-\alpha_n \}\}_n. \]
      See~ \cite[\S 4]{berger2016theorie}   for the notation. We just mention here that the $\alpha_n \in \kft$ is a sequence of (algebraic) elements approximating $\alpha$, and the expression above says that each $f \in \wh{K}_{\mathrm{fT}}^\la$ can be expressed as a (converging) power series with coefficients in $\kft$ and with variable $(\alpha-\alpha_n)$ for some $n$.
It suffices to determine $s(\alpha)$, as everything else in the expression of $\kftla$ are elements in $\kft$ which embed  canonically into $\bdrplus$. In fact, we claim $s(\alpha)=\wt\alpha$.
  Indeed, as the section $s$ comes from the isomorphism 
  $\bdrplus(\kft)^{\pa, \nabla(\kft) =0}\cong   \kftla$,  we reduce to verifying  
  $\nabla(\wt\alpha)=0,$
  which follows from the computation of Example \ref{ex:Kummer_ext}.
\end{example}

\subsection{The case {$K_\cyc \subset K_\infty$}} \label{subsec:kcyc_case}

\begin{lemma} \label{lem:nabla_on_t}
Suppose that $K_\cyc \subset K_\infty$ with $K_\infty$ orientable. Fix an orientation $\nabla(K_\infty)$, so that $\nabla(K_\infty)$ acts on $\BdRplus(K_\infty)^{\pa}$. Then $\nabla(K_\infty)(t) = t$, where $t$ is Fontaine's element. In particular, $\nabla(K_\infty)((I_\theta^\pa)^n) \subset (I_\theta^\pa)^n$ for $n \ge 0$.
\end{lemma}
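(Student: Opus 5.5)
We reduce the claim to the cyclotomic tower, where everything is explicit. Since $K_\cyc\subset K_\infty$, the quotient $\Gamma(K_\infty)\to\Gamma(K_\cyc)$ induces $\BdRplus\otimes_{\QQ_p}\Lie\Gamma(K_\infty)\to \BdRplus\otimes_{\QQ_p}\Lie\Gamma(K_\cyc)$. This map is $G_K$-equivariant, so it sends $\nabla(K_\infty)$ to an element $\nabla'\in\DdRplus(\Lie\Gamma(K_\cyc))$. By Lemma~\ref{lemma:sen_operator_compatibility}, the image of $\theta(\nabla')$ is $\Theta(K_\cyc)$, so $\nabla'$ is an orientation of $K_\cyc$. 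Since $K_\cyc$ is abelian, it is canonically orientable (Example~\ref{ex:ab_ext}), and hence $\nabla'=\nabla(K_\cyc)$. Thus $\nabla(K_\infty)$ and $\nabla(K_\cyc)$ are compatible orientations in the sense of Lemma~\ref{lem:nabla_action_compatibility}.

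Next we apply Lemma~\ref{lem:nabla_action_compatibility} to the pair $K_\cyc\subset K_\infty$ and the element $t\in\BdRplus(K_\cyc)^\pa$. The element $t$ is $\Gamma(K_\cyc)$-pro-analytic because $g(t)=\chi_\cyc(g)t$ and $\chi_\cyc$ is locally analytic. The lemma gives $\nabla(K_\infty)(t)=\nabla(K_\cyc)(t)$.

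It then remains to compute $\nabla(K_\cyc)(t)$. By Corollary~\ref{cor:sen_op_rationality}, $\Theta(K_\cyc)\in K\otimes\Lie\Gamma(K_\cyc)$, and its canonical lift is this same element viewed inside $\BdRplus\otimes\Lie\Gamma(K_\cyc)$. Concretely, $\Theta(K_\cyc)=\log\gamma/\log\chi_\cyc(\gamma)$, i.e. $1\otimes\nabla_\cyc$, where $\nabla_\cyc\in\Lie\Gamma(K_\cyc)$ is the element whose action is $\log\gamma/\log\chi_\cyc(\gamma)$. One checks this on the faithful representation $\chi_\cyc$: its Sen operator is $1$. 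Then
\[
\nabla_\cyc(t)=\frac{\log\gamma}{\log\chi_\cyc(\gamma)}(t)=\frac{\log\chi_\cyc(\gamma)}{\log\chi_\cyc(\gamma)}\,t=t.
\]
The step needing the most care is this normalization: we must ensure that the identification of $\Theta(K_\cyc)$ with the derivation $\log\gamma/\log\chi_\cyc(\gamma)$ agrees with the action convention of \S\ref{subsubsection:derivations}. The Sen operator of $\QQ_p(1)$ is $1$ (Hodge--Tate weight $1$ in our normalization), which fixes the sign. Alternatively, the computation in Example~\ref{ex:Kummer_ext} already uses $\nabla(K_\cyc)(z)=-z$ for $z$ with $g(z)=\chi_\cyc(g)^{-1}z$, which is consistent with this normalization.

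For the final assertion, $\nabla(K_\infty)$ acts as a derivation on $\BdRpluspa$, since it is a $\BdRpluspa$-linear combination of elements of $\Lie\Gamma(K_\infty)$, each acting by derivations. By Proposition~\ref{prop:bdran_local}, $(I_\theta^\pa)^n$ is generated by $s^n$ for any generator $s$. Because $t$ generates $I_\theta$ and lies in $I_\theta^\pa\setminus(I_\theta^2)^\pa$, we may take $s=t$. Then for $x=at^n$ with $a\in\BdRpluspa$, the Leibniz rule gives
\[
\nabla(x)=\nabla(a)\,t^n+n\,a\,t^n\in(I_\theta^\pa)^n,
\]
using $\nabla(t)=t$ and the fact that $\nabla(a)\in\BdRpluspa$.
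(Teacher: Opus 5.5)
Your proposal is correct and follows the paper's route. Canonical orientability of $K_\cyc$ forces $\nabla(K_\infty)$ to map to $\nabla(K_\cyc)$, Lemma~\ref{lem:nabla_action_compatibility} reduces the claim to $\nabla(K_\cyc)(t)=t$, and the Leibniz rule gives the stability of $(I_\theta^\pa)^n$. You also correctly fill in details the paper leaves implicit: that the image of $\nabla(K_\infty)$ is an orientation via Lemma~\ref{lemma:sen_operator_compatibility}, that $t$ is pro-analytic, the normalization check $\nabla(K_\cyc)(t)=t$, and that $t$ can serve as the generator $s$ in Proposition~\ref{prop:bdran_local}.
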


\begin{proof} Since $K_\cyc$ is canonically orientable, the operator $\nabla(K_\infty)$ must map to $\nabla(K_\cyc)$ under the map $V_{\dR}^+(K_\infty) \to V_{\dR}^+(K_\cyc)$. Now use Lemma \ref{lem:nabla_action_compatibility} and the relation $\nabla(K_\cyc)(t) = t$. The last statement of the lemma follows from the Leibniz rule.\end{proof}

\begin{lemma}  
\label{lem:nabla_action_on_graded_pieces}
  For $n \ge 0$, the operator $\nabla(K_\infty)$ acts on $(I_\theta^\pa)^n/(I_\theta^\pa)^{n+1}$ multiplication by $n$.
\end{lemma}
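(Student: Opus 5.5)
We work in the setting of Lemma \ref{lem:nabla_on_t}, so $K_\cyc \subset K_\infty$ and $t \in \BdRplus(K_\infty)^{\pa}$. By that lemma, $\nabla=\nabla(K_\infty)$ preserves each $(I_\theta^\pa)^n$, so it induces an operator on the quotient $(I_\theta^\pa)^n/(I_\theta^\pa)^{n+1}$. The plan is to show that $\nabla - n$ kills this quotient.

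\textbf{Step 1: a generator.} Since $t \in I_\theta\setminus I_\theta^2$ and $t$ is pro-analytic, Proposition \ref{prop:bdran_local} shows that $t$ generates $I_\theta^\pa$. Hence $(I_\theta^\pa)^n = t^n\BdRpluspa$. By Lemma \ref{lem:graded_analytic_quotients}, the quotient $(I_\theta^\pa)^n/(I_\theta^\pa)^{n+1}$ is identified with
\[ (I_\theta^n/I_\theta^{n+1})^\la = \Kinftyla\cdot t^n. \]
So every class is represented by $a t^n$ with $a \in \BdRpluspa$, and only the image $\bar a \in \Kinftyla$ of $a$ matters.

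\textbf{Step 2: Leibniz.} Write $\nabla = \sum b_i\otimes\partial_i$. Each $\partial_i$ is a derivation on pro-analytic vectors, and the $b_i$ act by multiplication, so $\nabla$ is a derivation. Using $\nabla(t)=t$ from Lemma \ref{lem:nabla_on_t},
\[ \nabla(at^n) = \nabla(a)\,t^n + n\,a\,t^n. \]
It therefore suffices to show that $\nabla(a) \in I_\theta^\pa$ for every $a\in\BdRpluspa$.

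\textbf{Step 3: reduction mod $I_\theta$.} The map $\theta$ is $\Gamma(K_\infty)$-equivariant and continuous, so it commutes with the Lie algebra action on pro-analytic vectors. Hence
\[ \theta(\nabla(a)) = \Bigl(\sum_i \theta(b_i)\otimes\partial_i\Bigr)(\theta(a)). \]
Because $\nabla$ lifts the Sen operator, $\sum_i \theta(b_i)\otimes\partial_i = \Theta(K_\infty)$, and $\theta(a)\in\Kinftyla$. By Proposition \ref{prop:sen_operator_kills_la_vecs}, $\Theta(K_\infty)$ acts by zero on $\Kinftyla$. Thus $\theta(\nabla(a))=0$, i.e. $\nabla(a)\in I_\theta\cap\BdRpluspa = I_\theta^\pa$, which finishes the argument.

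\textbf{Main obstacle.} The delicate point is Step 3: justifying that the action of $\BdRpluspa\otimes\Lie\Gamma(K_\infty)$ on pro-analytic vectors is compatible with the reduction map $\theta$. I would argue this level by level. The projection $\BdRplus(K_\infty)/I_\theta^m \to \hatkinfty$ is an equivariant bounded map of Banach representations. It therefore sends locally analytic vectors to locally analytic vectors and intertwines the derived $\Lie\Gamma(K_\infty)$-actions, since these are limits of $(g-1)/\log$ expressions that commute with equivariant continuous maps. Taking the inverse limit over $m$ then gives the compatibility on $\BdRpluspa$.
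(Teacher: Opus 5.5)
Your proof is correct and follows the paper's argument: the graded piece is identified with $\Kinftyla\cdot t^n$, then $\nabla(t)=t$ together with the Leibniz rule, and the fact that $\nabla$ reduces modulo $I_\theta$ to $\Theta(K_\infty)$, which kills $\Kinftyla$ by Proposition \ref{prop:sen_operator_kills_la_vecs}, give the result. Your Step 3 makes explicit the compatibility of the Lie algebra action with $\theta$ that the paper uses without comment.
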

\begin{proof}
    As $\kcyc \subset \kinfty$, we can take $s=t$ in proof of Proposition \ref{prop:bdran_local}. Recall that by Lemma \ref{lem:graded_analytic_quotients} and Proposition \ref{prop:bdran_local}, we have $(I_\theta^\pa)^n/(I_\theta^\pa)^{n+1}\cong [I_\theta^n/I_\theta^{n+1}]^\la$. Then $[I_\theta^n/I_\theta^{n+1}]^\la\cong\hatkinftyla\cdot t^n$ and we conclude by the previous lemma and Proposition \ref{prop:sen_operator_kills_la_vecs}.
\end{proof}

We shall need the following key lemma.

\begin{lemma}
\label{lem:operator_on_filtered_space}
Let $A$ be a $\QQ$-algebra, complete and separated with respect to a decreasing filtration $A = \Fil^0A \supset \Fil^1A \supset ...$, with $\gr^0A$ a field and each $\gr^iA$ a finite-dimensional vector space over $\gr^0A$. Let $T$ be a $\QQ$-linear operator on $A$ which respects the filtration and acts on $\gr^iA$ as multiplication by $i$.  Then:

\begin{enumerate}
    \item 
 For all $i \ge 0$ we have $A^{T=i} \subset \Fil^iA$ and the induced map $A^{T=i} \to \gr^iA$ is an isomorphism.

\item The natural map $\prod_{i \ge 0} A^{T=i} \to A$ is an isomorphism. 

\end{enumerate}
\end{lemma}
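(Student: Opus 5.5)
The proof is a filtered version of the eigenspace decomposition of a semisimple operator. The key observation is that on each finite-level quotient $A/\Fil^{n}A$ the operator $\prod_{i<n}(T-i)$ kills everything. Indeed, $T-i$ sends $\Fil^iA$ into $\Fil^{i+1}A$ because $T$ acts on $\gr^iA$ by $i$. Applying the factors in increasing order of $i$ therefore pushes any element into $\Fil^nA$, and since the factors commute the order does not matter. Because $A$ is a $\QQ$-algebra, the integers $i-j$ for $i\neq j$ are invertible. This lets us form the Lagrange interpolation idempotents
\[ e_i^{(n)}=\prod_{j<n,\, j\neq i}\frac{T-j}{i-j},\qquad 0\le i<n, \]
acting on $A/\Fil^nA$. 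The standard polynomial identities $\sum_i e_i^{(n)}\equiv 1$ and $(T-i)e_i^{(n)}\equiv 0$ hold modulo the polynomial $\prod_{j<n}(T-j)$, which annihilates $A/\Fil^nA$. Hence $A/\Fil^nA=\bigoplus_{i<n}(A/\Fil^nA)^{T=i}$.

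Next I identify the summands. On $\Fil^iA/\Fil^nA$ the operator $T-i$ is unipotent-like: it acts by $j-i$ on $\gr^j$. For $j>i$ this is invertible, and for $j=i$ it is zero. So by induction on the filtration length, the $T=i$ eigenspace of $A/\Fil^nA$ lies in $\Fil^iA/\Fil^nA$ and maps isomorphically onto $\gr^iA$, for every $n>i$.

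Here is the inductive argument. Apply $e_i^{(n)}$ to a lift of a class in $\gr^iA$. Modulo $\Fil^{i+1}$, every factor $(T-j)/(i-j)$ acts as the identity on $\gr^i$. So we obtain an eigenvector with the prescribed image in $\gr^i$. Injectivity holds because an eigenvector with eigenvalue $i$ lying in $\Fil^{i+1}$ has leading term in some $\gr^j$ with $j>i$. There $T$ acts by $j\neq i$, which forces that leading term to vanish; iterating shows the eigenvector lies in $\Fil^n$, i.e. is zero mod $\Fil^n$.

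Finally I pass to the limit using completeness and separatedness, $A=\varprojlim A/\Fil^nA$. The eigenspace decompositions at different levels $n$ are compatible under the transition maps, since eigenspaces map into eigenspaces. For fixed $i$, the reduction maps $(A/\Fil^{n+1})^{T=i}\to(A/\Fil^n)^{T=i}$ are isomorphisms for $n>i$, because both sides are identified with $\gr^iA$ compatibly. Hence
\[ A^{T=i}=\varprojlim_n (A/\Fil^nA)^{T=i}\cong \gr^iA, \]
and this space lies in $\Fil^iA$, which gives (1). Taking the limit of $A/\Fil^nA\cong\prod_{i<n}(A/\Fil^nA)^{T=i}$ then gives $A\cong\prod_{i\ge0}A^{T=i}$. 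Convergence of the sum is automatic because $A^{T=i}\subset\Fil^iA$; this gives (2).

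The main point needing care is this passage to the limit. Taking $T$-eigenvectors does not commute with inverse limits in general. Here it does, because the transition maps on eigenspaces are isomorphisms from level $i+1$ on, so no $\varprojlim^1$ issue arises. The finite-dimensionality of the $\gr^iA$ is not actually needed for this argument.
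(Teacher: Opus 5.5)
Your proof is correct, but it takes a different route from the paper's. The paper never decomposes the finite levels $A/\Fil^nA$. Its steps are:
\begin{itemize}
\item it gets $A^{T=i}\subset\Fil^iA$ from $\Hom_{\QQ[T]}(A^{T=i},\gr^jA)=0$ for $j<i$;
\item it shows $[\Fil^iA/\Fil^{n+1}A]^{T=i}\to[\Fil^iA/\Fil^nA]^{T=i}$ is an isomorphism by applying $\Hom_{\QQ[T]}(\QQ[T]/(T-i),-)$ to $0\to\gr^nA\to\Fil^iA/\Fil^{n+1}A\to\Fil^iA/\Fil^nA\to0$; both $\Hom$ and $\mathrm{Ext}^1$ into $\gr^nA$ vanish because $T-i$ is bijective there;
\item it proves (2) by a separate induction on $m$, comparing $\prod_{i\le m}A^{T=i}$ with $A/\Fil^{m+1}A$ through a diagram and a dimension count over $\gr^0A$.
\end{itemize}

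You replace this d\'evissage with one observation: $\prod_{j<n}(T-j)$ annihilates $A/\Fil^nA$, so the Lagrange projectors split every finite level into eigenspaces. The paper's inductive surjectivity argument then becomes your one-step construction $e_i^{(n)}(\tilde x)$. Part (2) follows by taking the limit of the finite-level decompositions, with no further induction.

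Your version is explicit, since the projectors are polynomials in $T$. As you say, it uses neither the finite-dimensionality of $\gr^iA$ nor that $\gr^0A$ is a field. The paper's last step does invoke finite-dimensionality, although the short five lemma would suffice there, because both rows of its diagram are short exact. The paper's version has the advantage of matching the d\'evissage style used throughout the article.

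Two small remarks on your write-up. First, the worry about eigenvectors and inverse limits is unfounded: $A^{T=i}$ is a kernel, and kernels always commute with inverse limits. So $A^{T=i}=\varprojlim_n(A/\Fil^nA)^{T=i}$ is automatic. What the stabilization of the transition maps actually does is identify this limit with $\gr^iA$, and identify the limit of the $\bigoplus_{i<n}$ with $\prod_i$. Second, your key observation gives $A^{T=i}\subset\Fil^iA$ directly in $A$, with no induction. The operator $\prod_{j<i}(T-j)$ maps $A$ into $\Fil^iA$, and on $A^{T=i}$ it acts as the nonzero integer $\prod_{j<i}(i-j)$.
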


\begin{proof}
We start by proving $A^{T=i} \subset \Fil^iA$. For $ 0 \le j <i$, we have an exact sequence of $\QQ[T]$-modules
\begin{align*}
0 \to \Fil^{j+1}A \to \Fil^jA \to \gr^jA \to 0.
\end{align*}
Since $\Hom_{\QQ[T]}(A^{T=i},\gr^jA) = 0,$ we get $\Hom_{\QQ[T]}(A^{T=i},\Fil^jA) = \Hom_{\QQ[T]}(A^{T=i},\Fil^{j+1}A).$
Applying this consecutively for $j = 0,...,i-1$, starting from the inclusion $A^{T=i} \subset  \Fil^0A$, we get $A^{T=i} \subset \Fil^iA.$ 

We now show $A^{T=i} \to \gr^iA$ is an isomorphism. Since $A = \varprojlim_n A/\Fil^nA$, it suffices to prove that $[A/\Fil^nA]^{T=i} \to \gr^iA$ is an isomorphism for $n \ge i+1$. The assumption that $T$ acts as multiplication by $j$ on $\gr^j$ implies by d\'evissage that $$[\Fil^iA/\Fil^nA]^{T=i}=[A/\Fil^nA]^{T=i},$$
so we reduce to proving $[\Fil^iA/\Fil^nA]^{T=i} \to \gr^iA$ is an isomorphism. Argue by induction on $n \ge i+1$, the base case being obvious. Then for the induction step, it suffices to show $[\Fil^iA/\Fil^{n+1}A]^{T=i} \to [\Fil^iA/\Fil^{n}A]^{T=i}$ is an isomorphism. To see this, apply the functor $\Hom_{\QQ[T]}(\QQ[T]/(T-i),*)$ to the sequence
\begin{align*}
0 \to \gr^nA \to \Fil^iA/\Fil^{n+1}A \to \Fil^iA/\Fil^{n}A \to 0
\end{align*} and observe that $T-i$ acts as an isomorphism on $\gr^nA$. This completes the proof of item (1).

For item (2), we write  $$A_m:=(\prod_{i\ge 0}A^{T=i})/(\prod_{i\ge m+1}A^{T=i}).$$ It suffices to show that for each $m$, the map $A_m \to A/\Fil^{m+1}A$ is an isomorphism. We prove this claim by induction on $m$. For $m=0$, the desired claim is the same as the isomorphism $A^{T=0} \cong \gr^0A,$ which we have already shown. Then by induction we consider the commutative diagram

\[
\begin{tikzcd}[row sep=small, column sep=small]
    0 \arrow[r] 
    & A^{T=m+1}  \arrow[d] \arrow[r] 
        & A_{m+1} \arrow[d] \arrow[r] 
        & A_m \arrow[d] \\
    0 \arrow[r] 
    & \gr^{m+1}A  \arrow[r] 
        & A/\Fil^{m+2}A\arrow[r] 
        & A/\Fil^{m+1}A.
\end{tikzcd}
\]
We know that the left vertical arrow is an isomorphism; the right vertical arrow is an isomorphism by induction. It follows the middle vertical map is injective, but it is also a map of $\gr^0A$-vector spaces of the same finite dimension, hence it must be an isomorphism.
\end{proof}

The following proposition completes the proof of Theorem \ref{thm:brdplusps_iso} in the case where $K_\cyc \subset K_\infty$. In this case, there is a trivialization $L(K_\infty)^\la\cong \Kinftyla\cdot t$, and so the statement simplifies.

\begin{proposition}
\label{prop:bdrpluspa_iso_power_series_ring}
Suppose that $K_\cyc \subset K_\infty$ with $K_\infty$ orientable. Fix an orientation $\nabla(K_\infty)$. Then, depending on the orientation $\nabla(K_\infty)$, the ring $\B_{\dR}^{+}(K_\infty)^{\pa}$ is naturally a $\wh{K}_{\infty}^{\la}$-algebra, and there exists a natural isomorphism of $\wh{K}_{\infty}^{\la}$-algebras
$$\B_{\dR}^{+}(K_\infty)^{\pa} \cong   \wh{K}_{\infty}^{\la}\llbracket t\rrbracket ,$$
where $t$ is Fontaine's element. In this isomorphism, $\B_{\dR}^{+}(K_\infty)^{\pa,\nabla=n}$ is identified with $\Kinftyla\cdot t^n.$  
\end{proposition}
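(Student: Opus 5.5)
Apply Lemma \ref{lem:operator_on_filtered_space} with $A=\BdRpluspa$, the filtration $\Fil^iA=(I_\theta^\pa)^i=(I_\theta^i)^\pa$, and $T=\nabla(K_\infty)$. The hypotheses hold as follows.

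First, $A$ is a $\QQ$-algebra, and it is complete and separated for this filtration by Proposition \ref{prop:bdran_local}.

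Second, $\gr^0A\cong\Kinftyla$ is a field. Each $\gr^iA\cong[I_\theta^i/I_\theta^{i+1}]^\la=\Kinftyla\cdot t^i$ is one-dimensional, by Lemma \ref{lem:graded_analytic_quotients} and the proof of Lemma \ref{lem:nabla_action_on_graded_pieces}.

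Third, $T$ is $\QQ$-linear, and indeed $K$-linear, because the coefficients of $\nabla$ are $G_K$-invariant but act through derivations in $\Lie\Gamma(K_\infty)$, which kill $K$. By Lemma \ref{lem:nabla_on_t}, $T$ preserves the filtration. By Lemma \ref{lem:nabla_action_on_graded_pieces}, $T$ acts on $\gr^i$ as multiplication by $i$.

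The lemma then gives two conclusions: $A^{\nabla=i}\subset\Fil^iA$ with $A^{\nabla=i}\xrightarrow{\sim}\gr^iA$, and $\prod_i A^{\nabla=i}\xrightarrow{\sim}A$.

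Next I make the algebra structure explicit.

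\emph{The subring $A^{\nabla=0}$.} Since $\nabla$ acts on $A$ as a derivation (Leibniz rule for elements of $A\otimes\Lie\Gamma$ acting through derivations), $A^{\nabla=0}$ is a subring. The reduction map $A^{\nabla=0}\to\gr^0A=\Kinftyla$ is a ring isomorphism. Inverting it gives a section $\Kinftyla\to A$, and this makes $A$ a $\Kinftyla$-algebra.

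\emph{Equivariance.} The section is $\Gamma(K_\infty)$-equivariant because $\nabla$ is $\Gamma(K_\infty)$-invariant as an element of $(\BdRpluspa\otimes\Lie\Gamma(K_\infty))^{\Gamma(K_\infty)}$. Concretely, $g\circ\nabla\circ g^{-1}=\nabla$, since $g$ acts on $b\otimes\partial$ by $g(b)\otimes\ad(g)\partial$, and $g\,\partial\, g^{-1}=\ad(g)\partial$ as operators. Hence each $A^{\nabla=n}$ is $\Gamma(K_\infty)$-stable.

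\emph{The pieces $A^{\nabla=n}$.} By the Leibniz rule and $\nabla(t)=t$, each $A^{\nabla=n}$ is an $A^{\nabla=0}$-module containing $t^n$. Since $A^{\nabla=0}\cdot t^n\subset A^{\nabla=n}$ and both map isomorphically onto $\gr^n=\Kinftyla t^n$, we get $A^{\nabla=n}=A^{\nabla=0}\,t^n$.

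\emph{Conclusion.} Combining with the product decomposition, every element of $A$ is uniquely $\sum_n a_nt^n$ with $a_n\in A^{\nabla=0}\cong\Kinftyla$. Multiplication is compatible with power series multiplication, because $A^{\nabla=m}A^{\nabla=n}\subset A^{\nabla=m+n}$ and the decomposition is compatible with the $t$-adic topology. This yields the isomorphism $\Kinftyla\llbracket t\rrbracket\cong A$, under which $A^{\nabla=n}$ is identified with $\Kinftyla t^n$.

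\emph{Main obstacle.} The only delicate point is checking that $\nabla$ genuinely preserves $A$ and acts by derivations compatibly with the Fréchet structure, so that Lemma \ref{lem:operator_on_filtered_space} applies. The derivation property is what ensures that the kernel is a ring and that the product decomposition is multiplicative.
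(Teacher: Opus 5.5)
Your proposal is correct and is essentially the paper's proof. The paper applies Lemma \ref{lem:operator_on_filtered_space} to $A=\BdRpluspa$ with its $I_\theta^\pa$-adic filtration and $T=\nabla$, citing Proposition \ref{prop:bdran_local} and Lemmas \ref{lem:nabla_on_t} and \ref{lem:nabla_action_on_graded_pieces} for the hypotheses; your additional checks ($A^{\nabla=0}$ is a $\Gamma(K_\infty)$-stable subring isomorphic to $\Kinftyla$, $A^{\nabla=n}=A^{\nabla=0}\,t^n$, and multiplicativity of the decomposition) spell out what the paper leaves implicit, with one harmless slip: the coefficients $b_i$ of $\nabla$ are not themselves $G_K$-invariant (only the tensor $\nabla$ is), but $K$-linearity still holds because the derivations $\partial_i$ kill $K$, and only $\QQ$-linearity is needed.
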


\begin{proof}
This follows from Lemma \ref{lem:operator_on_filtered_space} by taking $A = \BdRpluspa$ with its $I_\theta^\pa$-adic filtration and $T=\nabla$. To see the lemma applies, use Proposition \ref{prop:bdran_local}, Lemma \ref{lem:nabla_on_t} and Lemma \ref{lem:nabla_action_on_graded_pieces}.
\end{proof}



\subsection{The general case} \label{subsec:gen_case}

In this subsection, we complete the proof of Theorem \ref{thm:brdplusps_iso}. We let $K_\infty$ be general in this section.

\begin{lemma}
\label{lem:reduction_along_finite_extensions_easy_direction}
If Theorem \ref{thm:brdplusps_iso} is true for $K$, and $K'/K$ is a finite  extension, then it is also true for $K'$.
\end{lemma}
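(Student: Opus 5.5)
The plan is to reduce to the tower $K_\infty/K$ in two moves. First, pass to a finite extension over which $K'_\infty$ becomes a base change of $K_\infty$, as in the proof of Lemma \ref{lem:reduction_finite_extension_vanishing_higher_la}. Second, show that the assertions of Theorem \ref{thm:brdplusps_iso} for a given orientation are governed by Lemma \ref{lem:operator_on_filtered_space}, whose hypotheses do not depend on which orientation is chosen.

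\textbf{Step 1 (base change).} By \cite[p.~97, Lemma 6]{serre1964corps}, choose a finite extension $K_0'/K$ that is linearly disjoint from $K_\infty$ and satisfies $K_0'K_\infty=K'_\infty$. Then $\Gamma(K'_\infty/K_0')\cong\Gamma(K_\infty/K)$. Since $\B_{\dR}^+(K'_\infty)$ is finite étale over $\B_{\dR}^+(K_\infty)$ by Galois descent along $H(K'_\infty)\subset H(K_\infty)$, we get $\B_{\dR}^+(K'_\infty)=K_0'\otimes_K\B_{\dR}^+(K_\infty)$. It follows that $\B_{\dR}^+(K'_\infty)^{\pa}=K_0'\otimes_K\BdRpluspa$ and $\wh K_\infty'^{\la}=K_0'\otimes_K\Kinftyla$.

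Replacing the base field by a finite extension inside the tower, or passing to an open subgroup of $\Gamma$, changes neither the locally analytic vectors, nor $\Lie\Gamma$, nor the Sen operator (Proposition \ref{prop:sen_operator_independant_of_faithful_rep_and_nonzero}). It only shrinks the group under which equivariance is required. So it suffices to treat the passage from $K$ to $K_0'$, and then from $K_0'$ to $K'$ and the compositum $E=K_0'K'$. The one genuine problem is that an orientation $\nabla'$ for $K'$ is only $G_{K'}$-invariant, so it need not be an orientation over $K$.

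\textbf{Step 2 (independence of the orientation).} To handle this, I reformulate the theorem as follows: for any orientation $\nabla$, items (1)–(3) are the conclusions of Lemma \ref{lem:operator_on_filtered_space}. The ring is $A=\BdRpluspa$ with its $I_\theta^{\pa}$-adic filtration, which is complete with $\gr^0A=\Kinftyla$ and one-dimensional graded pieces by Proposition \ref{prop:bdran_local} and Lemma \ref{lem:graded_analytic_quotients}. The operator is $T=\nabla$. The required hypotheses are that $\nabla$ preserves $\Fil^n$ and acts on $\gr^n=L(K_\infty)^{\otimes n,\la}$ by multiplication by $n$. The $\Gamma$-equivariance of the resulting eigenspace decomposition only needs $\nabla$ to be invariant under the group in question.

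Now let $\nabla'$ be an orientation over $K'$, regarded as acting on $K_0'\otimes_K\BdRpluspa$ by $K_0'$-linear extension. Suppose some orientation $\nabla$ over $K$ exists, and in Step 3 I reduce to this case. Then $\nabla'-\nabla$ lies in $(\Fil^1\BdRplus\otimes\Lie\Gamma)^{\pa}$ because both lift $\Theta(K_\infty)$. Each $\partial\in\Lie\Gamma$ preserves $I_\theta^{\pa,n}$, since these are $\Gamma$-stable ideals. Hence $\nabla'-\nabla$ sends $\Fil^n$ into $\Fil^{n+1}$, so $\nabla'$ inherits from $\nabla$ both the filtration-preservation and the action by $n$ on $\gr^n$. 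Lemma \ref{lem:operator_on_filtered_space} then yields (1)–(3) for $\nabla'$, with $\Gamma(K'_\infty/K')$-equivariance coming from the $G_{K'}$-invariance of $\nabla'$.

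\textbf{Step 3 and the main obstacle.} The obstacle is ensuring that orientability over $K'$ gives an orientation over $K$, so that the hypotheses of Lemma \ref{lem:operator_on_filtered_space} can be imported from the truth of the theorem over $K$. I would first enlarge $K'$ to a Galois closure over $K$, which is harmless because an orientation over $K'$ remains one over the larger field. I would then average $\nabla'$ over $\Gal(K'/K)$. The average is $G_K$-invariant and still lifts the $G_K$-invariant element $\Theta(K_\infty)$, so it is an orientation over $K$.

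If this averaging step caused trouble, my fallback is to verify directly that $\nabla'$ acts by $n$ on $\gr^n$. Proposition \ref{prop:sen_operator_kills_la_vecs} gives that $\nabla'$ acts by $\Theta$ on $\gr^n$, and $\Theta$ kills $\Kinftyla$. Combined with the Leibniz rule, it remains to check this on a single generator of $L(K_\infty)^{\la}$. This can be done after adjoining $K_\cyc$ via Corollary \ref{cor:bijection_between_orientaitons_along_dim_1_extensions} together with Lemma \ref{lem:nabla_on_t}.
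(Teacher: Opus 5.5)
Your argument is correct, but it is organized differently from the paper's.

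\textbf{The paper's route.} The paper reduces, via the linear-disjointness trick from the proof of Lemma \ref{lem:reduction_finite_extension_vanishing_higher_la}, to $K'$ linearly disjoint from $K_\infty$. It records the equivariant identifications $\BdRplus(K'_\infty)\cong K'\otimes_K\BdRplus(K_\infty)$ and $L(K'_\infty)\cong K'\otimes_K L(K_\infty)$. It then base-changes the conclusions of Theorem \ref{thm:brdplusps_iso} along $K'/K$ using Lemma \ref{lem:basis_lav}, and calls the case $K'_\infty=K_\infty$ clear.

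\textbf{What that route covers.} Taken literally, the base change only treats orientations over $K'$ of the form $1\otimes\nabla$, with $\nabla$ an orientation over $K$. That is what is actually needed when the lemma feeds into Lemma \ref{lem:reduction_along_finite_extensions}. However, the orientations over $K'$ form a torsor under $K'\otimes_K\DdRplus(\Lie\Gamma(K_\infty)(1))$. This set can be strictly larger, even when $K'\subset K_\infty$.

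\textbf{What your route does.} You identify exactly this point and handle it in four moves:
\begin{enumerate}
\item You import from the theorem over $K$ only the fact that an orientation over $K$ acts by $n$ on $\gr^n$.
\item You produce such an orientation from $\nabla'$ by averaging. This also shows that orientability over $K'$ forces orientability over $K$.
\item You note that $\nabla'-\nabla$ has coefficients in $\Fil^1$, so it acts by zero on graded pieces.
\item You rerun Lemma \ref{lem:operator_on_filtered_space} for $\nabla'$ itself.
\end{enumerate}
This is somewhat longer than the paper's argument, but it genuinely proves the lemma for every orientation over $K'$.

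\textbf{Small corrections.}
\begin{enumerate}
\item It is $\nabla$, whose coefficients lie in $\BdRpluspa$, that gets extended $K_0'$-linearly to $K_0'\otimes_K\BdRpluspa=\BdRplus(K'_\infty)^{\pa}$. The operator $\nabla'$ acts on that ring directly, and the two should be compared as orientations over the compositum $E$.
\item The Galois closure is unnecessary. Summing a $G_{K'}$-invariant element over representatives of $G_K/G_{K'}$ is already well defined.
\item The fallback in Step 3 is not needed.
\end{enumerate}
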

\begin{proof}
As usual let $K_\infty' = K_\infty K'$. If $K_\infty'=K_\infty$ the claim is clear. By the same argument as that appearing in the proof of Lemma \ref{lem:reduction_finite_extension_vanishing_higher_la}, we may reduce to the case where $K'$ is linearly disjoint from $K_\infty$. In this case we have a natural identification $\Gamma(K_\infty')\cong \Gamma(K_\infty)$ as well as natural $\Gamma(K_\infty')$-equivariant isomorphisms $K_\infty' \cong K_\infty\otimes_K K'$, $L(K_\infty') \cong L(K_\infty)\otimes_K K'$ and $\BdRplus(K_\infty') \cong \BdRplus(K_\infty)\otimes_K K'$. The result follows from base change by Lemma \ref{lem:basis_lav}.
\end{proof}

\begin{lemma}
\label{lem:reduction_along_finite_extensions}
If Theorem \ref{thm:brdplusps_iso} is true after replacing $K$ by a finite  extension $K'/K$, then it is also true for $K$.
\end{lemma}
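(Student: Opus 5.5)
We first reduce, as in Lemma \ref{lem:reduction_finite_extension_vanishing_higher_la}, to the case where $K'$ is linearly disjoint from $K_\infty$. Let $K_0'/K$ be finite, linearly disjoint from $K_\infty$, with $K_0'K_\infty=K_\infty'$ (Serre's lemma). If $K_\infty'\neq K_\infty$ is only a finite extension, then the theorem for $K'$ with $K_\infty'$ is the same statement as for $K_0'$ with $K_\infty'$, since open subgroups do not affect pro-analytic vectors or orientations. It is also the same as for $K_0'K'$ with $K_\infty'$, and then for $K'$ with $K_\infty'$. Thus we may assume $K'\cap K_\infty=K$ and $\Gamma(K'_\infty)\cong\Gamma(K_\infty)$. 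In this situation we have $\Gamma$-equivariant identifications $\BdRplus(K_\infty')=K'\otimes_K\BdRplus(K_\infty)$, $\wh{K}'_\infty=K'\otimes_K\wh{K}_\infty$ and $L(K_\infty')=K'\otimes_K L(K_\infty)$. By Lemma \ref{lem:basis_lav}(2), applied to a $K$-basis of $K'$ consisting of smooth (fixed) vectors, the same decompositions hold after taking pro-analytic and locally analytic vectors. The chosen orientation $\nabla$ of $K_\infty$ is also an orientation of $K'_\infty$, since $\Theta$ depends on $K$ only up to finite extension (Proposition \ref{prop:sen_operator_independant_of_faithful_rep_and_nonzero}) and $\D_{\dR}^+$ over $K$ embeds into that over $K'$. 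Under these identifications $\nabla$ acts $K'$-linearly, because $K'$ consists of $\Gamma$-invariant elements.

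Next we descend each item by taking $\Gal(K'/K)$-invariants, or, more robustly, by using the decomposition over a $K$-basis $\{e_j\}$ of $K'$. Every $x\in\BdRplus(K'_\infty)^{\pa}$ can be written uniquely as $x=\sum e_j\otimes x_j$ with $x_j\in\BdRpluspa$. Since $\nabla$ and $\Gamma(K_\infty)$ act coefficientwise, we get
\[
\BdRplus(K_\infty')^{\pa,\nabla=n}=K'\otimes_K\BdRplus(K_\infty)^{\pa,\nabla=n}.
\]
The filtrations, the reduction map to $\wh{K}^{\la}_\infty$ and the maps to $\gr^n$ are all $K'$-linear base changes of the corresponding maps for $K_\infty$. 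A $K$-linear map $f$ becomes an isomorphism after $K'\otimes_K-$ exactly when $f$ is an isomorphism, because $K'$ is free and nonzero over $K$, hence faithfully flat. Therefore items (1) and (2) for $K_\infty$ follow from those for $K'_\infty$. For item (3), the map $\prod_n\BdRplus(K_\infty)^{\pa,\nabla=n}\to\BdRpluspa$ base changes to the corresponding map for $K'_\infty$, since products commute with tensoring by the finite free module $K'$. Hence it is an isomorphism too. The $\wh{\Sym}$ description then follows formally from (1)–(3).

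The main obstacle is the case where $K'K_\infty\neq K_\infty$ but $K'$ is not linearly disjoint. The Serre-lemma reduction must then be done carefully so that passing between the towers $K_\infty'/K'$, $K_\infty'/K_0'$ and $K_\infty/K$ preserves both the chosen orientation and the isomorphism statements. In particular, when $K_0'$ is linearly disjoint, the orientation for $K_\infty$ has to be transported to one for $K'_\infty=K_0'K_\infty$, and that is where the freeness argument above does the work. The naturality of the maps (they are canonical, not choice-dependent) ensures that the isomorphisms obtained downstairs are the natural ones asserted in the theorem.
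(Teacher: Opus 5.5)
Your argument is correct, but the descent step differs from the paper's.

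\textbf{The paper's route.} It first uses Lemma \ref{lem:reduction_along_finite_extensions_easy_direction} to assume $K'/K$ Galois. It then transports $\nabla(K_\infty)$ to an orientation of $K'_\infty$ via $V_{\dR}^+(K'_\infty)^{\Gal(K'/K)}=V_{\dR}^+(K_\infty)$. This orientation, and hence every natural map in Theorem \ref{thm:brdplusps_iso} for $K'_\infty$, is equivariant for the larger group $\Gal(K'_\infty/K)$. The paper then takes invariants under the finite group $\Delta=\Gal(K'_\infty/K_\infty)$, which is exact in characteristic $0$. This treats a non-disjoint $K'$ directly through $\Delta$.

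\textbf{Your route.} You use Serre's lemma to reduce to $K'$ linearly disjoint from $K_\infty$. There every object for $K'_\infty$ is literally $K'\otimes_K(\text{object for }K_\infty)$, and $\nabla$ acts as $\Id\otimes\nabla$. You then conclude by faithfully flat descent along $K'/K$. In effect you run the base-change argument of Lemma \ref{lem:reduction_along_finite_extensions_easy_direction} backwards. Your descent step is more elementary: it needs no group invariants, no Galois hypothesis and no bookkeeping with $\Gal(K'_\infty/K)$. Item (3) also descends immediately, since $K'$ is finite free over $K$. The cost is the preliminary change of base fields.

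\textbf{One point to correct.} In that preliminary step, the justification ``open subgroups do not affect orientations'' is not right as stated. After enlarging the base field, the orientations of the same tower $K'_\infty$ form a torsor under $\D_{\dR}^+(\Lie\Gamma(K_\infty)(1))$ computed over the larger field. That space can be strictly bigger. So ``the theorem over $K'$'' and ``the theorem over $K'_0$'' are not literally the same statement.

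What you actually need, and what your later paragraphs use, is only the single $G_K$-invariant orientation $\nabla(K_\infty)$. It is simultaneously an orientation of $K'_\infty$ over $K'$, over $K'_0$ and over $K'K'_0$. For this one orientation, the isomorphism assertions do not depend on the base, and equivariance for the larger group follows from naturality of the maps. Replace that sentence with this observation and the argument is complete.
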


\begin{proof}
By the previous lemma, we may assume $K'$ is Galois over $K$. Set $K'_\infty=K_\infty K'$ once more and let $\Delta=\Gal(K_\infty'/K_\infty)$ be the finite Galois group of $K_\infty'$ over $K_\infty$. Let $\tilde{\Gamma}(K_\infty') = \Gal(K_\infty'/K)$. It is an extension of $\Gamma(K_\infty')$ by $\Delta$. 

Let us first choose an orientation $\nabla(K_\infty')$ for $K_\infty'$. Observing that the natural map 
$\Lie\Gamma(K_\infty') \to \Lie\Gamma(K_\infty)$ is an isomorphism of $\QQ_p$-vector spaces, we may and do consider $\Lie\Gamma(K_\infty')$ as a $\tilde{\Gamma}(K_\infty')$-representation (rather than just a $\Gamma(K_\infty')$-representation). We then have an identification $$V_{\dR}^+(K_\infty')^{\Gal(K'/K)}= V_{\dR}^+(K_\infty).$$ We choose the orientation $\nabla(K_\infty')$ corresponding to $\nabla(K_\infty)$ under this isomorphism. It has the property of being $\tilde{\Gamma}(K_\infty')$-equivariant (rather than just being $\Gamma(K_\infty')$-equivariant).

The statements of Theorem \ref{thm:brdplusps_iso} for $K_\infty$ are now easily checked once one makes the easy observation that each of the maps for $K_\infty'$ is actually $\tilde{\Gamma}(K_\infty')$-equivariant. For example, for item (1) of Theorem \ref{thm:brdplusps_iso}, we have by assumption that the canonical map from $\BdRplus(K_\infty')^{\pa,\nabla(K_\infty')=0}$ to $\Ktaginftyla$ is a $\Gamma(K_\infty')$-equivariant isomorphism. But by construction this map is the composition of $\BdRplus(K_\infty')^{\pa,\nabla(K_\infty')=0} \subset \BdRplus(K_\infty')^{\pa}$ and $\BdRplus(K_\infty')^{\pa} \to \Ktaginftyla$, both of which are $\tilde{\Gamma}(K_\infty')$-equivariant. Taking $\Delta$-fixed points proves the natural map from $\BdRplus(K_\infty)^{\pa,\nabla(K_\infty)=0}$ to $\Kinftyla$ is an isomorphism. The proofs of item (2) and item (3) are similar.
\end{proof}

For the rest of this subsection, let $L_\infty = K_\infty K_\cyc$ and $\Delta=\Gal(L_\infty/K_\infty)$. If $L_\infty/K_\infty$ is a finite extension, then by Lemma \ref{lem:reduction_along_finite_extensions}, Theorem \ref{thm:brdplusps_iso} holds. Thus as in Section \ref{section:vanishing_of_higher_la}, we may put ourselves in the situation where $L_\infty/K_\infty$ is 1-dimensional extension, $\Delta\cong\ZZ_p$ with generator $\gamma$, and there is an element $z \in \wh{L}_\infty^{\Gamma(L_\infty)\hla}$ satisfying $\gamma(z) = z+1.$ Furthermore, because of Corollary \ref{cor:bijection_between_orientaitons_along_dim_1_extensions}, the orientation $\nabla(K_\infty)$ of $K_\infty$ induces a \emph{unique} orientation $\nabla(L_\infty)$ of $L_\infty$ which is compatible with the inclusion $K_\infty \subset L_\infty$.


The following key construction defines a ``$\kinfty$-analogue" of Fontaine's  element $t$.

\begin{construction} \label{cons:t_kinfty}
Take $z_0 \in L_\infty$ close enough to $z$ so that the series $$u = \exp(-\log\chi_{\cyc}(\gamma)(z-z_0))$$ converges inside $\wh{L}_\infty^\la$, which, via Proposition \ref{prop:bdrpluspa_iso_power_series_ring}, can be  regarded as an element in $\bdrplus(L_\infty)^{\pa, \nabla(L_\infty)=0}$.
Then after possibly replacing $K$ by a finite extension, we get that $u$ has the property that $g(u) = \chi_{\cyc}(g)^{-1}u$ for $g \in \Delta$. Thus $t_{K_\infty}=ut$ is fixed by $\Delta$, and this means that
\[  t_{K_\infty} \in \mathrm{Fil}^1 \BdRpluspa.\]
Note $\nabla(L_\infty)( t_{K_\infty})= t_{K_\infty}$ by Lemma \ref{lem:nabla_on_t}. Thus, by  Lemma \ref{lem:nabla_action_compatibility}, we have
\[\nabla(K_\infty)( t_{K_\infty})= t_{K_\infty}.\]
\end{construction}

The construction implies:

\begin{lemma}
We have $\nabla(K_\infty)((I_\theta^\pa)^n)\subset (I_\theta^\pa)^n$ and $\nabla(K_\infty)$ acts on $(I_\theta^\pa)^n/(I_\theta^\pa)^{n+1}$ as multiplication by $n$.
\end{lemma}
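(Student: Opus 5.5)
The plan is to imitate the proofs of Lemma \ref{lem:nabla_on_t} and Lemma \ref{lem:nabla_action_on_graded_pieces}, using $t_{K_\infty}$ from Construction \ref{cons:t_kinfty} in place of Fontaine's $t$. First I will check that $t_{K_\infty}$ generates $I_\theta^\pa$. In $\BdRplus(L_\infty)^\pa\cong \Linftyla\llbracket t\rrbracket$ (Proposition \ref{prop:bdrpluspa_iso_power_series_ring}), the element $u$ is a unit, since it has nonzero image in $\Linftyla$ and Lemma \ref{lem:basis_lav} applies. Hence $t_{K_\infty}=ut$ lies in $I_\theta\setminus I_\theta^2$. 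It is pro-analytic for $\Gamma(L_\infty)$ and fixed by $\Delta$. Its image $\theta$-graded class therefore lies in $(I_\theta/I_\theta^2)^{\la}$ for $\Gamma(K_\infty)$. Here I will use that a vector of $\BdRplus(K_\infty)/I_\theta^n$ which is $\Gamma(L_\infty)$-locally analytic is $\Gamma(K_\infty)$-locally analytic, because $\Gamma(K_\infty)$ is a quotient of $\Gamma(L_\infty)$. So $t_{K_\infty}\in I_\theta^\pa\setminus(I_\theta^2)^\pa$. As in the proof of Proposition \ref{prop:bdran_local}, we may then take $s=t_{K_\infty}$, so that $(I_\theta^\pa)^n=(I_\theta^n)^\pa=t_{K_\infty}^n\BdRpluspa$.

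Next, $\nabla(K_\infty)$ acts on $\BdRpluspa$ as a derivation: it is a $\BdRpluspa$-linear combination of the Lie algebra derivations $\partial_i$, which preserve pro-analytic vectors. The Leibniz rule together with $\nabla(K_\infty)(t_{K_\infty})=t_{K_\infty}$ gives
\[\nabla(K_\infty)(a t_{K_\infty}^n)=\nabla(K_\infty)(a)\,t_{K_\infty}^n+n\,a\,t_{K_\infty}^n.\]
This shows that $(I_\theta^\pa)^n=t_{K_\infty}^n\BdRpluspa$ is stable under $\nabla(K_\infty)$.

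For the graded statement, the displayed formula reduces the claim to showing $\nabla(K_\infty)(a)\in I_\theta^\pa$ for every $a\in\BdRpluspa$, that is, that $\nabla(K_\infty)$ acts by zero on $\BdRpluspa/I_\theta^\pa\cong\Kinftyla$ (Lemma \ref{lem:graded_analytic_quotients}). The action of $\nabla(K_\infty)$ is compatible with $\theta$. Modulo $I_\theta$, the element $\nabla(K_\infty)$ reduces to $\Theta(K_\infty)$ acting on $\Kinftyla$, because the orientation lifts $\Theta(K_\infty)$ and the coefficients reduce through $\theta$. By Proposition \ref{prop:sen_operator_kills_la_vecs}, $\Theta(K_\infty)$ kills $\Kinftyla$. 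Therefore $\nabla(K_\infty)(a)\,t_{K_\infty}^n\in(I_\theta^\pa)^{n+1}$, and $\nabla(K_\infty)$ acts on $(I_\theta^\pa)^n/(I_\theta^\pa)^{n+1}$ as multiplication by $n$.

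The main obstacle is the first step: verifying that $t_{K_\infty}$ is genuinely $\Gamma(K_\infty)$-pro-analytic and generates $I_\theta^\pa$ rather than some higher power. The unit property of $u$ and the passage from $\Gamma(L_\infty)$-analyticity of $\Delta$-invariants to $\Gamma(K_\infty)$-analyticity need some care. The remaining steps are formal.
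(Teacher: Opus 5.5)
Your proposal is correct and is essentially the paper's argument: the graded statement is the proof of Lemma~\ref{lem:nabla_action_on_graded_pieces} with $t_{K_\infty}$ in place of $t$ (reduce $\nabla(K_\infty)$ modulo $I_\theta^\pa$ to $\Theta(K_\infty)$, which kills $\Kinftyla$), and your check that $t_{K_\infty}$ is a uniformizer of $\BdRpluspa$ is something the paper uses implicitly. The only minor difference is that you get stability of $(I_\theta^\pa)^n$ directly from the Leibniz rule and $\nabla(K_\infty)(t_{K_\infty})=t_{K_\infty}$ (as in the last line of Lemma~\ref{lem:nabla_on_t}), whereas the paper transfers it from $L_\infty$ via Lemma~\ref{lem:nabla_action_compatibility}.
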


\begin{proof}
The first claim holds because it is true for $L_\infty$, see Lemma \ref{lem:nabla_action_compatibility}. For the second claim argue as in the proof of Lemma \ref{lem:nabla_action_on_graded_pieces}, using $t_{K_\infty}$ instead of $t$.
\end{proof}



The following Proposition then concludes the proof of Theorem \ref{thm:brdplusps_iso}.
\begin{proposition} 
The map
$$\wh{K}_\infty^\la\llbracket t_{K_\infty}\rrbracket  \to \BdRpluspa$$ is a $\Gamma(K_\infty)$-equivariant isomorphism.
\end{proposition}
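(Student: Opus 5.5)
Apply Lemma \ref{lem:operator_on_filtered_space} to $A=\BdRpluspa$ with its $I_\theta^\pa$-adic filtration $\Fil^i A=(I_\theta^\pa)^i$ and $T=\nabla(K_\infty)$, exactly as in the proof of Proposition \ref{prop:bdrpluspa_iso_power_series_ring}, but with $t_{K_\infty}$ in place of $t$.

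\textbf{Checking the hypotheses.} By Proposition \ref{prop:bdran_local}, $A$ is a complete discrete valuation ring with residue field $\Kinftyla$. Hence $A$ is complete and separated for this filtration, $\gr^0A=\Kinftyla$ is a field, and each $\gr^iA$ is one-dimensional over it. By the preceding lemma, $T$ preserves the filtration and acts on $\gr^i A$ as multiplication by $i$. Lemma \ref{lem:operator_on_filtered_space} therefore gives $A^{T=0}\xrightarrow{\sim}\gr^0A=\Kinftyla$, which is item (1) of Theorem \ref{thm:brdplusps_iso}. It also gives $A^{T=n}\xrightarrow{\sim}\gr^nA$ and $\prod_n A^{T=n}\xrightarrow{\sim}A$.

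\textbf{Identifying $A^{T=n}$.} Let $s:\Kinftyla\to A^{T=0}$ be the inverse of the isomorphism above. We need $A^{T=n}=s(\Kinftyla)\,t_{K_\infty}^n$. By the Leibniz rule and $\nabla(t_{K_\infty})=t_{K_\infty}$ (Construction \ref{cons:t_kinfty}), we have $\nabla(a\,t_{K_\infty}^n)=n\,a\,t_{K_\infty}^n$ for $a\in A^{T=0}$. So $s(\Kinftyla)t_{K_\infty}^n\subset A^{T=n}$.

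It remains to see that $t_{K_\infty}^n$ has nonzero image in $\gr^n A$. Since $u$ is a unit ($\theta(u)\neq 0$), the element $t_{K_\infty}=ut$ generates $I_\theta\BdRplus(L_\infty)$. Because $t_{K_\infty}\in\BdRplus(K_\infty)$ and $\BdRplus(K_\infty)\to\BdRplus(L_\infty)$ is an unramified extension of DVRs (both have $\theta$-kernel generated by $t$ up to units), $t_{K_\infty}$ is a uniformizer of $\BdRplus(K_\infty)$. Hence $t_{K_\infty}\in I_\theta^\pa\setminus(I_\theta^2)^\pa$, and $t_{K_\infty}^n$ spans $\gr^nA$ over $\Kinftyla$. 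The composite $s(\Kinftyla)t_{K_\infty}^n\to A^{T=n}\xrightarrow{\sim}\gr^nA$ is then a nonzero $\Kinftyla$-linear map between one-dimensional spaces, hence an isomorphism. Therefore $A^{T=n}=s(\Kinftyla)t_{K_\infty}^n$.

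\textbf{Conclusion and the main obstacle.} Combining with $\prod_nA^{T=n}\cong A$, the map $\Kinftyla\llbracket t_{K_\infty}\rrbracket\to A$, $\sum a_nX^n\mapsto\sum s(a_n)t_{K_\infty}^n$, is a bijection. It is a ring map because $s$ is a ring map (the kernel of a derivation is a subring). For $\Gamma(K_\infty)$-equivariance, $\nabla(K_\infty)$ is $\Gamma(K_\infty)$-invariant, so $A^{T=0}$ is $\Gamma$-stable and $s$ is equivariant. Thus $\Gamma$ acts on the source via $s$ and via the action on the line $\Kinftyla\cdot t_{K_\infty}$, which is $g(t_{K_\infty})=c(g)t_{K_\infty}$ with $c(g)\in A^{T=0}$, since $\nabla(g(t_{K_\infty}))=g(t_{K_\infty})$. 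The step needing most care is the verification that $t_{K_\infty}$ is a uniformizer of $\BdRplus(K_\infty)^\pa$, i.e.\ the comparison of $I_\theta$ for $K_\infty$ and $L_\infty$. I would check it directly from $\theta(t_{K_\infty}/t)=\theta(u)\ne0$ and the fact that $t/t_{K_\infty}=u^{-1}$ is a unit.
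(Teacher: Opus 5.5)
Your proof is correct and follows the paper's argument: the paper's proof is exactly Lemma \ref{lem:operator_on_filtered_space} applied to $A=\BdRpluspa$ with $T=\nabla(K_\infty)$, together with the preceding lemma, and your identification $A^{T=n}=s(\Kinftyla)\,t_{K_\infty}^n$ and your equivariance check make explicit what the paper leaves implicit. On the point you flag, you do not need unramifiedness as a separate input, and the parenthetical is imprecise because $t\notin\BdRplus(K_\infty)$ in general: since $t_{K_\infty}$ is a uniformizer of $\BdRplus(L_\infty)$ and lies in the maximal ideal of $\BdRplus(K_\infty)$, the relation $1=v_{L_\infty}(t_{K_\infty})=e\cdot v_{K_\infty}(t_{K_\infty})$ (with $e$ the ramification index) forces $v_{K_\infty}(t_{K_\infty})=1$.
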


\begin{proof}
This follows from the previous lemma and Lemma \ref{lem:operator_on_filtered_space}.
\end{proof}



\section{Consequences for orientations}

In the previous section we studied the structure of $\BdRplus(K_\infty)^{\pa}$, notably via Theorem \ref{thm:brdplusps_iso}. In this section, we show how the structural results there clarify further the concept of an orientation.
In Theorem \ref{thm:characterization_of_orientability}, we give an equivalent criterion of orientability (compare with Theorem \ref{thm:characterization_of_canonical_orientability} on criterion of canonical orientability). 
Then in Theorem \ref{thm:parameterization_of_bdrplus_isos}, we classify equivariant power series descriptions of $\BdRpluspa$,  giving   a complete answer to Question \ref{question:bdr_plus_iso} from the introduction.

\subsection{Representations of $\Gamma(K_\infty)$ and a criterion for orientability}

We make the following observation: even though the condition of orientability asks for the lifting of a single element $\Theta(K_\infty)$ in $\D_{\C}(\Lie\Gamma(K_\infty))$ to $\DdRplus(\Lie\Gamma(K_\infty))$, one automatically gets from this a splitting of the entire space $\D_{\C}(V)$ for an arbitrary $p$-adic representation $V$ of $\Gamma(K_\infty)$. In what follows, recall that for such $V$ we have identifications $\D_{\C}(V) = (\wh{K}_\infty^\la\otimes_{\QQ_p} V)^{\Gamma(K_\infty)}$ and $\DdRplus(V) = (\BdRpluspa\otimes_{\QQ_p} V)^{\Gamma(K_\infty)}$.

\begin{corollary} \label{cor:orientable_iff_HT_eq_de_rham}
Let $K_\infty$ be orientable. Let $V$ be a representation of $\Gamma(K_\infty)$, considered as a $G_K$-representation. 
\begin{enumerate}
    \item The natural map $\DdRplus(V)\to\D_{\C}(V)$ has a splitting. In particular, $\gr^0\D_{\dR}(V)\cong \D_{\C}(V).$

    \item We have $\dim_K\D_{\mt{HT}}(V) = \dim_K\D_{\dR}(V)$. In particular,
$V$ is Hodge-Tate if and only if it is de Rham.
\end{enumerate} 
\end{corollary}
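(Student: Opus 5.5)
The plan is to use the $\Gamma(K_\infty)$-equivariant section $s:\Kinftyla\to\BdRpluspa$ produced by Theorem \ref{thm:brdplusps_iso}(1), whose image is $\BdRplus(K_\infty)^{\pa,\nabla=0}$ and which is inverse to the reduction map $\theta$. Tensoring with $V$ gives a $\Gamma(K_\infty)$-equivariant map $s\otimes\Id_V:\Kinftyla\otimes_{\QQ_p}V\to\BdRpluspa\otimes_{\QQ_p}V$, and $\theta\circ s=\Id$ still holds after tensoring. Taking $\Gamma(K_\infty)$-invariants and using the identifications recalled just before the corollary, $\D_{\C}(V)=(\Kinftyla\otimes_{\QQ_p}V)^{\Gamma(K_\infty)}$ and $\DdRplus(V)=(\BdRpluspa\otimes_{\QQ_p}V)^{\Gamma(K_\infty)}$, we obtain a $K$-linear map $\D_{\C}(V)\to\DdRplus(V)$ whose composite with the natural map $\DdRplus(V)\to\D_{\C}(V)$ is the identity. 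This proves the splitting in (1).

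The identifications themselves should be justified briefly. A $\Gamma(K_\infty)$-fixed vector of $\wh{K}_\infty\otimes V$ is locally analytic, and by Lemma \ref{lem:basis_lav} it lies in $\Kinftyla\otimes V$, since $V$ is locally analytic. Similarly, a fixed vector of $\BdRplus(K_\infty)\otimes V$ lies in the pro-analytic part. On the other side, $(\C\otimes V)^{G_K}=(\wh{K}_\infty\otimes V)^{\Gamma(K_\infty)}$ because $H(K_\infty)$ acts trivially on $V$, and the analogous statement holds for $\BdRplus$.

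For the consequence $\gr^0\D_{\dR}(V)\cong\D_{\C}(V)$, I would apply the splitting to the twists $V(-i)$. The surjectivity of $\Fil^0\D_{\dR}(V)=\DdRplus(V)\to\D_{\C}(V)$ with kernel $\DdRplus(V(1))=\Fil^1\D_{\dR}(V)$ identifies $\gr^0$ with $\D_{\C}(V)$. The twists are not themselves representations of $\Gamma(K_\infty)$ unless $K_\cyc\subset K_\infty$. To handle this, I would first pass to $L_\infty=K_\infty K_\cyc$, which is orientable by Corollary \ref{cor:bijection_between_orientaitons_along_dim_1_extensions}, and view $V(i)$ as a representation of $\Gamma(L_\infty)$. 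Applying (1) over $L_\infty$ then gives that $\gr^i\D_{\dR}(V)\to\D_{\C}(V(i))$ is surjective for every $i$.

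For (2), summing over $i$ gives $\dim_K\D_{\dR}(V)=\sum_i\dim\gr^i\D_{\dR}(V)=\sum_i\dim_K\D_{\C}(V(i))=\dim_K\D_{\mt{HT}}(V)$. The main point needing care is exactly the passage to $L_\infty$: I need to check that the identifications of $\D_{\C}$ and $\DdRplus$ with invariants hold for $\Gamma(L_\infty)$-representations, which they do by the same argument as above. Granting this, $V$ is Hodge--Tate if and only if $\dim_K\D_{\mt{HT}}(V)=\dim V$, if and only if $\dim_K\D_{\dR}(V)=\dim V$, that is, if and only if $V$ is de Rham.
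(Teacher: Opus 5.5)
Your proposal is correct and follows the paper's proof essentially step for step. For (1) you tensor the $\Gamma(K_\infty)$-equivariant section $\Kinftyla\to\BdRpluspa$ of Theorem \ref{thm:brdplusps_iso} with $V$ and take invariants. For (2) you pass to $K_\infty K_\cyc$, which is orientable by Corollary \ref{cor:bijection_between_orientaitons_along_dim_1_extensions}, apply (1) to the twists $V(i)$, and sum the identifications $\gr^i\D_{\dR}(V)\cong\D_{\C}(V(i))$. One cosmetic remark: the identification $\gr^0\D_{\dR}(V)\cong\D_{\C}(V)$ needs no twisting, only the surjectivity coming from the splitting together with the kernel $\Fil^1\D_{\dR}(V)$, and that is the argument you actually give.
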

\begin{proof}
For item (1), use the existence of a $G_K$-equivariant section for the map $\BdRpluspa\to \wh{K}_\infty^\la$ (Theorem \ref{thm:brdplusps_iso}).

For item (2), we may regard $V$ as a representation of $\Gamma(\kinfty \kcyc)$ via its surjection to $\Gamma(\kinfty)$. Corollary \ref{cor:bijection_between_orientaitons_along_dim_1_extensions}  implies $\kinfty \kcyc$ is still orientable. Thus, without loss of generality, we may assume $K_\infty$ contains $K_\cyc$, so that twists $V(i)$ of $V$ are still representations of $\Gamma(K_\infty)$.
Item (1)   then implies that $$\gr^i\D_{\dR}(V)\cong \gr^0\D_{\dR}(V(i)) = \D_{\C}(V(i)),$$ hence we can sum over $i \in \ZZ$ to conclude.
\end{proof}



Furthermore, we get the following simple characterization of orientable $K_\infty$:

\begin{theorem}
\label{thm:characterization_of_orientability}
The following are equivalent.

\begin{enumerate}
\item The extension $K_\infty$ is orientable.

\item $\dim_K\D_{\dR}(\Lie\Gamma(K_\infty)) = \dim_K\D_{\mt{HT}}(\Lie\Gamma(K_\infty)).$
\end{enumerate}
\end{theorem}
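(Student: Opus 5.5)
The implication (1)$\Rightarrow$(2) is immediate from Corollary \ref{cor:orientable_iff_HT_eq_de_rham}(2) applied to $V=\Lie\Gamma(K_\infty)$. Note that $\Lie\Gamma(K_\infty)$ is a representation of $\Gamma(K_\infty)$ via the adjoint action, so the corollary applies.

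For (2)$\Rightarrow$(1), set $V=\Lie\Gamma(K_\infty)$. The first step is to pass to $L_\infty=K_\infty K_\cyc$. By Corollary \ref{cor:bijection_between_orientaitons_along_dim_1_extensions}, $K_\infty$ is orientable if and only if $L_\infty$ is. However, $\Lie\Gamma(L_\infty)$ differs from $V$. We have the exact sequence $0\to\Lie\Delta\to\Lie\Gamma(L_\infty)\to V\to 0$, in which $\Lie\Delta$ is trivial (after a finite extension of $K$). The hypothesis (2) for $V$ does not obviously transfer to $\Lie\Gamma(L_\infty)$. I would therefore avoid transferring it, and instead use only that $V$ is a $\Gamma(L_\infty)$-representation.

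The plan rests on the following observation, which should hold without any orientation. Since $V$ factors through $\Gamma(K_\infty)$, and $L_\infty\supset K_\cyc$ is canonically orientable at the level of $K_\cyc$, I can use Proposition \ref{prop:bdrpluspa_iso_power_series_ring} for $K_\cyc$. This gives $\BdRplus(K_\cyc)^{\pa}=K_\cyc\llbracket t\rrbracket$. The obstruction from Remark \ref{rem:future_criterion} is $\delta(\Theta)\in \mt{H}^1(G_K,\BdRplus\otimes V(1))$, and I want to show it vanishes under (2). The key point is that the dimension equality (2) is equivalent to surjectivity of $\Fil^i\D_{\dR}(V)\to\D_{\C}(V(i))$ for every $i$. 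Indeed, $\dim\D_{\dR}=\sum_i\dim\gr^i\D_{\dR}\le\sum_i\dim\D_{\C}(V(i))=\dim\D_{\mt{HT}}$, and since each $\gr^i\D_{\dR}(V)$ injects into $\D_{\C}(V(i))$, equality of totals forces equality term by term. Taking $i=0$ gives that $\Fil^0\D_{\dR}(V)=\DdRplus(V)\to\D_{\C}(V)$ is surjective, because $\gr^0\D_{\dR}(V)\cong\D_{\C}(V)$ and elements of $\Fil^0\D_{\dR}$ lie in $\DdRplus$. In particular $\Theta(K_\infty)\in\D_{\C}(V)$ lifts to $\DdRplus(V)$, which is exactly an orientation.

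The main thing to check carefully is the identification $\gr^0\D_{\dR}(V)\hookrightarrow\D_{\C}(V)$ together with the compatibility $\Fil^0\D_{\dR}(V)=\D_{\dR}^+(V)$. These are standard, coming from $\Fil^0\B_{\dR}=\BdRplus$ and $\gr^0\B_{\dR}=\C$. So the argument for (2)$\Rightarrow$(1) is elementary and does not even require the reduction to $L_\infty$. The only nontrivial direction is (1)$\Rightarrow$(2), and there the work lies in the cited corollary, which uses the equivariant section from Theorem \ref{thm:brdplusps_iso}.
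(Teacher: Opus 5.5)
Your proof is correct and is essentially the paper's. You get (1)$\Rightarrow$(2) from Corollary~\ref{cor:orientable_iff_HT_eq_de_rham}(2) applied to $\Lie\Gamma(K_\infty)$, and (2)$\Rightarrow$(1) from the same term-by-term count $\dim_K\gr^i\D_{\dR}(V)\le\dim_K\D_{\C}(V(i))$, which forces $\DdRplus(V)\to\D_{\C}(V)$ to be surjective. The digression about passing to $K_\infty K_\cyc$ and the obstruction $\delta(\Theta(K_\infty))$ is not needed, as you note yourself, and can be deleted.
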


\begin{proof}
Corollary  \ref{cor:orientable_iff_HT_eq_de_rham} shows that item (1) implies item (2). Conversely, suppose item (2) holds. We have
\begin{align*}
    \dim_K\D_{\dR}(\Lie\Gamma(K_\infty)) &= \sum_{i \in \ZZ} \dim_K \gr^i \D_{\dR}(\Lie\Gamma(K_\infty))\\
    &\le \sum_{i \in \ZZ} \dim_K \D_{\C}(\Lie\Gamma(K_\infty)(i))\\
    &= \dim_K\D_{\mt{HT}}(\Lie\Gamma(K_\infty)),
\end{align*}
so the assumption implies that for every $i \in \ZZ$, the inequality $$\dim_K \gr^i \D_{\dR}(\Lie\Gamma(K_\infty)) \le \dim_K \D_{\C}(\Lie\Gamma(K_\infty)(i))$$ is an equality. Taking in particular $i=0$ implies that $\D_{\dR}^+(\Lie\Gamma(K_\infty)) \to \D_{\C}(\Lie\Gamma(K_\infty))$ is surjective. This shows $\Theta(K_\infty)$ lifts, hence $K_\infty$ is orientable.
\end{proof}

\subsection{Orientations and power series descriptions of $\BdRpluspa$}



 The following theorem shows orientations are in bijection with equivariant power series descriptions of $\BdRpluspa$, answering Question \ref{question:bdr_plus_iso}. It holds even if $K_\infty$ is not assumed to be orientable.

\begin{theorem}
\label{thm:parameterization_of_bdrplus_isos}
The following are in bijection.
\begin{enumerate}
\item  The set of orientations of $K_\infty$.

\item  The set of $\Gamma(K_\infty)$-equivariant sections of the map $\BdRpluspa\to\wh{K}_\infty^\la$.

\item  The set of $\Gamma(K_\infty)$-equivariant isomorphisms  $$\BdRplus(K_\infty)^\pa\cong \wh{\Sym}_{\Kinftyla}(L(K_\infty)^\la)$$
inducing the identity on graded pieces.

\end{enumerate}
\end{theorem}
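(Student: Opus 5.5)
We construct maps $(1)\to(2)\to(3)\to(1)$ and check that the composites are the identity. The construction goes as follows.

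\textbf{$(1)\to(2)$.} Given an orientation $\nabla$, Theorem \ref{thm:brdplusps_iso}(1) supplies a $\Gamma(K_\infty)$-equivariant isomorphism $\BdRplus(K_\infty)^{\pa,\nabla=0}\cong\Kinftyla$. We take its inverse, composed with the inclusion into $\BdRpluspa$, as the section $s_\nabla$.

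\textbf{$(2)\to(3)$.} Given an equivariant ring section $s:\Kinftyla\to\BdRpluspa$, the ring $\BdRpluspa$ becomes a $\Kinftyla$-algebra. We use Lemma \ref{lem:graded_analytic_quotients} to choose an element $t_1\in (I_\theta)^{\pa}$ lifting a generator of $L(K_\infty)^\la$. By Proposition \ref{prop:bdran_local}, the map $\Kinftyla\llbracket t_1\rrbracket\to\BdRpluspa$ induced by $s$ is an isomorphism. However, this isomorphism is not yet equivariant, so the lift of $\gr^1$ must be chosen equivariantly. To do this, we consider the $\Kinftyla$-linear (via $s$) surjection $(I_\theta^\pa)/(I_\theta^2)^\pa\to L(K_\infty)^\la$. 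The issue is to find an equivariant $\Kinftyla$-linear splitting $L(K_\infty)^\la\to I_\theta^\pa$.

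Rather than splitting directly, we route the construction through an orientation built from $s$. We apply $s\otimes 1$ to the element $\Theta(K_\infty)\in(\Kinftyla\otimes\Lie\Gamma(K_\infty))^{\Gamma(K_\infty)}$ (Theorem \ref{thm:sen_is_analytic}). This produces a $\Gamma(K_\infty)$-invariant element of $\BdRpluspa\otimes\Lie\Gamma(K_\infty)$, which lies in $\D_{\dR}^+(\Lie\Gamma(K_\infty))$ and lifts $\Theta(K_\infty)$. Call this orientation $\nabla_s$; this gives the map $(2)\to(1)$. Once we have $\nabla_s$, Theorem \ref{thm:brdplusps_iso}(3) yields the isomorphism in $(3)$.

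\textbf{$(3)\to(2)$.} Given an isomorphism as in $(3)$, restricting it to degree $0$ gives the section.

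\textbf{Checking that the composites are identities.} The composite $(1)\to(2)\to(1)$ is the identity: $s_\nabla\otimes1$ sends $\Theta$ to the unique lift of $\Theta$ with coefficients in $\BdRplus(K_\infty)^{\pa,\nabla=0}$. To see this equals $\nabla$, we use that $\nabla$ has coefficients in $\BdRpluspa$ that are $\Gamma$-invariant combinations; writing $\nabla=\sum b_i\otimes\partial_i$, we need $b_i\in\ker\nabla$. This is the main obstacle. My approach is to apply Theorem \ref{thm:field_of_def_of_sen_op_is_gamma_finite}-style reasoning. The $b_i$ span a finite-dimensional $\Gamma$-stable space transforming by the adjoint matrix, which is the same as the space spanned by the coefficients $a_i$ of $\Theta$. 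Then $b_i-s_\nabla(a_i)$ lies in $I_\theta^\pa$ and spans a finite-dimensional $\Gamma$-stable subspace. Since $\nabla$ acts on it via the action of $\Lie\Gamma$, which is analytic, it acts on each graded piece $\gr^n$ by $n$ (the Lemma after Construction \ref{cons:t_kinfty}). On the other hand, the derivative of a finite-dimensional representation factoring through the adjoint action gives eigenvalues that can be compared with those coming from $\Theta$ acting on $\Kinftyla$, namely $0$ (Proposition \ref{prop:sen_operator_kills_la_vecs}). I expect this comparison of eigenvalues to force $b_i-s_\nabla(a_i)=0$ via Lemma \ref{lem:operator_on_filtered_space}, though this step needs care.

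The composite $(2)\to(1)\to(2)$ is the identity: $\nabla_s$ kills $s(\Kinftyla)$. This holds because $\nabla_s(s(x))$ equals $s(\Theta(x))$, since $s$ is equivariant and therefore commutes with the $\Lie\Gamma$ derivations, and $\Theta(x)=0$ by Proposition \ref{prop:sen_operator_kills_la_vecs}. So $s(\Kinftyla)\subset\ker\nabla_s$, and equality follows from Theorem \ref{thm:brdplusps_iso}(1).

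Finally, the maps $(2)\leftrightarrow(3)$ are mutually inverse. This follows from Theorem \ref{thm:brdplusps_iso}(2), which shows that the equivariant lift of $\gr^n$ is forced to be the $\nabla=n$ eigenspace, and this eigenspace is determined by the section.
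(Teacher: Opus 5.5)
Your maps $\nabla\mapsto s_\nabla$ and $s\mapsto\nabla_s=(s\otimes1)(\Theta(K_\infty))$ are exactly the paper's. So is your proof that $(2)\to(1)\to(2)$ is the identity, via $\nabla_s(s(x))=s(\Theta(K_\infty)(x))=0$ and Theorem \ref{thm:brdplusps_iso}(1).

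The gap is the step you flagged: showing that the coefficients of an orientation $\nabla=\sum_i b_i\otimes\partial_i$ satisfy $\nabla(b_i)=0$. The eigenvalue comparison you propose cannot give this. Write $[\partial_j,\partial_i]=\sum_k C^k_{ji}\partial_k$. Invariance gives $\partial_j(c_k)=-\sum_i C^k_{ji}c_i$ for $c_i=b_i-s_\nabla(a_i)$. So $\nabla$ acts on the vector $(c_i)$ through a matrix whose reduction modulo $I_\theta^\pa$ has the eigenvalues of $-\ad(\Theta(K_\infty))$, namely minus the generalized Hodge--Tate weights of $\Lie\Gamma(K_\infty)$, not $0$. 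This step only has content when orientations are non-unique. By Theorem \ref{thm:characterization_of_canonical_orientability} that forces weights in $\ZZ_{\le -1}$, so eigenvalues $n\ge1$ genuinely occur and there is no clash with $\nabla=n$ on $\gr^n$.

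More decisively, your argument uses only three properties: $X=\sum_i c_i\otimes\partial_i$ is $\Gamma(K_\infty)$-invariant, it has coefficients in $I_\theta^\pa$, and $\nabla$ acts by $n$ on $\gr^n$. These are shared by $X'=\nabla'-(s_\nabla\otimes1)(\Theta(K_\infty))$ for any second orientation $\nabla'$. Since $X-X'=\nabla-\nabla'\neq0$, $X$ and $X'$ cannot both vanish. So an argument using only these properties would prove orientations are unique, which is false, e.g.\ when $\Lie\Gamma(K_\infty)$ is de Rham with $\D_{\dR}^+(\Lie\Gamma(K_\infty)(1))\neq0$.

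The missing idea is to use $\nabla$ simultaneously as the operator and as the invariant tensor, as in Lemma \ref{lem:nabla_kills_its_coefficients}. Differentiating $\sum_i g(b_i)\otimes\ad(g)(\partial_i)=\sum_i b_i\otimes\partial_i$ along $\partial_j$ gives $\sum_i\partial_j(b_i)\otimes\partial_i=-\sum_i b_i\otimes[\partial_j,\partial_i]$. Multiply by $b_j$ and sum over $j$. The left side becomes $\sum_i\nabla(b_i)\otimes\partial_i$. The right side becomes $-\sum_{i,j}b_ib_j\otimes[\partial_j,\partial_i]$, which vanishes because $b_ib_j$ is symmetric and the bracket is antisymmetric. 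Equivalently, $\nabla$ acts on the coefficients of an invariant $X$ by $-[\nabla,X]$, and $[\nabla,\nabla]=0$; this is exactly what fails for $X'$, since $[\nabla,\nabla']$ need not vanish.

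Hence $b_i\in\BdRplus(K_\infty)^{\pa,\nabla=0}$ with $b_i\equiv a_i$ modulo $I_\theta^\pa$, so $b_i=s_\nabla(a_i)$ by Theorem \ref{thm:brdplusps_iso}(1). No appeal to Lemma \ref{lem:operator_on_filtered_space} is needed for this step.

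Your handling of $(2)\leftrightarrow(3)$ through $\nabla_s$ is fine, with one correction. The fact that an equivariant $s$-linear lift of $\gr^n$ lies in $\ker(\nabla_s-n)$ does not follow directly from Theorem \ref{thm:brdplusps_iso}(2). It follows because $\nabla_s$ acts $s(\Kinftyla)$-linearly on that line and by $n$ on its isomorphic image in $\gr^n$.
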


\begin{proof}
The bijection between (2) and (3) follows from the proof of Theorem \ref{thm:brdplusps_iso}. Let us then construct the bijection between (1) and (2). In the proof of Theorem \ref{thm:brdplusps_iso}, we have given a map $\nabla \mapsto s(\nabla)$ from (1) to (2). It is given as follows: an orientation $\nabla_0$ is mapped to the section $$s(\nabla_0):\wh{K}_\infty^{\la}\cong  \BdRplus(K_\infty)^{\pa,\nabla_0=0}\subset \BdRpluspa.$$ In the inverse direction, given a $\Gamma(K_\infty)$-equivariant section $s_0: \wh{K}_\infty^{\la}\to \BdRpluspa$, we get an induced map $s_{0,*}:\D_{\C}(\Lie\Gamma(K_\infty))\to\DdRplus(\Lie\Gamma(K_\infty))$, and the corresponding orientation is $$\nabla(s_0):=s_{0,*}(\Theta(K_\infty)).$$
It remains to show that the maps $s$ and $\nabla$ are inverses. 

First we show that $s \circ \nabla = \Id.$ Given a section $s_0$, we need to show that $s(\nabla(s_0)) = s_0$. We claim that $s_0,$ which is a priori only a section $\wh{K}_\infty^\la\to\BdRplus(K_\infty)^{\pa}$, actually factors through $\BdRplus(K_\infty)^{\pa,\nabla(s_0)=0}.$ Indeed, write $\nabla(s_0) = \sum b_i \otimes \partial_i$ with the $\partial_i$ forming a basis of $\Lie\Gamma(K_\infty)$ and the $b_i \in \BdRpluspa$. By definition of $\nabla(s_0)$,  each $b_i$ is of the form $s_0(a_i)$ for $a_i \in \Kinftyla$. For $x \in \Kinftyla$ we thus have
\begin{align*}
\nabla(s_0)(s_0(x)) &= \sum b_i\partial_i(s_0(x))\\
&= s_0(\sum a_i \partial_i(x))\\ &= s_0(\Theta(K_\infty)(x)) = 0,
\end{align*}
where the last equality holds by Proposition \ref{prop:sen_operator_kills_la_vecs}. This proves the claim. But now we see that both $s_0$ and $s(\nabla(s_0))$ are inverses of the natural map $\BdRplus(K_\infty)^{\pa,\nabla(s_0)=0}$, so they must be equal.

Conversely, given an orientation $\nabla_0$, we need to show that $\nabla(s(\nabla_0)) = \nabla_0$. Write $\nabla_0 = \sum b_i \otimes \partial_i$ with the $\partial_i$ forming a basis of $\Lie\Gamma(K_\infty)$ and the $b_i \in \BdRpluspa$. Then by Lemma \ref{lem:nabla_kills_its_coefficients}, the elements $b_i$ are killed by $\nabla_0$, hence are in $\BdRplus(K_\infty)^{\pa,\nabla_0=0}$. But this means $\nabla_0 = s(\nabla_0)_{*}(\Theta(K_\infty))$, or $\nabla_0 = \nabla(s(\nabla_0)),$ as required.
\end{proof}

\begin{remark}
    
The tower $K_\infty$ is orientable if and only if $\Theta(K_\infty)$ is mapped to zero under the boundary map
$$\D_\C(\Lie\Gamma(K_\infty))\to \mt{H}^1(G_K,\B_{\dR}^+\otimes_{\QQ_p} \Lie\Gamma(K_\infty)(1)).$$
 Thus the above theorem is saying that the image of $\Theta(K_\infty)$ under this boundary map is the  obstruction for $\BdRpluspa$ to be $\Gamma(K_\infty)$-equivariantly isomorphic to a power series ring over $\Kinftyla$.
\end{remark}

The following lemma was used in proof of Theorem \ref{thm:parameterization_of_bdrplus_isos}. It is the $\bdrplus$-analogue of Theorem  \ref{thm:field_of_def_of_sen_op_is_gamma_finite}.

\begin{lemma}
\label{lem:nabla_kills_its_coefficients} 
Let $\nabla(K_\infty)$ be an orientation of $K_\infty$. Let $\partial_i$ be a $
\QQ_p$-basis of $\Lie\Gamma(K_\infty),$ and write $\nabla(K_\infty) = \sum_i b_i \otimes \partial_i$ with the $b_i \in \BdRpluspa.$ Then for every $i=1,...,d$ we have $\nabla(K_\infty)(b_i) = 0.$ More precisely,
$$\nabla(K_\infty)\in\BdRplus(K_\infty)^{\ker(\ad),\Gamma(K_\infty)\hfin,\nabla(K_\infty)=0}\otimes_{\QQ_p} \Lie\Gamma(K_\infty).$$
\end{lemma}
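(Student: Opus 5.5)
The plan is to run the argument of Theorem \ref{thm:field_of_def_of_sen_op_is_gamma_finite} with $\C$ replaced by $\BdRplus$, and then differentiate the invariance relation. The vanishing of $\nabla(K_\infty)(b_i)$ should come out as the vanishing of a bracket $[\nabla,\nabla]$.

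\emph{Step 1 (finiteness and $\ker(\ad)$-invariance).} Let $\pi:G_K\to\Gamma(K_\infty)$ be the projection. Since $\nabla(K_\infty)\in(\BdRplus\otimes_{\QQ_p}\Lie\Gamma(K_\infty))^{G_K}$, for every $g\in G_K$ we have
\[
\sum_i b_i\otimes\partial_i=\sum_i g(b_i)\otimes\ad(\pi(g))(\partial_i).
\]
\begin{itemize}
\item The subgroup $\ker(\ad\circ\pi)$ fixes every $\partial_i$. Comparing coefficients in the basis $\{\partial_i\}$, it therefore fixes every $b_i$.
\item Writing $h_{ij}(g)$ for the entries of $\ad(g^{-1})^t$, as in the proof of Theorem \ref{thm:field_of_def_of_sen_op_is_gamma_finite}, the same comparison gives
\[
g(b_i)=\sum_j h_{ij}(g)\,b_j .
\]
So the $\QQ_p$-span $W$ of the $b_i$ is a finite-dimensional $\Gamma(K_\infty)$-stable subspace of $\BdRplus(K_\infty)$.
\end{itemize}
The action on $W$ is through the algebraic representation $\ad^{\vee}$, so $W$ consists of locally analytic vectors. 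Hence the $b_i$ lie in $\BdRpluspa$, and the Lie algebra acts on them by the derivative of this representation.

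\emph{Step 2 (differentiating the invariance).} For $x\in\Lie\Gamma(K_\infty)$, differentiate the identity $g(\nabla)=\nabla$ along $g=\exp(sx)$ at $s=0$. The action on $\BdRpluspa\otimes\Lie\Gamma(K_\infty)$ is diagonal, so Leibniz gives
\[
\sum_i \partial_x(b_i)\otimes\partial_i+\sum_i b_i\otimes[x,\partial_i]=0.
\]
This step is legitimate because, by Step 1, the $b_i$ lie in the finite-dimensional locally analytic representation $W$, and $\Lie\Gamma(K_\infty)$ is itself analytic.

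\emph{Step 3 (computing $\nabla(b_i)$).} Apply Step 2 with $x=\partial_j$, multiply by $b_j$, and sum over $j$. This yields
\[
\sum_i \nabla(K_\infty)(b_i)\otimes\partial_i=\sum_{i,j} b_j\,\partial_j(b_i)\otimes\partial_i=-\sum_{i,j} b_jb_i\otimes[\partial_j,\partial_i].
\]
The ring $\BdRpluspa$ is commutative and the bracket is antisymmetric. Hence the terms indexed by $(i,j)$ and $(j,i)$ cancel, and the right-hand side is $0$. Since the $\partial_i$ form a basis, each $\nabla(K_\infty)(b_i)=0$.

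Together with Step 1, this gives the refined statement: the coefficients lie in the $\ker(\ad)$-invariant, $\Gamma(K_\infty)$-finite vectors of $\BdRplus(K_\infty)$ that are killed by $\nabla(K_\infty)$.

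The only delicate point is Step 2: one must justify that the action of $\Lie\Gamma(K_\infty)$ on the $b_i$ agrees with the derivative of the finite-dimensional representation $W$. This follows from Step 1 together with the definition of the derived action on locally analytic vectors.
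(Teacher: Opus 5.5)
Your proposal is correct and follows the paper's proof: you differentiate the $\Gamma(K_\infty)$-invariance of $\sum_i b_i\otimes\partial_i$, multiply by $b_j$ and sum over $j$, and the bracket term vanishes by antisymmetry, while the refined statement comes from the argument of Theorem~\ref{thm:field_of_def_of_sen_op_is_gamma_finite}. The only difference is cosmetic: you prove $\Gamma(K_\infty)$-finiteness first and use it to justify the differentiation, whereas the paper differentiates directly (using that the $b_i$ already lie in $\BdRpluspa$) and records finiteness afterwards.
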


\begin{proof}
For $g\in \Gamma(K_\infty)$ we have the identity
$$\sum_ig(b_i)\otimes\ad(g)(\partial_i) = \sum_ib_i\otimes\partial_i.$$
Differentiating the identity along $\partial \in \Lie\Gamma(K_\infty)$, we get
$$\sum_i \partial(b_i)\otimes\partial_i + \sum_i b_i \otimes [\partial,\partial_i] = 0.$$
Now take $\partial = \partial_j$, multiply the equation by $b_j$, and sum over $j$. We get
$$\sum_{i,j}b_j\partial_j(b_i)\otimes\partial_i +\sum_{i,j}b_ib_j\otimes[\partial_j,\partial_i] = 0.$$
Since $[\partial_j,\partial_i] = - [\partial_i,\partial_j]$, the second summand is zero. Considering the coefficient of $\partial_i,$ we get
$$\nabla(K_\infty)(b_i) = \sum_jb_j\partial_j(b_i) = 0,$$
which shows the first statement of the theorem. For the stronger statement, argue as in the proof of Theorem \ref{thm:field_of_def_of_sen_op_is_gamma_finite}. 
\end{proof}

\section{Cohomological applications}

In this section we give cohomological applications of our methods. In \S \ref{subsec:sen_modules} we give a version of Sen theory for general $K_\infty.$ Then in \S \ref{subsec:bdrplus_reps} we extend the picture to $\BdRplus$-representations. In both of these subsections the main ingredient is Theorem \ref{thm:higher_la_vanishing_k_infty}. In \S \ref{sec:reg_conn} we establish the deeper link between $\BdRplus$-representations to regular connections when $K_\infty$ is orientable; this uses the full force of Theorem \ref{thm:brdplusps_iso}.

\subsection{Sen modules and cohomology comparisons}
\label{subsec:sen_modules}
 
The main result of this subsection is Theorem \ref{thm:SenBC16}, where we   construct a full analogue of Sen theory---namely, categorical (decompletion)-equivalence, cohomology comparison, etc.~---for a general $p$-adic Lie extension. Many ingredients were known from the work of Sen \cite{sen1980continuous} and Berger--Colmez \cite{berger2016theorie}; our new input here is the vanishing of higher locally analytic vectors which was proved in Theorem \ref{thm:higher_la_vanishing_k_infty}.

 \begin{definition} \label{defsemilinrep}
 Suppose $\mathcal G$ is a topological group that acts continuously on a topological ring $R$. We use $\rep_{\mathcal G}(R)$ to denote the category where an object is a finite free $R$-module $M$ (topologized via the topology on $R$) with a continuous and \emph{semilinear} $\mathcal G$-action in the usual sense that
$$g(rx)=g(r)g(x), \forall g\in \mathcal G, r \in R, x\in M.$$
\end{definition}

The following is the main theorem of this section.

\begin{theorem} \label{thm:SenBC16} 
Let $K_{\infty}/K$ be   an infinitely ramified $p$-adic Lie  extension.
\begin{enumerate}
\item 
There are  equivalences of categories (see Definition \ref{defsemilinrep})
\[ \rep_\gk(\C) \cong \rep_\gammak(\hatkinfty)  \cong \rep_\gammak(\hatkinftyla) \] 
Given $U \in \rep_\gk(\C)$, the corresponding object in $\rep_\gammak(\hatkinfty)$ is $W=U^{\hk}$, and the corresponding object in $\rep_\gammak(\hatkinftyla) $ is $D=W^{\gammak\dla}$. The functors from right to left are base-change functors.

\item For $U \in \rep_\gk(\C)$, we have cohomology comparisons 
\[ \rg(\gk, U) \simeq \rg(\gammak,  W) \simeq
\rg(\gammak,  D) \simeq \rg_\la(\gammak,  D) \]
Here, the first three complexes are continuous group cohomologies, and the last complex denotes   locally analytic cohomology.

\item We have
\[ \rg(\gammak,  D) \otimes_K \kinfty \simeq \rg(\Lie\gammak, D)\]
where the right hand side is Lie algebra cohomology. 
As a consequence,
\[ \rg(\gk, U) \simeq (\rg(\Lie\gammak, D))^\gammak. \]

\item We have
\[ \rg(\gammak,  D) \otimes_K \hatkinftyla \simeq \rg(\Theta(\kinfty), D):=[D \xrightarrow{\Theta(\kinfty)} D].\] 
As a consequence, 
\[ \rg(\gk, U) \simeq  (\rg(\Theta(\kinfty), D))^\gammak. \]
\end{enumerate}
\end{theorem}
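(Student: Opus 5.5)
Item (1) will be handled in two stages. The first equivalence $\rep_\gk(\C)\cong\rep_\gammak(\hatkinfty)$ is pro-étale (almost purity) descent along the perfectoid extension $\C/\hatkinfty$: for $U\in\rep_\gk(\C)$ the module $W=U^{\hk}$ is finite free with $\C\otimes_{\hatkinfty}W\cong U$, and $\mt{H}^i(\hk,U)=0$ for $i\ge1$. For the second equivalence, Berger--Colmez \cite[Th\'eor\`eme 3.4]{berger2016theorie} (already used in Proposition \ref{prop:bdran_local}) shows that $D=W^{\la}$ is finite free over $\hatkinftyla$ of full rank with $\hatkinfty\otimes_{\hatkinftyla}D\cong W$. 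Full faithfulness of base change will follow from Lemma \ref{lem:basis_lav} together with the fact that morphisms send locally analytic vectors to locally analytic vectors.

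For item (2), the first comparison is Hochschild--Serre combined with the vanishing of $\mt{H}^i(\hk,U)$. The second comparison is where the new input enters. We write $\rg(\gammak,W)=\mt{RHom}_{\QQ_p[\gammak]}(\QQ_p,W)$. Since $\QQ_p$ is a locally analytic representation, the adjunction of \cite[Theorem A]{jacinto2023solid} (used in Proposition \ref{prop: HS_spe_seq}) gives $\rg(\gammak,W)\simeq\rg(\gammak,W^{\mt{R}\gammak\hla})$. Theorem \ref{thm:higher_la_vanishing_k_infty} yields $W^{\mt{R}\gammak\hla}\simeq D$, so we obtain $\rg(\gammak,W)\simeq\rg(\gammak,D)$. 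The comparison of continuous and locally analytic cohomology for the locally analytic representation $D$ is the solid Lazard comparison of \cite{RJRC22}.

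For item (3), we apply Lazard's theorem \cite[Theorem 1.7]{RJRC22} on an open uniform subgroup $\Gamma_n$: it gives $\rg_\la(\Gamma_n,D)\simeq\rg(\Lie\gammak,D)^{\Gamma_n}$. Passing from $\Gamma_n$ to $\gammak$ means taking invariants under a finite group, which is exact in characteristic $0$. This gives $\rg(\gk,U)\simeq\rg(\Lie\gammak,D)^{\gammak}$ directly. To obtain the base-change statement, we first show that $\Gamma$ acts smoothly on $\mt{H}^i(\Lie\gammak,D)$. We then apply Hilbert 90 descent for the smooth $K_\infty$-semilinear $\gammak$-action on these cohomology groups: a smooth $K_\infty$-semilinear representation equals $K_\infty\otimes_K$ of its invariants. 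This yields $\mt{H}^i(\Lie\gammak,D)\cong K_\infty\otimes_K\mt{H}^i(\gammak,D)$. Here we must check that these cohomology groups are $K_\infty$-modules on which the action is smooth and finite-dimensional, and then invoke Hilbert 90 for the tower.

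Item (4) is the main obstacle. Following Berger--Colmez \cite[\S6]{berger2016theorie} and Proposition \ref{prop:sen_operator_kills_la_vecs}, the Lie algebra $\hatkinftyla\otimes\Lie\gammak$ acting on $D$ decomposes into the direction $\Theta(\kinfty)$ and ``horizontal'' directions. The horizontal directions act on $\hatkinftyla$ as coordinate derivations, whose de Rham complex is acyclic in positive degrees and has kernel $\kinfty$; this is the content of the isomorphism $\wh{K}_\infty\otimes\hatkinfty^{\Gamma_n\han}\cong\mc{C}^{\an}(\Gamma_n,\hatkinfty)^{D=0}$. I would first try to use that isomorphism to rewrite $\hatkinftyla\otimes_{\kinfty}\rg(\Lie\gammak,D)$ as the Lie algebra cohomology of $\mc{C}^{\an}(\Gamma_n,W)$ restricted to the kernel of the transverse derivations. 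A Poincaré lemma for analytic functions on $\Gamma_n$ then cuts everything down to the two-term complex $[D\xrightarrow{\Theta}D]$. Taking $\gammak$-invariants and using item (2) gives the final statement. The delicate point, which I expect to be the hardest, is checking that this Poincaré-lemma argument is compatible with the limit over $n$.
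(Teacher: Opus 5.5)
Your items (1)--(3) are fine and essentially follow the paper. For (1) you use almost purity and \cite[Th\'eor\`eme 3.4]{berger2016theorie}. For (2) you use the adjunction between $\iota$ and $(-)^{\mt{R}\gammak\hla}$, Theorem \ref{thm:higher_la_vanishing_k_infty}, and the comparison of \cite{RJRC22}. For (3), your smoothness-plus-Hilbert-90 argument unpacks the colimit formula $\rg(\Lie\gammak,D)\simeq\mathrm{colim}_M\rg_{\la}(\Gal(\kinfty/M),D)$ and the Galois descent used in the paper; the smoothness you only assert is exactly the content of that formula.

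The genuine gap is item (4), where you give a programme rather than an argument. The Berger--Colmez isomorphism $\wh{K}_\infty(\mu_{p^m})\otimes_{K_n}\hatkinfty^{\Gamma_n\han}\cong\mc{C}^{\an}(\Gamma_n,\hatkinfty)^{D=0}$ only describes the analytic functions on $\Gamma_n$ killed by the single vector field attached to $\Theta(\kinfty)$. It does not say that a ``horizontal'' de Rham complex of $\hatkinftyla$ relative to $\kinfty$ is acyclic in positive degrees. To make your plan work you would need four further inputs, none of which you supply:
\begin{enumerate}
\item a coordinate description of $\hatkinftyla$ showing that the anchor $\hatkinftyla\otimes_{\QQ_p}\Lie\gammak\to\mathrm{Der}_{\kinfty}(\hatkinftyla)$ is surjective with kernel exactly $\hatkinftyla\cdot\Theta(\kinfty)$;
\item a Hochschild--Serre argument for this ideal of the action algebroid. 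Since $\Theta(\kinfty)\notin\Lie\gammak$ in general, there is no Lie subalgebra of $\Lie\gammak$ to use, and a mere module complement of $\hatkinftyla\cdot\Theta(\kinfty)$ does not give ``horizontal directions'' acting on $D$;
\item a convergent Poincar\'e lemma on $\hatkinftyla$ compatible with the colimit over $n$;
\item a $p$-adic Cauchy-type theorem showing that the flat connections induced on $\ker\Theta(\kinfty)$ and $\coker\Theta(\kinfty)$ are spanned by horizontal sections over $\kinfty$.
\end{enumerate}
You yourself leave the limit over $n$ open, so (4) is not established.

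The missing idea is that no intrinsic Poincar\'e lemma is needed: the paper reduces to the cyclotomic tower. Put $L_\infty=\kinfty\kcyc$ and $\Delta=\Gal(L_\infty/\kinfty)$. By Lemma \ref{lem:basis_lav}(2) and Berger--Colmez one has $D(\kinfty)\otimes_{\hatkinftyla}\Linftyla\simeq D(L_\infty)\simeq D(\kcyc)\otimes_{\kcyc}\Linftyla$. By Lemma \ref{lemma:sen_operator_compatibility}, $\Theta(L_\infty)$ acts on these as the base change of $\Theta(\kinfty)$, respectively of $\Theta(\kcyc)$, and all three operators are linear by Proposition \ref{prop:sen_operator_kills_la_vecs}. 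So the three two-term complexes are base changes of one another along field extensions. The classical identity $[D(\kcyc)\xrightarrow{\Theta(\kcyc)}D(\kcyc)]\simeq\rg(\gk,U)\otimes_K\kcyc$ then gives a $\Gamma(L_\infty)$-equivariant quasi-isomorphism $\rg(\Theta(\kinfty),D)\otimes_{\hatkinftyla}\Linftyla\simeq\rg(\gk,U)\otimes_K\Linftyla$. Taking $\Delta$-invariants, using $(\Linftyla)^{\Delta}=\hatkinftyla$ and finite-dimensionality of the cohomology, yields the first claim of (4). The $\gammak$-invariant version follows from (2), since $(\hatkinftyla)^{\gammak}=K$.
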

\begin{proof}
We first prove items (1) and (2). 
The equivalence $\rep_\gk(\C) \cong \rep_\gammak(\hatkinfty)$ and cohomological comparison  $\rg(\gk, U) \simeq \rg(\gammak,  W)$ follows from almost purity (since $\hatkinfty$ is perfectoid).  The equivalence $\rep_\gammak(\hatkinfty)  \cong \rep_\gammak(\hatkinftyla)$ is proved by Berger--Colmez \cite[Theorem 3.4]{berger2016theorie}. The cohomological comparison
\[ \rg(\gammak,  W) \simeq \rg(\gammak,  D) \simeq \rg_\la(\gammak,  D) \]
follows from the fact that $W$ has no higher locally analytic vectors (Theorem \ref{thm:higher_la_vanishing_k_infty}) and the general comparison theorem \cite[Theorem 1.7]{RJRC22}.

Item (3) follows from the general fact (see~ \cite{Tamme_loc_ana_rep_2015ANT}) that \[\rg(\Lie\gammak, D) \simeq \mathrm{colim}_{K \subset_{\mathrm{fin}} M \subset \kinfty} \rg_\la(\gal(\kinfty/M), D) \]
and Galois descent.
 
For item (4), first note that the $\Theta(\kinfty)$-action on $D$ is   $\hatkinftyla$-linear  since  $\Theta(\kinfty)$ kills $\hatkinftyla$ by Proposition \ref{prop:sen_operator_kills_la_vecs}. 
Let $L_\infty = K_\infty K_\cyc$.   For convenience, use  $D(\kinfty), D(L_\infty)$ and $D(\kcyc)$ to denote the Sen modules for these towers; by Lemma \ref{lem:basis_lav}, we have natural isomorphisms:
\[   D(\kinfty)\otimes_\hatkinftyla \wh{L}_\infty^\la \simeq D(L_\infty) \simeq D(\kcyc) \otimes_\kcyc \wh{L}_\infty^\la.\]
Recall Lemma \ref{lemma:sen_operator_compatibility} proved compatibilities between $\Theta(L_\infty)$, $\Theta(\kinfty)$ and $\Theta(\kcyc)$. 
We thus have
\begin{align*} \label{eq:compa_sen}
  \rg(\Theta(\kinfty), D(\kinfty)) \otimes_{\hatkinftyla} \wh{L}_\infty^\la &\simeq
 \rg(\Theta(L_\infty), D(L_\infty)) \\&\simeq  \rg(\Theta(\kcyc), D(\kcyc)) \otimes_{\kcyc} \wh{L}_\infty^\la. 
\end{align*} 
Indeed, these complexes are   computing the same endomorphism using different bases. 
 Recall the classical (cyclotomic) fact \[\rg(\Theta(\kcyc), D(\kcyc)) \simeq \rg(\gk, U)\otimes_K \kcyc.\]
We can thus take $\gal(L_\infty/\kinfty)$-invariants on the previous quasi-isomorphism to conclude. 
\end{proof}

\subsection{D\'evissage of $\bdrplus$-representations}
\label{subsec:bdrplus_reps}

The goal of this subsection is to prove     Theorem \ref{thm:dR_cat}, which generalizes items (1)-(3) of Theorem \ref{thm:SenBC16} to the $\bdrplus$-setting, for   \emph{any} $p$-adic Lie extension $\kinfty/K$.
As we see,  a generalization of Theorem \ref{thm:SenBC16}(4)  (concerning cohomology of the Sen operator) is missing here: this will have to make full use of Theorem \ref{thm:brdplusps_iso}, under the \emph{orientability} assumption; cf.~ Theorem \ref{thm:dR_cat_coho}.

 The content of the following theorem is hinted above \cite[Theorem 3.4]{berger2016theorie}.
 
\begin{theorem} \label{thm:dR_cat} 
Let $K_{\infty}/K$ be   an infinitely ramified $p$-adic Lie  extension.
\begin{enumerate}
\item 
There are  equivalences of categories
\[ \rep_\gk(\mathbf B_{\mathrm{dR}}^+) \simeq \rep_\gammak(\mathbf B_{\mathrm{dR}}^+(K_\infty))  \simeq \rep_\gammak(\BdRpluspa). \] 
Given $U \in \rep_\gk(\mathbf B_{\mathrm{dR}}^+)$, the corresponding object in $\rep_\gammak(\mathbf B_{\mathrm{dR}}^+(K_\infty))$ is $W=U^{\hk}$, and the corresponding object in $\rep_\gammak(\BdRpluspa) $ is $D=W^{\gammak\dpa}$. The functors from right to left are base-change functors.

\item For $U \in \rep_\gk(\mathbf B_{\mathrm{dR}}^+)$, we have cohomology comparisons 
\[ \rg(\gk, U) \simeq \rg(\gammak,  W) \simeq
\rg(\gammak,  D) \simeq  \varprojlim_n \rg_\la(\gammak,  D/t_{\kinfty}^n D) \]
Here, the first three complexes are continuous group cohomologies, and the last complex denotes inverse limit of  locally analytic cohomology.

\item We have
\[ \rg(\gammak,  D) \otimes_K \kinfty \simeq \rg(\Lie\gammak, D)\]
where the right hand side is Lie algebra cohomology. 
As a consequence
\[ \rg(\gk, U) \simeq (\rg(\Lie\gammak, D))^\gammak \] 
 \end{enumerate}
\end{theorem}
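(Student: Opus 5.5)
The plan is to follow the proof of Theorem \ref{thm:SenBC16} step by step, replacing $\hatkinfty$ by $\BdRplus(K_\infty)$ and passing through the $I_\theta$-adic filtration. Theorem \ref{thm:higher_la_vanishing_k_infty} and Corollary \ref{cor:higher_la_vanishing_bdr} supply the exactness needed at each stage.

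For item (1), the first equivalence $\rep_\gk(\BdRplus)\simeq\rep_\gammak(\BdRplus(K_\infty))$ and the comparison $\rg(\gk,U)\simeq\rg(\gammak,W)$ come from almost purity applied to each $U/t^nU$, followed by a limit over $n$. Concretely, $\mt{H}^i(\hk,\C(m))=0$ for $i>0$, and a d\'evissage along $t^mU/t^{m+1}U$ gives $\mt{R}\Gamma(\hk,U/t^n)=(U/t^n)^{\hk}$, which is finite free over $\BdRplus(K_\infty)/I_\theta^n$. A Mittag-Leffler argument then handles the limit.

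For the second equivalence, set $W_n=W/I_\theta^nW$. By Corollary \ref{cor:higher_la_vanishing_bdr}, the sequences
\[0\to I_\theta^{n}W/I_\theta^{n+1}W\to W_{n+1}\to W_n\to 0\]
stay exact after taking locally analytic vectors. The graded pieces are objects of $\rep_\gammak(\hatkinfty)$, and for these Berger--Colmez gives full rank locally analytic vectors. Hence $W_n^\la$ is finite free of full rank over $(\BdRplus(K_\infty)/I_\theta^n)^\la=\BdRpluspa/(I_\theta^\pa)^n$; the identification of the coefficient ring uses Proposition \ref{prop:bdran_local} and its proof. Surjectivity of the transition maps lets us lift a basis of $W_1^\la$ compatibly to a basis $e_1,\dots,e_d$ of $W$ lying in $W^\pa=D$. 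By Lemma \ref{lem:basis_lav}(2), $D$ is then finite free over $\BdRpluspa$ with basis $e_i$, and $\BdRplus(K_\infty)\otimes D\cong W$. Full faithfulness should follow by applying this to internal Hom objects, or directly from the fact that morphisms preserve pro-analytic vectors.

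For item (2), the step $\rg(\gammak,W)\simeq\rg(\gammak,D)$ reduces, after writing both sides as $\mt{R}\varprojlim_n$ of the corresponding objects for $W_n$ and $D/t_{\kinfty}^nD=W_n^\la$, to the comparison $\rg(\gammak,W_n)\simeq\rg_\la(\gammak,W_n^\la)$. That comparison comes from \cite[Theorem 1.7]{RJRC22} together with the vanishing of $\R^i_\la(W_n)$ for $i\ge1$ (Corollary \ref{cor:higher_la_vanishing_bdr}). The $\mt{R}\varprojlim$ commutes because all transition maps are surjective. Here $t_\kinfty$ should be read as any generator of $I_\theta^\pa$, so no orientation is needed.

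For item (3), apply \cite{Tamme_loc_ana_rep_2015ANT} to each $W_n^\la$ together with Galois descent along $\kinfty/K$, exactly as in Theorem \ref{thm:SenBC16}(3), and then pass to the limit over $n$. I expect the main obstacle to be this limit in item (3). Lie algebra cohomology of $D$ must be compared with $\mt{R}\varprojlim_n$ of Lie algebra cohomology of the $D/t^nD$, and the colimit over $M$ in Tamme's formula must commute with $\mt{R}\varprojlim$. I would handle this using the fact that the Chevalley--Eilenberg complex is a finite complex of products, so it commutes with limits. I would also use the fact that the terms $\rg(\gammak,W_n)$ have finite-dimensional cohomology over $K$: each $W_n$ is a successive extension of $\C$-representations, whose cohomology is finite-dimensional by Sen. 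This finiteness makes tensoring with $\kinfty$ commute with $\mt{R}\varprojlim$. The $\gammak$-invariants statement then follows by Galois descent.
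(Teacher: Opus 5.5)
Your proposal is correct and follows essentially the paper's route. It is a d\'evissage along the $I_\theta$-adic filtration, with almost purity giving the first equivalence and Corollary~\ref{cor:higher_la_vanishing_bdr} supplying the exactness behind the second equivalence and the cohomology comparisons. Your compatible lifting of a basis combined with Lemma~\ref{lem:basis_lav}(2) is a repackaging of the paper's induction with the short five lemma, and your passage to the limit is more explicit than the paper's one-line d\'evissage.

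One small precision in item (3): finite-dimensionality of each $\mt{H}^i(\gk, U/t^nU)$ alone would not let $-\otimes_K\kinfty$ commute with $\mt{R}\varprojlim_n$. What makes your argument go through is that the inverse system is eventually constant, because the graded pieces $(U/tU)(n)$ are acyclic for $n\gg 0$.
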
 

\begin{proof} 
For each $m\in\mathbb{Z}_{\geq1}$, let $\B_{\dR,m}^{+}=\B_{\dR}^{+}/I_\theta^{m}$; by a standard d\'evissage argument, it suffices to prove that we have   equivalences of categories: \begin{equation} \label{eq:equiv_level_m}
\rep_\gk(\bdrplusm) \simeq \rep_\gammak(\bdrplusm(K_\infty))  \simeq \rep_\gammak(\bdrplusmkinftyla). 
\end{equation} 
via the analogous functors
\[ U_m \mapsto W_m:=(U_m)^\hk, \quad W_m \mapsto D_m:=(W_m)^{\gammak\dla}.\]
Indeed, assuming these equivalences, the cohomological comparisons (for $U_m, W_m, D_m$) follow by obvious d\'evissage argument to the $m=1$ case, which is already established in Theorem \ref{thm:SenBC16}. 

We first prove the first   equivalence  in \eqref{eq:equiv_level_m} via induction on $m$ with the $m=1$ case known in  Theorem \ref{thm:SenBC16}.
 It is clear that we only need to prove that   $W_m =(U_m)^\hk$ is finite free over $\bdrplusm(\kinfty)$ and the natural map 
\begin{equation} \label{wmum}
W_m \otimes_{\bdrplusm(\kinfty)} \bdrplusm \to U_m
\end{equation}
  is an isomorphism.
For brevity, for any $k \leq m$, let $U_k:=U_m\otimes_{\bdrplusm } \bdrplusm/I_\theta^k$ and similarly for $W_k, D_k$.
 Theorem \ref{thm:SenBC16} implies that  higher (i.e., degree $\geq 1$) $\hk$-cohomology of $U_1$ vanishes; thus so is that of $U_k$ for any $k$. Thus the short exact sequence
 \[0 \to U_{m-1} \xrightarrow{\times t} U_m \to U_1 \to 0\]
induces a short exact sequence
\[0 \to W_{m-1} \to W_m \to W_1 \to 0\]
This (via short five lemma, and induction hypothesis) implies that \eqref{wmum} is an isomorphism; this forces $W_m$ to be finite free since $\bdrplus(\kinfty)$ is a DVR.

We now prove the second   equivalence  in \eqref{eq:equiv_level_m} via induction on $m$ with $m=1$ case known in  Theorem \ref{thm:SenBC16}. The strategy is quite similar to that in the preceding paragraph. It suffices to prove $D_m$ is finite free over $\bdrplusmkinftyla$ and natural map
\begin{equation} \label{dmwm}
 D_m\otimes_{\bdrplusmkinftyla} \bdrplus(\kinfty) \to W_m
\end{equation}
is an isomorphism.
Corollary \ref{cor:higher_la_vanishing_bdr} implies   $W_k$ has no higher locally analytic vectors. Thus  we have a short exact   sequence
\[ 0 \to D_{m-1} \to D_m \to D_1 \to 0.\]
  Thus \eqref{dmwm} is an isomorphism by induction and short five lemma. Again, this forces $ D_m$ to be finite free since $\BdRpluspa$ is a DVR by Theorem \ref{thm:dvr}. 
\end{proof}

\subsection{Orientations and regular connections} \label{sec:reg_conn}

 In this subsection, when $\kinfty$ is orientable, we relate $\bdrplus$-representations with  regular connections. 

 \begin{definition}
      Let $E$ be a characteristic zero field.  Let $z$ be a variable. Equip $E\llbracket z\rrbracket$ with the differentiation $z\frac{d}{dz}$. A regular  connection over $E\llbracket z \rrbracket$ is a  finite free $E\llbracket z \rrbracket$-module $M$ equipped with an $E$-linear morphism $\nabla: M \to M$ satisfying the Leibniz rule with respect to $z\frac{d}{dz}$. Given a regular connection, denote \[\rg(\nabla, M): = [M \xrightarrow{\nabla} M].\]
 \end{definition}


\begin{theorem} \label{thm:dR_cat_coho}  
Use the same notation as in Theorem \ref{thm:dR_cat}, and suppose furthermore $\kinfty$ is orientable with a chosen orientation $\nabla(\kinfty)$. Let  $U \in \rep_\gk(\mathbf B_{\mathrm{dR}}^+)$ with corresponding $D \in \rep_\gammak(\BdRpluspa)$. 
Then the pair $(D, \nabla(\kinfty))$ is a \emph{regular connection} over the ring $\BdRpluspa$. In addition, we have
\begin{equation}\label{eqn:coho_reg_conn}
    \rg(\gammak,  D) \otimes_K \bdrplus(\kinfty)^{\pa, \nabla=0} \simeq \rg(\nabla(\kinfty), D)
\end{equation}  
As a consequence 
\begin{equation} \label{eqn:coho_reg_gal_inv}
     \rg(\gk, U) \simeq  (\rg(\nabla(\kinfty), D))^\gammak. 
\end{equation}
\end{theorem}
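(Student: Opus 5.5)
First I would establish the regular connection property. Write $\nabla=\nabla(\kinfty)=\sum_i b_i\otimes\partial_i$ with $b_i\in\BdRpluspa$ (by the lemma following Definition \ref{def:orientation}). Each $\partial_i\in\Lie\gammak$ acts on $D$ by derivations compatible with the action on $\BdRpluspa$, since $D$ consists of pro-analytic vectors. Hence $\nabla$ acts $K$-linearly on $D$ and satisfies $\nabla(ax)=a\nabla(x)+\nabla(a)x$ for $a\in\BdRpluspa$. By Theorem \ref{thm:brdplusps_iso} we have $\BdRpluspa=\Kinftyla\llbracket t_\kinfty\rrbracket$, where $\Kinftyla$ is identified with $\BdRplus(\kinfty)^{\pa,\nabla=0}$ and $\nabla(t_\kinfty)=t_\kinfty$. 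So on $\BdRpluspa$, $\nabla$ is exactly $t_\kinfty\frac{d}{dt_\kinfty}$: it kills the coefficients and acts by multiplication by $n$ on $t_\kinfty^n$, and it is continuous for the $t_\kinfty$-adic topology. This gives the regular connection structure. Freeness of $D$ comes from Theorem \ref{thm:dR_cat}(1).

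For \eqref{eqn:coho_reg_conn}, I would argue by d\'evissage along the $t_\kinfty$-adic filtration and reduce to Theorem \ref{thm:SenBC16}(4). Put $D_m=D/t_\kinfty^mD$; it is $\nabla$-stable because $\nabla(t_\kinfty^m D)\subset t_\kinfty^mD$. The graded piece $t_\kinfty^{m-1}D/t_\kinfty^mD\cong D_1\otimes t_\kinfty^{m-1}$ is the Sen module of $U_1(m-1)$, up to the twist by the rank-one $L(\kinfty)^{\otimes m-1,\la}$. On it, $\nabla$ acts as $\Theta(\kinfty)+(m-1)$. Here I use that $\nabla$ reduces to $\Theta(\kinfty)$ modulo $I_\theta$, and the Leibniz rule with $\nabla(t_\kinfty^{m-1})=(m-1)t_\kinfty^{m-1}$.

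Theorem \ref{thm:SenBC16}(4), applied to the $\C$-representation $\gr^{m-1}U$, then gives $\rg(\gammak,\gr^{m-1}D)\otimes_K\Kinftyla\simeq[\gr^{m-1}D\xrightarrow{\nabla}\gr^{m-1}D]$. Both sides are compatible with the short exact sequences $0\to\gr^{m-1}D\to D_m\to D_{m-1}\to 0$. On the Galois side these are exact by Corollary \ref{cor:higher_la_vanishing_bdr} and Theorem \ref{thm:dR_cat}; on the connection side they are exact trivially.

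To glue the graded comparisons I need a natural map rather than abstract isomorphisms on graded pieces. I would construct it as in the proof of Theorem \ref{thm:SenBC16}(4): pass to $L_\infty=\kinfty\kcyc$ using the compatible orientation of Corollary \ref{cor:bijection_between_orientaitons_along_dim_1_extensions} and Lemma \ref{lem:nabla_action_compatibility}. There $D(L_\infty)\simeq D(\kinfty)\otimes\BdRplus(L_\infty)^{\pa}$. By Lemma \ref{lem:nabla_action_compatibility}, $\nabla(L_\infty)$ on $L_\infty$ restricts to the corresponding operator for $\kcyc$, so the complex is base-changed from $[D(\kcyc)\xrightarrow{\nabla(\kcyc)}D(\kcyc)]$ over $\kcyc\llbracket t\rrbracket$. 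For that complex the classical cyclotomic comparison (Fontaine's $\BdRplus$-theory) gives $\rg(G_K,U)\otimes_K\kcyc$. Taking $\Gal(L_\infty/\kinfty)$-invariants, which is exact on these modules by Lemma \ref{lem:delta_no_higher_coho}-type arguments, yields the comparison at each finite level $m$. I then pass to $\varprojlim_m$, using Mittag-Leffler: the transition maps are surjective and all terms are finite-dimensional over $\Kinftyla$. I expect this gluing and descent step to be the main obstacle.

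Finally, \eqref{eqn:coho_reg_gal_inv} follows by taking $\gammak$-invariants of \eqref{eqn:coho_reg_conn}. This uses $(\Kinftyla)^{\gammak}=K$, together with Hilbert 90-type vanishing of $\gammak$-cohomology of $\Kinftyla$ in the appropriate locally analytic sense, as in Theorem \ref{thm:SenBC16}(4).
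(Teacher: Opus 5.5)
Your argument is essentially the paper's proof. You pass to $L_\infty=\kinfty\kcyc$ with the induced compatible orientation, identify the complex as a base change of the cyclotomic regular connection over $\kcyc\llbracket t\rrbracket$, apply the classical cyclotomic comparison, and take $\Gal(L_\infty/\kinfty)$-invariants. The only real difference is that you truncate to $D/t_{\kinfty}^mD$, which turns the base-change step into finite-dimensional linear algebra over $\Kinftyla$ before passing to the limit. The paper instead works directly with $D$ and cites a base-change property for regular connections. Your limit step is harmless because both sides stabilize for $m\gg 0$, and once you have it, the graded d\'evissage through Theorem \ref{thm:SenBC16}(4) is superfluous.

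One correction to your last step. The $\gammak$-invariants in \eqref{eqn:coho_reg_gal_inv} are taken on cohomology groups, and only $(\Kinftyla)^{\gammak}=K$ is needed. The \emph{Hilbert 90-type vanishing} you invoke is false: Theorem \ref{thm:SenBC16}(2) gives $\mt{H}^1(\gammak,\Kinftyla)\cong\mt{H}^1(G_K,\C)\neq 0$.
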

\begin{proof}
 We know $\nabla(t_\kinfty)=t_\kinfty$ from Theorem \ref{thm:brdplusps_iso}, thus clearly $(D, \nabla)$ is a regular connection. We focus on proving \eqref{eqn:coho_reg_conn} as \eqref{eqn:coho_reg_gal_inv} is trivial consequence.

The idea of the proof is quite similar to that of Theorem \ref{thm:SenBC16}(4) (i.e., using $\kcyc$ and its composite with $\kinfty$), except we need to upgrade the ``linear endomorphism" base change comparison   $$ \rg(\Theta(\kinfty), D(\kinfty)) \otimes_{\hatkinftyla} \wh{L}_\infty^\la \simeq  \rg(\Theta(\kcyc), D(\kcyc)) \otimes_{\kcyc} \wh{L}_\infty^\la. $$ to a base change comparison of \emph{regular connections}.

 Let $L_\infty=\kinfty\kcyc$.   Lemma \ref{lem:orientations_preserved_under_central_extensions} implies $\nabla(\kinfty)$ induces a unique  compatible orientation $\nabla(L_\infty)$, which then necessarily specializes to the unique cyclotomic orientation $\nabla(\kcyc)$ by Lemma \ref{lem:nabla_action_compatibility}.
Use Theorem \ref{thm:brdplusps_iso}(1) to identify $\bdrplus(\kinfty)^{\pa, \nabla=0}$ with $\hatkinftyla$; similarly for the towers $\kcyc$ and $L_\infty$. The compatibilities among all these orientations imply that we have a  commutative diagram
\[
\begin{tikzcd}
{\hatkinftyla\llbracket t_{K_\infty}\rrbracket} \arrow[d, hook] \arrow[r, "\simeq"] & \bdrplus(\kinfty)^\pa \arrow[d, hook] \\
{\wh{L}_\infty^\la\llbracket t\rrbracket} \arrow[r, "\simeq"]                   & \bdrplus(L_\infty)^\pa                \\
{\kcyc\llbracket t\rrbracket} \arrow[u, hook] \arrow[r, "\simeq"]               & \bdrplus(\kcyc)^\pa \arrow[u, hook]  
\end{tikzcd}
\]
where all vertical arrows are inclusion maps.
Use  $D(\kinfty), D(L_\infty)$ and $D(\kcyc)$ to denote the regular connection for these towers, then Lemma \ref{lem:basis_lav} implies:
\[   D(\kinfty)\otimes_{\hatkinftyla\llbracket t_\kinfty\rrbracket} \wh{L}_\infty^\la\llbracket t\rrbracket \simeq D(L_\infty) \simeq D(\kcyc) \otimes_{\kcyc\llbracket t\rrbracket} \wh{L}_\infty^\la\llbracket t\rrbracket.\]
Thus base change property for cohomology of regular connections (e.g., see \cite[Proposition 8.7]{GWocdr} which is elementary) imply
\begin{align*}
\label{eq:compa_dR}
  \rg(\nabla(\kinfty), D(\kinfty)) \otimes_{\hatkinftyla} \wh{L}_\infty^\la &\simeq
 \rg(\nabla(L_\infty), D(L_\infty))\\ &\simeq  \rg(\nabla(\kcyc), D(\kcyc)) \otimes_{\kcyc} \wh{L}_\infty^\la. 
\end{align*} 
Using  the classical  cyclotomic  fact (see~e.g.~ \cite[Theorem 10.12]{GMWdR}) \[\rg(\nabla(\kcyc), D(\kcyc)) \simeq \rg(\gk, U)\otimes_K \kcyc,\]
we can thus take $\gal(L_\infty/\kinfty)$-invariants on the previous quasi-isomorphism    to conclude. 
\end{proof}

 \bibliographystyle{alpha}
\bibliography{main}
\end{document}